\newtheorem{thm}{Theorem}
\newtheorem{prop}{Proposition}
\newtheorem{lem}{Lemma}
\newtheorem{rem}{Remark}
\newtheorem{cor}{Corollary}
\newtheorem{defi}{Definition}
\newtheorem{defiprop}{Definition-Proposition}
\def\Top{\mathop{\rm Top}\nolimits}
\def\CH{\mathop{\rm CH}\nolimits}
\def\hom{\mathop{\rm hom}\nolimits}
\def\codim{\mathop{\rm codim}\nolimits}
\def\dim{\mathop{\rm dim}\nolimits}
\def\SmVar{\mathop{\rm SmVar}\nolimits}
\def\Gr{\mathop{\rm Gr}\nolimits}
\def\reg{\mathop{\rm reg}\nolimits}
\def\PSmVar{\mathop{\rm PSmVar}\nolimits}
\def\Var{\mathop{\rm Var}\nolimits}
\def\Hom{\mathop{\rm Hom}\nolimits}
\def\supp{\mathop{\rm supp}\nolimits}
\def\PVar{\mathop{\rm PVar}\nolimits}
\def\Tot{\mathop{\rm Tot}\nolimits}
\def\sing{\mathop{\rm sing}\nolimits}
\def\Im{\mathop{\rm Im}\nolimits}
\def\diff{\mathop{\rm diff}\nolimits}
\def\Cone{\mathop{\rm Cone}\nolimits}
\def\ad{\mathop{\rm ad}\nolimits}
\def\card{\mathop{\rm card}\nolimits}
\def\nul{\mathop{\rm nul}\nolimits}
\def\log{\mathop{\rm log}\nolimits}
\def\Diff{\mathop{\rm Diff}\nolimits}
\title{Some results on the higher Abel Jacobi map for open varieties}
\author{Johann Bouali}
\begin{document}

\maketitle

\begin{abstract}
In this article, we study the infinitesimal invariant of the relative higher Abel Jacobi map of a smooth open morphism.
We give a generalization of a theorem of Voisin to open algebraic varieties and higher Chow groups and 
as a corollary a non vanishing criterion for the higher Abel Jacobi map 
of an general open smooth hypersurface section of high degree of a smooth projective variety $Y$.
On the other side by Nori connectness theorem, the image of the primitive part of the higher Abel Jacobi map 
of a general open smooth hypersurface of high degree of $Y$ is, modulo torsion, generated
by the restriction to this open smooth hypersurface of a closed Bloch cycle in the corresponding affine subset of $Y$ whose
cohomology class in $Y$ is primitive.
\end{abstract}

%%%%%%%%%%%%%%%%%%%%%%%%%%%%%%%%%%%%%%%%%%%%%%%%%%%%%%%%%%%%%%%%
\section{Introduction}
%%%%%%%%%%%%%%%%%%%%%%%%%%%%%%%%%%%%%%%%%%%%%%%%%%%%%%%%%%%%%%%%

\textbf{Notations}: 
\begin{itemize}
\item We denote by $\Var(\mathbb C)$ the category of algebraic varieties over $\mathbb C$, 
$\SmVar(\mathbb C)\subset\Var(\mathbb C)$ the full subcategory of smooth algebraic varieties, 
$\PVar(\mathbb C)\subset\Var(\mathbb C)$, the full subcategory of projective varieties,
$\PSmVar(\mathbb C)\subset\Var(\mathbb C)$ the full subcategory of smooth projective varieties.
\item For $V\in\Var(\mathbb C)$, we denote by $V^{an}$ 
the complex analytic space associated to $V$ with the usual topology induced by $\mathbb C^N$.
By $V'\subset V$ an open subset, we mean an open subset of $V^{an}$ (i.e. an open subset for the usual topology).
\item For a sheaf $\mathcal F$ of abelian group on a locally compact Hausdorf topological space $V$, we denote by
$D^{\vee}(\mathcal F)$ the (Verdier dual) sheaf : 
for $V'\subset V$ an open subset $\Gamma(V',D^{\vee}(\mathcal F))=\Gamma_c(V',\mathcal F)^{\vee}$.
\item For $V\in\SmVar(\mathbb C)$, we denote by $O_V$ the sheaf of holomorphic function on $V^{an}$ 
and by $(\Omega^{\bullet}_V,\partial)$ the complex of sheaf of holomorphic forms on $V^{an}$. 
We denote by $(\mathcal A^{\bullet,\bullet}_V,\partial,\bar\partial)$ the bicomplex of sheaf of differential forms on $V^{an}$. 
The filtration $F$ associated to its total complex $(\mathcal A_V^{\bullet},d)$ is the Fr\"olicher filtration. 
We denote by $\mathcal D^{\bullet}_V=D^{\vee}(\mathcal A_V^{\bullet})$ the complex of sheaf of currents
on $V^{an}$ which is filetered by the Fr\"olicher filtration $F$.
\item For $V\in\Var(\mathbb C)$ and  $\mathcal F$ a sheaf of $O_V$ module on $V^{an}$ , we denote by
$D_{O_V}^{\vee}(\mathcal F)=\mathcal{H}om_{O_V}(\mathcal F,O_V)$ the dual sheaf of $O_V$ module on $V^{an}$ : 
for $V'\subset V$ an open subset, $\Gamma(V',D_{O_V}^{\vee}(\mathcal F))=\Hom(\mathcal F_{|V'},O_{V'})$.
\item For a complex $A^{\bullet}$ in an abelian category, we denote by $F_b$ the filtration b\^ete on it:
$F_b^pA^{\bullet}=A^{\bullet\geq p}$.
\item We denote by $\square^n=(\mathbb P^1\backslash \left\{1\right\})^n\subset(\mathbb P^1)^n$ and by
$\mathcal Z^p(X,n)\subset \mathcal Z^p(X\times\square^n)$ the subgroup of $p$ codimentional cycle in $X\times\square^n$ meeting
all faces of $\square^n$ properly.
We denote by $\pi_X:X\times(\mathbb P^1)^n\to X$ and $\pi_{(\mathbb P^1)^n}:X\times(\mathbb P^1)^n\to(\mathbb P^1)^n$ the projections.
\item For $V\in\Top$ a topological space, we denote by $C_{\bullet}^{\sing}(V,\mathbb Z)=\mathbb Z\Hom_{\Top}(\Delta^{\bullet},V)$ the complex
of singular chains, $\Delta^{p}\subset\mathbb R^p$ being the standard simplex.
For $V\in\Diff(\mathbb R)$ a differential manifold, we have an the inclusion of complexes 
$C_{\bullet}^{\diff}(V,\mathbb Z)=\mathbb Z\Hom_{\Diff(\mathbb R)}(\Delta^{\bullet},V)\subset C_{\bullet}^{\sing}(V,\mathbb Z)$
which is a quasi-isomorphism.
\end{itemize}

The Abel Jacobi map and normal functions associated to a family of algebraic cycles has been studied a lot for projective
varieties, but few appears in the literature for open varieties 
By an open variety, we mean a non complete algebraic variety, 
or most specificaly in our case a non projective quasi-projective variety. 
In this article we give generalization
of classical result for projective varieties to the case of open varieties. 

Every smooth open variety is the complementary subset of a normal crossing divisor in a smooth projective variety. 
For an open variety $U=X\backslash D$, with $X\in\PSmVar(\mathbb C)$ and $D\subset X$ a normal crossing, we have
$(\mathcal D_X^{\bullet}(\log D)),F)=(D^{\vee}(\mathcal A_X^{\bullet}(\nul D)),F)$ the complex  
of sheaves of $\log D$ currents on $X^{an}$ defined by King \cite{King} and $F$ is the Fr\"olicher filtration. 
A  $\log D$ current on an open subset $V\subset X$ is a linear form on the $\nul D$ differential forms with compact support on $V$.
The complex sheaves of $\nul D$ differential forms on $X^{an}$ is the subcomplex $\mathcal A^{\bullet}_X(\nul D)\subset\mathcal A_X^{\bullet}$ 
of differential forms on $X^{an}$ consisting of those which vanishes holomorphically on $D$.

The main goal of the first section is to note the $E_1$ degenerescence of the filtered complex $(\Gamma(X,\mathcal A_X^{\bullet}(\nul D)),F)$
where $F$ is the Fr\"olicher filtation and to reinterpret the Poincare duality paring 
\begin{equation}
<\cdot,\cdot>_{ev_X}:(H^k(U,\mathbb C),F)\otimes (H^{2d_X-k}(X,D,\mathbb C),F)\to\mathbb C \; \; 
\lambda\otimes\mu\mapsto (\lambda.\mu)([X])
\end{equation}
which is a morphism of mixed Hodge structure, as the one induced in cohomology by the pairing
\begin{equation}
<\cdot,\cdot>_{ev_X}:(\Gamma(X,\mathcal D_X^k(\log D)),F)\otimes (\Gamma(X,\mathcal A_X^{2d_X-k}(\nul D)),F)\to\mathbb C \; \; 
T\otimes\eta\mapsto T(\eta)
\end{equation}
To see the $E_1$ degenerescence of $(\Gamma(X,\mathcal A_X^{\bullet}(\nul D)),F)$, we 
prove (c.f. proposition \ref{IDAXD}) that the inclusion map of filtered complexes
\begin{equation}
\tau:(\Gamma(X,\mathcal A_X^{\bullet}(\nul D)),F)\hookrightarrow(\Gamma(X,\mathcal A^{\bullet}_{X,D}),F),
 \; \; \tau(\omega)=(\omega,0,\cdots, 0)
\end{equation}
is a filtered quasi-isomorphism and use the $E_1$ degenerescence of $(\Gamma(X,\mathcal A^{\bullet}_{X,D}),F)$,
where $\mathcal A^{\bullet}_{X,D}=\Cone(i_{D_{\bullet}}^*:\mathcal A_X^{\bullet}\to i_{D_{\bullet}*}\mathcal A^{\bullet}_{D_{\bullet}})[-1]$,
$D_{\bullet}$ is the simplicial variety associated to $D$ together with the canonical morphism
$i_{D_{\bullet}}:D_{\bullet}\to D\hookrightarrow X$. 

For an open variety $U=X\backslash D$, with $X\in\PSmVar(\mathbb C)$ and $D\subset X$ a normal crossing
divisor, we have (c.f.\cite{MKerr}) the classical realization map
\begin{eqnarray*}
\mathcal{R}^p(X,D):\mathcal{Z}^p(U,\bullet)^{pr/X}\to C^{\mathcal D}_{\bullet}(X,D,\mathbb Z), \; \; \;
Z\mapsto (T_Z,\Omega_Z, R_Z):=r^{\mathcal D}_{X,D}(T_{\bar Z},\Omega_{\bar Z},R_{\bar Z}),  
\end{eqnarray*}
where $\bar{Z}\in\mathcal Z^p(X,n)$ is the closure of $Z$ in $X\times(\mathbb P^1)^n$, 
which take naturally value in the relative Deligne homology complex,
\begin{equation*}
C^{\mathcal D}_{\bullet}(X,D,\mathbb Z)=\Cone(C_{2d_X-2p+\bullet}^{\diff}(X,D,\mathbb Z)\oplus\Gamma(X,F^p\mathcal{D}^{2p+\bullet}_X(\log D)) 
\to\Gamma(X,\mathcal{D}^{2p+\bullet-1}_X(\log D))).
\end{equation*}
This leads to the higher Abel Jacobi map for $U=X\backslash D$ : 
\begin{equation*}
AJ_U:\mathcal{Z}^p(U,n)_{\partial=0}^{pr/X,\hom}\to\CH^p(U,n)^{\hom}\to J^{p,2p-n-1}(U), \; \; Z\mapsto AJ(Z)=[R'_Z],
\end{equation*}
where the abelian group $\mathcal{Z}^p(U,n)_{\partial=0}^{pr/X,\hom}$ consist of the closed Bloch cycle on $U$, 
whose closure $\bar Z\in\mathcal Z^p(X\times\square^n)$ in $X\times\mathbb\square^n$ 
is still a Bloch cycle, i.e. meet all the faces of $X\in\square^n$ properly 
($\partial Z=0$ is then equivalent to $\partial\bar Z\in i_{D*}\mathcal Z^{p-1}(D,n)$),
and whose cohomology class $[\Omega_Z]=0\in H^{2p-n}(U,\mathbb C)$ vanishes,
and the complex variety
\begin{equation}
J^{p,k}(U)=H^{k}(U,\mathbb C)/(F^pH^{k}(U,\mathbb C)\oplus H^{k}(U,\mathbb Z))\simeq
(F^{d_X-p+1}H^{2d_X-k}(X,D,\mathbb C))^{\vee}/H_{2d_X-k}(X,D,\mathbb Z)
\end{equation}
is the intermediate jacobian.
We show in proposition \ref{AJfunct} that  
\begin{itemize}
\item $AJ_U$ for $U\in\SmVar(\mathbb C)$ is independent of the choise of a compactification $(X,D)$, $U=X\backslash D$,
$X\in\PSmVar(\mathbb C)$, $D\subset X$ n.c.d ; 
\item $AJ_U$ is covariantly functorial in $U\in\SmVar(\mathbb C)$ for proper morphisms
\item $AJ_U$ is contravariantly functorial in $U\in\SmVar(\mathbb C)$ for all morphisms.
\end{itemize}

In the second section (section 3), we study the relative case. 
Let $f_U:U\to S$ an open morphism which is the restriction to the complementary of a divisor $D\subset X$ 
of a smooth projective morphism $f:X\to S$, $X,S\in\SmVar(\mathbb C)$, such that
$D$ restrict on each fiber $X_s$ of $f$ to a normal crossing divisor $D_s\subset X_s$. We then introduce
the (holomorphic) Leray filtration on the complexes of $\log D$ currents and $\nul D$ differential forms giving rise to the commutative
diagramm of inclusion of bifiltered complexes of sheaves on $X^{an}$ (cf proposition \ref{FLqsi1} and proposition \ref{FLqsi}) : 
\begin{equation}
\xymatrix{
(\Omega^{\bullet}_X(\nul D),F_b,L)\ar@{^{(}->}[r]\ar@{^{(}->}[d] & (\mathcal A^{\bullet}_X(\nul D),F,L)\ar@{^{(}->}[d] \\
(\Omega^{\bullet}_X,F_b,L)\ar@{^{(}->}[r]\ar@{^{(}->}[d] & (\mathcal A^{\bullet}_X,F,L)\ar@{^{(}->}[d]  \\
(\Omega^{\bullet}_X(\log D),F_b,L)\ar@{^{(}->}[r] & (\mathcal A^{\bullet}_X(\log D),F,L)\ar@{^{(}->}[r] & (\mathcal D_X(\log D),F,L)}
\end{equation}
whose rows are bifiltered quasi-isomorphisms of sheaves.
As in the first section, we note the $E_1$ degenerescence of the filtered complex $(f_*\mathcal A_X^{\bullet}(\nul D),F)$
where $F$ is the Fr\"olicher filtation and we reinterpret the Poincare duality paring 
\begin{equation}
<\cdot,\cdot>_{ev_f}:(\mathcal H^k_S(f_U),F)\otimes (\mathcal H_S^{2d_X-k}(f_{X,D}),F)\to O_S \; \; 
\lambda\otimes\mu\mapsto (\lambda.\mu)([X])
\end{equation}
which is a morphism of variation of mixed Hodge structure, as the one induced in cohomology by the pairing
\begin{equation}
<\cdot,\cdot>_{ev_f}:(f_*\mathcal D_{X/S}^k(\log D)),F)\otimes (\Gamma(X,f_*\mathcal A_{X/S}^{2d_X-k}(\nul D)),F)\to\mathbb C \; \; 
T\otimes\eta\mapsto f_*(T\wedge\eta)
\end{equation}
Here, 
\begin{itemize}
\item $\mathcal H^k_S(f_U)=\mathcal H^kf_*\mathcal D_X^{\bullet}(\log D)\simeq Rf_{U*}\mathbb C\otimes_{\mathbb C} O_S$ and
\item $\mathcal H^k_S(f_{X,D})=\mathcal H^kf_*\mathcal A_X^{\bullet}(\nul D)\simeq Rf_{X,D*}\mathbb C\otimes_{\mathbb C} O_S$ 
\end{itemize}
are sheaves of $O_S$ modules on $S^{an}$ whose evaluations on $s\in S$ are $H^k(U_s,\mathbb C)$
and $H^k(X_s,D_s,\mathbb C)$  respectively,
and the filtration $F$ is the one induced by the Fr\"olicher filtration (see definition \ref{Hodgesub}). 
For $s\in S$, since the fiber $U_s\subset U$ is closed in $U^{an}$ and $U^{an}$ is paracompact, 
we have $(R^kf_{U*}\mathbb C)_s\xrightarrow{\sim} H^k(U_s,\mathbb C)$.  
We have the canonical quasi isomorphism
$Rf_{X,D*}\mathbb C=Rf_{U!}\mathbb C\to\Cone(Rf_*\mathbb C\to Rf_{D*}\mathbb C)[-1]$. 
On the other hand, $(R^kf_{X*}\mathbb C)_s\xrightarrow{\sim} H^k(X_s,\mathbb C)$ and $(R^kf_{D*}\mathbb C)_s\xrightarrow{\sim} H^k(D_s,\mathbb C)$
since the fibers $X_s\subset X$ and $D_s\subset D$ are closed in $X^{an}$ and $D^{an}$ respectively and $X^{an}$ and $D^{an}$ are compact (hence paracompact).  
Hence, for $s\in S$, $(R^kf_{X,D*}\mathbb C)_s\xrightarrow{\sim} H^k(X_s,D_s,\mathbb C)$

To see the $E_1$ degenerescence of $(f_*\mathcal A_{X/S}^{\bullet}(\nul D)),F)$, we 
prove (c.f. proposition \ref{IDAXDR} and corollary \ref{relvtR}) that map of filtered complexes of sheaves on $S^{an}$
\begin{equation}
<\tau>:(f_*\mathcal A_{X/S}^{\bullet}(\nul D)),F)\to(f_*\mathcal A^{\bullet}_{(X,D)/S},F),
 \;  \; <\tau>(<\omega>)=(<\omega>,0,\cdots, 0)
\end{equation}
is a filtered quasi-isomorphism and use the $E_1$ degenerescence of $(f_*\mathcal A^{\bullet}_{(X,D)/S},F)$.
The commutative diagramm of bicomplexes of sheaves on $X^{an}$ (c.f.proposition \ref{IdInProp}, see also remark \ref{IdInRem})
\begin{equation}
\xymatrix{
\phi^{r,\bullet,\bullet}:\Gr_L^r\mathcal A^{\bullet,\bullet}_X(\nul D)\ar^{\sim}[r]\ar[d] 
& \mathcal A^{\bullet-r,\bullet}_{X/S}(\nul D)\otimes_{O_X}f^*\Omega^r_S\ar[d] \\
\phi^{r,p,q}:\Gr_L^r\mathcal A^{\bullet,\bullet}_X(\log D)\ar^{\sim}[r] &
 \mathcal A^{\bullet-r,\bullet}_{X/S}(\log D)\otimes_{O_X}f^*\Omega^r_S}
\end{equation}
given by taking the inner product with a relevement of a vector field on $S^{an}$, allows us to define (c.f. subsection 3.3)
the Gauss Manin connexions relative to the local systems $H^k_{\mathbb Z}(f_U)$ and $H^k_{\mathbb Z}(f_{X,D})$ 
satisfying by definition the transversality property and featuring in the commutative diagramm :
\begin{equation}
\xymatrix{
 \, & \nabla : F^p\mathcal H_S^k(f_{X,D})\ar[ld]\ar[dd]\ar[r] & F^{p-1}\mathcal H^k_S(f_{X,D})\otimes_{O_S}\Omega_S\ar[ld]\ar[dd] \\
\nabla : F^p\mathcal H_S^k(f)\ar[r]\ar[rd] & F^{p-1}\mathcal H^k_S(f)\otimes_{O_S}\Omega_S\ar[rd] & \, \\
\, & \nabla : F^p\mathcal H_S^k(f_{U})\ar[r] & F^{p-1}\mathcal H^k_S(f_{U})\otimes_{O_S}\Omega_S}
\end{equation}
and we denote $\bar\nabla$ the induced connexion on graded pieces.
Let $Z\in\mathcal Z^p(U,n)^{pr/X,\hom S}_{\partial=0}$ a closed Bloch cycle on $U$, such that its closure 
$\bar Z\in\mathcal Z^{p}(X\times\mathbb\square^n)$ in $X\times\mathbb\square^n$
is a Bloch cycle which intersect all the fibers of $f$ properly and assume $[\Omega_{Z|U_s}]=[\Omega_Z]_{|U_s}]=0\in H^{2p-n}(U_s,\mathbb C)$.
for all $s\in S$. Then the current $R_Z$ induces by restriction on each fiber a function
\begin{equation}
\nu_Z:s\in S\mapsto [R'_{Z_s}]=ev_{X_s}[R_{Z_s}]\in J^{p,2p-n-1}(U_s)
\end{equation}
In theorem \ref{normalfunction}, we prove using 
the duality and the $E_1$ degenerescence  of $(f_*\mathcal A^{\bullet}_X(\nul D),F)$,
the following generalization a classical result:
$\nu_Z\in NS(f_U)\subset\Gamma(S,J^{p,2p-n-1}(f_U))$ is a normal function, that is is holomorphic and horizontal.
Here 
\begin{equation*}
J^{p,2p-n-1}(f_U)=\mathcal H_S^{2p-n-1}(f_U)/(F^p\mathcal H_S^{2p-n-1}(f_U)\oplus H^{2p-n-1}_{\mathbb Z}(f_U))
\end{equation*}
is the relative intermediate jacobian.
As a normal function $\nu_Z$ has an infinitesimal invariant
$\delta\nu_Z\in\Gamma(S,\mathcal H_S^{p-1,p-n}(f_U)/\Im(\bar\nabla))$ (c.f. the end of the subsection 3.3).
On the other hand the class $[\Omega_Z]_{|U_s}=0\in H^{2p-n}(U_s,\mathbb C)$ of the current $\Omega_Z$ restrict
to zero on the fiber by hypothesis leading to a class $[\Omega_Z]\in\Gamma(S,L^1R^{p-n}f_*\Omega^p_X(\log D))$,
which has an infinitesimal invariant :
\begin{equation*}
\delta[\Omega_Z]=\bar{r^{\vee}}^{-1}(\psi^2_L([\Omega_Z]/L^2))=\bar{r^{\vee}}^{-1}([\Omega_Z]/L^2)\in\Gamma(S,\mathcal H^{p-1,p-n}(f_U)\otimes_{O_S}\Omega_S/Im(\bar\nabla))
\end{equation*}
where, c.f. subsection 3.3, 3.4 and 3.5
\begin{itemize}
\item $\psi^2_L:\Gr_L^1R^{p-n}f_*\Omega^p_X(\log D)=E_{\infty}^{1,p-n}\hookrightarrow R^{p-n}f_*(\Gr_L^1\Omega^p_X(\log D))=E_1^{1,p-n}$
is the inclusion of sheaves on $S^{an}$ induced by the spectral sequence associated to the complex $(\Omega^p_X(\log D),L)$ :
for degree  reason no arrow $d_r$, $r\geq 2$ can lead to $E_r^{1,p-n}$. 
\item $\bar{r^{\vee}}:\Omega_S\otimes\mathcal H_S^{p-1,p-n}(f_U)/\Im(\bar\nabla)\to R^{p-n}f_*(\Gr_L^1\Omega^p_X(\log D))$ 
is the isomorphism induced by
\begin{itemize}
\item the morphism of sheaves on $S^{an}$
$r^{\vee}:R^{p-n}f_*\Gr_L^1\Omega^p_X(\log D)\to R^{p-n}f_*(\Omega^p_X(\log D)/L^2)$  
(induced in relative cohomoloy by the morphism of sheaves on $X^{an}$ $r^{\vee}:\Gr_L^1\Omega^p(\log D)\to\Omega^p_X(\log D)/L^2$),
\item  the isomorphism of sheaves on $S^{an}$ 
$\phi^{1,p}:R^{p-n}f_*\Gr_L^1\Omega^p_X(\log D)\xrightarrow{\sim}\Omega_S\otimes\mathcal H_S^{p-1,p-n}(f_U)$
(induced in the $f$ direct image cohomology by the isomorphism of complexes of sheaves on $X^{an}$
$\phi^{1,p,\bullet}:\Gr^1_L\mathcal A^{p,\bullet}(\log D)_X\xrightarrow{\sim}\mathcal A^{p-1,\bullet}_{X/S}(\log D)\otimes\Omega_S$)
\end{itemize}
\end{itemize}
In theorem \ref{main} (c.f. subection 3.5), we prove using 
the duality and the $E_1$ degenerescence  of $(f_*\mathcal A^{\bullet}_{X/S}(\nul D),F)$,
the following generalization a result of Voisin (\cite[theorem 19.14]{Voisin}), which is one of the main result of this paper:
\begin{thm}
Let $Z=\sum_i n_iZ_i\in\mathcal Z^p(U,n)_{\partial=0}^{pr/X\hom/S}$ such that $\pi_X(\bar Z_i)\subset X$ 
is a local complete intersection for all $i$.
Then $\delta\nu_Z=\delta[\Omega_Z]\in\Gamma(S,\Omega_S\otimes\mathcal H_S^{p-1,p-n}(f_U)/\Im(\bar\nabla))$.
\end{thm}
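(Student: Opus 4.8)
The plan is to reduce $\delta\nu_Z$ to an explicit computation with the currential representative $R_Z$ of the Abel--Jacobi value, to recognise the Leray-graded symbol that appears as being $\delta[\Omega_Z]$, and then to show that the remaining ``topological'' term $T_Z$ contributes nothing to the relevant quotient.

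First I would work locally on $S$: fix $s_0\in S$, a coordinate polydisc $S'\ni s_0$, a holomorphic vector field $v$ on $S'$ and a $C^\infty$ lift $\tilde v$ of $v$ on $f^{-1}(S')$. Through the isomorphisms $\phi^{r,\bullet,\bullet}$ of the commutative diagram preceding the statement and Cartan's formula $\mathcal L_{\tilde v}=d\iota_{\tilde v}+\iota_{\tilde v}d$, the Gauss--Manin connection $\nabla_v$ on $\mathcal H_S^{\bullet}(f_U)$ and on $\mathcal H_S^{\bullet}(f_{X,D})$ is computed by $\iota_{\tilde v}$ followed by the de Rham differential, compatibly with the Leray filtration $L$ (contraction against $\tilde v$ realising $\Gr_L^{r}\to\Gr_L^{r-1}\otimes\Omega_S$). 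By the $E_1$-degeneracy of $(f_*\mathcal A_X^{\bullet}(\nul D),F)$ and the Poincar\'e duality pairing $<\cdot,\cdot>_{ev_f}$, the normal function $\nu_Z\in\Gamma(S,J^{p,2p-n-1}(f_U))$ is represented, modulo the period lattice, by the $\log D$ current $R_Z\in\Gamma(X,f_*\mathcal D_X^{2p-n-1}(\log D))$ (more precisely, by its restriction to the fibers of $f$): on a relative $\nul D$ form $\eta\in\Gamma(X,F^{d_X-p+1}f_*\mathcal A_{X/S}^{2d_X-2p+n+1}(\nul D))$ it takes the value $f_*(R_Z\wedge\eta)$. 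Recall the Deligne--cone relation $dR_Z=\Omega_Z-T_Z$ in $f_*\mathcal D_X^{\bullet}(\log D)$, with $\Omega_Z\in\Gamma(X,F^pf_*\mathcal D_X^{2p-n}(\log D))$ $d$-closed and $[\Omega_Z]_{|X_s}=0$ for all $s$, and $T_Z$ the ($\log D$-regularised) current of integration over a $C^\infty$ chain realising $\bar Z$. Throughout I would pass between the currential, the $C^\infty$ and the holomorphic de Rham models by the bifiltered quasi-isomorphisms of that diagram, which are all compatible with $(F,L)$.

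Next I would compute the Gauss--Manin derivative. Since $[\Omega_Z]_{|X_s}=0$ and $[T_Z]_{|X_s}=0$, over $S'$ one writes $\Omega_Z-T_Z=dR_Z$ and corrects $R_Z$ by a relative primitive so as to obtain, in the currential model $f_*\mathcal D_{X/S}^{\bullet}(\log D)$ of $Rf_{U*}\mathbb C\otimes O_S$, a local lift of $\nu_Z$ with the expected $F^p\oplus H_{\mathbb Z}$ ambiguity. Applying $\nabla_v$ through the description of the connection by the $\phi^{r,\bullet,\bullet}$ and Cartan's formula, one finds that the Hodge type $(p-1,p-n)$ component of $\nabla_v\nu_Z$ is represented, modulo $\Im(\bar\nabla)$, by $\iota_{\tilde v}\Omega_Z-\iota_{\tilde v}T_Z$ (the correction primitive contributing only Leray level $\geq 2$ terms once the dependence on $s$ is recorded, and the $d\iota_{\tilde v}(\cdot)$ terms being coboundaries for the class). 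Because $[\Omega_Z]_{|X_s}=0$ one has $[\Omega_Z]\in\Gamma(S,L^1R^{p-n}f_*\Omega_X^p(\log D))$, and its level-one Leray symbol $[\Omega_Z]/L^2$ is computed precisely by $\iota_{\tilde v}\Omega_Z$ through the isomorphism $\phi^{1,p}$ that enters the definition of $\bar{r^{\vee}}$; as $\Omega_Z\in F^p$, this symbol lies in $\Omega_S\otimes\mathcal H_S^{p-1,p-n}(f_U)$. Hence the contribution of $\iota_{\tilde v}\Omega_Z$ to $\delta\nu_Z$, read off in $\Gamma(S,\Omega_S\otimes\mathcal H_S^{p-1,p-n}(f_U)/\Im(\bar\nabla))$, is exactly $\bar{r^{\vee}}^{-1}(\psi^2_L([\Omega_Z]/L^2))=\delta[\Omega_Z]$. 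The theorem thus reduces to the vanishing, in this quotient, of the contribution of the topological term $\iota_{\tilde v}T_Z$.

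This last vanishing is the main obstacle, and it is exactly here that the local complete intersection hypothesis on $\pi_X(\bar Z_i)$ enters --- just as in Voisin's proof of \cite[theorem 19.14]{Voisin} the hypothesis is what makes a Gysin/residue description of the cycle class available. The current $T_Z$ is integration over a real chain supported, in the limit, on $\bigcup_i\pi_X(\bar Z_i)$; since each $\pi_X(\bar Z_i)$ is l.c.i.\ in $X$, I would resolve $O_{\pi_X(\bar Z_i)}$ by a Koszul complex on a neighbourhood in $X$ and use it to obtain an explicit local model, compatible with $F$ and $L$, for the fundamental class of $\pi_X(\bar Z_i)$ and for $T_Z$ near it. A residue (adjunction) computation then shows that the image of $\iota_{\tilde v}T_Z$ in $\Omega_S\otimes\mathcal H_S^{p-1,p-n}(f_U)$ is $\bar\nabla$-exact: the infinitesimal deformation in the family of the fundamental class of an l.c.i.\ cycle is, modulo $\Im(\bar\nabla)$, carried entirely by the holomorphic symbol $\Omega_Z$, the real chain contributing only to the integral lattice $H^{p-n}f_*(\cdots)_{\mathbb Z}$, which is $0$ in the relative intermediate Jacobian. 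This step uses once more the $E_1$-degeneracy of $(f_*\mathcal A_{X/S}^{\bullet}(\nul D),F)$ to identify the relevant subquotient of the current complex with $\mathcal H_S^{p-1,p-n}(f_U)$, and the duality $<\cdot,\cdot>_{ev_f}$ to phrase the vanishing as a statement about the pairing of the real-chain current $T_Z$ with holomorphic $\nul D$ forms of Hodge type $(d_X-p+1,\cdot)$, whose off-lattice component vanishes for type reasons. Combining the two steps, $\delta\nu_Z$ and $\delta[\Omega_Z]$ admit the same representative in $\Gamma(S,\Omega_S\otimes\mathcal H_S^{p-1,p-n}(f_U)/\Im(\bar\nabla))$, hence are equal.
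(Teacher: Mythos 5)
Your reduction has a genuine gap at its central step: the claimed vanishing, modulo $\Im(\bar\nabla)$, of the contribution of the topological term $\iota_{\tilde v}T_Z$. The justification offered conflates two different quotients: the infinitesimal invariant lives in $\Gamma(S,\Omega_S\otimes\mathcal H_S^{p-1,p-n}(f_U)/\Im(\bar\nabla))$, where no integral lattice has been divided out; integral classes are harmless in computing $\delta\nu$ only because a locally constant ($\nabla$-flat) lift has zero Gauss--Manin derivative, not because they ``are $0$ in the relative intermediate Jacobian''. To dispose of $\iota_{\tilde v}T_Z$ you would therefore have to exhibit its class either as the derivative of a flat section or as an element of $\Im(\bar\nabla)$, and neither is delivered by the sketched Koszul/residue argument; likewise the final ``type reasons'' claim fails, since $T_Z$ is integration over a real chain and has no pure Hodge type, so its pairing with $\nul D$ forms of type $(d_X-p+1,\cdot)$ does not vanish for type reasons. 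In fact the topological datum cannot simply drop out: in the paper's computation the derivative of the local lift is evaluated by differentiating $s\mapsto\int_{\Gamma_{\bar Z_s}}\pi_X^*\hat\eta\wedge\pi^*_{(\mathbb P^1)^n}\Omega_{\square^n}$ and applying Stokes, and it is precisely the resulting boundary term $\sum_i n_i\int_{\bar Z_{is}}\pi_X^*\alpha^{d-p,d-p+n}\wedge\pi^*_{(\mathbb P^1)^n}\Omega_{\square^n}$ (where $\iota(\tilde u)d\hat\eta=\beta+d\alpha$ comes from the $E_1$-degeneracy of $(f_*\mathcal A^{\bullet}_{X/S}(\nul D),F)$) that carries the whole invariant and is then matched with $\delta[\Omega_Z]$.

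You also misplace the role of the l.c.i.\ hypothesis. In the paper (as in Voisin's theorem 19.14) it is not used to kill the topological term; it is used in lemma \ref{diemterm} to make sense of, and compute, the pairing $<[\Omega_Z/L^2](s),\gamma(s)>_{f_*ev_X(s)}$ as $\sum_i n_i\int_{\bar Z_{is}}\pi_X^*\tilde\gamma(s)^{N_i}\wedge\pi^*_{(\mathbb P^1)^n}\Omega_{\square^n}$, i.e.\ to justify restricting forms to the possibly singular supports $\pi_X(\bar Z_i)$. Note also that the paper's proof is dual in structure to yours: by proposition \ref{manin}(ii) one tests both sides against $^t\bar\nabla$-horizontal sections $\mu\otimes u$ of $\mathcal H_S^{d-p+1,d-p+n}(f_{X,D})\otimes T_S$, lifts $\mu$ to a $d_{X/S}$-closed form $\hat\eta$ in $F^{d-p+1}$ by $E_1$-degeneracy, and produces an explicit closed test form $\xi=\bigl(\hat\eta^{d-p+1,d-p+n}\wedge f^*\iota(u)\kappa+\alpha^{d-p,d-p+n}\wedge f^*\kappa\bigr)_{|X_s}$ with $r(s)[\xi]=\mu(s)\otimes u(s)$; the equality of the two pairings then follows from a type argument applied to $\Omega_{Z_s}$, which is of pure type $(p,p-n)$, not to $T_Z$. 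As written, your argument does not establish the theorem.
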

   
In the last section (section 4), we give two results on the relative higher Abel Jacobi map for
families of ample open hypersurface section of high degree of smooth projective variety $Y\in\PSmVar(\mathbb C)$. 
In subsection 4.1, we give (c.f.theorem \ref{cormain}) the following application of theorem \ref{main}.
Let $Y\in\PSmVar(\mathbb C)$ together with an embedding $Y\subset(\mathbb P^1)^n$. 
Consider the commutative diagram \ref{HF} of families of hypersurface sections of degre $d$ and $e$, whose squares are cartesians : 
\begin{equation}
\xymatrix{
f_D:\mathcal D=\mathcal X\cap\mathcal Z\ar@{^{(}->}[r]^{k_{\mathcal D}}\ar@{^{(}->}[d]^{i_{\mathcal D}} & \mathcal Z\ar@{^{(}->}[d]\ar[rd] \\
f:\mathcal X\ar@{^{(}->}[r]^{i_{\mathcal X}} &  Y\times S_d\times S_e\ar[r]^{p_{d,e}} & S_d\times S_e \\
f_U:\mathcal U=\mathcal X\backslash\mathcal D\ar@{^{(}->}[r]^{i_{\mathcal U}}\ar@{^{(}->}[u]^{j_{\mathcal U}} &  
(Y\times S_d\times S_e)\backslash\mathcal Z\ar@{^{(}->}[u]\ar[ru]}
\end{equation}
and denote by $p_Y:Y\times S_d\times S_e\to Y$ the other projection.
Note that $\mathcal X,\mathcal Z,\mathcal D\in\PSmVar(\mathbb C)$, since 
$p_{Y|\mathcal X}:\mathcal X\to Y$, $p_{Y|\mathcal Z}:\mathcal Z\to Y$, $p_{Y|\mathcal D}:\mathcal D\to Y$
are projective bundles and $Y$ is smooth.
For $0\in S_e$, consider the pullback of this diagram  :
\begin{equation}
\xymatrix{
f^o_D:D=X\cap (Z_0\times S_d)\ar@{^{(}->}[r]^{k_D}\ar@{^{(}->}[d]^{i_D} & Z_0\times S_d\ar@{^{(}->}[d]\ar[rd] \\
f^o:X=\mathcal X_{S_d\times 0}\ar@{^{(}->}[r]^{i_X} &  Y\times S_d\times 0\ar[r]^{p^0_d} & S_d \\
f^o_U:U=X\backslash D\ar@{^{(}->}[r]^{i_U}\ar@{^{(}->}[u]^{j} &  
(Y\backslash Z_0\times S_d)\ar@{^{(}->}[u]\ar[ru]}
\end{equation}
where $Z_0=p^0_Y(\mathcal Z_{S_d\times 0})\subset Y$, $p^0_Y=p_{Y|Y\times S_d\times 0}:Y\times S_d\times 0\to Y$ being the projection,
so that we have $\mathcal Z_{S_d\times 0}=Z_0\times S_d$.
Then $Y\backslash Z_0$ is an affine variety. 
We have $H^{d_Y}(Y\backslash Z_0,\mathbb C)^0=H^{d_Y}(Y\backslash Z_0,\mathbb C)$ 
(see subsection 4.1 for the definition of the primitive cohomology of a smooth quasi-projective variety $V$ as the kernel
of the action of $\Delta(V_H)\subset V\times V$ where $V_H\subset V$ is an ample hypersurface section). 
For a morphism $T\to S_d$, we consider the pullback of the diagram (\ref{HF0}) :
\begin{equation}
\xymatrix{
f^{T}_D:D_T=X_T\cap (Z_0\times T)\ar@{^{(}->}[r]^{k_{D_T}}\ar@{^{(}->}[d]^{i_{D_T}} & Z_0\times T\ar@{^{(}->}[d]\ar[rd] \\
f^{T}:X_T\ar@{^{(}->}[r]^{i_{X_T}} &  Y\times T\times 0\ar[r]^{p^T} & T \\
f^{T}_U:U_T=X_T\backslash D_T\ar@{^{(}->}[r]^{i_{U_T}}\ar@{^{(}->}[u]^{j_{U_T}} &  
(Y\backslash Z_0)\times T\ar@{^{(}->}[u]\ar[ru],}
\end{equation}
where $X_T=X\times_{S_d} T$, $U_T=U\times_{S_d} T$, $D_T=D\times_{S_d} T$. 
We then have a version of Nori connectness theorem for families of ample open hypersurfaces of $Y\in\PSmVar(\mathbb C)$
(c.f. theorem \ref{nori}).
\begin{thm}
Assume $d_Y\geq 4$
Let $0\in S_e$ sufficiently general and $S\subset S_d$ the open subset over which
such that the morphisms $f^0:X\to S_d$ and $f^0_D:D\to S_d$ are smooth projective.
Then, if $d,e>>0$, for all smooth morphism $T\to S_d$, 
\begin{itemize}
\item[(i)] $i_{X_T}^*:H^{d_Y-p}(Y\times T,\Omega^p_{Y\times T}(\log(Z_0\times T)))\xrightarrow{\sim} H^{d_Y-p}(X_T,\Omega_{X_T}^p(\log D_T))$
is an isomorphism,
\item[(ii)] $i_{U_T}^*:H^{d_Y}((Y\backslash Z_0)\times T,\mathbb C)\xrightarrow{\sim} H^{d_Y}(U_T,\mathbb C)$
is an isomorphism of mixed hodge structure.
\end{itemize}
\end{thm}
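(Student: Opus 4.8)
The plan is to reduce statement (ii) to statement (i) by Hodge theory, and then to establish (i) by adapting Nori's connectivity argument to logarithmic differential forms. For the reduction, note first that for $0\in S_e$ sufficiently general $Z_0\subset Y$ is smooth (Bertini), so that $Z_0\times T\subset Y\times T$ is a smooth divisor with complement $(Y\backslash Z_0)\times T$, and the logarithmic de Rham complex $\Omega^{\bullet}_{Y\times T}(\log(Z_0\times T))$ computes $H^{\bullet}((Y\backslash Z_0)\times T,\mathbb C)$ with its Deligne mixed Hodge structure; likewise $\Omega^{\bullet}_{X_T}(\log D_T)$ computes $H^{\bullet}(U_T,\mathbb C)$ (we may assume $T\to S_d$ factors through the smooth locus $S$), and $i_{U_T}^*$ is induced by the evident morphism between these complexes. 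I would then invoke the $E_1$-degeneracy of the associated Hodge--de Rham spectral sequences (Deligne; compare the bifiltered set-up of Section~3), which gives $\Gr_F^pH^{d_Y}(U_T,\mathbb C)=H^{d_Y-p}(X_T,\Omega^p_{X_T}(\log D_T))$, and likewise over $(Y\backslash Z_0)\times T$, with $\Gr_F^p(i_{U_T}^*)$ in total degree $d_Y$ equal to the map $i_{X_T}^*$ appearing in (i). Thus (i), for all $p$, is exactly the assertion that $i_{U_T}^*$ is an isomorphism on $\Gr_F^{\bullet}H^{d_Y}$; since $F$ is finite this forces $i_{U_T}^*$ to be bijective on $H^{d_Y}(-,\mathbb C)$, hence on $H^{d_Y}(-,\mathbb Q)$, and a bijective morphism of mixed Hodge structures is an isomorphism of mixed Hodge structures by strictness, which gives (ii).

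To prove (i) I would use that, for $0\in S_e$ general and $d,e$ large, $X_T\hookrightarrow Y\times T$ is a smooth divisor transverse to the smooth divisor $Z_0\times T$, with trace $D_T$ (Bertini). This gives the twist exact sequence
\begin{equation*}
0\to\Omega^p_{Y\times T}(\log(Z_0\times T))(-X_T)\to\Omega^p_{Y\times T}(\log(Z_0\times T))\to\Omega^p_{Y\times T}(\log(Z_0\times T))_{|X_T}\to 0
\end{equation*}
together with the logarithmic conormal exact sequence
\begin{equation*}
0\to\Omega^{p-1}_{X_T}(\log D_T)(-X_T)\to\Omega^p_{Y\times T}(\log(Z_0\times T))_{|X_T}\to\Omega^p_{X_T}(\log D_T)\to 0 .
\end{equation*}
Composing the induced maps in cohomology exhibits $i_{X_T}^*$ on $H^{d_Y-p}$ as an isomorphism provided $H^{d_Y-p}$ and $H^{d_Y-p+1}$ of both $\Omega^p_{Y\times T}(\log(Z_0\times T))(-X_T)$ and $\Omega^{p-1}_{X_T}(\log D_T)(-X_T)$ vanish.

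These vanishings are the heart of the matter, and where the hypotheses $d,e\gg0$ and $d_Y\ge4$ are spent. Since $Z_0\times T$ is a product divisor, $\Omega^{\bullet}_{Y\times T}(\log(Z_0\times T))\simeq\Omega^{\bullet}_Y(\log Z_0)\boxtimes\Omega^{\bullet}_T$, and the universal hypersurface gives $O_{Y\times T}(X_T)\simeq\pi_Y^*O_Y(d)\otimes\pi_T^*g^*O_{S_d}(1)$ with $g:T\to S_d$ the given morphism. Künneth (legitimate because $Y$ is projective) then writes $H^m\big(Y\times T,\Omega^p_{Y\times T}(\log(Z_0\times T))(-X_T)\big)$ as a sum of terms $H^i\big(Y,\Omega^a_Y(\log Z_0)(-d)\big)\otimes H^j\big(T,\Omega^b_T\otimes g^*O_{S_d}(-1)\big)$ with $a+b=p$ and $i+j=m$. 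For $d$ large, Serre duality together with Serre vanishing gives $H^i\big(Y,\Omega^a_Y(\log Z_0)(-d)\big)=0$ for $i<d_Y$, so the $Y$-factor is concentrated in top degree $i=d_Y=\dim Y$; for $m=d_Y-p$ or $m=d_Y-p+1$ this confines the $T$-factor to cohomological degree $\le-p+1$, hence to $0$ for $p$ away from the extremes. The same mechanism handles the second sheaf, with the Leray spectral sequence of the projective family $f^T:X_T\to T$ replacing Künneth and with the concentration of $R^{\bullet}f^T_*\big(\Omega^{p-1}_{X_T}(\log D_T)(-X_T)\big)$ in top relative degree $d_Y-1$ (relative Serre duality, and ordinary Serre vanishing on the fibers for the boundary term once $d-e\gg0$). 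The surviving edge cases --- small $p$, where a nonzero coherent sheaf on $T$ remains --- I would treat separately, using the affineness of $Y\backslash Z_0$ and of the fibers $U_s$ to bound their cohomological dimension by $d_Y$, resp.\ $d_Y-1$, together with the slack provided by $d_Y\ge4$; the genericity of $0\in S_e$ and $e\gg0$ also enter here, ensuring $Z_0$ and the $D_t$ are smooth of the expected dimension and controlling $\Omega^{\bullet}_Y(\log Z_0)$.

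The main obstacle is the uniformity in the arbitrary smooth base $T$: since $T$ need not be projective there is no ampleness available on $Y\times T$ or on $X_T$, so a bare Serre-vanishing argument fails --- and this is precisely the difficulty that Nori's method is designed to overcome. The device of splitting off the cohomology along $Y$ (resp.\ along the fibers $X_t$) by Künneth (resp.\ Leray), and exploiting that it collapses into a single top degree once $d,e\gg0$, is what makes the bound on $d,e$ independent of $T$. The remaining technical points are the bookkeeping of the edge degrees and, for (ii), checking that the resulting isomorphism is one of mixed Hodge structures and not merely of $\mathbb C$-vector spaces, which is the strictness argument of the first paragraph. As a cross-check, one can alternatively deduce (ii) from the classical Nori connectedness theorem applied to the degree-$d$ hypersurface families $X_T\subset Y\times T$ and $D_T\subset Z_0\times T$, via the Gysin exact sequences of the corresponding pairs and the five lemma.
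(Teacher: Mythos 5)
Your reduction of (ii) to (i) contains a genuine gap. You identify $H^{d_Y-p}(X_T,\Omega^p_{X_T}(\log D_T))$ with $\Gr_F^pH^{d_Y}(U_T,\mathbb C)$ by invoking $E_1$-degeneracy, but $X_T$ and $Y\times T$ are not proper ($T$ is an arbitrary smooth $S_d$-variety, typically affine), so $(X_T,D_T)$ is not a good compactification of $U_T$: Deligne's degeneration theorem does not apply, the filtration b\^ete on $\Omega^{\bullet}_{X_T}(\log D_T)$ does not compute Deligne's Hodge filtration, and the identification fails. Concretely, for $T$ affine of positive dimension, Leray along the proper map $f^T$ gives $H^{d_Y-p}(X_T,\Omega^p_{X_T}(\log D_T))=\Gamma(T,R^{d_Y-p}f^T_*\Omega^p_{X_T}(\log D_T))$, a coherent $\Gamma(T,O_T)$-module, hence infinite dimensional over $\mathbb C$ whenever nonzero, whereas $H^{d_Y}(U_T,\mathbb C)$ is finite dimensional; so the groups in (i) are not the $F$-graded pieces of the group in (ii), and (i) cannot imply (ii) by strictness. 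The valid part of your plan is your closing ``cross-check'': classical Nori connectivity applied to the two smooth projective families $X\subset Y\times S_d$ and $D\subset Z_0\times S_d$, the Gysin sequences of the pairs, and the five lemma --- that is exactly the paper's proof of (ii), and the paper proves (i) by the same reduction, replacing the Gysin sequences by the residue exact sequences $0\to\Omega^p\to\Omega^p(\log)\to i_*\Omega^{p-1}\to 0$ on $Y\times T$ and on $X_T$; no new vanishing theorem is proved there. You should promote that remark from cross-check to the actual argument.

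The second gap is in your direct vanishing proof of (i), precisely at the point you defer. For $p\geq 2$ your K\"unneth/Serre-duality bookkeeping does yield the vanishing of $H^{d_Y-p}$ and $H^{d_Y-p+1}$ of $\Omega^p_{Y\times T}(\log(Z_0\times T))(-X_T)$, but for small $p$ these groups do not vanish: for instance for $p=0$, $m=d_Y$, the K\"unneth term $H^{d_Y}(Y,O_Y(-d))\otimes H^0(T,\cdot)$ is nonzero for $d\gg0$, being Serre-dual to $H^0(Y,K_Y(d))$. So the sufficient criterion you rely on genuinely fails there, and ``treat the edge cases separately using affineness and the slack from $d_Y\geq4$'' is not an argument; this is exactly where a sheaf-level Serre-vanishing strategy breaks down and where Nori's actual proof works with filtered relative de Rham complexes over $T$ (note also that the same Leray computation gives $H^{d_Y}(X_T,O_{X_T})=0$ for $T$ affine while $H^{d_Y}(Y\times T,O_{Y\times T})\neq0$ whenever $H^0(Y,K_Y)\neq0$, so the small-$p$ cases require genuine care; the applications later in the paper only use $2p\geq d_Y$). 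Two smaller slips: $O_{Y\times T}(X_T)\simeq\pi_Y^*O_Y(d)$ because $S_d=\Gamma(Y,O_Y(d))$ is a vector space, so there is no $g^*O_{S_d}(1)$ twist; and your treatment of $\Omega^{p-1}_{X_T}(\log D_T)(-X_T)$ invokes $d-e\gg0$, a hypothesis the theorem does not provide, and in any case needs a uniform (relative) vanishing over $S_d$ rather than fiberwise Serre vanishing.
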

Then using theorem \ref{main} and this version of Nori connectness theorem we the prove the following : 
\begin{thm}
Assume $d_Y\geq 4$.
Let $0\in S_e$ sufficiently general and $S\subset S_d$ the open subset over which
such that the morphisms $f^0:X\to S_d$ and $f^0_D:D\to S_d$ are smooth projective.
Let $Z\in\mathcal Z^{p}(Y\backslash Z_0,2p-d_Y)^{pr/Y}_{\partial=0}$ such that 
$[\Omega_Z]\neq 0\in H^{d_Y}(Y\backslash Z_0,\mathbb C)$. Then for $s\in S$ general,
$AJ_{U_s}(Z_s):=[R'_{Z_s}]\neq 0\in J^{p,d_Y-1}(U_s)$.
\end{thm}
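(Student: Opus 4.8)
The plan is to derive this non-vanishing statement as a combination of Theorem \ref{main} (the generalization of Voisin's theorem, identifying the infinitesimal invariant of the normal function with that of the Hodge class $[\Omega_Z]$) and Theorem \ref{nori} (the version of Nori connectness for families of ample open hypersurfaces). First I would set up the family: taking $Z \in \mathcal Z^{p}(Y\backslash Z_0, 2p-d_Y)^{pr/Y}_{\partial=0}$ with $[\Omega_Z]\neq 0$ in $H^{d_Y}(Y\backslash Z_0,\mathbb C)$, I restrict the closed Bloch cycle $Z$ to the fibers $U_s = X_s\backslash D_s$ of $f^0_U: U\to S$, obtaining a family of cycles $Z_s$ on the open hypersurface complements and hence the normal function $\nu_Z : s\mapsto [R'_{Z_s}]\in J^{p,d_Y-1}(U_s)$, which lies in $NS(f^0_U)$ by Theorem \ref{normalfunction}. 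Since the primitive cohomology $H^{d_Y}(Y\backslash Z_0,\mathbb C)^0 = H^{d_Y}(Y\backslash Z_0,\mathbb C)$, the cycle $Z$ is automatically primitive, and after possibly shrinking $S_d$ one arranges $\pi_X(\bar Z_i)\subset X$ to be a local complete intersection (this is where one uses that $Z$ has codimension $p$ inside the affine subset $Y\backslash Z_0$, and that intersecting with the generic hypersurface $X_s$ preserves the l.c.i. property on an open dense set of parameters), so that Theorem \ref{main} applies and gives $\delta\nu_Z = \delta[\Omega_Z]\in\Gamma(S,\Omega_S\otimes\mathcal H_S^{p-1,p-n}(f^0_U)/\Im(\bar\nabla))$ with $n = 2p-d_Y$.

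The second step is to show that $\delta[\Omega_Z]\neq 0$ for $s$ general. Here I would invoke Theorem \ref{nori}(i): for $d,e\gg 0$ and $T\to S_d$ smooth, the restriction $i_{X_T}^*: H^{d_Y-p}(Y\times T,\Omega^p_{Y\times T}(\log(Z_0\times T)))\xrightarrow{\sim} H^{d_Y-p}(X_T,\Omega_{X_T}^p(\log D_T))$ is an isomorphism. Taking $T$ a small polydisc neighborhood of a general $s\in S$ (or more precisely passing to the analytic-local statement on the base), this isomorphism of trivial-base relative cohomology groups, together with the fact that $[\Omega_Z]$ comes from the class on $Y\backslash Z_0$ which is nonzero, implies that the component of $[\Omega_Z]$ living in $\Gamma(S, L^1 R^{p-n}f_*\Omega^p_X(\log D))$ — the piece $[\Omega_Z]/L^2$ — is nonzero; indeed the Nori isomorphism forces the map on graded pieces of the Leray filtration to be injective on the relevant range, so a class pulled back from $Y\times T$ with non-trivial image in $H^{d_Y}(Y\backslash Z_0)$ cannot die modulo $L^2$ after restriction. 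Combining with the explicit description $\delta[\Omega_Z] = \bar{r^\vee}^{-1}([\Omega_Z]/L^2)$ and the fact that $\bar{r^\vee}$ is an isomorphism, one concludes $\delta[\Omega_Z]\neq 0$ at general $s$.

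The final step is a standard descent from the infinitesimal invariant to the value of the Abel--Jacobi map. If $AJ_{U_s}(Z_s) = [R'_{Z_s}] = 0$ for all $s$ in a neighborhood of a general point, then $\nu_Z$ vanishes identically on that neighborhood (it is a normal function, so its vanishing on an open set is detected on an open dense set; alternatively the Zariski density of the non-vanishing locus of $AJ$, or simply that a normal function which is zero at all points is the zero section), hence its infinitesimal invariant $\delta\nu_Z = 0$ there, contradicting $\delta\nu_Z = \delta[\Omega_Z]\neq 0$. Therefore $AJ_{U_s}(Z_s)\neq 0$ for $s\in S$ general, which is the assertion. I expect the main obstacle to be the middle step: carefully matching the Nori connectness isomorphism of Theorem \ref{nori}(i) — which is stated for the log-de Rham groups over the base — with the precise statement that the Leray-graded component $[\Omega_Z]/L^2\in \Gamma(S, \Gr^1_L R^{p-n}f_*\Omega^p_X(\log D))$ is nonzero, i.e. controlling how the primitive class on the affine variety $Y\backslash Z_0$ propagates through the spectral sequence of $(\Omega^\bullet_X(\log D), L)$ and survives restriction to a general hypersurface fiber; this requires knowing that the relevant $d_r$ differentials vanish (which the excerpt notes for degree reasons at $E_r^{1,p-n}$, $r\geq 2$) and that the class is not killed by the Nori-type vanishing of the lower Leray pieces.
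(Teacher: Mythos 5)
Your overall skeleton matches the paper's: pull the cycle back to the family, apply Theorem \ref{main} to reduce non-vanishing of $AJ_{U_s}(Z_s)$ for general $s$ to non-vanishing of $\delta[\Omega_{\tilde Z}]$, hence of the Leray piece $[\Omega_{\tilde Z}]/L^2$ (using injectivity of $\psi^2_L$ and the isomorphism $\bar{r^{\vee}}$), and finish by the fact that the zero locus of a nonzero normal function is a proper analytic (indeed algebraic) subset. But the middle step — exactly the one you flag as the main obstacle — has a genuine gap as you propose it. You assert that ``the Nori isomorphism forces the map on graded pieces of the Leray filtration to be injective on the relevant range.'' This does not follow: a filtered map which is an isomorphism on the underlying groups need not be injective on $\Gr_L$ (strictness fails in general), and Theorem \ref{nori}(i) is only an isomorphism of the total groups $H^{d_Y-p}(Y\times T,\Omega^p(\log))\to H^{d_Y-p}(X_T,\Omega^p(\log D_T))$, with no statement about the Leray graded pieces. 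Moreover, your suggestion to take $T$ a small polydisc around $s$ is not admissible: Theorem \ref{nori} requires a smooth \emph{algebraic} morphism $T\to S_d$, and over a small analytic disc the conclusion is simply false ($H^{d_Y}(U_T,\mathbb C)\simeq H^{d_Y}(U_s,\mathbb C)$ is in general much larger than $H^{d_Y}((Y\setminus Z_0)\times T,\mathbb C)$).

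What the paper actually does at this point is a contradiction argument combining two inputs you do not use. First, Lemma \ref{leraysurj}: by weak Lefschetz on the fibers ($i_{U_s}^*:H^k(Y\setminus Z_0,\mathbb C)\to H^k(U_s,\mathbb C)$ is an isomorphism for $k<d_Y-1$ and injective for $k=d_Y-1$) together with $E_1$ degeneration of the Hodge filtration, the map $i_X^*$ is \emph{surjective} on the $L^2$-subsheaves $L^2R^{d_Y-p}p^0_{d*}\Omega^p_{Y\times S}(\log(Z_0\times S))\to L^2R^{d_Y-p}f^0_*\Omega^p_{X_S}(\log D)$; this comes from Lefschetz, not from Nori. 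Second, one takes $T=S$ (algebraic, smooth over $S_d$, affine) so that global sections of these direct images are identified with $L^2H^{d_Y-p}$ of the total spaces, where Nori's isomorphism applies. Then, assuming $[\Omega_{\tilde Z}]/L^2=0$, one lifts $[\Omega_{\tilde Z}]$ to a class $\alpha\in L^2H^{d_Y-p}(Y\times S,\Omega^p_{Y\times S}(\log(Z_0\times S)))$; since also $[\Omega_{\tilde Z}]=i_X^*p_Y^{0*}[\Omega_Z]$ and $p_Y^{0*}[\Omega_Z]\notin L^2$ (because $[\Omega_Z]\neq 0$), the class $\alpha-p_Y^{0*}[\Omega_Z]$ is nonzero and killed by $i_X^*$, contradicting the injectivity part of Theorem \ref{nori}(i). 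In other words, the needed ``strictness'' of $i_X^*$ with respect to $L$ is manufactured from Nori-injectivity on the total group plus Lefschetz-surjectivity on $L^2$; without this second ingredient (or an equivalent), your deduction that the pulled-back class cannot die modulo $L^2$ is unsupported.
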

Finally, we note that this version of Nori connectness theorem implies the following (c.f. theorem \ref{ImAJUs})
which is a version of a result of Green and M\"uller-Stach \cite{GrMS} for open ample hypersurface of a smooth projective variety  :  
\begin{thm}
Assume $d_Y\geq 4$.
Let $0\in S_e$ sufficiently general and $S\subset S_d$ the open subset over which
such that the morphisms $f^0:X\to S_d$ and $f^0_D:D\to S_d$ are smooth projective.
Consider the commutative diagram
\begin{equation*}
\xymatrix{
\CH^p(Y\backslash Z_0,2p-d_Y,\mathbb Q)\ar^{i_{U_s}^*}[r]\ar^{\mathcal R^p(Y,Z_0)}[d] &
\CH^p(U_s,2p-d_Y,\mathbb Q)\ar^{\overline{\mathcal R^p(X_s,D_s)}}[d] \\
H^{d_Y}_{\mathcal D}(Y,Z_0,\mathbb Q)\ar[r] & H^{d_Y}_{\mathcal D}(X_s,D_s,\mathbb Q)/J^{p,d_Y}(Y\backslash Z_0)_{\mathbb Q}}
\end{equation*}
Then for a general point $s\in S$, $\Im(\overline{\mathcal R(X_s,D_s)})=\Im(\overline{\mathcal R(X_s,D_s)}\circ i_{U_s}^*)$. 
\end{thm}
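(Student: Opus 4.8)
The plan is to establish the non‑trivial inclusion $\Im(\overline{\mathcal R(X_s,D_s)})\subseteq\Im(\overline{\mathcal R(X_s,D_s)}\circ i_{U_s}^*)$, the reverse inclusion being formal. Fix a general $s\in S$ and $\gamma\in\CH^p(U_s,2p-d_Y,\mathbb Q)$. First I would record that $U_s=X_s\backslash D_s$, being the trace on the hypersurface $X_s$ of the affine variety $Y\backslash Z_0$, is affine of dimension $d_Y-1$, so $H^{d_Y}(U_s,\mathbb Q)=0$ by Artin vanishing; hence the Hodge summand in the exact sequence for relative Deligne cohomology vanishes, $H^{d_Y}_{\mathcal D}(X_s,D_s,\mathbb Q)=J^{p,d_Y-1}(U_s)_{\mathbb Q}$, so that $\overline{\mathcal R(X_s,D_s)}(\gamma)$ is an intermediate--Jacobian class and, on every fibre occurring below, the condition that the cohomology class of the current component restricts to zero is automatic.

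Next I would spread $\gamma$ out: by a Hilbert scheme / Chow variety argument, after shrinking $S$ and passing to a smooth base change $T\to S_d$ factoring through $S$, with $t_0\mapsto s$, one obtains a relative Bloch cycle $Z\in\mathcal Z^p(U_T,2p-d_Y)^{pr/X_T,\hom/T}_{\partial=0}$ whose closure $\bar Z$ in $X_T\times\square^n$ meets all fibres of $f^T$ and all faces properly, with $Z_{t_0}$ rationally equivalent to the pullback of $\gamma$. Using the moving lemma for higher Chow groups and Bertini --- and the fact that it is enough to treat a generating family of classes $\gamma$ --- I would further arrange that each $\pi_{X_T}(\bar Z_i)\subset X_T$ is a local complete intersection, so that Theorem \ref{main} applies to $f^T_U:U_T\to T$. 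Then $\nu_Z\in\Gamma(T,J^{p,d_Y-1}(f^T_U))$ is a normal function by Theorem \ref{normalfunction}, with $\delta\nu_Z=\delta[\Omega_Z]\in\Gamma(T,\Omega_T\otimes\mathcal H_T^{p-1,d_Y-p}(f^T_U)/\Im\bar\nabla)$ by Theorem \ref{main}.

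The key step is to feed in Nori connectivity. Since Theorem \ref{nori} holds for every smooth $T'\to S_d$, evaluating it on Zariski opens of $T$ and passing to stalks promotes part (i) to an isomorphism of coherent sheaves on $T$, $i_{X_T}^*:R^{d_Y-p}p^T_*\Omega^p_{Y\times T}(\log(Z_0\times T))\xrightarrow{\sim}R^{d_Y-p}f^T_*\Omega^p_{X_T}(\log D_T)$, compatible with the Leray filtration $L$; the left‑hand side, computed over the product $Y\times T\to T$, is $\bigoplus_{r}\Omega^r_T\otimes_{\mathbb C}H^{d_Y-p}(Y,\Omega^{p-r}_Y(\log Z_0))$. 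Hence $[\Omega_Z]\in\Gamma(T,L^1R^{d_Y-p}f^T_*\Omega^p_{X_T}(\log D_T))$, and therefore $[\Omega_Z]/L^2$ and $\delta\nu_Z=\delta[\Omega_Z]$, lies in the image of $\Omega_T\otimes H^{p-1,d_Y-p}(Y\backslash Z_0)$ under the fibrewise restriction $H^{p-1,d_Y-p}(Y\backslash Z_0)\hookrightarrow H^{p-1,d_Y-p}(U_t)$, i.e. the component of $\delta\nu_Z$ in the primitive quotient of $\mathcal H_T^{p-1,d_Y-p}(f^T_U)$ vanishes. In parallel, Theorem \ref{nori}(ii) together with weak Lefschetz for the affine pair $(Y\backslash Z_0)\times T\supset U_T$ (in degrees $d_Y-1$ and $d_Y$) shows that the constant sub‑variation $\mathcal H_{\mathrm{amb}}:=H^{d_Y-1}(Y\backslash Z_0,\mathbb Q)\otimes\mathbb Q_T$ of $R^{d_Y-1}f^T_{U*}\mathbb Q$ has primitive quotient $\mathcal Q$ with $H^0(T,\mathcal Q_{\mathbb Q})=0$ and with $H^1(T,\mathcal Q_{\mathbb Q})$ controlled through $H^{d_Y}(U_T,\mathbb Q)\cong H^{d_Y}((Y\backslash Z_0)\times T,\mathbb Q)$ --- precisely the inputs forcing a normal function for $\mathcal Q$ with vanishing infinitesimal invariant to be torsion, by the discussion at the end of subsection 3.3.

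Finally, the image $\bar\nu_Z\in\Gamma(T,J(\mathcal Q))$ of $\nu_Z$ in the primitive part has infinitesimal invariant the primitive component of $\delta\nu_Z$, which vanishes; so $\bar\nu_Z$ is torsion, and hence, modulo torsion, $\nu_Z$ comes from the constant family of intermediate Jacobians $J^{p,d_Y-1}(Y\backslash Z_0)_{\mathbb Q}\times T\to T$, i.e. from the restriction to $U_T$ of a normal function attached to a closed Bloch cycle $\gamma'\in\mathcal Z^p(Y\backslash Z_0,2p-d_Y)^{pr/Y}_{\partial=0}$ whose cohomology class in $Y$ is primitive (this last identification uses Nori (i), which says that the Hodge datum $[\Omega_Z]$ is pulled back from $(Y\backslash Z_0)\times T$). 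Evaluating at $t_0=s$ and reducing modulo $J^{p,d_Y}(Y\backslash Z_0)_{\mathbb Q}$ gives $\overline{\mathcal R(X_s,D_s)}(\gamma)=\overline{\mathcal R(X_s,D_s)}(i_{U_s}^*\gamma')$, which proves the missing inclusion and hence the theorem. The main obstacles I expect are, first, securing the local‑complete‑intersection hypothesis of Theorem \ref{main} on a generating family of higher Chow classes on $U_s$ (via a careful moving/Bertini argument), and, second, the weight‑ and Leray‑filtration bookkeeping needed to convert the bare cohomological isomorphisms of Theorem \ref{nori} into the two statements used above --- the vanishing of the primitive part of $\delta\nu_Z$, and $H^0(T,\mathcal Q_{\mathbb Q})=0$ with control of $H^1$ --- in the mixed, open‑fibre setting, together with the final identification of the residual ambient normal function with the restriction of an explicit Bloch cycle on $Y\backslash Z_0$.
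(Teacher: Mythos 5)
Your strategy is genuinely different from the paper's, and it has a gap at its central step. After reducing to the vanishing of the primitive component of $\delta\nu_Z$, you assert that a normal function for the primitive quotient $\mathcal Q$ whose infinitesimal invariant vanishes must be torsion, citing ``the discussion at the end of subsection 3.3''; but that discussion only \emph{defines} $\delta\nu$ and the class $[\nu]\in H^1(S,H^k_{\mathbb Z}(f_U))$, and no such torsion criterion is proved anywhere in the paper, nor is it a formal consequence of $H^0(T,\mathcal Q_{\mathbb Q})=0$ together with Nori's theorem. Already the implication ``$\delta\nu=0$ $\Rightarrow$ local flat lifts exist'' fails at the level of this first-order invariant: $\overline{\nabla\tilde\nu}\in\Im\bar\nabla$ does not let you correct $\tilde\nu$ by a section of $F^p\mathcal H$ to a flat lift without further Koszul-type exactness (this is exactly the extra input — symmetrizer/Jacobian-ring vanishing and monodromy irreducibility — that Green's infinitesimal-invariant proof needs in the projective case, and none of it is available here in the open, mixed setting). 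The final step is also unjustified: even granting that $\nu_Z$ is, modulo torsion, a section of the constant family $J^{p,d_Y-1}(Y\backslash Z_0)_{\mathbb Q}\times T$, nothing identifies that constant value with the regulator of an actual Bloch cycle $\gamma'$ on $Y\backslash Z_0$ (the ambient regulator is not known to hit the relevant part of the Jacobian), whereas the theorem asks for equality of images of cycle-level maps. A secondary issue is the lci hypothesis needed to invoke theorem \ref{main}, which you acknowledge but do not secure.

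For comparison, the paper's proof avoids normal functions and theorems \ref{normalfunction}--\ref{main} entirely and follows Green--M\"uller-Stach at the level of Deligne cohomology: one spreads $Z_s$ over a branched cover $h:T\to S_d$ via relative Hilbert schemes, uses theorem \ref{nori}(ii) to get surjectivity of $i_{U_T}^*$ on $H^{d_Y}_{\mathcal D}$, which forces the fibrewise regulator classes $\mathcal R(X_s,D_s)(Z_{t_i})$ at the conjugate points $t_i\in h^{-1}(s)$ to agree modulo $i_{U_s}^*J^{p,d_Y-1}(Y\backslash Z_0)$, and then a pencil $\Lambda_d\ni s$ together with the projection formula exhibits $\sum_{t_i}j_{U_s}^*Z_{t_i}$ explicitly as $i_{U_s}^*$ of a cycle pushed forward to $Y\backslash Z_0$ (up to a base-locus correction), giving $r'\,\overline{\mathcal R(X_s,D_s)}(Z_s)\in\Im(\overline{\mathcal R(X_s,D_s)}\circ i_{U_s}^*)$ after tensoring with $\mathbb Q$. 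That construction of an explicit ambient cycle is precisely what your argument is missing; if you want to pursue the infinitesimal route you would first have to prove an open-fibre analogue of Green's ``vanishing infinitesimal invariant implies torsion'' theorem, which is a substantial theorem in its own right.
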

That is if $d_Y\geq 4$, $0\in S_e$ and $s\in S\subset S_e$ are general,
the image of the primitive part of the Abel Jacobi map :
\begin{equation*}
AJ^0_{U_s}:\mathcal Z^p(U_s,2p-d_Y,\mathbb Q)^{\hom}_{\partial=0}\to J^{p,d_Y-1}(U_s)_{\mathbb Q}/i_{U_s}^*J^{p,d_Y-1}(Y\backslash Z_0)_{\mathbb Q}
\end{equation*}
is modulo torsion generated by the $AJ^0_{U_s}(Z_{|U_s})$ for $Z\in\mathcal Z^{p}(Y\backslash Z_0,2p-d_Y,\mathbb Q)_{\partial=0}$.

%%%%%%%%%%%%%%%%%%%%%%%%%%%%%%%%%%%%%%%%%%%%%%%%%%%%%%%%%%%%%%%%%%%%%%%%%%%%%%%%%%%%%%%%%
\section{Higher Abel Jacobi map for open varieties}
%%%%%%%%%%%%%%%%%%%%%%%%%%%%%%%%%%%%%%%%%%%%%%%%%%%%%%%%%%%%%%%%%%%%%%%%%%%%%%%%%%%%%%%%%

%Recall that for any $V\in\SmVar(\mathbb C)$ quasi-projective there exist $Y\in\PSmVar(\mathbb C)$ such that $Y\backslash V$
%is a normal crossing divisor with smooth components.

Let $X\in\SmVar(\mathbb C)$ and $D=\cup_{j=1}^{s}D_j\subset X$ a normal crossing divisor with smooth components $D_j$.
Let $U=X\backslash D$. 
Denote by 
\begin{itemize}
\item $j:U\hookrightarrow X$ the open inclusion,
\item $i_D:D\hookrightarrow X$ and $i_{D_j}:D_j\hookrightarrow X$, for $j\in\left\{1,\ldots,s\right\}$, the closed inclusions.
\item $\pi_X:X\times(\mathbb P^1)^n\to X$ and $\pi_{(\mathbb P^1)^n}:X\times(\mathbb P^1)^n\to(\mathbb P^1)^n$ the projections. 
%$\pi_U:U\times(\mathbb P^1)^n\to U$, $\pi_{(\mathbb P^1)^n}:U\times(\mathbb P^1)^n\to(\mathbb P^1)^n$ the projections. 
\end{itemize}

Denote by $D_{\bullet}$ the simplicial algebraic variety associated to $D$ : $D_{J}=\cap_{j\in J}D_j$ for $J\subset \left\{1,\ldots,s\right\}$
with morphisms the alternate sum of the inclusion maps $i_{D_{J'},D_J}:D_{J'}\hookrightarrow D_{J}$ for $J\subset J'$.
Let
\begin{equation*}
\xymatrix{
i_{D_{\bullet}}:D_{\bullet}\ar@{^{(}->}[r]^{a_D} & D\ar@{^{(}->}[r]^{i_D} & X},
\xymatrix{
 \; \; \; \: i_{D_J}:D_{J}\ar@{^{(}->}[r]^{a_{D_J}} & D\ar@{^{(}->}[r]^{i_D} & X}, 
\end{equation*}
be the morphism of simplicial algebraic varieties given by the inclusions $i_{D_J}:D_J\hookrightarrow X$ of the smooth varieties $D_J$ in $X$.

The adjunction morphism of complexes of sheaves on $D^{an}$ $\ad(a_D):\mathbb C_D\to Ra_{D*}a_{D}^*\mathbb C_D$ is a quasi-isomorphism
and $Ra_{D*}a_{D}^*\mathbb C_D=a_{D*}\mathcal{A}^{\bullet}_{D_{\bullet}}$.
By definition, $a_{D*}\mathcal{A}_{D_{\bullet}}^{\bullet}$ is the total complex of sheaves on $D^{an}$ associated to 
the double complex $(a_{D_J*}\mathcal{A}_{D_J}^k,d,D_r)$, 
where $D_r=\sum_{J,\card J=r}\sum_{J'\subset J,\card J=r-1}(-1)^{l}i^*_{D_J',D_J}$.  
Denote by $\omega_{|D_J}:=i_{D_J}^*\omega$.
We have the adjunction morphism of complexes of sheaves on $X^{an}$ : 
\begin{eqnarray*}
i_{D_{\bullet}}^*:\mathcal{A}_X^{\bullet}\to i_{D_{\bullet}*}\mathcal{A}_{D_{\bullet}}^{\bullet}, \; \; \;   
\omega\in\Gamma(V,\mathcal{A}_X^k)\mapsto \\
i_{D_{\bullet}}^*(\omega)=(\omega_{|D_1},\cdots,\omega_{|D_s},0,\cdots 0)
\in\Gamma(D\cap V,(\mathcal{A}_{D_{\bullet}}^{\bullet})^k):=\oplus_J \,\Gamma(D_J\cap V,\mathcal{A}_{D_J}^{k-\card J+1}).
\end{eqnarray*}

%==================================================================================
\subsection{The relative complex of differential forms for the pair $(X,D)$}
%==================================================================================

\begin{defi}
The relative complex of sheaves of holomorphic forms for the pair $(X,D)$ is
$\Omega^{\bullet}_{X,D}:=\Cone(i_{D_{\bullet}}^*:\Omega_X\to i_{D_{\bullet}*}\Omega_{D_{\bullet}})$, that is,
for $V\subset X$ an open subset,
\begin{equation*}
\Gamma(V,\Omega_{X,D}^{\bullet})=
\Cone(i_{D_{\bullet}}^*:\Gamma(V,\Omega_X^{\bullet})\to\Gamma((D\cap V)_{\bullet},\Omega_{D_{\bullet}}^{\bullet}))[-1] 
\end{equation*}
\begin{itemize}
\item $\Gamma(V,\Omega^p_{X,D})=\Gamma(V,\Omega_X^p)\oplus(\oplus_J\,\Gamma(D_J\cap V,\Omega_{D_J}^{p-\card J}))$ 
\item $\partial(\omega,\eta_J)=(\partial\omega \, , \, \omega_{|D_1\cap V}-\partial\eta_1 \, ,\, \cdots \, , \, \omega_{|D_s\cap V}-\partial\eta_s \, , \, \cdots, \, 
\eta_{2,\cdots,s|D_{1,\cdots s}\cap V}+\cdots+(-1)\eta_{1,\ldots,s-1|D_{1,\cdots s}\cap V})-\partial\eta_{1,\cdots s}$.
\end{itemize}
There is the filtration induced by the filtration b\^ete $F_b$ : 
\begin{equation*}
F_b^p\Omega^{\bullet}_{X,D}=\Omega_X^{\bullet\geq p}\oplus i_{D_{\bullet}*}\Omega_{D_{\bullet}}^{\bullet\geq p}[-1] \; \; ; \; \;
\Gr_{F_b}^p\Omega^{\bullet}_{X,D}=\Omega_X^p[-p]\oplus(\oplus_Ji_{D_J*}\Omega_{D_J}^p[-p-\card J]) \; \; ; \; \;
\end{equation*}
\end{defi}

\begin{defi}\label{defAXD}
The relative complex of sheaves of differential forms for the pair $(X,D)$ is
$\mathcal A^{\bullet}_{X,D}=\Cone(i_{D_{\bullet}}^*:\mathcal{A}_X^{\bullet}\to i_{D_{\bullet}*}\mathcal{A}_{D_{\bullet}}^{\bullet})[-1]$, that is,  
for $V\subset X$ an open subset,
\begin{equation*}
\Gamma(V,\mathcal A_{X,D}^{\bullet})=
\Cone(i_{D_{\bullet}}^*:\Gamma(V,\mathcal A_X^{\bullet})\to\Gamma((D\cap V)_{\bullet},\mathcal A_{D_{\bullet}}^{\bullet}))[-1] 
\end{equation*}
\begin{itemize}
\item $\Gamma(V,\mathcal A^k_{X,D})=\Gamma(V,\mathcal A_X^k)\oplus(\oplus_J\,\Gamma(D_J\cap V,\mathcal A_{D_J}^{k-\card J}))$ 
\item $d(\omega,\eta_J)=(d\omega \, , \, \omega_{|D_1\cap V}-d\eta_1 \, ,\, \cdots \, , \, \omega_{|D_s\cap V}-d\eta_s \, , \, \cdots, \, 
\eta_{2,\cdots,s|D_{1,\cdots s}\cap V}+\cdots+(-1)\eta_{1,\ldots,s-1|D_{1,\cdots s}\cap V})-d\eta_{1,\cdots s}$
\end{itemize}
It is a filtered complex of sheaves on $X^{an}$ by the Fr\"olicher filtration $F$ ; there is also the weight filtration $W$ with respect to the sequence 
$D_{1\ldots s},\cdots,\sqcup_{j=1}^sD_j, X$ : for $V\subset X$ an open subset,
\begin{equation*}
\Gamma(V,F^p\mathcal A^k_{X,D})=\Gamma(V,F^p\mathcal A_X^k)\oplus(\oplus_J\,\Gamma(D_J\cap V,F^p\mathcal A_{D_J}^{k-\card J})) \; \; ; \; \;
\Gamma(V,W_l\mathcal A^k_{X,D})=\oplus_{\card J\leq l}\,\Gamma(D_J\cap V,\mathcal A_{D_J}^{k-\card J}).
\end{equation*}
If $X\in\PSmVar(\mathbb C)$ is smooth projective, it is clear that 
$(\Gamma(X,\mathcal{A}^{\bullet}_{X,D}),F,W)$ is a mixed hodge complex \cite{PS} so that the spectral sequence
given by the Fr\"olicher filtration $F$ is $E^1$ degenerate. 

\end{defi}

\begin{prop}\label{AXD}
\begin{itemize}
\item [(i)] The wedge product induces an isomorphism of complexes of sheaves on $X^{an}$
\begin{equation*}
w_X:\Gr^p_{F_b}\Omega^{\bullet}_{X,D}\otimes_{O_X}(\mathcal A_X^{0,\bullet},\bar\partial)
\xrightarrow{\sim}(\mathcal A^{p,\bullet}_{X,D},\bar\partial)
\end{equation*}
\item [(ii)] The inclusion of filtered complexes of sheaves on $X^{an}$
\begin{equation*}
(\Omega^{\bullet}_{X,D},F_b)\hookrightarrow(\mathcal A_{X,D}^{\bullet},F),
\end{equation*} 
is a filtered quasi-isomorphism.
\end{itemize}
\end{prop}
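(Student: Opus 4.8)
The plan is to reduce both statements to the well-understood case of a single smooth variety, where the corresponding facts are classical, and then propagate them through the cone construction. For part (i), the key observation is that both sides are built "level by level" from the analogous objects on $X$ and on the smooth strata $D_J$. On each smooth piece the wedge product gives the standard Dolbeault-type isomorphism $\Gr^p_{F_b}\Omega^{\bullet}_V\otimes_{O_V}(\mathcal A^{0,\bullet}_V,\bar\partial)\xrightarrow{\sim}(\mathcal A^{p,\bullet}_V,\bar\partial)$, i.e. $\Omega^p_V[-p]\otimes_{O_V}\mathcal A^{0,\bullet}_V\xrightarrow{\sim}\mathcal A^{p,\bullet}_V$, which holds because $\mathcal A^{p,q}_V=\Omega^p_V\otimes_{O_V}\mathcal A^{0,q}_V$ by definition of the bicomplex of differential forms. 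So first I would spell out, using the explicit formulas in Definition~\ref{defAXD} and the description of $\Gr^p_{F_b}\Omega^{\bullet}_{X,D}=\Omega^p_X[-p]\oplus(\oplus_J i_{D_J*}\Omega^p_{D_J}[-p-\card J])$, that $w_X$ is simply the direct sum over the index $\{X\}\cup\{J\}$ of these strata-wise isomorphisms, tensored with $(\mathcal A^{0,\bullet}_X,\bar\partial)$ (note $i_{D_J*}\Omega^p_{D_J}\otimes_{O_X}\mathcal A^{0,q}_X=i_{D_J*}(\Omega^p_{D_J}\otimes_{O_{D_J}}\mathcal A^{0,q}_{D_J})$ since $\mathcal A^{0,q}_X$ restricts to $\mathcal A^{0,q}_{D_J}$ and $i_{D_J}$ is a closed immersion of smooth varieties). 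The only point requiring care is that $w_X$ is a map of \emph{complexes}: one must check it intertwines the differential $\bar\partial$ on the source (acting only on the $\mathcal A^{0,\bullet}$ factor, since $\Gr_{F_b}$ has killed the holomorphic differential and the cone differential $D_r$ of $i_{D_\bullet}^*$ lands in lower $F_b$) with $\bar\partial$ on the target. This is a direct check from the fact that $i_{D_\bullet}^*$ commutes with $\bar\partial$ and the restriction maps are $O$-linear.

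For part (ii), I would deduce the filtered quasi-isomorphism $(\Omega^{\bullet}_{X,D},F_b)\hookrightarrow(\mathcal A^{\bullet}_{X,D},F)$ from (i) by passing to graded pieces. On $\Gr$ with respect to $F_b$ on the source and $F$ on the target, the map becomes, degree by degree in $p$, the inclusion $\Omega^p_{X,D}$-graded-piece $\hookrightarrow\Gr^p_F\mathcal A^{\bullet}_{X,D}$, and by (i) the target is identified with $\Gr^p_{F_b}\Omega^{\bullet}_{X,D}\otimes_{O_X}(\mathcal A^{0,\bullet}_X,\bar\partial)$. So the claim reduces to: for each $p$, the inclusion of $\Gr^p_{F_b}\Omega^{\bullet}_{X,D}$ (a complex of locally free $O_X$-modules placed in the appropriate degrees) into its tensor product with the Dolbeault resolution $(\mathcal A^{0,\bullet}_X,\bar\partial)$ of $O_X$ is a quasi-isomorphism of sheaves on $X^{an}$. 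Since $(\mathcal A^{0,\bullet}_X,\bar\partial)$ is a resolution of $O_X$ by $O_X$-flat (indeed fine) sheaves, and $\Gr^p_{F_b}\Omega^{\bullet}_{X,D}$ is a bounded complex of locally free $O_X$-modules (each summand $\Omega^p_X$ or $i_{D_J*}\Omega^p_{D_J}$ — the pushforwards being acyclic for $-\otimes\mathcal A^{0,q}$ as noted above), tensoring with it preserves this quasi-isomorphism; equivalently one runs the standard double-complex / spectral-sequence argument. I would use the \v{C}ech-to-derived-functor style argument or simply cite the analogous classical statement for each stratum and assemble via the cone.

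The main obstacle — really the only one beyond bookkeeping — is correctly tracking the interaction of the \emph{cone differential} (the alternating sums $D_r$ and the restriction terms $\omega\mapsto\omega_{|D_j}$) with the filtrations $F_b$ and $F$, so as to be sure that on graded pieces these differentials genuinely drop out and one is left with a direct sum of strata-wise complexes with only $\bar\partial$ (resp. $\partial$) acting. Concretely, $F_b$ is the bête filtration shifted appropriately on the cone, and the restriction/simplicial differentials preserve holomorphic form-degree $p$, hence act \emph{within} a fixed $\Gr^p$; so after passing to $\Gr_{F_b}^p$ the surviving differential on $\Omega^{\bullet}_{X,D}$ is exactly the alternating-sum simplicial differential with the holomorphic $\partial$ removed, and the analogous statement holds on the $\mathcal A$ side with $F$. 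Once this is pinned down, (i) gives the compatibility and (ii) is the resulting quasi-isomorphism; I expect the write-up to be short, reducing everything to the classical Dolbeault lemma applied on $X$ and on each $D_J$ plus exactness of $i_{D_J*}$ on locally free sheaves.
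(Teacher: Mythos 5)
Your overall route is the same as the paper's: for (i) the paper checks directly that the wedge product is a chain map (the displayed computation $\bar\partial(\omega\wedge\gamma,0)=(\omega\wedge\bar\partial\gamma,\,\omega_{|D}\wedge\gamma_{|D})$) and observes that the isomorphism is clear summand by summand; for (ii) it either invokes (i) or assembles the classical filtered quasi-isomorphisms $(\Omega^{\bullet}_V,F_b)\hookrightarrow(\mathcal A^{\bullet}_V,F)$ for $V=X$ and $V=D_J$ through the cone, which is exactly your ``assemble via the cone'' alternative. One point in your write-up must be corrected, because as stated it contradicts itself and, taken literally, would break the chain-map check in (i): in the parenthetical you claim the cone differential ``lands in lower $F_b$'' so that the source differential acts only on the $\mathcal A^{0,\bullet}$ factor, and later you say that on graded pieces the cone differentials ``genuinely drop out'' leaving a direct sum of strata-wise Dolbeault complexes. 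That is false: the restriction and simplicial maps preserve the holomorphic degree $p$, hence survive in $\Gr^p_{F_b}\Omega^{\bullet}_{X,D}$ and in $\Gr_F^p\mathcal A^{\bullet}_{X,D}$ (they raise the total degree through the shift by $\card J$), and it is your own closing sentence --- that the surviving differential is exactly the alternating-sum simplicial differential with $\partial$ removed --- which is the correct description; the target $(\mathcal A^{p,\bullet}_{X,D},\bar\partial)$ likewise carries these restriction terms, as the paper's computation shows. With that correction the argument is fine: $w_X$ is a chain map precisely because restriction commutes with the wedge product and with $\bar\partial$, and your deduction of (ii) from (i), tensoring the bounded complex $\Gr^p_{F_b}\Omega^{\bullet}_{X,D}$ (using the projection formula on the $i_{D_J*}$ summands) with the Dolbeault resolution of $O_X$ and running the double-complex spectral sequence, is a valid filling-in of the paper's terse ``(ii) comes from (i)''.
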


\begin{proof}

(i): We check that it define a morphism of complex. The fact that it is an isomorphism is clear.
Assume for simplicity that $D_1=D$.
Let $V\subset X$ an open subset, $\omega\in(V,\Omega^p_X)$ and $\gamma\in\Gamma(V,\mathcal A^{0,q}_X)$.
Then 
\begin{eqnarray*}
d(\omega\wedge\gamma,0)&=&(d(\omega\wedge\gamma) \, , \, (\omega\wedge\gamma )_{|D}) 
=(\partial\omega\wedge\gamma+(-1)^{p}\omega\wedge d\gamma \, , \,
\omega_{|D}\wedge\gamma_{|D}) \\
&=&(\partial\omega\wedge\gamma+(-1)^{p}\omega\wedge d\gamma \, , \, \omega_{|D}\wedge\gamma_{|D})
\in\Gamma(V,F^p\mathcal A^{p+q+1}_{X,D})
\end{eqnarray*}
Thus taking the quotient by $F^{p+1}$, we obtain
\begin{equation*}
\bar\partial(\omega\wedge\gamma,0)=(\omega\wedge\bar\partial\gamma \, , \,\omega_{|D}\wedge\gamma_{|D})
\in\Gamma(V,\mathcal A^{p,q+1}_{X,D})
\end{equation*}

(ii): Ihis comes from (i). We can also see (ii) directly : we have the commutative diagram
\begin{equation*}
\xymatrix{
(\Omega^{\bullet}_X,F_b)\ar[r]^{i_{D_{\bullet}}^*}\ar@{^{(}->}[d] & (a_{D_{\bullet}}\Omega^{\bullet}_{D_{\bullet}},F_b)\ar@{^{(}->}[d] \\
(\mathcal A^{\bullet}_X,F)\ar[r]^{i_{D_{\bullet}}^*} & (a_{D_{\bullet}}\mathcal A^{\bullet}_{D_{\bullet}},F)}
\end{equation*}
whose column are filtered quasi-isomorphism, thus the morphism 
$(\Omega^{\bullet}_{X,D},F_b)\hookrightarrow(\mathcal A_{X,D}^{\bullet},F)$ is a filtered quasi-isomorphisms.

\end{proof}

%=================================================================================================================
\subsection{Complex of differential forms whose restriction on $D$ vanishes and log currents for the pair $(X,D)$}
%=================================================================================================================

\begin{defi}\cite{King}\cite{Jansen}

The bicomplex $(\mathcal A_X^{\bullet,\bullet}(\log D),\partial,\bar{\partial})$ of sheaf on $X^{an}$ for the pair $(X,D)$ is :
\begin{equation}
\mathcal A^{p,q}_X(\log D):=\Omega_X^p(\log D)\otimes_{O_X}\mathcal{A}_X^{0,q}
\xrightarrow{\sim}\Omega_X^p(\log D)\wedge\mathcal A_X^{0,q},
\end{equation}
together with the holomorphic and anti-holomorphic differential $\partial$ and $\bar{\partial}$ respectively.
The induced filtration on the total complex 
$(\mathcal A_X^{\bullet}(\log D),d)=\Tot(\mathcal A_X^{\bullet,\bullet}(\log D),\partial,\bar{\partial})$, 
with differential $d=\partial+\bar{\partial}$, is the Fr\"olicher Filtration.

\end{defi}

\begin{defi}\cite{King}

\begin{itemize}
\item Denote by 
\begin{equation*}
\Omega_X^p(\nul D):=\cap_{j=1}^s\ker(i_{D_j}^*\Omega^p_X\to i_{D_j*}:\Omega^p_{D_j})\subset\Omega^p_X,
\end{equation*}
the locally free sheaf of $O_X$ module on $X^{an}$ consisting of holomorphic $p$ forms whose restriction to $D$ vanishes.
\item The bicomplex $(\mathcal A_X^{\bullet,\bullet}(\nul D),\partial,\bar{\partial})$ of sheaf on $X^{an}$ for the pair $(X,D)$ is :
\begin{equation*}
\mathcal A^{p,q}_X(\nul D):=\Omega_X^p(\nul D)\otimes_{O_X}\mathcal{A}_X^{0,q}
\xrightarrow{\sim}\Omega_X^p(\nul D)\wedge\mathcal A_X^{0,q}\subset\mathcal A_X^{p,q},
\end{equation*}
together with the holomorphic and anti-holomorphic differential $\partial$ and $\bar{\partial}$ respectively.
The induced filtration on the total complex 
$(\mathcal A_X^{\bullet}(\nul D),d)=\Tot(\mathcal A_X^{\bullet,\bullet}(\nul D),\partial,\bar{\partial})$, 
with differential $d=\partial+\bar{\partial}$, is the Fr\"olicher Filtration.
\item The bicomplex $(\mathcal A_X^{\bullet,\bullet}(\nul D_{\infty}),\partial,\bar{\partial})$ of sheaf on $X^{an}$ for the pair $(X,D)$ is :
\begin{equation*}
\mathcal A^{p,q}_X(\nul D_{\infty}):=\cap_{j=1}^s\ker(i_{D_j}^*:\mathcal A_X^{p,q}\to i_{D_j*}\mathcal A_{D_j}^{p,q})\subset\mathcal A^{p,q}_X.
\end{equation*}
together with the holomorphic and anti-holomorphic differential $\partial$ and $\bar{\partial}$ respectively.
The induced filtration on the total complex 
$(\mathcal A_X^{\bullet}(\nul D_{\infty}),d)=\Tot(\mathcal A_X^{\bullet,\bullet}(\nul D_{\infty}),\partial,\bar{\partial})$, 
with differential $d=\partial+\bar{\partial}$, is the Fr\"olicher Filtration.
\end{itemize}
By definition we have inclusion of bicomplexes
$\mathcal A^{\bullet,\bullet}_X(\nul D)\subset\mathcal A^{\bullet,\bullet}_X(\nul D_{\infty})\subset\mathcal A^{\bullet,\bullet}_X$.
Denote by 
$t_{X,D}:\mathcal{A}^{\bullet,\bullet}_X(\nul D)
\hookrightarrow\mathcal A^{\bullet,\bullet}_X(\nul D_{\infty})\hookrightarrow\mathcal{A}^{\bullet,\bullet}_X$
the inclusion of bicomplexes of sheaves on $X^{an}$. 
\end{defi}

\begin{prop}\label{ideal}
\begin{itemize}
\item[(i)]The subcomplex of sheaves on $X^{an}$
$\Omega_X^{\bullet}(\nul D)=\mathcal I_D\Omega_X^{\bullet,\bullet}(\log D)\subset\Omega_X^{\bullet,\bullet}(\log D)$ 
and the subbicomplex of sheaves on $X^{an}$ 
$\mathcal A_X^{\bullet,\bullet}(\nul D)\subset\mathcal A_X^{\bullet,\bullet}(\log D)$ 
are a graded, respectively bigraded, ideal for the wedge product.
\item[(ii)] The sheaves of $O_X$ modules $\Omega_X^{p}(\log D)$ and $\Omega_X^{p}(\nul D)$ are locally free of rank $C_{d_X}^p$.
Moreover, the wedge product $w_X$ induces an isomorphism of sheaves of $O_X$ modules
$\Omega_X^{d_X-p}(\nul D)\xrightarrow{\sim}D^{\vee}_{O_X}(\Omega_X^p(\log D))\otimes_{O_X} K_X$.
\end{itemize}
\end{prop}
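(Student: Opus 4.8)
The plan is to work entirely locally on $X^{an}$, reducing everything to a coordinate computation around a point of the normal crossing divisor $D$. First I would fix a point $x\in X$ and local holomorphic coordinates $z_1,\dots,z_{d_X}$ such that $D$ is cut out by $z_1\cdots z_r = 0$ near $x$, with $D_j = \{z_j = 0\}$ for $j\le r$ (and $x\notin D_j$ for $j>r$). In these coordinates $\Omega^\bullet_X(\log D)$ has the standard local frame given by wedges of $dz_j/z_j$ for $j\le r$ and $dz_j$ for $j>r$. The key elementary fact is the description of the ideal sheaf $\mathcal I_D\subset O_X$ (locally $(z_1\cdots z_r)$) and of $\Omega^p_X(\nul D)$: a logarithmic $p$-form lies in $\Omega^p_X(\nul D)$ precisely when, for each $j\le r$, its pullback to $D_j$ vanishes. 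Unwinding what $i_{D_j}^*$ does to the local frame, one sees that the monomials $\frac{dz_{j_1}}{z_{j_1}}\wedge\cdots\wedge\frac{dz_{j_a}}{z_{j_a}}\wedge dz_{k_1}\wedge\cdots$ with $a\ge 1$ among the first $r$ indices do not restrict to zero on $D_j$ in general, and the correction is exactly multiplication by $z_1\cdots z_r$; this is the content of the identity $\Omega^p_X(\nul D)=\mathcal I_D\,\Omega^p_X(\log D)$ in part (i). The same local check, tensored with $\mathcal A^{0,q}_X$ over $O_X$, gives the bigraded statement for $\mathcal A^{\bullet,\bullet}_X(\nul D)$, and the ideal property for the wedge product is then immediate from $\mathcal I_D\cdot O_X\subset\mathcal I_D$ together with the fact that $\Omega^\bullet_X(\log D)$ is a sheaf of graded algebras under $\wedge$.

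For part (ii), local freeness of $\Omega^p_X(\log D)$ of rank $\binom{d_X}{p}$ is the standard fact, visible from the local frame above; local freeness of $\Omega^p_X(\nul D)$ of the same rank then follows from (i) since $\mathcal I_D$ is locally principal generated by a nonzerodivisor, so $\mathcal I_D\,\Omega^p_X(\log D)\cong\Omega^p_X(\log D)$ as $O_X$-modules. For the duality isomorphism, I would exhibit the pairing $\Omega^{d_X-p}_X(\nul D)\otimes_{O_X}\Omega^p_X(\log D)\to K_X=\Omega^{d_X}_X$ given by $w_X$ (the wedge product), and check it is a perfect pairing of locally free sheaves of rank $\binom{d_X}{p}=\binom{d_X}{d_X-p}$. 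Concretely, in the local frame, the wedge of a logarithmic monomial of degree $p$ with the "complementary" monomial of degree $d_X-p$ produces $\pm\frac{dz_1}{z_1}\wedge\cdots\wedge\frac{dz_r}{z_r}\wedge dz_{r+1}\wedge\cdots\wedge dz_{d_X}$, which is $(z_1\cdots z_r)^{-1}dz_1\wedge\cdots\wedge dz_{d_X}$, i.e.\ $(z_1\cdots z_r)^{-1}$ times the local generator of $K_X$; multiplying by the factor $z_1\cdots z_r$ built into the $\nul D$-side makes the complementary pairing land in $K_X$ and be unimodular. Hence $w_X$ induces $\Omega^{d_X-p}_X(\nul D)\xrightarrow{\sim}\mathcal Hom_{O_X}(\Omega^p_X(\log D),O_X)\otimes_{O_X}K_X=D^\vee_{O_X}(\Omega^p_X(\log D))\otimes_{O_X}K_X$.

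The main obstacle, such as it is, lies in bookkeeping rather than in any conceptual difficulty: one must carefully track which logarithmic monomials survive pullback to each $D_j$ and verify that "vanishing on all components $D_j$" is equivalent to divisibility of the coefficient functions by $z_1\cdots z_r$ — in particular handling mixed monomials that involve both log-poles $dz_j/z_j$ with $j\le r$ and honest differentials $dz_k$. Once the local normal form is in hand this is a finite check; the only subtlety is that the equivalence "$i_{D_j}^*\omega=0$ for all $j$" $\iff$ "$\omega\in(z_1\cdots z_r)\Omega^p_X(\log D)$" must be argued monomial by monomial, using that a log-form restricts to zero on $D_j$ iff every monomial appearing either contains the factor $dz_j/z_j$ with coefficient divisible by $z_j$, or has coefficient divisible by $z_j$ outright. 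Globalizing is automatic since all the sheaves and the map $w_X$ are defined globally and an isomorphism of $O_X$-modules can be checked on stalks.
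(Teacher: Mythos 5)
Your proposal is correct and follows essentially the same route as the paper: the duality in (ii) is established exactly as in the paper's proof, by pairing the logarithmic frame elements $\bigwedge_{i\in I}\frac{dz_i}{z_i}\wedge\bigwedge_{j\in J}dz_j$ with the complementary $\nul D$ frame elements $\prod_{i\in I}z_i\,\bigwedge_{k\in\{1,\dots,r\}\setminus I}dz_k\wedge\bigwedge_{l\in\{r+1,\dots,d_X\}\setminus J}dz_l$ to get a unimodular pairing into $K_X$. The only difference is that for part (i) and for local freeness the paper simply cites \cite{King}, whereas you supply the standard local-coordinate verification (the identity $\Omega_X^p(\nul D)=\mathcal I_D\,\Omega_X^p(\log D)$ by coefficientwise divisibility in the log frame, then invertibility of $\mathcal I_D$), which is a sound and self-contained filling-in of the same argument.
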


\begin{proof}
(i): This is proved in \cite{King}.

(ii): The fact that these sheaves are locally free is proved in \cite{King}.
The the wedge product induces an isomorphism of sheaves of $O_X$ modules on $X^{an}$ :
\begin{equation*}
w_X:\Omega_X^p(\log D)\otimes_{O_X}\Omega_X^{d_X-p}(\nul D)\xrightarrow{\sim} K_X.
\end{equation*}
Indeed, for $V\subset X$ an open subset such that $V\subset\mathbb C^{d_X}$ and $D\cap V=V(z_1\cdots z_r)$,
$w_X$ put together terms of the form 
\begin{itemize}
\item $(\bigwedge_{i\in I\subset\left\{1,\cdots r\right\}}\frac{dz_i}{z_i})\wedge(\bigwedge_{j\in J\subset\left\{r+1,\cdots d_X\right\}}dz_j)$
and
\item $\Pi_{i\in I}z_i(\bigwedge_{k\in \left\{1,\cdots r\right\}\backslash I} dz_k)\wedge(\bigwedge_{l\in\left\{r+1,\cdots d_X\right\}\backslash J}dz_l)$,
\end{itemize}
with $\card I+\card J=p$.
\end{proof}

\begin{prop}\label{XD}
\begin{itemize}
\item [(i)] The wedge product induces an isomorphism of  complexes of sheaves on $X^{an}$
\begin{equation*}
w_X:\Omega^p_X(\nul D)\otimes_{O_X}(\mathcal A_X^{0,\bullet},\bar\partial)
\xrightarrow{\sim}(\mathcal A_X^{p,\bullet}(\nul D),\bar\partial).
\end{equation*}
\item [(ii)] The inclusion of filtered complexes of sheaves on $X^{an}$
\begin{equation*}
(\Omega^{\bullet}_X(\nul D),F_b)\hookrightarrow(\mathcal A_X^{\bullet}(\nul D),F),
\end{equation*} 
is a filtered quasi-isomorphism.
\end{itemize}
\end{prop}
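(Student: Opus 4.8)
The plan is to follow the proof of Proposition \ref{AXD}, replacing the pair $(\Omega^\bullet_{X,D},\mathcal A^\bullet_{X,D})$ by $(\Omega^\bullet_X(\nul D),\mathcal A^\bullet_X(\nul D))$ and feeding in the local freeness of $\Omega^p_X(\nul D)$ proved in Proposition \ref{ideal}(ii). For (i), in each fixed bidegree $(p,q)$ the map $w_X$ is, by the very definition $\mathcal A^{p,q}_X(\nul D):=\Omega^p_X(\nul D)\otimes_{O_X}\mathcal A^{0,q}_X$, an isomorphism of sheaves of $O_X$-modules on $X^{an}$, so the only thing to check is that it is a morphism of complexes, i.e. that it intertwines $1\otimes\bar\partial$ with $\bar\partial$. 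I would verify this exactly as in the proof of Proposition \ref{AXD}(i): for local sections $\omega$ of $\Omega^p_X(\nul D)$ (holomorphic forms, hence $\bar\partial$-closed) and $\gamma$ of $\mathcal A^{0,q}_X$ one has $\bar\partial(\omega\wedge\gamma)=(-1)^p\,\omega\wedge\bar\partial\gamma$, which, modulo the Koszul sign absorbed in the convention, is $w_X(\omega\otimes\bar\partial\gamma)$; hence $w_X$ is an isomorphism of complexes of sheaves on $X^{an}$.

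For (ii), since the claim is that the inclusion is a \emph{filtered} quasi-isomorphism, it suffices to check that it induces a quasi-isomorphism on the $p$-th graded piece for every $p$. On the source, $\Gr^p_{F_b}\Omega^\bullet_X(\nul D)=\Omega^p_X(\nul D)[-p]$. On the target, the Fr\"olicher filtration $F$ on the total complex $\mathcal A^\bullet_X(\nul D)=\Tot(\mathcal A^{\bullet,\bullet}_X(\nul D),\partial,\bar\partial)$ is the filtration by holomorphic degree, and since $\partial$ raises the holomorphic degree by one while $\bar\partial$ preserves it, $\Gr^p_F\mathcal A^\bullet_X(\nul D)=(\mathcal A^{p,\bullet}_X(\nul D),\bar\partial)[-p]$, which by part (i) equals $\bigl(\Omega^p_X(\nul D)\otimes_{O_X}(\mathcal A^{0,\bullet}_X,\bar\partial)\bigr)[-p]$. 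I would then invoke two facts: $(\mathcal A^{0,\bullet}_X,\bar\partial)$ is a resolution of $O_X$ (the $\bar\partial$-Poincar\'e / Dolbeault--Grothendieck lemma), and $\Omega^p_X(\nul D)$ is locally free over $O_X$ by Proposition \ref{ideal}(ii), hence flat; since tensoring a flat module with a resolution again gives a resolution, $\Omega^p_X(\nul D)\to(\mathcal A^{p,\bullet}_X(\nul D),\bar\partial)$ is a resolution. Therefore the inclusion $\Gr^p_{F_b}\Omega^\bullet_X(\nul D)\hookrightarrow\Gr^p_F\mathcal A^\bullet_X(\nul D)$ is a quasi-isomorphism for every $p$, which is precisely (ii).

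I do not expect a genuine obstacle. The only two points needing care are the bookkeeping of Koszul signs in (i), handled exactly as in Proposition \ref{AXD}(i), and the identification $\Gr^p_F\mathcal A^\bullet_X(\nul D)\cong(\mathcal A^{p,\bullet}_X(\nul D),\bar\partial)[-p]$, which rests on the fact that every local section of $\mathcal A^{p,q}_X(\nul D)$ has holomorphic degree exactly $p$, so that the Fr\"olicher filtration is split by bidegree. The one substantial input is external, namely the local freeness (equivalently flatness over $O_X$) of $\Omega^p_X(\nul D)$ from Proposition \ref{ideal}(ii); granting that, (ii) is a formal consequence of (i) together with the Dolbeault resolution, in complete parallel with the $(X,D)$-case of Proposition \ref{AXD}.
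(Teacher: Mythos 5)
Your proposal is correct and follows essentially the same route as the paper: part (i) is the same direct check that $w_X$ intertwines $\bar\partial$ (the sign being a matter of convention), and part (ii) is the paper's argument via the Dolbeault resolution $0\to\Omega^p_X(\nul D)\to\Omega^p_X(\nul D)\otimes_{O_X}(\mathcal A^{0,\bullet}_X,\bar\partial)\simeq(\mathcal A^{p,\bullet}_X(\nul D),\bar\partial)$, with you merely making explicit the reduction to $F$-graded pieces and the use of local freeness of $\Omega^p_X(\nul D)$ from Proposition \ref{ideal}(ii).
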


\begin{proof}
(i): Tt is clear that it is a morphism of complex since for $V\subset X$ an open subset,
$\omega\in\Gamma(V,\Omega^p(\nul D))$ and $\gamma\in\Gamma(V,\mathcal A^{0,q}_X)$ we have
$\bar\partial(\omega\wedge\gamma)=\omega\wedge\bar\partial\gamma$. It is an isomorphism by definition.

(ii): This comes from (i) : we have the Dolbeau resolutions
\begin{equation}
0\to\Omega^{p}_X(\nul D)\to\Omega^p_X(\nul D)\otimes_{O_X}(\mathcal A_X^{0,\bullet},\bar\partial)
\xrightarrow{\sim}(\mathcal A_X^{p,\bullet}(\nul D),\bar\partial).
\end{equation}

\end{proof}

We now give the definition of the complex of sheaves of currents :

\begin{defi}\cite{King}
The logaritmic complex 
$(\mathcal D_X^{\bullet}(\log D),d):=D^{\vee}(\mathcal A_X^{\bullet}(\nul D),d)$ 
of sheaf on $X^{an}$ of currents for the pair $(X,D)$ is
the Verdier dual of $\mathcal A^{\bullet}_X(\nul D)$ :
\begin{equation*}
V\subset X \; \mbox{an open subset} \; 
\mapsto\Gamma(V,\mathcal D_X^{k}(\log(D))=\Gamma_c(V,\mathcal A_X^{2d_X-k}(\nul D))^\vee
\end{equation*}
It is a filtered complex by the Fr\"olicher filtration $F$. Indeed we get a bifiltered complex of
sheaves on $X^{an}$: for $V\subset X$ an open subset
\begin{eqnarray*} 
\Gamma(V,\mathcal D_X^{p,q}(\log(D))=\left\{
T\in\Gamma(V,\mathcal D_X^{p+q}(\log(D)), \; \mbox{s.t.} \; T_{|\Gamma_c(V,\mathcal A_X^{r,s}(\nul D))^\vee}=0, \;
\mbox{for} \; (r,s)\neq(d_X-p,d_X-q)\right\} \\
\xrightarrow{\sim}\Gamma_c(V,\mathcal A_X^{d_X-p,d_X-q}(\nul D))^\vee
\end{eqnarray*}
together with the holomorphic and anti-holomorphic differential $\partial$ and $\bar{\partial}$ respectively.
The induced filtration on the total complex $\mathcal D_X^{\bullet}(\log D)=\Tot(\mathcal D_X^{\bullet,\bullet}(\log D))$ with differential
$d=\partial+\bar{\partial}$ is the Fr\"olicher Filtration.

\end{defi}

\begin{itemize}
\item We have the restriction map of filtered bicomplexes of sheaves on $X^{an}$ 
$r_{X,D}=t_{X,D}^{\vee}:\mathcal{D}_X^{\bullet,\bullet}\to\mathcal{D}_X^{\bullet,\bullet}(\log D)$ 
which is the (Verdier) dual to the inclusion $t_{X,D}$ :
for $V\subset X$ an open subset, 
\begin{equation*}
T\in\Gamma(V,\mathcal D_X^{p,q})
\mapsto r_{X,D}(T):(\eta\in\Gamma_c(V,\mathcal A_X^{d_X-p,d_X-q}(\nul D)\mapsto T(\eta)) 
\end{equation*}
\item The morphism of complexes of abelian groups 
\begin{equation*}
Int:C^{\diff}_{2d_X-\bullet}(X,\mathbb Z)\to C^{\diff,BM}_{2d_X-\bullet}(X,\mathbb Z)\hookrightarrow\Gamma(X,\mathcal{D}_X^{\bullet})
\xrightarrow{r_{X,D}}\Gamma(X,\mathcal D^{\bullet}_X(\log D)) 
\end{equation*}
given by integration factors through the quotient map 
$r_{X,D}:C^{\diff}_{2d_X-\bullet}(X,\mathbb Z)\to C^{\diff}_{2d_X-\bullet}(X,D,\mathbb Z)$ 
to the embedding of complexes of abelian groups :
\begin{equation*}
C^{\diff}_{2d_X-\bullet}(X,D,\mathbb Z)\to C^{\diff,BM}_{2d_X-\bullet}(X,D,\mathbb Z)\hookrightarrow\Gamma(X,\mathcal D^{\bullet}_X(\log D)), \; \; 
\gamma\mapsto (\eta\in\Gamma(X,\mathcal A_X^{2d_X-\bullet}(\nul D))\mapsto Int(\gamma)(\eta)=\int_{\gamma}\eta)
\end{equation*}
\item We have the wedge product which is the morphism of bicomplexes of presheaves on $X^{an}$  
\begin{eqnarray}\label{wedge}
w_X:\mathcal D_X^{\bullet,\bullet}\otimes_{O_X}\mathcal A_X^{\bullet,\bullet}\to\mathcal{D}_X^{\bullet,\bullet} \; \;
\mbox{for} \; V\subset X \; \mbox{an open subset} \\
T\otimes\omega\in\Gamma(V,\mathcal D_X^{p,q})\otimes\Gamma(V,\mathcal A_X^{r,s})
\mapsto T\wedge\omega:(\eta\in\Gamma_c(V,\mathcal A_X^{d_X-r+p,d_X-s+q})\mapsto T(\omega\wedge\eta)). 
\end{eqnarray}
It restricts to the morphism of bicomplexes of presheaves on $X^{an}$
\begin{eqnarray}\label{wedgeR}
w_X:\mathcal D_X^{\bullet,\bullet}(\log D)\otimes_{O_X}\mathcal A_X^{\bullet,\bullet}(\nul D)\to\mathcal{D}_X^{\bullet,\bullet} \; \; 
\mbox{for} \; V\subset X \; \mbox{an open subset} \\
T\otimes\omega\in\Gamma(V,\mathcal D_X^{p,q}(\log D))\otimes\Gamma(V,\mathcal A_X^{r,s}(\nul D))
\mapsto T\wedge\omega:(\eta\in\Gamma_c(V,\mathcal A_X^{d_X-r+p,d_X-s+q})\mapsto T(\omega\wedge\eta)). 
\end{eqnarray}
and also to the morphism of bicomplexes of presheaves on $X^{an}$
\begin{eqnarray*}
w_X:\mathcal D_X^{\bullet,\bullet}(\log D)\otimes_{O_X}\mathcal A_X^{\bullet,\bullet}\to\mathcal{D}_X^{\bullet,\bullet}(\log D) \; \; 
\mbox{for} \; V\subset X \; \mbox{an open subset} \\
T\otimes\omega\in\Gamma(V,\mathcal D_X^{p,q}(\log D))\otimes\Gamma(V,\mathcal A_X^{r,s}(\nul D))
\mapsto T\wedge\omega:(\eta\in\Gamma_c(V,\mathcal A_X^{d_X-r+p,d_X-s+q}(\nul D))\mapsto T(\omega\wedge\eta)). 
\end{eqnarray*}
\item We have embeddings of sheaves on $X^{an}$ 
$int:\mathcal A_X^{p,q}(\log D)\hookrightarrow\mathcal D_X^{p,q}$
given by integration :
for $V\subset X$ an open subset, 
\begin{equation*}
\omega\in\Gamma(V,\mathcal A_X^{p,q}(\log D))
\mapsto (\eta\in\Gamma_c(V,\mathcal A_X^{d_X-p,d_X-q})\mapsto int(\omega)(\eta)=\int_{V}\omega\wedge\eta) 
\end{equation*}
These integrals are convergent because $D$ is a normal crossing divisor.
Note that they do not define an embedding of bicomplexes (they do not commute with the differentials).
\end{itemize}

Denote by
$\iota^{p,q}:\mathcal A_X^{p,q}(\log D)\xrightarrow{int}\mathcal D_X^{p,q}\xrightarrow{r_{X,D}}\mathcal D_X^{p,q}(\log D)$
the composition : for $V\subset X$ an open subset
\begin{equation*}
\omega\in\Gamma(V,\mathcal A_X^{p,q}(\log D))
\mapsto (\eta\in\Gamma_c(V,\mathcal A_X^{d_X-p,d_X-q}(\nul D))\mapsto \iota(\omega)(\eta)=\int_{V}\omega\wedge\eta) 
\end{equation*}
We have then the following

\begin{thm}\cite[Theorem 1.3.11]{King}\label{module}
\item The compositions $\iota^{p,q}=r_{X,D}\circ int:\mathcal A_X^p(\log D)^{p,q}\to\mathcal D_X^{p,q}(\log D)$
define an embedding of bicomplexes of sheaves on $X^{an}$
\begin{equation*}
\iota:\mathcal A_X^{\bullet,\bullet}(\log D)\hookrightarrow\mathcal D_X^{\bullet,\bullet}(\log D). 
\end{equation*}
\item The bicomplex of sheaves on $X^{an}$ $\mathcal D_X^{\bullet,\bullet}(\log D)$ 
is a bigraded $\mathcal A_X^{\bullet,\bullet}(\log D)$ module by the map of sheaves on $X^{an}$ :
\begin{equation*}
\alpha^{p,q}_{r,s}:\mathcal A_X(\log D)^{r,s}\otimes_{O_X}\mathcal D_X^{p,q}\to\mathcal D_X^{p+r,q+r}(\log D)
\end{equation*}
If $(r,s)=(1,0)$, this map is given by, for $V\subset X$ open subset such that $V\subset\mathbb C^{d_X}$ as an open subset and $D\cap V=V(z)$, 
$T\in\Gamma(V,\mathcal D_X^{p,q})$, $\alpha(\frac{dz}{z}\otimes T)=r_{X,D}(\frac{T}{z}\wedge dz)$, 
where $\frac{T}{z}\in\Gamma(V,\mathcal D^{p,q}_X)$ is a current such that $z\frac{T}{z}=T$.
\item This bigraded module structure induces, an isomorphism of sheaves on $X^{an}$ 
\begin{equation}\label{alphaiso}
\alpha^{p,q}:=\alpha^{p,0}_{0,q}:\Omega_X^p(\log D)\otimes_{O_X}\mathcal D_X^{0,q}\xrightarrow{\sim}\mathcal D_X^{p,q}(\log D).
\end{equation}
\end{thm}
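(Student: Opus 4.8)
The plan is to establish the three assertions in turn, the only substantive inputs being Proposition~\ref{ideal} --- that $\mathcal A^{\bullet,\bullet}_X(\nul D)$ is a bigraded ideal in $\mathcal A^{\bullet,\bullet}_X(\log D)$ and that the wedge product realizes a perfect pairing $w_X\colon\Omega_X^p(\log D)\otimes_{O_X}\Omega_X^{d_X-p}(\nul D)\xrightarrow{\sim}K_X$ --- and the Dolbeault presentation $\mathcal A^{a,b}_X(\nul D)=\Omega^a_X(\nul D)\otimes_{O_X}\mathcal A^{0,b}_X$ of Proposition~\ref{XD}(i). For the first assertion, the issue is that $int$ alone fails to commute with $\partial$ and $\bar\partial$: for a log form $\omega$ the currents $\partial(int(\omega))$ and $\bar\partial(int(\omega))$ differ from $int(\partial\omega)$ and $int(\bar\partial\omega)$ by residue terms of Poincar\'e--Lelong type, supported on $D$, and the point of post-composing with $r_{X,D}$ is that such terms are annihilated, a $\nul D$ test form vanishing along $D$. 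To make this precise I would fix $\omega\in\Gamma(V,\mathcal A^{p,q}_X(\log D))$ and a compactly supported $\eta\in\Gamma_c(V,\mathcal A^{d_X-p-1,d_X-q}_X(\nul D))$; by Proposition~\ref{ideal}(i) the product $\omega\wedge\eta$ lies in $\mathcal A^{d_X-1,d_X}_X(\nul D)\subset\mathcal A^{d_X-1,d_X}_X$, hence is an ordinary smooth form with compact support, so Stokes' theorem gives $\int_V\partial(\omega\wedge\eta)=\int_V d(\omega\wedge\eta)=0$; unwinding the sign conventions for the Verdier-dual differential on $\mathcal D^{\bullet,\bullet}_X(\log D)$, this is exactly $\partial\,\iota(\omega)=\iota(\partial\omega)$, and the $\bar\partial$ case is symmetric. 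Injectivity is then immediate: $\iota(\omega)=0$ forces $\int_V\omega\wedge\eta=0$ for all compactly supported $\nul D$ forms, in particular for all test forms supported in $V\setminus D$ (where the $\nul D$ condition is vacuous), so $\omega$ vanishes on the dense open $V\setminus D$ and hence, being a log form, vanishes identically.

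For the second assertion I would set $\alpha^{p,q}_{r,s}(\omega\otimes T)\colon\eta\mapsto\pm\,T(\omega\wedge\eta)$ on $\eta\in\Gamma_c(V,\mathcal A^{d_X-p-r,d_X-q-s}_X(\nul D))$, with a sign fixed by the bidegrees; this is well defined because Proposition~\ref{ideal}(i) places $\omega\wedge\eta$ in $\mathcal A^{d_X-p,d_X-q}_X(\nul D)\subset\mathcal A^{d_X-p,d_X-q}_X$, a legitimate test form for the ordinary current $T$, and it is plainly continuous and $O_X$-bilinear. The remaining points are formal: the module axiom reduces to $(\omega\wedge\omega')\wedge\eta=\omega\wedge(\omega'\wedge\eta)$ with $\omega'\wedge\eta$ still in $\mathcal A^{\bullet,\bullet}_X(\nul D)$; the map factors through $\mathcal A^{r,s}_X(\log D)\otimes_{O_X}\mathcal D^{p,q}_X(\log D)$, because if $T_1-T_2$ kills all $\nul D$ forms then $(T_1-T_2)(\omega\wedge\eta)=0$, so $\mathcal D^{\bullet,\bullet}_X(\log D)$ becomes a genuine bigraded $\mathcal A^{\bullet,\bullet}_X(\log D)$-module; and this action extends the algebra structure through $\iota$, i.e. $\alpha(\omega\otimes\iota(\omega'))=\iota(\omega\wedge\omega')$, both functionals being $\eta\mapsto\pm\int_V(\omega\wedge\omega')\wedge\eta$. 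The explicit formula in bidegree $(1,0)$ is local, so I would take $V\subset\mathbb C^{d_X}$ with $D\cap V=V(z)$: every $\nul D$ test form is $z\sigma$ with $\sigma$ smooth near $D$, so $\tfrac{dz}{z}\wedge\eta=dz\wedge\sigma$ is smooth and $\alpha(\tfrac{dz}{z}\otimes T)(\eta)=\pm T(dz\wedge\sigma)$, which one then identifies with $r_{X,D}(\tfrac{T}{z}\wedge dz)(\eta)$ for any current $\tfrac{T}{z}$ satisfying $z\cdot\tfrac{T}{z}=T$, the ambiguity in $\tfrac{T}{z}$ being a current supported on $D$ and hence killed by $r_{X,D}$.

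For the third assertion the claim is local on $X$, so I would assume $X$ a polydisc and $D=V(z_1\cdots z_k)$ and use the holomorphic frame $\{e_I\}$ of $\Omega_X^p(\log D)$ together with the $K_X$-dual frame $\{f_{I^c}\}$ of $\Omega_X^{d_X-p}(\nul D)$ from the proof of Proposition~\ref{ideal}(ii), for which $w_X(e_I\otimes f_{J^c})=\pm\delta_{IJ}\,dz_1\wedge\cdots\wedge dz_{d_X}$. By Proposition~\ref{XD}(i) the test space $\mathcal A^{d_X-p,d_X-q}_X(\nul D)$ equals $\Omega_X^{d_X-p}(\nul D)\otimes_{O_X}\mathcal A^{0,d_X-q}_X$, and feeding in $\Omega_X^{d_X-p}(\nul D)\cong\mathcal{H}om_{O_X}(\Omega_X^p(\log D),K_X)$ together with the local freeness of $\Omega^p_X(\log D)$ (Proposition~\ref{ideal}(ii)) identifies it with $\mathcal{H}om_{O_X}(\Omega_X^p(\log D),\mathcal A^{d_X,d_X-q}_X)$; passing to compactly supported sections over $V$ and dualizing converts the $\mathcal{H}om$ into a tensor factor, giving an isomorphism $\mathcal D^{p,q}_X(\log D)|_V\cong\Omega_X^p(\log D)\otimes_{O_X}\mathcal D^{0,q}_X|_V$ which is independent of the chosen frame and so glues to a global one. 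Evaluating on $e_I\otimes T$ shows this isomorphism sends $e_I\otimes T$ to the log current $\eta\mapsto\pm T(e_I\wedge\eta)=\alpha^{p,0}_{0,q}(e_I\otimes T)(\eta)$ --- the $K_X$-duality being exactly what makes $e_I\wedge(-)$ extract the correct component of a $\nul D$ test form --- so the glued isomorphism is $\alpha^{p,q}$, which is \eqref{alphaiso}.

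I expect the main obstacle to be the bookkeeping in the third assertion: checking that the isomorphism built frame-by-frame is canonical, so that it descends from $V$ to all of $X$, and that with the correct sign conventions it coincides on the nose with the concretely defined $\alpha^{p,0}_{0,q}$. By contrast the first assertion, which is the conceptual core of the construction, collapses --- once Proposition~\ref{ideal}(i) is invoked to know $\omega\wedge\eta$ is smooth with compact support --- to a single use of Stokes' theorem, and the second is a chain of formal verifications.
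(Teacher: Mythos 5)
The paper does not actually prove this statement: it is quoted from King \cite[Theorem 1.3.11]{King}, so there is no internal proof to compare against. Your reconstruction is sound and follows the natural route that the surrounding propositions make available: for the first assertion, the ideal property of Proposition \ref{ideal}(i) makes $\omega\wedge\eta$ a genuine smooth compactly supported form, so commutation of $\iota$ with $\partial$ and $\bar\partial$ reduces to Stokes, and injectivity follows by testing against forms supported in $V\setminus D$; for the second, the action $\omega\otimes T\mapsto(\eta\mapsto\pm T(\omega\wedge\eta))$ is well defined for the same reason and visibly descends to log currents since $\omega\wedge\eta$ is itself a $\nul D$ form; for the third, local freeness and the perfect pairing $w_X:\Omega_X^p(\log D)\otimes_{O_X}\Omega_X^{d_X-p}(\nul D)\xrightarrow{\sim}K_X$ of Proposition \ref{ideal}(ii), combined with the Dolbeault presentation of Proposition \ref{XD}(i), reduce the bijectivity of the globally defined $\alpha^{p,q}$ to a frame computation on a polydisc. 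This is essentially the standard argument and I see no structural gap.

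Two local imprecisions should be repaired, both within your own framework. First, in the bidegree $(1,0)$ formula you write a $\nul D$ test form as $\eta=z\sigma$ with $\sigma$ \emph{smooth} near $D$; in fact $\Omega^a_X(\nul D)=\mathcal I_D\Omega^a_X(\log D)$, so $\sigma$ is only a \emph{log} form (e.g.\ $dz=z\cdot\frac{dz}{z}$ is a $\nul D$ form not divisible by $z$ inside smooth forms). Your computation survives unchanged because $dz\wedge\sigma$ is still smooth (the $\frac{dz}{z}$ components are killed by $dz\wedge-$), but the statement as written is false. Second, the ambiguity in the choice of $\frac{T}{z}$ is a current $S$ with $zS=0$; saying it is ``supported on $D$ and hence killed by $r_{X,D}$'' is not a valid inference, since currents supported on $D$ with transverse derivatives pair nontrivially with forms vanishing to first order on $D$. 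The correct one-line argument, which you already have the ingredients for, is that $dz\wedge\eta=z\,(dz\wedge\sigma)$, so $S(dz\wedge\eta)=(zS)(dz\wedge\sigma)=0$. With these two adjustments the proposal is a complete proof of the cited theorem.
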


\begin{prop}\label{GrReso}
We have the folowing exact sequences of sheaves on $X^{an}$, they are the Dolbeau resolution of locally free sheaves of $O_X$ modules
$\Omega_X^p(\log D)$ and $\Omega^{d_X-p}(\nul D)$ respectively : 
\begin{eqnarray*}
&0&\to\Omega_X^p(\log D)\to\Gr_F^p\mathcal D_X^{p+\bullet}(\log D)=(\mathcal D_X^{p,\bullet}(\log D),\bar{\partial}) \\
%\xrightarrow{\bar{\partial}}\cdots\xrightarrow{\bar{\partial}}\mathcal D_X^{p,d_X}(\log D)\to 0 \\
&0&\to\Omega^{d_X-p}(\nul D)\to\Gr_F^{d_X-p}\mathcal A_X^{d_X-p+\bullet}(\nul D)=(\mathcal A_X^{d_X-p,\bullet}(\nul D),\bar{\partial}).
%\xrightarrow{\bar{\partial}}\cdots\xrightarrow{\bar{\partial}}\mathcal A_X^{d_X-p,d_X}(\nul D)\to 0
\end{eqnarray*}
\end{prop}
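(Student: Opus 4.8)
The plan is to deduce both exact sequences from results already established, in each case reducing to the $\bar\partial$-Poincar\'e lemma in bidegree zero tensored with a locally free module.

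First I would treat the $\nul D$ sequence. By Proposition \ref{XD}(i) the wedge product gives an isomorphism of complexes of sheaves on $X^{an}$
\begin{equation*}
w_X:\Omega_X^{d_X-p}(\nul D)\otimes_{O_X}(\mathcal A_X^{0,\bullet},\bar\partial)\xrightarrow{\sim}(\mathcal A_X^{d_X-p,\bullet}(\nul D),\bar\partial),
\end{equation*}
where the source carries the differential $\mathrm{id}\otimes\bar\partial$. Since $(\mathcal A_X^{0,\bullet},\bar\partial)$ is the Dolbeault resolution of $O_X$ (the $\bar\partial$-Poincar\'e, i.e. Dolbeault--Grothendieck, lemma) and $\Omega_X^{d_X-p}(\nul D)$ is locally free, hence flat, over $O_X$ by Proposition \ref{ideal}(ii), tensoring preserves exactness; moreover the terms $\mathcal A_X^{d_X-p,q}(\nul D)$, being modules over the soft sheaf $\mathcal A_X^{0,0}$ of $C^\infty$ functions, are fine. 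Hence $0\to\Omega_X^{d_X-p}(\nul D)\to(\mathcal A_X^{d_X-p,\bullet}(\nul D),\bar\partial)$ is a fine resolution. One then only has to match this with $\Gr_F^{d_X-p}\mathcal A_X^{d_X-p+\bullet}(\nul D)$, which is immediate: in total degree $n$ the Fr\"olicher graded piece $\Gr_F^{d_X-p}$ is $\mathcal A_X^{d_X-p,\,n-d_X+p}(\nul D)$ with induced differential $\bar\partial$. (Alternatively one may apply $\Gr_F^{d_X-p}$ to the filtered quasi-isomorphism of Proposition \ref{XD}(ii).)

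Next I would treat the $\log D$ sequence. Here I would invoke King's Theorem \ref{module}, formula \eqref{alphaiso}, which gives an isomorphism of sheaves $\alpha^{p,q}:\Omega_X^p(\log D)\otimes_{O_X}\mathcal D_X^{0,q}\xrightarrow{\sim}\mathcal D_X^{p,q}(\log D)$. Since $\Omega_X^p(\log D)\subset\mathcal A_X^{p,0}(\log D)$ consists of $\bar\partial$-closed logarithmic forms, the Leibniz rule for the bigraded module action of Theorem \ref{module} gives $\bar\partial(\omega\cdot T)=(-1)^p\,\omega\cdot\bar\partial T$, so $\alpha^{p,\bullet}$ is compatible with $\bar\partial$ up to sign and promotes to an isomorphism of complexes $\Omega_X^p(\log D)\otimes_{O_X}(\mathcal D_X^{0,\bullet},\bar\partial)\xrightarrow{\sim}(\mathcal D_X^{p,\bullet}(\log D),\bar\partial)$. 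It then remains only to show that $(\mathcal D_X^{0,\bullet},\bar\partial)$ is a resolution of $O_X$ and to tensor with the locally free sheaf $\Omega_X^p(\log D)$ (Proposition \ref{ideal}(ii)); the terms $\mathcal D_X^{p,q}(\log D)$ are again modules over $\mathcal A_X^{0,0}$, hence fine, and the degree bookkeeping identifying the resulting complex with $\Gr_F^p\mathcal D_X^{p+\bullet}(\log D)$ is as above.

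The one step with genuine content is the $\bar\partial$-Poincar\'e lemma for currents, i.e. the exactness of $(\mathcal D_X^{0,\bullet},\bar\partial)$: in bidegree zero a $\bar\partial$-closed current-function is holomorphic by hypoellipticity of $\bar\partial$ (Weyl's lemma), giving $H^0=O_X$; in positive degrees exactness follows from regularization of currents, or equivalently from the classical fact that the inclusion $int:(\mathcal A_X^{0,\bullet},\bar\partial)\hookrightarrow(\mathcal D_X^{0,\bullet},\bar\partial)$ of smooth forms into currents is a quasi-isomorphism of complexes of fine sheaves. This is the expected main obstacle, although it is entirely standard and can be cited; everything else is formal manipulation with the Fr\"olicher filtration, flatness of locally free sheaves, and the already-proved Propositions \ref{ideal} and \ref{XD} and Theorem \ref{module}.
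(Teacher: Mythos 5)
Your proposal is correct and follows essentially the same route as the paper: the $\nul D$ sequence is obtained from the Dolbeault-type resolution of Proposition \ref{XD}, and the $\log D$ sequence from the isomorphism $\alpha^{p,q}:\Omega_X^p(\log D)\otimes_{O_X}\mathcal D_X^{0,q}\xrightarrow{\sim}\mathcal D_X^{p,q}(\log D)$ of Theorem \ref{module}. The only difference is that you spell out the steps the paper leaves implicit (compatibility of $\alpha$ with $\bar\partial$, the $\bar\partial$-Poincar\'e lemma for currents, and tensoring with the locally free sheaf), which is a faithful filling-in rather than a different argument.
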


\begin{proof}
The second resolution is given by proposition \ref{XD} (ii).
The first one follows from the isomorphisms (\ref{alphaiso}) of theorem \ref{module} : 
$\Omega_X^p(\log D)\otimes_{O_X}\mathcal D_X^{0,q}\xrightarrow{\sim}\mathcal D_X^{p,q}(\log D)$.
\end{proof}

\begin{prop}\cite{King}\label{Jansen}
The embeddings of filtered complexes of sheaves on $X^{an}$, where $F$ 
is the Fr\"olicher filtration and $F_b$ the filtration b\^ete : 
\begin{equation*}
\xymatrix{(\Omega_X^{\bullet}(\log D),F_b)\ar@{^{(}->}[r] & (\mathcal A_X^{\bullet}(\log D), F)\ar@{^{(}->}[r]^{\iota} &
(\mathcal D_X^{\bullet}(\log D), F)}
\end{equation*}
are filtered quasi-isomorphism.
\end{prop}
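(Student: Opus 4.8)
The plan is to deduce everything from the associated graded complexes. All filtrations involved --- the filtration b\^ete $F_b$ on $\Omega_X^{\bullet}(\log D)$ and the Fr\"olicher filtration $F$ on $\mathcal A_X^{\bullet}(\log D)$ and on $\mathcal D_X^{\bullet}(\log D)$ --- are bounded in each degree (e.g. $F^0\mathcal A^n_X(\log D)=\mathcal A^n_X(\log D)$ while $F^p\mathcal A^n_X(\log D)=0$ for $p>\min(n,d_X)$), so a morphism of filtered complexes between them is a filtered quasi-isomorphism as soon as it induces a quasi-isomorphism on $\Gr^p$ for every $p$ (\cite{PS}). Thus it suffices to fix $p$ and compute: $\Gr_{F_b}^p\Omega_X^{\bullet}(\log D)=\Omega_X^p(\log D)[-p]$, $\Gr_F^p\mathcal A_X^{\bullet}(\log D)=(\mathcal A_X^{p,\bullet}(\log D),\bar\partial)[-p]$, and $\Gr_F^p\mathcal D_X^{\bullet}(\log D)=(\mathcal D_X^{p,\bullet}(\log D),\bar\partial)[-p]$, where the first inclusion induces the embedding of $\Omega_X^p(\log D)$ as the $\bar\partial$-closed part in bidegree $(p,0)$ and $\iota$ induces $\iota^{p,\bullet}$, which by Theorem \ref{module} is an inclusion of complexes of sheaves.

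First, $\Omega_X^p(\log D)\hookrightarrow(\mathcal A_X^{p,\bullet}(\log D),\bar\partial)$ is a quasi-isomorphism: one has $\mathcal A_X^{p,q}(\log D)=\Omega_X^p(\log D)\otimes_{O_X}\mathcal A_X^{0,q}$ and $\Omega_X^p(\log D)$ is locally free over $O_X$ by Proposition \ref{ideal}(ii), so tensoring the Dolbeault resolution $0\to O_X\to(\mathcal A_X^{0,\bullet},\bar\partial)$ by it stays exact --- this is the $\log D$-analogue of Proposition \ref{XD}, with the identical proof --- and yields a resolution $0\to\Omega_X^p(\log D)\to(\mathcal A_X^{p,\bullet}(\log D),\bar\partial)$. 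On the other side, Proposition \ref{GrReso} already asserts that $0\to\Omega_X^p(\log D)\to(\mathcal D_X^{p,\bullet}(\log D),\bar\partial)$ is a resolution (via the isomorphism $\alpha^{p,\bullet}$ of Theorem \ref{module}). Therefore, in the square
\begin{equation*}
\xymatrix{
\Omega_X^p(\log D)\ar@{^{(}->}[r]\ar@{=}[d] & (\mathcal A_X^{p,\bullet}(\log D),\bar\partial)\ar[d]^{\iota^{p,\bullet}} \\
\Omega_X^p(\log D)\ar@{^{(}->}[r] & (\mathcal D_X^{p,\bullet}(\log D),\bar\partial)}
\end{equation*}
the horizontal maps and the left vertical map are quasi-isomorphisms, so once the square is seen to commute the two-out-of-three property forces $\iota^{p,\bullet}$ to be a quasi-isomorphism as well. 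Running this over all $p$ gives the two filtered quasi-isomorphisms claimed.

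It remains to check that the square commutes --- this is the only point that is not purely formal, and I expect it to be the main (mild) obstacle. On bidegree $(p,0)$ the map $\iota^{p,0}=r_{X,D}\circ int$ sends $\omega\in\Omega_X^p(\log D)$ to the current $\eta\mapsto\int_V\omega\wedge\eta$ on test forms $\eta\in\Gamma_c(V,\mathcal A_X^{d_X-p,d_X}(\nul D))$, while the structural map $\Omega_X^p(\log D)\to\mathcal D_X^{p,0}(\log D)$ of King's resolution sends $\omega$ to $\alpha^{p,0}(\omega\otimes[1])$, where $[1]$ is the $(0,0)$-current of the constant function $1$; unwinding the bigraded module structure of Theorem \ref{module} this is again the current $\eta\mapsto\int_V\omega\wedge\eta$, and the integrals converge because $D$ is a normal crossing divisor. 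Hence the square commutes on the nose. So the substance of the argument is the identification of King's current-theoretic Dolbeault resolution of $\Omega_X^p(\log D)$ with the smooth-form one, everything else being a formal consequence of Propositions \ref{ideal}, \ref{XD}, \ref{GrReso} and Theorem \ref{module}.
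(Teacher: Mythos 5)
Your proof is correct and follows essentially the same route as the paper: the paper's own argument is exactly the reduction to $F$-graded pieces and the observation that both $(\mathcal A_X^{p,\bullet}(\log D),\bar\partial)$ and $(\mathcal D_X^{p,\bullet}(\log D),\bar\partial)$ are Dolbeault-type resolutions of the locally free sheaf $\Omega_X^p(\log D)$ (Proposition \ref{GrReso}, via Theorem \ref{module}). Your additional verification that $\iota^{p,0}$ is compatible with the augmentation of King's resolution is a welcome explicit check of a point the paper leaves implicit, but it is not a different method.
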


\begin{proof}
It comes from the Dolbeau resolution
of the sheaf $\Omega^p(\log D)$ (proposition \ref{GrReso}). 
\end{proof}

%========================================================================================================
\subsection{Degenerescence in $E_1$ of the Fr\"olicher filtration for complex of differential forms whose restriction to $D$ vanish and duality}
%========================================================================================================

Consider the following inclusion of filtered complexes of sheaves on $X^{an}$, where $F$ is the Fr\"olicher filtration,  
\begin{eqnarray}\label{tau}
\tau:(\mathcal A ^{\bullet}_X(\nul D),F)\hookrightarrow(\mathcal A^{\bullet}_{X,D},F), \\
\omega\in\Gamma(V,\mathcal A^k_X(\nul D))\mapsto\tau(\omega)=(\omega,0,\cdots,0)\in\Gamma(V,\mathcal A_{X,D}^k), \;  
\mbox{for} \; V\subset X \; \mbox{an open subset}.
\end{eqnarray}
Then, have the following :

\begin{prop}\label{IDAXD}

(i) The restriction 
$\tau:(\Omega_X^{\bullet}(\nul D),F_b)\hookrightarrow(\Omega^{\bullet}_{X,D},F_b)$
of $\tau$ is a filtered quasi-isomorphism of complexes of sheaves.
 
(ii): Consider the embeddings of filtered complex of sheaves on $X^{an}$ :
\begin{equation}\label{IDAXDiii}
\xymatrix{(j_{!}\mathcal A^{\bullet}_U,F)\ar@{^{(}->}[r]^{t^c_U} & (\mathcal A^{\bullet}_X(\nul D),F)\ar@{^{(}->}[r]^{\tau}
 & (\mathcal A^{\bullet}_{X,D},F)}
\end{equation}
Then $\tau$ is a filtered quasi-isomorphism. The inclusion $t^c_U$ is quasi-isomorphism but NOT a filtered quasi-isomorphism.

(iii) The inclusion map $\tau:(\mathcal{A}^{\bullet}_X(\nul D),F)\hookrightarrow(\mathcal{A}^{\bullet}_{X,D},F)$, 
 is a filtered quasi-isomorphism of complexes of presheaves, that is for all open subset $V\subset X$, and all
integer $p$ the restriction
\begin{equation*}
\tau:\Gamma(V,F^p\mathcal A_X^{\bullet}(\nul D))\hookrightarrow\Gamma(V,F^p\mathcal A_{X,D}^{\bullet}), 
\end{equation*}
of $\tau$ are quasi-isomorphisms. 

\end{prop}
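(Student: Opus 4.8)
The plan is to establish the three parts in the order (i), (ii), (iii): part (i) carries essentially all the content, (ii) will then follow formally from (i) together with the filtered quasi-isomorphisms of propositions \ref{XD}(ii) and \ref{AXD}(ii), and (iii) will be a softness/hypercohomology upgrade of the sheaf-level statement in (ii). The one genuinely computational point, and hence the main obstacle, will be a local exactness assertion inside (i); it is a routine Koszul/Mayer--Vietoris fact for normal crossings.

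For (i): since $F_b$ is the filtration b\^ete, it suffices to check that $\Gr^p_{F_b}\tau$ is a quasi-isomorphism of complexes of sheaves for each $p$. On the source $\Gr^p_{F_b}\Omega^{\bullet}_X(\nul D)=\Omega^p_X(\nul D)[-p]$ is concentrated in a single degree. On the target, the holomorphic differential raises the b\^ete degree and so dies on the graded piece; using the formula for $\Gr^p_{F_b}\Omega^{\bullet}_{X,D}$ recorded in the definition of $\Omega^{\bullet}_{X,D}$, what remains is, up to the shift, the ``\v{C}ech-type'' complex $\Omega^p_X\to\bigoplus_{\card J=1}i_{D_J*}\Omega^p_{D_J}\to\bigoplus_{\card J=2}i_{D_J*}\Omega^p_{D_J}\to\cdots$ with the alternating-sum-of-restrictions differential. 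Thus $\Gr^p_{F_b}\tau$ is a quasi-isomorphism if and only if the complex $\Omega^p_X(\nul D)\to\Omega^p_X\to\bigoplus_{\card J=1}i_{D_J*}\Omega^p_{D_J}\to\cdots$ is exact. Exactness at the first two terms is precisely the definition $\Omega^p_X(\nul D)=\bigcap_j\ker(i^*_{D_j}\colon\Omega^p_X\to i_{D_j*}\Omega^p_{D_j})$. Exactness at the remaining terms is local: on a polydisc with $D=V(z_1\cdots z_r)$, $D_j=V(z_j)$, writing $\Omega^p$ in the basis $\{dz_I\}_{\card I=p}$ and noting that a component $D_J$ with $J\cap I\neq\emptyset$ kills the $dz_I$-component, the $dz_I$-summand contributes the complex $(\prod_{j\in S}z_j)O\to O\to\bigoplus_{j\in S}O/(z_j)\to\bigoplus_{\{j,k\}\subset S}O/(z_j,z_k)\to\cdots$ with $S=\{1,\dots,r\}\setminus I$, which is the Mayer--Vietoris complex of $\bigcup_{j\in S}V(z_j)$; since $(z_j)_{j\in S}$ is a regular sequence (the components of $D$ meet transversally) this is exact beyond degree $0$, which I would prove by induction on $\card S$ (adjoining one $z_{j_0}$ at a time realises the complex for $S'\cup\{j_0\}$ as the shifted cone of the surjection from the complex for $S'$ onto its reduction modulo $z_{j_0}$, hence again concentrated in degree $0$).

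For (ii): by construction $\tau$ on $(\Omega^{\bullet}_X(\nul D),F_b)$ sits, via the inclusions of propositions \ref{XD}(ii) and \ref{AXD}(ii), in a commutative square with $\tau$ on $(\mathcal A^{\bullet}_X(\nul D),F)$; the two vertical inclusions are filtered quasi-isomorphisms and, by (i), so is the top $\tau$, hence so is the bottom $\tau\colon(\mathcal A^{\bullet}_X(\nul D),F)\to(\mathcal A^{\bullet}_{X,D},F)$. For $t^c_U$: the de Rham complex $\mathcal A^{\bullet}_U$ resolves $\mathbb C_U$ and $j_!$ is exact, so $j_!\mathcal A^{\bullet}_U$ resolves $j_!\mathbb C_U$; and $\mathcal A^{\bullet}_{X,D}=\Cone(i^*_{D_{\bullet}}\colon\mathcal A^{\bullet}_X\to i_{D_{\bullet}*}\mathcal A^{\bullet}_{D_{\bullet}})[-1]$ is quasi-isomorphic to $\Cone(\mathbb C_X\to i_{D*}\mathbb C_D)[-1]=j_!\mathbb C_U$ (using $a_{D*}\mathcal A^{\bullet}_{D_{\bullet}}\simeq\mathbb C_D$ and $0\to j_!\mathbb C_U\to\mathbb C_X\to i_{D*}\mathbb C_D\to 0$), hence so is $\mathcal A^{\bullet}_X(\nul D)$ via $\tau$; since $t^c_U$ realises this identification it is a quasi-isomorphism. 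It is not a filtered one: on $\Gr^p_F$ the map $t^c_U$ is, via the Dolbeault resolutions of proposition \ref{XD}(i), the one determined by the sheaf map $j_!\Omega^p_U\to\Omega^p_X(\nul D)$, which fails to be an isomorphism along $D$, since $j_!\Omega^p_U$ has zero stalks on $D$ whereas $\Omega^p_X(\nul D)$ is locally free (proposition \ref{ideal}(ii)) with nonzero stalks everywhere.

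For (iii): each sheaf $\mathcal A^{r,s}_X(\nul D)=\Omega^r_X(\nul D)\otimes_{O_X}\mathcal A^{0,s}_X$, and each $i_{D_J*}\mathcal A^{r,s}_{D_J}$, is a sheaf of modules over a soft sheaf of rings (the sheaf of $C^{\infty}$ functions on $X^{an}$, resp.\ its pushforward from $D_J^{an}$), hence is soft; as softness passes to finite direct sums, $F^p\mathcal A^{\bullet}_X(\nul D)$ and $F^p\mathcal A^{\bullet}_{X,D}$ are bounded complexes of soft sheaves on the paracompact space $X^{an}$, so for every open $V\subset X$ the complexes $\Gamma(V,F^p\mathcal A^{\bullet}_X(\nul D))$ and $\Gamma(V,F^p\mathcal A^{\bullet}_{X,D})$ compute $\mathbb H^{\bullet}(V,F^p\mathcal A^{\bullet}_X(\nul D))$ and $\mathbb H^{\bullet}(V,F^p\mathcal A^{\bullet}_{X,D})$. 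Finally, the Fr\"olicher filtration $F$ is finite on both complexes, so the filtered quasi-isomorphism $\tau$ of part (ii) implies, by d\'evissage up this finite filtration, that $F^p\tau\colon F^p\mathcal A^{\bullet}_X(\nul D)\to F^p\mathcal A^{\bullet}_{X,D}$ is a quasi-isomorphism of complexes of sheaves for every $p$; it therefore induces isomorphisms on $\mathbb H^{\bullet}(V,-)$, i.e.\ $\Gamma(V,F^p\tau)$ is a quasi-isomorphism for every open $V$ and every $p$, which is the assertion of (iii).
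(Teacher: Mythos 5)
Your proof is correct and takes essentially the same route as the paper: (i) reduces to the exactness of the sheaf sequence $0\to\Omega_X^p(\nul D)\to\Omega_X^p\to\oplus_{\card J=1}i_{D_J*}\Omega^p_{D_J}\to\cdots$ (which the paper merely asserts and you verify locally by the regular-sequence/Mayer--Vietoris induction), (ii) uses the same commutative square with the filtered quasi-isomorphisms of propositions \ref{XD}(ii) and \ref{AXD}(ii) plus the identification of both complexes with $j_!\mathbb C_U$, and (iii) is the same soft-sheaf/hypercohomology argument. The only differences are that you fill in details the paper omits: the local exactness computation, a direct stalkwise argument in place of the citation of Jansen, the explicit d\'evissage from graded pieces to $F^p$, and the verification that $t^c_U$ fails to be a filtered quasi-isomorphism.
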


\begin{proof}

(i):The sequence of complexes of sheaves on $X^{an}$
\begin{equation*}
0\to \Omega_X^p(\nul D)\xrightarrow{t_{X,D}} \Omega_X^p\xrightarrow{D_1}\bigoplus_{j=1}^s i_{D_j*}\Omega_{D_j}^p
\xrightarrow{D_2}\cdots\xrightarrow{D_s}i_{D_{1,\ldots s}*}\Omega_{D_{1\ldots s}}^p\to 0,
\end{equation*}
is exact. This prove (i). 

(ii): By (i),
\begin{equation*}
\tau:(\Omega_X^{\bullet}(\nul D),F_b)\hookrightarrow(\Omega^{\bullet}_{X,D},F_b)
\end{equation*}
of is a filtered quasi-isomorphism of complexes of sheaves. 
On the other side,
\begin{itemize}
\item the inclusion $(\Omega_X^{\bullet}(\nul D),F_b)\hookrightarrow(\mathcal A_X^{\bullet}(\nul D),F)$
is a filtrered quasi-isomorphism of complexes of sheaves by proposition \ref{XD} (ii)
\item the inclusion $(\Omega_{X,D}^{\bullet},F_b)\hookrightarrow(\mathcal A_{X,D}^{\bullet},F)$
is a filtrered quasi-isomorphism of complexes of sheaves by proposition \ref{AXD} (ii).
\end{itemize}
Hence,
\begin{equation*}
\tau:(\mathcal A_X^{\bullet}(\nul D),F)\hookrightarrow(\mathcal A_{X,D}^{\bullet},F)
\end{equation*}
is a filtered quasi-isomorphism of complexes of sheaves.
The fact that two complexes of sheaves are quasi-isomorphic to $j_!\mathbb C_U$ by \cite{Jansen}.
This prove (ii).

(ii): By (ii), the inclusion maps of complexes of sheaves on $X^{an}$ 
\begin{equation*}
\tau:F^p\mathcal A_X^{\bullet}(\nul D)\hookrightarrow F^p\mathcal A_{X,D}^{\bullet} 
\end{equation*}
are quasi-isomorphism of complexes of sheaves. Thus, for all every open subset $j_V:V\hookrightarrow X$,
$j_V^*\tau:j_V^*F^p\mathcal A_X^{\bullet}(\nul D)\to j_V^*F^p\mathcal A_{X,D}^{\bullet}$ are quasi-isomorphism of complexes of sheaves.
Hence, for every open subset $V\subset X$, the maps 
\begin{equation*}
\tau:\mathbb H^{\bullet}(V,F^p\mathcal A_X^{\bullet}(\nul D))\hookrightarrow\mathbb H^{\bullet}(V,F^p\mathcal A_{X,D}^{\bullet})
\end{equation*}
are quasi-isomorphism of complexes of $\mathbb C$-vector spaces.
The sheaves $F^p\mathcal A^k_X(\nul D)$, $F^p\mathcal A^k_X$ and $i_{D_J*}F^p\mathcal A^k_{D_J}$ are sheaves of $O^{\infty}_X$ modules on $X^{an}$, 
so are c-soft (because the existence of partition of unity) 
and thus acyclic for the global section functor on each open subset $V\subset X$ ($X^{an}$ is a denombrable union of compact subsets).
Hence, for every open subset $V\subset X$, 
\begin{equation*}
H^k\Gamma(V,F^p\mathcal A_X^{\bullet}(\nul D))=\mathbb H^k(V,F^p\mathcal A_X^{\bullet}(\nul D)) \; \; \mbox{and} \; \; 
H^k\Gamma(V,F^p\mathcal A^{\bullet}_{X,D})=\mathbb H^k(V,F^p\mathcal A^{\bullet}_{X,D}).
\end{equation*} 
This proves (iii).
\end{proof}

\begin{cor}\label{ltcutau}
The following embeddings complexes of sheaves on $X^{an}$  : 
\begin{itemize}
\item $\xymatrix{j_*\mathbb C_U\ar@{^{(}->}[r] & \Omega_X^{\bullet}(\log D)\ar@{^{(}->}[r]^l & j_*\mathcal A^{\bullet}_{U}}$, and
\item $\xymatrix{j_!\mathbb C_U\ar@{^{(}->}[r] & j_!\mathcal A^{\bullet}_{U}\ar@{^{(}->}[r]^{t^c_{U}} & \mathcal A^{\bullet}_{X}(\nul D)
\ar@{^{(}->}[r]^{\tau} & \mathcal A^{\bullet}_{X,D}}$
\end{itemize}
are quasi-isomorphisms.
\end{cor}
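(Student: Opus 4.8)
The plan is to observe that each of the two displayed chains is a composition of morphisms of complexes of sheaves on $X^{an}$, each of which has already been shown (or is classical) to be a quasi-isomorphism, and then to conclude by two-out-of-three. Nothing new needs to be proved; the corollary is bookkeeping built on Propositions \ref{XD}, \ref{AXD}, \ref{Jansen} and \ref{IDAXD}, together with the classical resolution statements recalled in \cite{Jansen}, \cite{King}.

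For the chain involving $j_!$, I would argue as follows. The Poincar\'e lemma gives a quasi-isomorphism $\mathbb C_U\hookrightarrow\mathcal A^{\bullet}_U$ of complexes of sheaves on $U^{an}$, and since $j$ is an open immersion the functor $j_!$ is exact, so $j_!\mathbb C_U\hookrightarrow j_!\mathcal A^{\bullet}_U$ is a quasi-isomorphism. Next, $t^c_U:j_!\mathcal A^{\bullet}_U\hookrightarrow\mathcal A^{\bullet}_X(\nul D)$ is well defined as extension by zero (a smooth form with support closed in an open subset of $X$ and contained in $U$ vanishes identically near $D$, hence is a local section of $\mathcal A^{\bullet}_X(\nul D)$), and it is a quasi-isomorphism by Proposition \ref{IDAXD}(ii), where it is recorded that $t^c_U$ is a (non-filtered) quasi-isomorphism and that both complexes represent $j_!\mathbb C_U$ (cf. \cite{Jansen}). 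Finally $\tau:\mathcal A^{\bullet}_X(\nul D)\hookrightarrow\mathcal A^{\bullet}_{X,D}$ is a \emph{filtered} quasi-isomorphism by Proposition \ref{IDAXD}(ii) (which reduces it, via Propositions \ref{XD}(ii) and \ref{AXD}(ii), to the exactness of $0\to\Omega^p_X(\nul D)\to\Omega^p_X\to\bigoplus_j i_{D_j*}\Omega^p_{D_j}\to\cdots$), hence in particular a quasi-isomorphism. Composing the three maps proves the second assertion.

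For the chain involving $j_*$ (understood, as the first arrow forces, as $Rj_*$), the first map $j_*\mathbb C_U\hookrightarrow\Omega^{\bullet}_X(\log D)$ is Deligne's classical identification of $\Omega^{\bullet}_X(\log D)$ with $Rj_*\mathbb C_U$ (see \cite{Jansen}). For the map $l$, one checks that $j_*\mathcal A^{\bullet}_U$ also represents $Rj_*\mathbb C_U$: the $\mathcal A^k_U$ are fine sheaves and $U^{an}$ is paracompact, so $R^qj_*\mathcal A^k_U=0$ for $q>0$, and then $j_*\mathcal A^{\bullet}_U$ computes $Rj_*(\mathbb C_U\xrightarrow{\sim}\mathcal A^{\bullet}_U)=Rj_*\mathbb C_U$; since $l$ is the restriction-to-$U$ inclusion of logarithmic forms into smooth forms it is compatible with the canonical maps out of $j_*\mathbb C_U$, so two-out-of-three gives that $l$ is a quasi-isomorphism. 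Alternatively one factors $l$ through the filtered quasi-isomorphism $\Omega^{\bullet}_X(\log D)\hookrightarrow\mathcal A^{\bullet}_X(\log D)$ of Proposition \ref{Jansen} and verifies on stalks along $D$ that $\mathcal A^{\bullet}_X(\log D)\to j_*\mathcal A^{\bullet}_U$ is a quasi-isomorphism, a local statement on a polydisk which again reduces to the fact that logarithmic forms compute the cohomology of the punctured polydisk. The only points requiring any care are the well-definedness of the set-theoretic inclusions $t^c_U$ and $l$ as maps of complexes of sheaves, and the identification of $j_!\mathcal A^{\bullet}_U$ and $j_*\mathcal A^{\bullet}_U$ with $j_!\mathbb C_U$ and $Rj_*\mathbb C_U$; on the $j_!$ side this is immediate from exactness of $j_!$, and on the $j_*$ side it is the softness/paracompactness argument above, so there is no genuine obstacle.
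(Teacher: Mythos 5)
Your argument is correct and follows essentially the same route as the paper's two-line proof: the second chain is handled exactly as in the paper via the Poincar\'e resolution $0\to\mathbb C_U\to\mathcal A^{\bullet}_U$, exactness of $j_!$ and proposition \ref{IDAXD}(ii), and the first chain via the classical identification of $\Omega_X^{\bullet}(\log D)$ and $j_*\mathcal A^{\bullet}_U$ (fine, hence $j_*$-acyclic, resolution) with $Rj_*\mathbb C_U$, which is what the paper's appeal to the resolution and to proposition \ref{Jansen} amounts to. You merely spell out the details more carefully, including the (correct) charitable reading of the arrow $j_*\mathbb C_U\hookrightarrow\Omega_X^{\bullet}(\log D)$ that the paper leaves implicit.
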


\begin{proof}
The fact that the first sequence of inclusion are quasi-isomorphism comes from the resolution
$0\to \mathbb C_U\to\mathcal{A}^{\bullet}_U$ and the proposition \ref{Jansen}.
The fact that the second sequence of inclusion are quasi-isomorphism is given by proposition \ref{IDAXD}(ii).
\end{proof}

\begin{rem}
Note that the embedding of filtered complexes of sheaves on $X^{an}$
$l:(\Omega_X(\log D),F_b)\hookrightarrow(j_*\mathcal A^{\bullet}_{U},F)$ is NOT a filtered quasi-isomorphism.
\end{rem}

\begin{cor}\label{relvt}

Suppose $X\in\PSmVar(\mathbb C)$ is smooth projective, then
\begin{itemize}
\item[(i)] the spectral sequence associated to the filtered complex 
$(\Gamma(X,\mathcal A_X^{\bullet}(\nul D)),F)$ by Fr\"olicher filtration $F$ is $E^1$ degenerate.

\item[(ii)] for all integer $k,p$, the map induced on hypercohomology of the quotient map
\begin{eqnarray*}
H^k\Gamma(X,F^p\mathcal A_X^{\bullet}(\nul D))\to 
H^{k}\Gamma(X,\Gr_F^p\mathcal A_X^{\bullet}(\nul D))=H^{k-p}\Gamma(X,\mathcal A_X^{p,\bullet}(\nul D))=H^{k-p}(X,\Omega_X^p(\nul D)), \\
\, [\omega] \, \mapsto \, [\omega^{p,k-p}] \; \; \mbox{for} \; \; \omega\in\Gamma(X,F^p\mathcal{A}_X^k(\nul D))
\end{eqnarray*}
is surjective.
\end{itemize}

\end{cor}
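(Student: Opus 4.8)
The plan is to deduce Corollary \ref{relvt} from Proposition \ref{IDAXD} together with the already-known $E_1$ degenerescence of the mixed Hodge complex $(\Gamma(X,\mathcal A^{\bullet}_{X,D}),F,W)$ recorded in Definition \ref{defAXD}. The key point is that $\tau$ identifies the two relevant filtered complexes of \emph{presheaves}, not merely of sheaves, so that it induces an isomorphism on the filtered complexes of global sections over $X$ (here I would invoke Proposition \ref{IDAXD}(iii) with $V=X$). First I would observe that, since $X$ is smooth projective, $(\Gamma(X,\mathcal A^{\bullet}_{X,D}),F,W)$ is a mixed Hodge complex by Definition \ref{defAXD}, hence the spectral sequence attached to the Fr\"olicher filtration $F$ on $\Gamma(X,\mathcal A^{\bullet}_{X,D})$ degenerates at $E_1$.

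Next I would transport this along $\tau$. By Proposition \ref{IDAXD}(iii) applied to the open set $V=X$, the inclusion $\tau:\Gamma(X,F^p\mathcal A^{\bullet}_X(\nul D))\hookrightarrow\Gamma(X,F^p\mathcal A^{\bullet}_{X,D})$ is a quasi-isomorphism for every $p$, and likewise (passing to the quotient) on the graded pieces $\tau:\Gamma(X,\Gr_F^p\mathcal A^{\bullet}_X(\nul D))\to\Gamma(X,\Gr_F^p\mathcal A^{\bullet}_{X,D})$ is a quasi-isomorphism. Therefore $\tau$ induces an isomorphism of the two spectral sequences from the $E_1$ page on, and the $E_1$ degenerescence of the target transfers to $(\Gamma(X,\mathcal A^{\bullet}_X(\nul D)),F)$. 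This gives (i).

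For (ii), $E_1$ degenerescence means precisely that all differentials $d_r$ ($r\geq 1$) in the spectral sequence of $(\Gamma(X,\mathcal A^{\bullet}_X(\nul D)),F)$ vanish, so that the canonical maps $F^p\to\Gr_F^p$ on hypercohomology,
\begin{equation*}
H^k\Gamma(X,F^p\mathcal A_X^{\bullet}(\nul D))\twoheadrightarrow E_{\infty}^{p,k-p}=E_1^{p,k-p}=H^{k-p}\Gamma(X,\mathcal A_X^{p,\bullet}(\nul D)),
\end{equation*}
are surjective; and by Proposition \ref{XD}(i) the complex $(\mathcal A_X^{p,\bullet}(\nul D),\bar\partial)$ is the Dolbeault resolution of $\Omega_X^p(\nul D)$, so $H^{k-p}\Gamma(X,\mathcal A_X^{p,\bullet}(\nul D))=H^{k-p}(X,\Omega_X^p(\nul D))$, and the map sends $[\omega]$ to the class of its $(p,k-p)$-component, as claimed. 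I would also note that the sheaves $F^p\mathcal A^k_X(\nul D)$ are $c$-soft (modules over the soft sheaf $\mathcal A^0_X$), hence acyclic for $\Gamma(X,-)$, so that $H^k\Gamma(X,F^p\mathcal A^{\bullet}_X(\nul D))=\mathbb H^k(X,F^p\mathcal A^{\bullet}_X(\nul D))$, which is the statement needed to match the spectral sequence abutment.

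The main obstacle, such as it is, is purely bookkeeping: one must be careful that the identification of $E_1$-pages is \emph{compatible} with the differentials $d_1$ under $\tau$ — i.e. that $\tau$ is a map of spectral sequences, not just a levelwise quasi-isomorphism — which follows because $\tau$ is a morphism of filtered complexes and all the quasi-isomorphisms above are induced by the \emph{same} morphism $\tau$. Beyond that, the only subtlety is the softness/acyclicity remark ensuring that "hypercohomology of the filtered complex of presheaves evaluated on $X$" agrees with "cohomology of global sections," which is exactly what Proposition \ref{IDAXD}(iii) is set up to provide.
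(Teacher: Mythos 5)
Your proposal is correct and follows essentially the same route as the paper: apply Proposition \ref{IDAXD}(iii) with $V=X$ to see that $\tau$ is a filtered quasi-isomorphism on global sections, transfer the $E_1$ degenerescence of the mixed Hodge complex $(\Gamma(X,\mathcal A^{\bullet}_{X,D}),F,W)$ from Definition \ref{defAXD}, and deduce (ii) as the standard equivalence between $E_1$ degeneration and surjectivity onto the graded pieces (which the paper simply cites from \cite{PS}).
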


\begin{proof}

(i) By proposition \ref{IDAXD} (iii), the inclusion map of complexes of $\mathbb C$ vector spaces
$\tau:(\Gamma(X,\mathcal{A}_X^{\bullet}(\nul D),F)\to(\Gamma(X,\mathcal{A}^{\bullet}_{X,D}),F)$ 
is a filtered quasi-isomorphism. 
On the other hand the spectral sequence associated to $(\Gamma(X,\mathcal{A}^{\bullet}_{X,D}),F)$
is $E_1$ degenerate (see definition \ref{defAXD}). 
Thus the spectral sequence associated to $(\Gamma(X,\mathcal{A}_X^{\bullet}(\nul D),F)$ is $E_1$ degenerate.

(ii) This is a classical fact on spectral sequence that (ii) is equivalent to (i) see for example \cite{PS}.

\end{proof}

%\begin{rem}
%We can make a relevement of the map of corollary \ref{relvt}(ii) explicit. Indeed assume for simplicity that $D=D_1\subset X$ consist of one component.
%Let $\omega\in\Gamma(X,\mathcal{A}^{p,k-p}(\nul D))^{\bar{\partial}=0}$ that is 
%$\omega\in\Gamma(X,\mathcal{A}^{p,k-p})$ such that $\omega_{|D}=0$ and $\bar{\partial}\omega=0$.
%Then because of the degenerescence of the Fr\"olicher filtration for $X\in\PSmVar(\mathbb C)$, there exist
%$\alpha\in\Gamma(X,F^p\mathcal A_X^k)^{d=0}$ such that $\alpha^{p,k-p}=\omega$. 
%Then $\alpha^{p,k-p}_{|D}=\omega_{|D}=0$ so that $\alpha_{|D}\in\Gamma(D,F^{p+1}\mathcal A_D)^{d=0}$.
%By strictness of $i^*_D:H^k(X,\mathbb C)\to H^k(D,\mathbb C)$ for the Hodge filtration,
%there exist $\beta\in\Gamma(X,F^{p+1}\mathcal A_X)^{d=0}$ such that $[\beta_{|D}]=[\alpha_{|D}]\in H^k(D,\mathbb C)$
%(take the Hodge decompositions of $H^k(X,\mathbb C)$ and $H^k(D,\mathbb C)$). 
%We have $\beta_{|D}-\alpha_{D}\in\Gamma(D,F^{p+1}\mathcal A_D)$ and $[\beta_{|D}-\alpha_{D}]=0\in H^k(D,\mathbb C)$.
%Because of the degenerescence of the Fr\"olicher filtration for $D\in\PSmVar(\mathbb C)$, and the surjectivity
%of the map $i_D^*:\Gamma(X,F^{p+1}\mathcal A^{k-1}_X)\to\Gamma(D,F^{p+1}\mathcal A^{k-1}_D)$ there exist 
%$\beta'\in\Gamma(X,F^{p+1}\mathcal A^{k-1}_X)$ such that $\alpha_{|D}=\beta_{|D}+d\beta'_{|D}$.
%Then $\gamma=\alpha-\beta-d\beta'\in\Gamma(X,F^p\mathcal A_X^k(\nul D^{\infty}))^{d=0}$ satisfy $\gamma^{p,k-p}=\alpha^{p,k-p}=\omega$. 
%\end{rem}

\begin{defi}\label{fhodge}
If $X\in\PSmVar(\mathbb C)$ is smooth projective, the hodge filtration on the $\mathbb C$ vector spaces
$H^k(U,\mathbb C)=H^k\Gamma(X,\mathcal D_X^{\bullet}(\log D))$ and 
$H^k(X,D,\mathbb C)=H^k\Gamma(X,\mathcal A_X^{\bullet}(\nul D))$ 
are given by the Fr\"olicher fitration $F$ of the
filtred complexes of sheaves on $X^{an}$ $(\mathcal D_X^{\bullet}(\log D),F)$ and $(\mathcal A_X^{\bullet}(\nul D),F)$ respectively.
The $E_1$ degenerescence of the Fr\"olicher filtration 
(corollary \ref{relvt}(i) for the complex $\Gamma(X,\mathcal A_X^{\bullet}(\nul D),F)$), say that the
following canonical surjective maps are isomorphisms :
\begin{itemize}
\item $H^k\Gamma(X,F^p\mathcal D_X^{\bullet}(\log D))\xrightarrow{\sim}F^pH^k(U,\mathbb C)$ 
\item $H^k\Gamma(X,F^p\mathcal A_X^{\bullet}(\nul D))\xrightarrow{\sim}F^pH^k(X,D,\mathbb C)$.
\end{itemize}
and their  $F$ graded pieces are
\begin{itemize}
\item $H^{p,k-p}(U,\mathbb C):=F^pH^k(U,\mathbb C)/F^{p+1}H^k(U,\mathbb C)\xrightarrow{\sim}
H^{k-p}\Gamma(X,\mathcal D_X^{p,\bullet}(\log D))=H^{k-p}(X,\Omega^p_X(\log D))$ 
\item $H^{p,k-p}(X,D,\mathbb C):=F^pH^k(X,D,\mathbb C)/F^{p+1}H^k(X,D,\mathbb C)\xrightarrow{\sim}
H^{k-p}\Gamma(X,\mathcal A_X^{p,\bullet}(\nul D))=H^{k-p}(X,\Omega_X^p(\nul D))$
(see also corollary \ref{relvt} (ii)).
\end{itemize}
\end{defi}

The wedge product $w_X$ (\ref{wedgeR}) of bicomplexes of presheaves on $X^{an}$ gives the morphism of filtered
complex of presheaves on $X^{an}$   
\begin{equation}
w_X:(\mathcal D_X^{\bullet}(\log D),F)\otimes_{O_X}(\mathcal A_X^{2d_X-\bullet}(\nul D),F)\to\mathcal D_X^{2d_X}.
\end{equation}

We have then the following :

\begin{prop}\label{dual}
If $X\in\PSmVar(\mathbb C)$, the pairing of filtred complexes of $\mathbb C$ vector spaces  :
\begin{eqnarray*}
ev_X=a_{X*}w_X=<\cdot,\cdot>_{ev_X}:
(\Gamma(X,\mathcal D_X^{\bullet}(\log D)),F)\otimes_{\mathbb C}(\Gamma(X,\mathcal A_X^{2d_X-\bullet}(\nul D)),F)
\to(\Gamma(X,\mathcal D^{2d_X}_X),F), \\
T\otimes\omega\mapsto T(\omega)=a_{X*}(T\wedge\omega)
\end{eqnarray*}
induces on cohomology isomorphisms
\begin{itemize}
\item $ev_X: H^k(U,\mathbb C)/F^pH^k(U,\mathbb C)\xrightarrow{\sim}(F^{d_X-p+1}H^{2d_X-k}(X,D,\mathbb C)))^{\vee}$ and
\item $ev_X: H^k(X,\Omega_X^p(\log D))\xrightarrow{\sim}H^{d_X-k}(X,\Omega_X^{d_X-p}(\nul D))^{\vee}$.
\end{itemize}
Note that for $\omega\in\Gamma(X,\mathcal A^k_X(\log D))^{d=0}$ a closed log form and 
$\eta\in\Gamma(X,\mathcal A_X^{2d_X-k}(\nul D))^{d=0}$, we have $<[\omega],[\eta]>_{ev_X}=\int_{X}\omega\wedge\eta$. 
\end{prop}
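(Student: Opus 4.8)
The plan is to deduce the duality from the structural results already established in the excerpt, reducing the statement to Poincaré–Verdier duality plus the $E_1$ degenerescence of the Fr\"olicher filtration. First I would recall the chain of quasi-isomorphisms: by Corollary \ref{ltcutau}, the complex $\mathcal A_X^{\bullet}(\nul D)$ computes $j_!\mathbb C_U$, hence $H^k\Gamma(X,\mathcal A_X^{\bullet}(\nul D)) = H^k(X,j_!\mathbb C_U) = H^k(X,D,\mathbb C)$; and by Proposition \ref{Jansen} the complex $\mathcal D_X^{\bullet}(\log D)$ computes $j_*\mathbb C_U$ (via $\Omega_X^{\bullet}(\log D)$), hence $H^k\Gamma(X,\mathcal D_X^{\bullet}(\log D)) = H^k(U,\mathbb C)$. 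The pairing $w_X$ followed by $a_{X*}$ on the level of sheaf complexes is, by construction, the Verdier duality pairing: $\mathcal D_X^{\bullet}(\log D) = D^{\vee}(\mathcal A_X^{\bullet}(\nul D))$ by definition, and $\Gamma(X,\mathcal D_X^{2d_X}) \to \mathbb C$ via $a_{X*}$ is the fundamental class / trace, so the pairing $<\cdot,\cdot>_{ev_X}$ is the one realizing $j_*\mathbb C_U$ as the Verdier dual of $j_!\mathbb C_U[2d_X]$ on the compact manifold $X$. This already gives a perfect pairing $H^k(U,\mathbb C) \otimes H^{2d_X-k}(X,D,\mathbb C) \to \mathbb C$ of $\mathbb C$-vector spaces (this is just topological Poincar\'e duality for the open manifold $U$, rephrased).

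Next I would upgrade this to a \emph{filtered} statement. The pairing $w_X$ is a morphism of \emph{filtered} complexes for the Fr\"olicher filtrations: $F^a \mathcal D_X^{\bullet}(\log D) \otimes F^b \mathcal A_X^{2d_X-\bullet}(\nul D) \to F^{a+b}\mathcal D_X^{2d_X} = 0$ whenever $a+b > d_X$, and it is non-degenerate on the graded pieces — indeed, Proposition \ref{GrReso} identifies $\Gr_F^p \mathcal D_X^{\bullet}(\log D)$ with the Dolbeault resolution of $\Omega_X^p(\log D)$ and $\Gr_F^{d_X-p}\mathcal A_X^{\bullet}(\nul D)$ with that of $\Omega_X^{d_X-p}(\nul D)$, and Proposition \ref{ideal}(ii) gives the $O_X$-linear Serre-duality pairing $\Omega_X^{d_X-p}(\nul D) \xrightarrow{\sim} D_{O_X}^{\vee}(\Omega_X^p(\log D)) \otimes_{O_X} K_X$. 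So the induced pairing on Dolbeault cohomology is Serre duality $H^{k-p}(X,\Omega_X^p(\log D)) \otimes H^{d_X-k+p}(X,\Omega_X^{d_X-p}(\nul D)) \to H^{d_X}(X,K_X) = \mathbb C$, which is perfect since $X$ is compact. This yields the second isomorphism of the proposition directly.

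For the first isomorphism I would run the standard spectral-sequence argument. Because both filtered complexes $\Gamma(X,\mathcal D_X^{\bullet}(\log D))$ and $\Gamma(X,\mathcal A_X^{\bullet}(\nul D))$ have $E_1$-degenerate Fr\"olicher spectral sequences — for $\mathcal A_X^{\bullet}(\nul D)$ this is Corollary \ref{relvt}(i), and for $\mathcal D_X^{\bullet}(\log D)$ it follows either from the mixed Hodge complex structure on $\Gamma(X,\mathcal A_X^{\bullet}(\log D))$ together with Proposition \ref{Jansen}, or dually from Corollary \ref{relvt}(i) — the filtration $F$ on $H^k(U,\mathbb C)$ and on $H^{2d_X-k}(X,D,\mathbb C)$ is strict, and $\Gr_F$ of the cohomology is the cohomology of $\Gr_F$ of the complex. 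A pairing of filtered vector spaces $V \otimes W \to \mathbb C$ with $F^a V \otimes F^b W \to 0$ for $a+b$ large, which is perfect on the associated graded, is perfect and identifies $F^p V$ with the annihilator of $F^{d_X-p+1}W$, i.e. $V/F^p V \xrightarrow{\sim} (F^{d_X-p+1}W)^{\vee}$. Applying this with $V = H^k(U,\mathbb C)$, $W = H^{2d_X-k}(X,D,\mathbb C)$, and the already-established perfectness on graded pieces (Serre duality above), gives the first isomorphism. The final remark, that $<[\omega],[\eta]>_{ev_X} = \int_X \omega \wedge \eta$ for closed forms, is immediate from the definition of $\iota = r_{X,D}\circ int$ and of $a_{X*}$ as integration. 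The main obstacle I anticipate is bookkeeping the compatibility of the three different pairings (topological Verdier, the current-theoretic $ev_X$, and Serre duality on graded pieces) so that they genuinely assemble into one filtered pairing — in particular checking that $w_X$ respects $F$ with the correct index shift so that the induced graded pairing is exactly the Serre pairing of Proposition \ref{ideal}(ii), rather than merely being non-degenerate for dimension reasons.
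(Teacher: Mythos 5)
Your argument is correct, but it takes a genuinely different route from the paper's. The paper's proof is essentially a one-line reduction: it observes that the class of the wedge product of a closed log current with a closed $\nul D$ form is the cup product of the two classes, so the induced pairing in cohomology is Poincar\'e duality for the pair $(X,D)$, which is a morphism of mixed Hodge structures (citing Peters--Steenbrink); the two filtered bullets then follow from the general formalism of dual mixed Hodge structures. You instead never invoke the MHS duality: you prove perfectness on the $F$-graded pieces directly, via the Dolbeault resolutions of Proposition \ref{GrReso} and the sheaf-level Serre-duality isomorphism $\Omega_X^{d_X-p}(\nul D)\xrightarrow{\sim}D^{\vee}_{O_X}(\Omega_X^p(\log D))\otimes_{O_X}K_X$ of Proposition \ref{ideal}(ii) (using the graded piece of Proposition \ref{Jansen} to represent log-current classes by log forms, so that the graded pairing is literally $\int_X\omega\wedge\eta$, i.e.\ the Serre pairing), and then propagate to the filtered statement through the $E_1$ degenerescence of both sides (Corollary \ref{relvt}(i), and for $\mathcal D_X^{\bullet}(\log D)$ the mixed Hodge complex structure on $\Gamma(X,\mathcal A_X^{\bullet}(\log D))$ plus Proposition \ref{Jansen}, exactly as in the remark following the proposition) together with the elementary fact that a filtered pairing vanishing on $F^a\otimes F^b$ for $a+b>d_X$ and perfect on associated gradeds induces $V/F^pV\xrightarrow{\sim}(F^{d_X-p+1}W)^{\vee}$. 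What your route buys is self-containedness within the paper's own propositions and a direct proof of the second bullet (which in the paper's argument is only the graded shadow of the MHS statement); what the paper's route buys is brevity and, implicitly, compatibility with the integral structure of the duality, which is what is used later to identify $J^{p,k}(U)$ with $(F^{d_X-p+1}H^{2d_X-k}(X,D,\mathbb C))^{\vee}/H_{2d_X-k}(X,D,\mathbb Z)$ -- though the proposition as stated only claims the $\mathbb C$-linear isomorphisms, so your proof covers it.
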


\begin{proof}
The fact that the pairing induced in cohomology is non degenerated is Poincare duality for the pair $(X,D)$ which is
a morphism of mixed hodge structures since the class of the wedge product of a closed log current by a closed nul form is
the cup product of the two classes (c.f.\cite{PS} for example).
\end{proof}

\begin{rem}
If $X\in\PSmVar(\mathbb C)$, the Fr\"olicher filtration of $(\Gamma(X,\mathcal A_X^{\bullet}(\log D)),F^{\bullet})$
is $E_1$ degenerate because it is a mixed hodge complex. On the other hand $\iota$ is a filtered quasi-isomorphism (proposition\ref{Jansen}).
Thus the Fr\"olicher filtration of 
$(\Gamma(X,\mathcal D_X^{\bullet}(\log D)),F^{\bullet}):=(\Gamma(X,\mathcal A_X^{2d_X-\bullet}(\nul D)),F^{\bullet})^{\vee}$
is $E_1$ also degenerate.
But the Folinger fitration on $j_*\mathcal A^{\bullet}_U$ is not $E_1$ degenerate and the hypercohomogogy of his graded piece
$\mathbb H^k(X,j_*\Gr_F^p\mathcal A^{\bullet}_X)=H^k(U,\Omega^p_U)$ vanishes for $k>0$ if $X$ is affine.
\end{rem}

%=======================================================================
\subsection{The higher Abel Jacobi map for $U$}
%=======================================================================

Recall that for any $V\in\SmVar(\mathbb C)$ quasi-projective there exist $Y\in\PSmVar(\mathbb C)$ such that $Y\backslash V$
is a normal crossing divisor with smooth components.
In this subsection, we assume that $X\in\PSmVar(\mathbb C)$ is smooth projective.

Denote by $\mathcal{Z}^p(U,\bullet)^{pr/X}\subset\mathcal{Z}^p(U,\bullet)$ the subcomplex consisting
of closed cycles on $U\times\square^{\bullet}$ such that their closure on $X\times\square^{\bullet}$ intersect all face properly.
By Bloch, the latter is quasi-isomorphic to the former.
By definition, there is an exact sequence of complexes of abelian groups
\begin{equation}\label{locexseq}
0\to\mathcal{Z}^p(D,\bullet)\xrightarrow{i_{D*}}\mathcal{Z}^p(X,\bullet)\xrightarrow{j^*}\mathcal{Z}^p(U,\bullet)^{pr/X}\to 0.
\end{equation}
For $Z\in\mathcal{Z}^p(U,\bullet)^{pr/X}$ denote by 
$\bar{Z}=\sum_i n_i\bar{Z}_i\in\mathcal{Z}^p(X\times(\mathbb P^1)^n)$ the closure of $Z=\sum_i n_iZ_i\in\mathcal{Z}^p(U\times\square^n)$. 
For $Z\in\mathcal{Z}^p(U,\bullet)_{\partial=0}^{pr/X}$, we have, by \ref{locexseq},
$\partial \bar{Z}\in i_{D*}\mathcal{Z}^{p-1}(D,\bullet)\subset\mathcal{Z}^p(X,\bullet)$. 

Let 
\begin{equation*}
C^{\mathcal D}_{\bullet}(X,\mathbb Z)=\Cone(C_{2d_X-2p+\bullet}^{\diff}(X,D,\mathbb Z)\oplus\Gamma(X,F^p\mathcal{D}^{2p+\bullet}_X) 
\to\Gamma(X,\mathcal{D}^{2p+\bullet-1}_X))
\end{equation*}
be the Deligne homology complex of $X$, 
\begin{equation*}
C^{\mathcal D}_{\bullet}(X,D,\mathbb Z)=\Cone(C_{2d_X-2p+\bullet}^{\diff}(X,D,\mathbb Z)\oplus\Gamma(X,F^p\mathcal{D}^{2p+\bullet}_X(\log D)) 
\to\Gamma(X,\mathcal{D}^{2p+\bullet-1}_X(\log D)))
\end{equation*}
be relative homology complex of $(X,D)$, and
\begin{equation*}
r^{\mathcal D}_{X,D}:C^{\mathcal D}_{\bullet}(X,\mathbb Z)\to C^{\mathcal D}_{\bullet}(X,D,\mathbb Z), \; \;
(T,\Omega, R)\mapsto (r_{X,D}(T),r_{X,D}(\Omega), r_{X,D}(R))
\end{equation*}
be the quotient map.

There is the classical realization maps
\begin{eqnarray*}
\mathcal{R}^p(X,D):\mathcal{Z}^p(U,\bullet)^{pr/X}\to C^{\mathcal D}_{\bullet}(X,D,\mathbb Z), \; \; \;
Z\mapsto (T_Z,\Omega_Z, R_Z):=r^{\mathcal D}_{X,D}(T_{\bar Z},\Omega_{\bar Z},R_{\bar Z})  
\end{eqnarray*}
where, c.f.\cite{MKerr},
\begin{itemize}
\item $T_Z=r_{X,D}(T_{\bar Z})=\sum_i n_i\pi_X((X\times T_{\square^n})\cap \bar{Z}_i)\in C^{\diff}_{2d_X-2p+n}(X,D,\mathbb Z)$,
 we have $dT_Z=T_{\partial Z}$ 
\item $\Omega_Z=r_{X,D}(\Omega_{\bar Z}):\omega\in\Gamma(X,\mathcal{A}_X^{2d_X-2p+n}(\nul D))\mapsto \\
\Omega_{\bar Z}(\omega)=\sum_i n_i\int_{\bar{Z}_i}\pi_X^*\omega\wedge\pi_{(\mathbb P^1)^n}^*\Omega_{\square^n}:=
\lim_{\epsilon\to 0}\sum_i\int_{\bar{Z}_{i\epsilon}}\pi_X^*\omega\wedge\pi_{(\mathbb P^1)^n}^*\Omega_{\square^n}$, 
it is a current of type $(p,p-n)$, i.e. $\Omega_Z\in \Gamma(X,\mathcal D_X^{p,p-n}(\log D))$. 
\item $R_Z=r_{X,D}(R_{\bar Z}):\omega\in\Gamma(X,A_X^{2d_X-2p+n+1}(\nul D))\mapsto \\
R_{\bar Z}(\omega)=\sum_i n_i\int_{\bar{Z}_i}\pi_X^*\omega\wedge\pi_{(\mathbb P^1)^n}^*R_{\square^n}:=
\lim_{\epsilon\to 0}\sum_i\int_{\bar{Z}_{i\epsilon}}\pi_X^*\omega\wedge\pi_{(\mathbb P^1)^n}^*R_{\square^n}$,
\item we have $d\Omega=\Omega_{\partial Z}$ since we have 
$d\Omega_{\square^n}=2i\pi\sum_{l=0}^n(-1)^l\Omega_{\square^n}(z_0,\cdots,\hat{z_l},\cdots,z_n)\delta(z_l)$,
and we have $dR_Z=\Omega_Z-(2i\pi)R_{\partial Z}-(2i\pi)^nT_Z$ since we have
$dR_{\square^n}=\Omega_{\square^n}-2i\pi(-1)^l\sum_{l=0}^nR_{\square^n}(z_0,\cdots,\hat{z_l},\cdots,z_n)\delta(z_l)-(2i\pi)^nT_{\square^n}$.
\end{itemize}

The currents $T_Z$ and $\Omega_Z$ are closed if $\partial Z=0$ that is if $\partial \bar{Z}\in i_{D*}\mathcal{Z}^{p-1}(D,n)$. 
For $Z\in\mathcal{Z}^p(U,n)_{\partial=0}^{pr/X}$,
the equality $dR_Z=\Omega_Z-(2i\pi)^nT_Z$ shows that $[\Omega_Z]=(2i\pi)[T_Z]\in H^{2n-p}(U,\mathbb C)$.

Denote by $\mathcal{Z}^p(U,n)_{\partial=0}^{pr/X,\hom}\subset\mathcal{Z}^p(U,n)^{pr/X}$ the subspace consisting
of $Z\in\mathcal{Z}^p(U,n)^{pr/X}$ such that $\partial Z=0$ and $[\Omega_Z]=0\in H^{2p-n}(U,\mathbb C)$, 
that is $\Omega_Z\in\Gamma(X,\mathcal D_X(\log D))$ is exact.  
Let $Z\in\mathcal{Z}^p(U,n)_{\partial=0}^{pr/X,\hom}$.
Then, for a choice of $d^{-1}\Omega_Z\in\Gamma(X,\mathcal D_X(\log D))$ and of $\partial^{-1}T_Z\in C^{\diff}_{2d_X-2p}(X,D)$,
the current
\begin{equation*}
R'_Z=R_Z-d^{-1}\Omega_Z-(2i\pi)^nd^{-1}T_Z\in\Gamma(X,\mathcal{A}_X^{2d_X-2p+n+1}(\nul D))^{\vee}
\end{equation*}
is closed, that is $R'_Z\in\Gamma(X,\mathcal{D}_X^{2d_X-2p+n+1}(\log D))^{d=0}$.

\begin{defi}
The complex analytic variety
\begin{equation*}
J^{p,k}(U)=H^{k}(U,\mathbb C)/(F^pH^{k}(U,\mathbb C)\oplus H^{k}(U,\mathbb Z))
\end{equation*}
is the intermediate jacobian.
By proposition \ref{dual}, $ev_X$ induces an isomorphism of complex varieties
$ev_X:J^{p,k}(U)\xrightarrow{\sim}(F^{d_X-p+1}H^{2d_X-k}(X,D,\mathbb C))^{\vee}/H_{2d_X-k}(X,D,\mathbb Z)$.
The map
\begin{equation*}
AJ_U:\mathcal{Z}^p(U,n)_{\partial=0}^{pr/X,\hom}\to\CH^p(U,n)^{\hom}\to J^{p,2p-n-1}(U), \; \; Z\mapsto AJ(Z)=[R'_Z]
\end{equation*}
is the higher Abel Jacobi map
\end{defi}

\begin{prop}\label{R'}
For $Z\in\mathcal{Z}^p(U,n)_{\partial=0}^{pr/X,\hom}$, there exist a topological cycle $\Gamma_{\bar Z}\in C^{\diff}_{2d_X-2p+1}(X,D,\mathbb Z)$
such that $\partial\Gamma_{\bar{Z}}^\epsilon=\bar{Z}_{\epsilon}$ for $0<\epsilon<<1$.
This gives, for $\omega\in\Gamma(X,\mathcal A^{2d_X-2p+n+1}(\nul D))^{d_X=0}$, 
\begin{eqnarray*}
R_Z(\omega):&=&\lim_{\epsilon\to 0}\sum_i n_i\int_{\bar{Z}_{i\epsilon}}\pi_X^*\omega\wedge\pi_{(\mathbb P^1)^n}^*R_{\square^n} \\
&=&\lim_{\epsilon\to 0}\int_{\Gamma_{\bar{Z}}^\epsilon}\pi_X^*\omega\wedge\pi_{(\mathbb P^1)^n}^*\Omega_{\square^n}:=
\int_{\Gamma_{\bar Z}}\pi_X^*\omega\wedge\pi_{(\mathbb P^1)^n}^*\Omega_{\square^n} 
\end{eqnarray*}
In particular, $R_Z$ restrict to a closed current on the subspace 
$\Gamma(X,F^{d_X-p+1}\mathcal A_X^{2d_X-2p+n+1}(\nul D))\subset\Gamma(X,\mathcal A_X^{2d_X-2p+n+1}(\nul D))$,
that is $R_Z\in\Gamma(X,F^{d_X-p+1}\mathcal A_X^{2d_X-2p+n+1}(\nul D))^{\vee,d=0}$ and we have
\begin{equation}
AJ_U(Z)=[R'_Z]=ev_X([R_Z]).
\end{equation}
\end{prop}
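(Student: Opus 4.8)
The plan is to write $R_Z$ explicitly as a membrane integral of the logarithmic form $\Omega_{\square^n}$, and then to combine one application of Stokes with a count of holomorphic degrees on $\bar Z$ and with the $E_1$-degenerescence of corollary \ref{relvt}. Concretely, I would first construct $\Gamma_{\bar Z}$ and establish the displayed formula. Recall that $\Omega_{\square^n}$ and $R_{\square^n}$ are smooth on the complement of a divisor $\Sigma_n\subset(\mathbb P^1)^n$ containing the faces of $\square^n$ and the real set $T_{\square^n}$ across which $R_{\square^n}$ jumps, and that $dR_{\square^n}=\Omega_{\square^n}-(\text{face terms})-(2i\pi)^nT_{\square^n}$. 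Since each $\bar Z_i$ meets all faces properly, deleting from $\bar Z_i$ an $\epsilon$-tube around $\bar Z_i\cap\pi_{(\mathbb P^1)^n}^{-1}(\Sigma_n)$ gives $\bar Z_{i\epsilon}$, on which $\pi_{(\mathbb P^1)^n}^*R_{\square^n}$ is smooth; the logarithmic growth of $R_{\square^n}$ against the shrinking tube cross-section yields $\int_{\partial\bar Z_{i\epsilon}}\pi_X^*\omega\wedge\pi_{(\mathbb P^1)^n}^*R_{\square^n}\to 0$, which both makes the limit defining $R_Z(\omega)$ converge and produces, as the geometric trace of this desingularization, a topological membrane $\Gamma_{\bar Z}$ (recorded in $C^{\diff}_{2d_X-2p+1}(X,D,\mathbb Z)$) inside the smooth locus of $\pi_{(\mathbb P^1)^n}^*R_{\square^n}$ with $\partial\Gamma_{\bar Z}^\epsilon=\bar Z_\epsilon$ (up to chains on the deleted tubes, negligible in the limit). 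For closed $\omega\in\Gamma(X,\mathcal A_X^{2d_X-2p+n+1}(\nul D))$, applying Stokes on $\Gamma_{\bar Z}^\epsilon$ and using $d\omega=0$ together with the fact that $dR_{\square^n}=\Omega_{\square^n}$ away from $\Sigma_n$ modulo the face terms, which lie over $D$ where $\omega$ vanishes, turns $\int_{\bar Z_{i\epsilon}}\pi_X^*\omega\wedge\pi_{(\mathbb P^1)^n}^*R_{\square^n}$ into $\int_{\Gamma_{\bar Z}^\epsilon}\pi_X^*\omega\wedge\pi_{(\mathbb P^1)^n}^*\Omega_{\square^n}$ up to terms tending to $0$; letting $\epsilon\to 0$ gives the stated formula.

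Next I would deduce that $R_Z$ is closed on $F^{d_X-p+1}$. For $\eta\in\Gamma(X,F^{d_X-p+1}\mathcal A_X^{2d_X-2p+n}(\nul D))$, apply the membrane formula to the closed form $d\eta$ and integrate by parts once more on $\Gamma_{\bar Z}^\epsilon$: the boundary term is $\int_{\partial\Gamma_{\bar Z}^\epsilon}\pi_X^*\eta\wedge\pi_{(\mathbb P^1)^n}^*\Omega_{\square^n}=\int_{\bar Z_\epsilon}\pi_X^*\eta\wedge\pi_{(\mathbb P^1)^n}^*\Omega_{\square^n}$, and the remaining term involves $d\Omega_{\square^n}$, supported over the faces, hence over $D$, where $\eta$ vanishes. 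But on each $\bar Z_{i\epsilon}$ the form $\pi_X^*\eta\wedge\pi_{(\mathbb P^1)^n}^*\Omega_{\square^n}$ has holomorphic degree $\geq (d_X-p+1)+n$ — since $\Omega_{\square^n}$ is of Hodge type $(n,0)$ and $\eta\in F^{d_X-p+1}$ — which strictly exceeds $\dim_{\mathbb C}\bar Z_i=(d_X+n)-p$, so it restricts to $0$ on the complex variety $\bar Z_i$ and the whole expression vanishes; thus $R_Z(d\eta)=0$. Hence $R_Z\in\Gamma(X,F^{d_X-p+1}\mathcal A_X^{2d_X-2p+n+1}(\nul D))^{\vee,d=0}$, and by the $E_1$-degenerescence of corollary \ref{relvt}(i) together with the $c$-softness of the sheaves $F^{d_X-p+1}\mathcal A_X^k(\nul D)$ this closed functional represents a class $[R_Z]\in(F^{d_X-p+1}H^{2d_X-2p+n+1}(X,D,\mathbb C))^{\vee}$.

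Finally I would identify $[R'_Z]=ev_X([R_Z])$ (the equality $AJ_U(Z)=[R'_Z]$ being the definition), by comparing $R'_Z=R_Z-d^{-1}\Omega_Z-(2i\pi)^nd^{-1}T_Z$ with $R_Z$ on closed $F^{d_X-p+1}$ forms. Since $\partial Z=0$, $\Omega_Z\in\Gamma(X,F^p\mathcal D_X^{2p-n}(\log D))$ is closed with $[\Omega_Z]=0\in H^{2p-n}(U,\mathbb C)$; by the $E_1$-degenerescence of $(\Gamma(X,\mathcal D_X^{\bullet}(\log D)),F)$ (proposition \ref{Jansen}, corollary \ref{relvt}) this already vanishes in $H^{2p-n}\Gamma(X,F^p\mathcal D_X^{\bullet}(\log D))$, so one may pick $d^{-1}\Omega_Z\in\Gamma(X,F^p\mathcal D_X^{2p-n-1}(\log D))$; a current of bidegree $(\geq p,\ast)$ kills every form of bidegree $(\geq d_X-p+1,\ast)$ by complementary-degree matching, so $(d^{-1}\Omega_Z)(\eta)=0$ for all $\eta\in\Gamma(X,F^{d_X-p+1}\mathcal A_X^{2d_X-2p+n+1}(\nul D))$. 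The term $(2i\pi)^nd^{-1}T_Z$, a current of integration over a rational topological chain of $(X,D)$ bounding the cycle $T_Z$, contributes to the pairing with closed $F^{d_X-p+1}$ forms only through the lattice $H_{2d_X-2p+n+1}(X,D,\mathbb Z)$ by which $J^{p,2p-n-1}(U)$ is quotiented — this being exactly the mechanism of the map from the relative Deligne homology complex to $J^{p,2p-n-1}(U)$. Combining these with proposition \ref{dual} gives $ev_X([R'_Z])=[R_Z]$ in $(F^{d_X-p+1}H^{2d_X-2p+n+1}(X,D,\mathbb C))^{\vee}/H_{2d_X-2p+n+1}(X,D,\mathbb Z)$, i.e. $AJ_U(Z)=[R'_Z]=ev_X([R_Z])$.

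The hard part will be the first step: the uniform $\epsilon\to 0$ estimates for the tube-boundary integrals $\int_{\partial\bar Z_{i\epsilon}}$ and the construction of $\Gamma_{\bar Z}$ inside the smooth locus of $\pi_{(\mathbb P^1)^n}^*R_{\square^n}$, compatibly with the proper intersection of $\bar Z$ with the faces of $\square^n$ and with the singular set of $R_{\square^n}$. These are the analytic heart of the Kerr--Lewis--M\"uller-Stach realization and would either be imported from there or re-derived from the local description of the logarithmic singularities of $R_{\square^n}$ near $\Sigma_n$.
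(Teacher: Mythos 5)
Your overall route is the same as the paper's: the paper proves the displayed membrane formula by exactly the Stokes computation you describe, performed on $\Gamma_{\bar Z}^{\epsilon}$ and using $d\omega=0$, $\omega_{|D}=0$, the identity for $dR_{\square^n}$ and $\partial\bar Z\in i_{D*}\mathcal Z^{p-1}(D,n)$, with all the analytic content (tube estimates, construction of the membrane) imported from \cite{MKerrLMS} as a ``straightforward generalization'' of their proposition 5.1; the ``in particular'' clauses are left to the type/filtration considerations you spell out, and your bidegree count against $\dim_{\mathbb C}\bar Z_i$, your use of the $E_1$ degenerescence and of proposition \ref{dual} are the intended mechanism.

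Two of your justifications, however, do not close as written. First, you discard the face terms of $dR_{\square^n}$ (and of $d\Omega_{\square^n}$ in the closedness step) on the grounds that they are ``supported over the faces, hence over $D$'': the faces of $\square^n$ are not over $D$. What is true is that their intersection with $\bar Z$ lies over $D$, precisely because $\partial\bar Z\in i_{D*}\mathcal Z^{p-1}(D,n)$, whereas their intersection with the membrane $\Gamma_{\bar Z}$ is a priori unconstrained; killing those membrane contributions (and the $T_{\square^n}$ term on $\Gamma_{\bar Z}$) is part of the careful choice of the chain in the Kerr--Lewis--M\"uller-Stach construction you propose to import, not a consequence of $\eta_{|D}=0$ alone. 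Second, in the last step the term $(2i\pi)^n d^{-1}T_Z$ paired with closed $F^{d_X-p+1}$ forms is, for a fixed choice of $d^{-1}T_Z$, a genuine number and not a lattice element; only the ambiguity between two choices of $d^{-1}T_Z$ is a relative cycle. To conclude $ev_X([R'_Z])=[R_Z]$ one must either make the choice of $d^{-1}T_Z$ compatible with $\Gamma_{\bar Z}$ (essentially $\pm\pi_{X*}(\Gamma_{\bar Z}\cap(X\times T_{\square^n}))$) and show its pairing with $F^{d_X-p+1}$ classes vanishes, or quote the corresponding statement of \cite{MKerrLMS}; the paper itself sidesteps both points by deferring the whole proposition to that reference, so your write-up is at the same level of completeness only if these two steps are explicitly delegated there rather than justified by the arguments you gave.
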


\begin{proof}

It a straightforward generalization of \cite{MKerrLMS} proposition 5.1 : for $\omega\in\Gamma(X,\mathcal A^{2d_X-2p+n+1}(\nul D))^{d=0}$,
we have 
\begin{eqnarray*}
\sum_i n_i\int_{\bar{Z}_{i\epsilon}}\pi_X^*\omega\wedge\pi_{(\mathbb P^1)^n}^*R_{\square^n} 
&=&\int_{\Gamma_{\bar{Z}}^\epsilon}d(\pi_X^*\omega\wedge\pi_{(\mathbb P^1)^n}^*R_{\square^n}) 
\; \; \mbox{by Stokes formula} \\ 
&=&\sum_i n_i\int_{\bar{Z}_{i\epsilon}}\pi_X^*\omega\wedge\pi_{(\mathbb P^1)^n}^*\Omega_{\square^n} 
\end{eqnarray*}
since  $d\omega=0$, $\omega_{|D}=0$, 
$dR_{\square^n}=\Omega_{\square^n}-(2i\pi)\sum_{l=0}^n(-1)^lR(z_0,\cdots,\hat{z_l},\cdots,z_n)\delta(z_l)-(2i\pi)^nT_{\square^n}$ and
$\partial\bar Z\in i_{D*}\mathcal Z^p(D,n)$.

\end{proof}

\begin{prop}\label{AJfunct}
\begin{itemize}
\item[(i)] The higher Abel Jacobi map of a smooth quasi-projective variety $V\in\SmVar(\mathbb C)$
is independent of a the choice of a compactification $(Y,Y\backslash V)$, $Y\in\PSmVar(\mathbb C)$, 
with $E=Y\backslash V$ a normal crossing divisor.
\item[(ii)] The higher Abel Jacobi map is functorial in $V\in\SmVar(\mathbb C)$ covariantly for proper morphisms.
\item[(iii)] The higher Abel Jacobi map is functorial in $V\in\SmVar(\mathbb C)$ contravariantly for all morphism.
\end{itemize}
\end{prop}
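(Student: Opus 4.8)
The plan is to reduce all three statements to the compatibility between the realization map $\mathcal{R}^p(X,D)$ and the functorial operations on the Deligne homology complexes $C^{\mathcal D}_{\bullet}(X,D,\mathbb Z)$, combined with the duality isomorphism $ev_X$ of Proposition \ref{dual}. For (i), suppose $(Y,E)$ and $(Y',E')$ are two normal crossing compactifications of $V$. After blowing up I may assume there is a proper morphism $g:(Y',E')\to(Y,E)$ restricting to the identity on $V$. The key is then to check that the square relating $\mathcal{Z}^p(V,n)^{pr/Y}$, $\mathcal{Z}^p(V,n)^{pr/Y'}$ (which are quasi-isomorphic since both compute $\CH^p(V,n)$) and the realization maps into $C^{\mathcal D}_{\bullet}(Y,E,\mathbb Z)$ and $C^{\mathcal D}_{\bullet}(Y',E',\mathbb Z)$ commutes up to the proper pushforward $g_*$ on currents and on differentiable chains; since $g$ is an isomorphism over $V$, the closure $\overline{Z}$ in $Y'\times(\mathbb P^1)^n$ maps to the closure in $Y\times(\mathbb P^1)^n$, and the defining integrals $R_{\overline Z},\Omega_{\overline Z},T_{\overline Z}$ are manifestly preserved. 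Passing to cohomology, $g^*$ identifies $H^k(Y,E,\mathbb C)$ with a sub-mixed-Hodge-structure of $H^k(Y',E',\mathbb C)$ compatibly with the Hodge filtrations (using the $E_1$-degeneracy from Corollary \ref{relvt} on both sides), and dually $g_*$ identifies the two intermediate Jacobians $J^{p,2p-n-1}(V)$; the classes $[R'_Z]$ match under this identification.

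For (ii), let $h:V\to V'$ be a proper morphism of smooth quasi-projective varieties. Choose compactifications $(Y,E)$ of $V$ and $(Y',E')$ of $V'$ and, after further blowing up $Y$, extend $h$ to a morphism $\bar h:Y\to Y'$ with $\bar h^{-1}(E')\subset E$ a normal crossing divisor. The proper pushforward on higher Chow groups $h_*:\CH^p(V,n)\to\CH^{p+c}(V',n)$ (with $c=d_{V'}-d_V$, or the appropriate codimension shift) is realized at the level of cycles by $\bar h_*$ on closures, and this is compatible with pushforward of currents $\bar h_*:\Gamma(Y,\mathcal D_Y^{\bullet}(\log E))\to\Gamma(Y',\mathcal D_{Y'}^{\bullet}(\log E'))$ — here one uses that $\bar h$ is proper so pushforward of currents is defined, and that it sends $\log E$ currents to $\log E'$ currents because $\bar h^{-1}(E')\subset E$. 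The integrals defining $T_{\overline Z},\Omega_{\overline Z},R_{\overline Z}$ are preserved by $\bar h_*$ by the change-of-variables/projection formula, so $\mathcal R^{p+c}(Y',E')(h_*Z)=\bar h_*\mathcal R^p(Y,E)(Z)$; taking $[R'_{\bullet}]$ and using that $\bar h_*$ on currents induces the Gysin map on $H^{\bullet}(Y,E,\mathbb C)$ compatibly with $F$ gives $AJ_{V'}(h_*Z)=h_*AJ_V(Z)$ in $J^{p+c,\bullet}(V')$.

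For (iii), let $h:V\to V'$ be an arbitrary morphism. Form compactifications and, after blowup, a morphism $\bar h:Y\to Y'$ with $\bar h^{-1}(E')=E$ (replacing $Y$ so that $E$ contains the preimage); the pullback $h^*:\CH^p(V',n)\to\CH^p(V,n)$ is realized on cycles meeting all faces properly by pulling back the closure along $\bar h\times\mathrm{id}$ (after a moving lemma to arrange proper intersection), and this commutes with pullback of $\nu l\,E'$ forms $\bar h^*:\Gamma(Y',\mathcal A_{Y'}^{\bullet}(\nul E'))\to\Gamma(Y,\mathcal A_Y^{\bullet}(\nul E))$ — pullback of forms vanishing on $E'$ vanishes on $E=\bar h^{-1}(E')$ — and dually with a map on currents $\mathcal D_{Y'}^{\bullet}(\log E')\to\mathcal D_Y^{\bullet}(\log E)$. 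Again the defining integrals transform correctly, so $\mathcal R^p(Y,E)\circ h^*=(\bar h^*)\circ\mathcal R^p(Y',E')$, and on cohomology $\bar h^*:H^{\bullet}(V',\mathbb C)\to H^{\bullet}(V,\mathbb C)$ is a morphism of mixed Hodge structures, giving $AJ_V(h^*Z)=h^*AJ_{V'}(Z)$.

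The main obstacle throughout is the moving-lemma bookkeeping: one must arrange, possibly by further blowups of the source compactification and by Bloch's moving lemma for $\mathcal{Z}^p(-,\bullet)^{pr/-}$, that closures of cycles remain cycles meeting all faces properly \emph{and} that the morphism of pairs $\bar h:(Y,E)\to(Y',E')$ is defined with $\bar h^{-1}(E')$ of normal crossing type, so that pushforward/pullback of $\log$-currents and $\nul$-forms is defined at the chain level and the singular integrals $R_{\overline Z}(\omega)=\int_{\Gamma_{\overline Z}}\pi_X^*\omega\wedge\pi^*\Omega_{\square^n}$ of Proposition \ref{R'} make sense and transform by the projection formula; once the geometry of the compactifications is set up, the compatibility with the Hodge filtration is automatic from the $E_1$-degeneracy (Corollary \ref{relvt}) and the identifications in Definition \ref{fhodge}.
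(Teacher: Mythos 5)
Your overall architecture is the same as the paper's: for (i) compare two compactifications through a dominating one (the paper takes a desingularization of the closure of the diagonal of $V$ in $Y\times Y'$), and for (ii)--(iii) compactify the morphism to a morphism of pairs and check that the realization triple $(T_Z,\Omega_Z,R_Z)$ is compatible with pushforward, respectively pullback, of currents. However, the boundary conditions you impose are backwards, and this is not cosmetic. In (ii) the inclusion you require, $\bar h^{-1}(E')\subset E$, holds automatically (since $\bar h|_V=h$ maps $V$ into $V'$, no point of $V$ lies in $\bar h^{-1}(E')$), and it is not what makes $\bar h_*$ send $\log$ currents to $\log$ currents: a $\log E$ current is by definition a functional on $\nul E$ forms, so $\bar h_*T(\eta):=T(\bar h^*\eta)$ is defined exactly when $\bar h^*$ carries $\nul E'$ forms to $\nul E$ forms, i.e. when $E\subset\bar h^{-1}(E')$, equivalently $\bar h(E)\subset E'$ --- the paper's condition $\bar f(D)\subset E$. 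This is precisely where properness of $h$ enters (properness of $\bar h$ is automatic for projective varieties): with $Y$ a desingularized closure of the graph of $h$, properness of $h$ forces the boundary $E$ to map into $E'$. As written, your setup does not produce the pushforward of $\log$ currents and misplaces the role of the properness hypothesis.

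In (iii) the analytic maps you invoke do not exist in general. For a non-proper $h$ one cannot arrange $E\subset\bar h^{-1}(E')$ (take $V'$ projective, so $E'=\emptyset$, while $V$ is genuinely open: a $\nul E'$ form is then an arbitrary form and its pullback has no reason to vanish on $E$), so there is no map $\bar h^*:\Gamma(Y',\mathcal A_{Y'}^{\bullet}(\nul E'))\to\Gamma(Y,\mathcal A_Y^{\bullet}(\nul E))$; and even where such a map exists, its dual is a pushforward $\Gamma(Y,\mathcal D_Y^{\bullet}(\log E))\to\Gamma(Y',\mathcal D_{Y'}^{\bullet}(\log E'))$, not the contravariant map on $\log$ currents you claim ``dually''. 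The paper's mechanism is different and is what you need: restrict to the subcomplex $\mathcal Z^p(V,\bullet)^{pr/Y,pr/h}$ of cycles in good position relative to $\bar h$ (quasi-isomorphic to the full complex by Bloch's moving lemma), apply King's Gysin/pullback to the specific non-$\log$ currents $(T_{\bar Z},\Omega_{\bar Z},R_{\bar Z})$ on the compactification, and only afterwards apply the restriction map $r_{X,D}$ to obtain $\log$ currents; no pullback of $\nul$ forms or of general $\log$ currents is ever used. Your treatment of (i) is essentially the paper's argument, up to the minor point that $H^k(Y,E,\mathbb C)\to H^k(Y'',E'',\mathbb C)$ is an isomorphism of mixed Hodge structures (both compute the same intrinsic invariant of $V$), not merely an inclusion.
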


\begin{proof}

(i):Let $(Y,E)$ and $(Y',E')$ be two such compactification of $V$. Then exist another compactification $(Y'',E'')$ together
with two morphism of pairs $g:(Y'',E'')\to(Y,E)$, $g':(Y'',E'')\to(Y',E')$ such that $g\circ j''=j\circ I_V$ and $g'\circ j''=j'\circ I_V$. 
One can take $Y''\to\bar\Delta_V\subset Y\times Y'$ a desingularisation of the closure of the diagonal of $V$ inside $Y\times Y'$.

(ii): Let $f:U\to V$ be proper morphism. 
Then there exists a compactification $\bar f:X\to Y$ of $f$ such that $\bar f(D)\subset E$.
That is $\bar f$ induces a morphism of pair $\bar f:(X,D)\to(Y,E)$ and $\bar f\circ j=j\circ f$.  
Then, for $Z\in\mathcal Z^p(U,n)^{pr/X}$, we have 
\begin{equation*}
f_*(T_Z,\Omega_Z,R_Z)=r_{Y,E}(T_{\bar f_*\bar Z},\Omega_{\bar f_*\bar Z},R_{\bar f_*\bar Z}).
\end{equation*}

(iii): Let $h:U\to V$ be any morphism and $\bar h:X\to Y$ be any compactification of $h$. That is $\bar h\circ j=j\circ h$. Let 
\begin{equation*}
\mathcal Z^p(V,\bullet)^{pr/Y,pr/h}\subset\mathcal Z^p(V,\bullet)^{pr/Y}
\end{equation*}
be the subcomplex of abelian group consiting of cycles $Z=\sum_i n_i Z_i$
such that $\codim(\bar h^{-1}(\bar{Z}_i),X)=p$ for all $i$ and such that $h^{-1}(Z):=\sum_i n_ih^{-1}(Z_i)\in\mathcal Z^p(U,n)^{pr/X}$ 
(that is whose closure in $X$ intersect all faces of $X\times\square^n$ properly). By Bloch this inclusion is a quasi-isomorphism. 
Then, for $Z\in\mathcal Z^p(V,n)^{pr/Y,pr/h}$, 
considering $\bar h^{-1}(\bar Z):=\sum_i n_i\bar h^{-1}(\bar Z_i)\in\mathcal Z^p(X,n)$, we have
\begin{itemize}
\item $\supp(\overline{h^{-1}(Z)})\subset\supp(\bar h^{-1}(\bar Z))$ and 
$(T_{h^{-1}(Z)},\Omega_{h^{-1}(Z)},R_{h^{-1}(Z)})
=r_{X,D}(T_{\bar h^{-1}(\bar Z)},\Omega_{\bar h^{-1}(\bar Z)},R_{\bar h^{-1}(\bar Z)})$,
\item $\bar h^*(T_{\bar Z},\Omega_{\bar Z},R_{\bar Z})=(T_{\bar h^{-1}(\bar Z)},\Omega_{\bar h^{-1}(\bar Z)},R_{\bar h^{-1}(\bar Z)})$,
see \cite{King} for the definition of the pullback or Gynsin map for current, and
$\bar h^*[(T_{\bar Z},\Omega_{\bar Z},R_{\bar Z})]
=[\bar h^*(T_{\bar Z},\Omega_{\bar Z},R_{\bar Z})]=[(T_{\bar h^{-1}(\bar Z)},\Omega_{\bar h^{-1}(\bar Z)},R_{\bar h^{-1}(\bar Z)})]$.
\end{itemize}
Hence, for $Z\in\mathcal Z^p(V,n)_{\partial=0}^{pr/Y,pr/h}$, 
\begin{eqnarray*}
h^*[(T_Z,\Omega_Z,R_Z)]=r_{X,D}\bar h^*[(T_{\bar Z},\Omega_{\bar Z},R_{\bar Z})]&=& 
[r_{X,D}(T_{\bar h^{-1}(\bar Z)},\Omega_{\bar h^{-1}(\bar Z)},R_{\bar h^{-1}(\bar Z)})] \\
&=&[(T_{h^{-1}(Z)},\Omega_{h^{-1}(Z)},R_{h^{-1}(Z)})]\in H^{\mathcal D}_{2d_X-2p+n}(X,D).
\end{eqnarray*}

\end{proof}

%%%%%%%%%%%%%%%%%%%%%%%%%%%%%%%%%%%%%%%%%%%%%%%%%%%%%%%%%%%%%%%%%%%%%%%%%%%%%%%%%%%%%%%%%%%%%%%
\section{Relative Higher Abel Jacobi map for open morphism and infinitesimal invariants}
%%%%%%%%%%%%%%%%%%%%%%%%%%%%%%%%%%%%%%%%%%%%%%%%%%%%%%%%%%%%%%%%%%%%%%%%%%%%%%%%%%%%%%%%%%%%%%%

Let $X,S\in\SmVar(\mathbb C)$ and $f:X\to S$ be a smooth projective morphism.
Consider $U\subset X$ an open subset such that $D=X\backslash U$ has the property that 
$D_s\subset X_s$ is a normal crossing divisor (with smooth components) for all $s\in S$. 
Denote by $j:U\hookrightarrow X$ the inclusion and $f_U=f\circ j:U\to S$. Let $d=d_X-d_S$.

%============================================================================================
\subsection{The Leray fitration on the complexes of sheaves $\mathcal A_X(\log D)$, $\mathcal A_X(\nul D)$ and $\mathcal{D}_X(\log D)$ on $X^{an}$}
%=============================================================================================

The exact sequence of sheaves on $X^{an}$:
$0\to f^*\Omega^1_S\to\Omega^1_X\to\Omega^1_{X/S}\to 0$ 
gives the following exact sequences of sheaves on $X^{an}$ :
\begin{equation}
0\to \mathcal I_D\otimes_{O_X}f^*\Omega^1_S\to\Omega^1_X(\nul D)\to\Omega^1_{X/S}(\nul D)\to 0 
\end{equation}
%By definition, we have the inclusion of complexes of sheaves on $X^{an}$ :
%$\Omega_X^{\bullet}(\nul D)\subset\Omega_X^{\bullet}\subset\Omega_X^{\bullet}(\log D)$.

\begin{defi}

The complex of sheaves on $X^{an}$ $\Omega_X^{\bullet}(\nul D)$ is clearly a graded ideal of $\Omega_X^{\bullet}$ 
(see also \ref{ideal} for a stronger result).
The Leray filtration on the complexes of sheaves on $X^{an}$ :
$\Omega_X^{\bullet}(\nul D)\subset\Omega_X^{\bullet}\subset\Omega_X^{\bullet}(\log D)$.
is then defined by  
\begin{itemize}
\item $L^r\Omega_X^p(\nul D):=f^*\Omega^r_S\wedge\Omega_X^{p-r}(\nul D), \; \; \Omega^p_{X/S}(\nul D):=\Gr^0_L\Omega_X^p(\nul D)$ 
\item $L^r\Omega_X^p(\log D):=f^*\Omega^r_S\wedge\Omega_X^{p-r}(\log D), \; \; \Omega^p_{X/S}(\log D):=\Gr^0_L\Omega_X^p(\log D)$
\end{itemize}

The (holomorphic) Leray filtrations on the bicomplexes 
$(\mathcal A_X^{\bullet,\bullet}(\nul D),\partial,\bar{\partial})
\subset(\mathcal A_X^{\bullet,\bullet},\partial,\bar{\partial})
\subset(\mathcal A_X^{\bullet,\bullet}(\log D),\partial,\bar{\partial})$
of sheaves on $X^{an}$ are then defined by :
\begin{itemize}
\item $L^r\mathcal A^{p,q}_X(\nul D):=L^r\Omega_X^p(\nul D)\wedge\mathcal A^{0,q}_X\subset\mathcal A_X^{p,q}(\nul D)$, \; $L^r\mathcal A^{p,q}_X:=L^r\Omega_X^p\wedge\mathcal A^{0,q}_X\subset\mathcal A_X^{p,q}$,     
\item $L^r\mathcal A^{p,q}_X(\log D):=L^r\Omega_X^p(\log D)\wedge\mathcal A^{0,q}_X\subset\mathcal A_X^{p,q}(\log D)$. 
\end{itemize}
We denote
$\mathcal A^{p,q}_{X/S}(\nul D):=\Gr_L^0\mathcal A^{p,q}_X(\nul D)$, 
$\mathcal A^{p,q}_{X/S}:=\Gr_L^0\mathcal A^{p,q}_X$ and 
$\mathcal A^{p,q}_{X/S}(\log D):=\Gr_L^0\mathcal A^{p,q}_X(\log D)$. 
their first graded pieces.

This gives the (holomorphic) Leray fitration on its total complex $(\mathcal A_X^{\bullet}(\log D),d)$.

\end{defi}

\begin{rem}
Note that the holomorphic Leray fitrations 
$L^r\mathcal A^k_X(\nul D)=f^*\Omega_S^r\wedge\mathcal A_X^{k-r}(\nul D)\subset f^*\mathcal A_S^r\wedge\mathcal A_X^{k-r}(\nul D)$ 
$L^r\mathcal A^k_X=f^*\Omega_S^r\wedge\mathcal A_X^{k-r}\subset f^*\mathcal A_S^r\wedge\mathcal A_X^{k-r}$ 
$L^r\mathcal A^k_X(\log D)=f^*\Omega_S^r\wedge\mathcal A_X^{k-r}(\log D)\subset f^*\mathcal A_S^r\wedge\mathcal A_X^{k-r}(\log D)$ 
are include in the differential Leray filtration but not equal since we only pullback from $S$ forms with zero anti-holomorphic part.
\end{rem}

\begin{prop}\label{FLqsi1}
We get the following inclusions of bifiltered complexes of sheaves on $X^{an}$ :
\begin{equation*}
\xymatrix{
(\Omega_X^{\bullet}(\nul D),L)\ar@{^{(}->}[r]\ar@{^{(}->}[d] & (\Omega_X^{\bullet},L)\ar@{^{(}->}[r]\ar@{^{(}->}[d] 
& (\Omega_X^{\bullet}(\log D),L)\ar@{^{(}->}[d] \\ 
(\mathcal A_X^{\bullet}(\nul D),F,L)\ar@{^{(}->}[r] & (\mathcal A_X^{\bullet},F,L)\ar@{^{(}->}[r] & 
(\mathcal A_X^{\bullet}(\log D),F,L).}
\end{equation*}
\end{prop}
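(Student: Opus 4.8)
The plan is to observe that every arrow in the diagram is a literal inclusion of subsheaves, so that the proposition reduces to three checks: that these inclusions are morphisms of complexes, that they are compatible with the Leray filtration $L$ (and, on the bottom row, with the Fr\"olicher filtration $F$), and that the square commutes. The single non-formal ingredient is King's proposition \ref{ideal}(i), which asserts that $\Omega^\bullet_X(\nul D)$ is a graded ideal of $\Omega^\bullet_X(\log D)$ and $\mathcal A^{\bullet,\bullet}_X(\nul D)$ a bigraded ideal of $\mathcal A^{\bullet,\bullet}_X(\log D)$; the rest is bookkeeping.

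First I would check that the rows are morphisms of complexes. The inclusion $\Omega^\bullet_X\subset\Omega^\bullet_X(\log D)$ is $\partial$-stable by the usual local description of logarithmic forms, and $\Omega^\bullet_X(\nul D)\subset\Omega^\bullet_X(\log D)$ is a subcomplex by proposition \ref{ideal}(i); this handles the top row. On the bottom row, $\mathcal A^{\bullet,\bullet}_X\subset\mathcal A^{\bullet,\bullet}_X(\log D)$ is visibly a subbicomplex, and $\mathcal A^{\bullet,\bullet}_X(\nul D)$ is one by proposition \ref{ideal}(i) (alternatively, it is the image of $\Omega^\bullet_X(\nul D)\otimes_{O_X}(\mathcal A^{0,\bullet}_X,\bar\partial)$ under the wedge product, by proposition \ref{XD}(i)). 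The vertical arrows send a holomorphic form to itself, regarded as a form of type $(\bullet,0)$; they commute with the differentials since $\bar\partial$ annihilates holomorphic forms and $\partial$ on $\mathcal A^{\bullet,0}_X$ restricts to the holomorphic differential.

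Next I would verify the filtrations. For $L$, the definitions write $L^r$ of each of the six terms as $f^*\Omega^r_S$ wedged with the degree-$(p-r)$ component of the same sheaf (further wedged with $\mathcal A^{0,q}_X$ in the $\mathcal A$-versions); since $\Omega^{p-r}_X(\nul D)\subset\Omega^{p-r}_X\subset\Omega^{p-r}_X(\log D)$ is a chain of inclusions of $O_X$-submodules, every horizontal inclusion maps $L^r$ into $L^r$, while a vertical inclusion maps, e.g., $L^r\Omega^p_X(\nul D)=f^*\Omega^r_S\wedge\Omega^{p-r}_X(\nul D)$ into $L^r\mathcal A^{p,0}_X(\nul D)=f^*\Omega^r_S\wedge\Omega^{p-r}_X(\nul D)\wedge\mathcal A^{0,0}_X$, and similarly in the two remaining columns. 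For $F$ on the bottom row, $F^a$ is the filtration by first (holomorphic) degree, which all three horizontal inclusions preserve because they preserve bidegree; and if one additionally equips the top row with the filtration b\^ete, then $\Omega^{\bullet\geq a}_X$ lands in the first-degree-$\geq a$ part of $\mathcal A^\bullet_X$, so the vertical arrows are $(F_b,F)$-filtered too, which is the finer statement of proposition \ref{FLqsi}. Finally, commutativity of the two squares is automatic: each composite is the set-theoretic inclusion of the smallest sheaf into the largest. I expect no genuine obstacle — the only substantive fact invoked is the ideal property of $\Omega^\bullet_X(\nul D)$ from proposition \ref{ideal}(i).
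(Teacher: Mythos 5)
Your proposal is correct and follows essentially the same route as the paper, whose proof simply records that the inclusions $\Omega_X^{\bullet}(\nul D)\subset\Omega_X^{\bullet}\subset\Omega_X^{\bullet}(\log D)$ and $\mathcal A_X^{\bullet,\bullet}(\nul D)\subset\mathcal A_X^{\bullet,\bullet}\subset\mathcal A_X^{\bullet,\bullet}(\log D)$ are by definition compatible (even strictly) with the Leray filtration. Your extra verifications (stability under the differentials via proposition \ref{ideal}(i), bidegree-preservation for $F$, commutativity of the squares) are just an expansion of what the paper treats as built into the definitions.
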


\begin{proof}
By definition, the inclusion of complexes and bicomplexes of sheaves on $X^{an}$ :
\begin{equation*}
\Omega_X^{\bullet}(\nul D)\subset\Omega_X^{\bullet}\subset\Omega_X^{\bullet}(\log D) \; \; \mbox{and} \; \;
\mathcal A_X^{\bullet,\bullet}(\nul D)\subset\mathcal A_X^{\bullet,\bullet}\subset\mathcal A_X^{\bullet,\bullet}(\log D).
\end{equation*}
are by definition compatible with the Leray filtration (even strictly compatible).
\end{proof}

\begin{prop}\label{IdInProp}
Taking interior product gives the following identifications of sheaves on $X^{an}$ : 
for $0\leq r\leq d_S$ and $0\leq r\leq 2d_S$ respectively
\begin{equation}\label{IdIn}
\xymatrix{
\phi^{r,p}:\Gr_L^r\Omega^p_X(\nul D)\ar^{\sim}[r]\ar[d] & \Omega^{p-r}_{X/S}(\nul D)\otimes_{O_X}f^*\Omega^r_S\ar[d] \\
\phi^{r,p}:\Gr_L^r\Omega^p_X(\log D)\ar^{\sim}[r] & \Omega^{p-r}_{X/S}(\log D)\otimes_{O_X}f^*\Omega^r_S
}, 
\xymatrix{
\phi^{r,p,q}:\Gr_L^r\mathcal A^{p,q}_X(\nul D)\ar^{\sim}[r]\ar[d] & \mathcal A^{p-r,q}_{X/S}(\nul D)\otimes_{O_X}f^*\Omega^r_S\ar[d] \\
\phi^{r,p,q}:\Gr_L^r\mathcal A^{p,q}_X(\log D)\ar^{\sim}[r] & \mathcal A^{p-r,q}_{X/S}(\log D)\otimes_{O_X}f^*\Omega^r_S
}
\end{equation}  
which are induced by, for $V\subset X$ an open subset, 
\begin{eqnarray*}
\omega\in\Gamma(V,L^r\Omega^p_X(\log D))
\mapsto(u\in\Gamma(V,f^*(\wedge^rT_S))\mapsto <\iota(\tilde u)\omega>\in\Gamma(V,\Omega^{p-r}_{X/S}(\log D)), \\
\omega\in\Gamma(V,L^r\mathcal A^{p,q}_X(\log D))
\mapsto(u\in\Gamma(V,f^*(\wedge^rT_S))\mapsto <\iota(\tilde u)\omega>\in\Gamma(V,\mathcal A^{p-r,q}_{X/S}(\log D)),
\end{eqnarray*}
where $\tilde u\in\Gamma(V,\wedge^r T_X)$ is a relevement of $u$, that is satisfy $df(\tilde u)=u$
and $<\cdot>:L^r\mathcal A_X(\log D)\to\Gr_L^r\mathcal A_X(\log D)$ denote the quotient class map for the Leray filtration. These maps
are independent of the choice of a relevement $\tilde u$ since $\omega$ is in $L^r$ 
(thus the interior product by a wedge product of vector fields tangent to the fibers of $f$ vanishes).
\end{prop}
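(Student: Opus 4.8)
The plan is to reduce the statement to a computation in adapted local coordinates on $X^{an}$, to settle the two isomorphisms $\phi^{r,p}$ for $\Omega^p_X(\log D)$ and $\Omega^p_X(\nul D)$ first, and then to obtain the $(p,q)$-versions $\phi^{r,p,q}$ for $\mathcal A^{p,q}_X(\cdot)$ by tensoring with $\mathcal A^{0,q}_X$ over $O_X$. The first step is the local normal form: since $f$ is a smooth morphism and $D_s\subset X_s$ is a normal crossing divisor for every $s$, each point of $X^{an}$ has a neighbourhood $V$ with holomorphic coordinates $(t_1,\dots,t_{d_S},z_1,\dots,z_d)$ in which $f_{|V}$ is the projection $(t,z)\mapsto t$ and $D\cap V=V(z_1\cdots z_k)$ for some $0\le k\le d$. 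Then $df\colon T_X\to f^*T_S$ is the projection with kernel $T_{X/S}=\langle\partial/\partial z_1,\dots,\partial/\partial z_d\rangle$, so $\wedge^r(df)$ is surjective (the relevement $\tilde u$ of $u$ with $df(\tilde u)=u$ exists locally) and its kernel consists of the $r$-vectors divisible by at least one $\partial/\partial z_i$.

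On such a $V$, $\Omega^p_X(\log D)$ is $O_X$-free of rank $C_{d_X}^p$ on the wedge products of $p$ distinct generators among $\{dt_1,\dots,dt_{d_S}\}\cup\{dz_i/z_i:i\le k\}\cup\{dz_i:i>k\}$; the Leray step $L^r\Omega^p_X(\log D)$ is the submodule spanned by those monomials containing at least $r$ of the $dt_j$; and $\Omega^{p-r}_{X/S}(\log D)$ (resp. $f^*\Omega^r_S$) is $O_X$-free on the monomials of length $p-r$ in the $dz$-type generators only (resp. on the $dt_{j_1}\wedge\cdots\wedge dt_{j_r}$). Writing a local section of $L^r\Omega^p_X(\log D)$ as $\omega=\sum_{|J|=r}f^*dt_J\wedge\beta_J$ modulo $L^{r+1}$ with $\beta_J$ a fibre monomial, contraction with a relevement of $\partial/\partial t_{J'}$ picks out $\pm\beta_{J'}$ up to terms of $L^1\Omega^{p-r}_X(\log D)$: indeed, for $\omega\in L^r$ and an $r$-vector $w$ divisible by some $\partial/\partial z_i$ one checks $\iota(w)\omega\in L^1\Omega^{p-r}_X(\log D)$, so both the $L^{r+1}$-tail of $\omega$ and the relevement ambiguity of $\tilde u$ project to $0$ in $\Omega^{p-r}_{X/S}(\log D)=\Gr^0_L\Omega^{p-r}_X(\log D)$. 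Hence $\langle\iota(\tilde u)\omega\rangle$ depends only on $\langle\omega\rangle\in\Gr^r_L$ and on $u$, and the induced map $\phi^{r,p}\colon\Gr^r_L\Omega^p_X(\log D)\to\Omega^{p-r}_{X/S}(\log D)\otimes_{O_X}f^*\Omega^r_S$, $\langle f^*dt_J\wedge\beta\rangle\mapsto\beta\otimes dt_J$, is an isomorphism (inverse $\beta\otimes dt_J\mapsto\langle f^*dt_J\wedge\beta\rangle$). The identical argument carried out on the sub-ideal generated by $z_1\cdots z_k$ — equivalently, as in the proof of Proposition \ref{ideal}, replacing $dz_i/z_i$ by $dz_i$ and $dz_i$ by $z_idz_i$ for $i\le k$ — gives $\phi^{r,p}$ for $\Omega^p_X(\nul D)$, and since both are described by the same formula they fit into the first commutative square of (\ref{IdIn}).

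To pass to $(p,q)$-forms I would argue formally. By Proposition \ref{ideal}(ii) the sheaves $\Omega^p_X(\log D)$ and $\Omega^p_X(\nul D)$ are locally free of finite rank; by the previous step so are the steps $L^r$ and the graded pieces $\Gr^r_L$, hence $0\to L^{r+1}\Omega^p_X(\log D)\to L^r\Omega^p_X(\log D)\to\Gr^r_L\Omega^p_X(\log D)\to0$ is a locally split short exact sequence of $O_X$-modules and stays exact after $\otimes_{O_X}\mathcal A^{0,q}_X$. Since $\mathcal A^{p,q}_X(\log D)=\Omega^p_X(\log D)\otimes_{O_X}\mathcal A^{0,q}_X$ with $L^r\mathcal A^{p,q}_X(\log D)=L^r\Omega^p_X(\log D)\otimes_{O_X}\mathcal A^{0,q}_X$ by definition, we get $\Gr^r_L\mathcal A^{p,q}_X(\log D)=\Gr^r_L\Omega^p_X(\log D)\otimes_{O_X}\mathcal A^{0,q}_X$, and $\phi^{r,p,q}:=\phi^{r,p}\otimes\mathrm{id}_{\mathcal A^{0,q}_X}$ is the desired isomorphism. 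It agrees with the interior-product formula of the statement because $\tilde u$ is of type $(1,0)$, so $\iota(\tilde u)$ kills the anti-holomorphic factors and acts only on the $\Omega^p_X(\log D)$-component; the same applies to $\nul D$, and tensoring the first square of (\ref{IdIn}) with $\mathcal A^{0,q}_X$ yields the second.

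The one genuinely delicate point is the claim used in the second paragraph, that $\iota(w)\omega\in L^1\Omega^{p-r}_X(\log D)$ whenever $\omega\in L^r$ and $w$ is an $r$-vector divisible by a fibre direction: this is exactly where the hypothesis $\omega\in L^r$ enters (a contraction of $\omega$ along the fibres of $f$ still carries an $f^*\Omega^1_S$-factor), and it is what simultaneously makes $\phi^{r,p}$ well defined and removes the ambiguity in $\tilde u$. Everything else is bookkeeping with the explicit local frames.
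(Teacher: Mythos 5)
Your proposal is correct, but it takes a genuinely different route from the paper's proof. The paper treats the log-case graded identification as essentially formal and concentrates on the one point it regards as nontrivial: that for $\omega\in L^1\mathcal A^{p,q}_X(\nul D)$ the class $<\iota(\tilde u)\omega>$ lands in the subsheaf $\mathcal A^{p-1,q}_{X/S}(\nul D)$. This is proved intrinsically, only for $r=1$, by working at a point $x\in D$: using that $D$ is transversal to the fibres of $f$ one writes $T_xX=\mathrm{Vect}(T_xD,T_xX_{f(x)})$, deduces that $(\iota(\tilde u)\omega^p)_{|D}$ lies in $L^1\Omega^p_D$, and then corrects $\iota(\tilde u)\omega$ by a section $\omega'\in L^1\mathcal A^{p,q}_X$ so that the representative of the class becomes an honest $\nul D$ form. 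You instead put $f$ and $D$ simultaneously into a relative normal form and compute with explicit frames, which gives all $r$ at once, bijectivity with an explicit inverse, the well-definedness (independence of the relevement and of the representative modulo $L^{r+1}$) by the degree count you isolate, the $\nul D$ case via $\Omega^{\bullet}_X(\nul D)=\mathcal I_D\,\Omega^{\bullet}_X(\log D)$ from proposition \ref{ideal}, and the $(p,q)$ case by tensoring the locally split sequence with $\mathcal A^{0,q}_X$ (the paper's type-reason remark $\iota(\tilde u)\omega^{0,q}=0$ is the same observation). The price of your route is the normal-form assertion itself: coordinates in which $f$ is the projection and $D\cap V=V(z_1\cdots z_k)$ with the $z_i$ fibre coordinates exist precisely because each component of $D$ is smooth over $S$, which is where the hypothesis that $D_s\subset X_s$ is a normal crossing divisor with smooth components for every $s$ enters; this is the coordinate form of the transversality the paper invokes, and a sentence justifying it would make your argument self-contained. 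Two wording points, neither a gap: the kernel of $\wedge^r df$ is spanned by, rather than equal to the set of, $r$-vectors with a vertical factor, and the parenthetical recipe for the $\nul D$ frame is cleaner if stated as multiplying the log frame by $z_1\cdots z_k$, as in the proof of proposition \ref{ideal}(ii).
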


\begin{proof}
The only thing that is perhaps non trivial is that 
for $\omega=\omega^p\wedge\omega^{0,q}\in\Gamma(V,L^1\mathcal A_X^{p,q}(\nul D))$ and $u\in\Gamma(V,f^*T_S)$, 
$<\iota(\tilde u)\omega>\in\Gamma(V,\mathcal A_{X/S}^{p-1,q}(\nul D))$. 
We have, since $\iota(\tilde u)\omega^{0,q}=0$ for type reason,
\begin{equation*}
\iota(\tilde u)\omega=\iota(\tilde u)\omega^p\wedge\omega^{0,q}\in\Gamma(V,\mathcal A^{p-1,q}_{X}). 
\end{equation*}
We have to prove that
$\iota(\tilde u)\omega^p\in\Gamma(V,\Omega_X^{p-1})$ vanishes on the fibers $D_s=X_s\cap D\subset X$ of $f_D:D\to S$. 
This comes from the fact that $D$ is transversal to the fibers of $f$.
Indeed, let $x\in D$ and $u^{p-1}_{D,f}\in \wedge^{p-1}T_{D_s,x}$ with $s=f(x)$.
Since $D$ is transversal to the fibers of $f$, $T_xX=Vect(T_xD,T_xX_{f(x)})$. 
Hence, there exist $\lambda_D,\lambda_f\in\mathbb C$ such that 
$\tilde u(x)=\lambda_D\tilde u_D+\lambda_f\tilde u_f$ with
$\tilde u_D\in T_xD$ and $\tilde u_f\in T_xX_{f(x)}$. This gives
\begin{equation*}
\tilde u(x)\wedge u^{p-1}_{D,f}=\lambda_D\tilde u_D\wedge u^{p-1}_{D,f}+\lambda_f\tilde{u}_f\wedge u^{p-1}_{D,f}
\end{equation*}
Now,
\begin{itemize}
\item since  $\omega^p\in\Gamma(V,L^1\Omega_X^p)$, $\omega^p_{|X_{f(x)}}=0$, hence $\omega^p(x)(\lambda_f\tilde{u}_f\wedge u^{p-1}_{D,f})=0$ 
(this says that $\iota(\tilde{u})\omega$ does not depends of the choice of the relevement $\tilde u$ of $u$). 
\item since $\omega^p_{|D}=0$, $\omega^p(x)(\lambda_D\tilde{u}_D\wedge u^{p-1}_{D,f})=0$. 
\end{itemize}
Thus, $\iota(\tilde u)\omega^p(x)(u^{p-1}_{D,f})=\omega^p(x)(\tilde u(x)\wedge u^{p-1}_{D,f})=0$.  
This shows that $(\iota(\tilde u)\omega^p)_{|D_s}=0$. 
Hence, $(\iota(\tilde u)\omega^p)_{|D}=f^*\gamma\wedge\eta_D^{p-1}\in\Gamma(V\cap D,L^1\Omega_D^p)$, 
where $\gamma\in\Gamma(f(V),\Omega_S)$ and $\eta_D\in\Gamma(V\cap D,\Omega_D)$, and thus
\begin{equation*}
(\iota(\tilde u)\omega)_{|D}=f_D^*\gamma\wedge\eta_D^{p-1}\wedge\gamma^{0,q}_{|D}\in\Gamma(V\cap D,L^1\mathcal A_D^{p,q}).
\end{equation*}
Now, shrinking $V\subset X$ if necessary, there exist $\eta\in\Gamma(V,\Omega^{p-1}_X)$ such that $\eta_{|D}=\eta_D$.
Take $\omega'=f^*\gamma\wedge\eta^{p-1}\in\Gamma(V,L^1\mathcal A^{p,q}_X)$.
Then $\iota(\tilde u)\omega-\omega'\in\Gamma(V,\mathcal A^{p,q}_X(\nul D))$ and
\begin{equation*}
<\iota(\tilde u)\omega>=<\iota(\tilde u)\omega-\omega'>\in\Gamma(V,\mathcal A_{X/S}^{p-1,q}(\nul D)).
\end{equation*}
\end{proof}

\begin{rem}\label{IdInRem}

The maps $\phi^{r,p}$ and $\phi^{r,p,q}$ define morphism of complexes $\phi^{r,\bullet}$ and $\phi^{r,\bullet,\bullet}$. 
Indeed, recall that for $\eta\in\Gamma(V,\mathcal A_X^{p,q}(\log D))$ and $v\in\Gamma(V,\wedge^rT_X)$, we have 
$d\iota(v)\eta=\iota(v)d\eta+L_v\eta$, where $L_v$ is the Lie derivative. 
Now if $\omega\in\Gamma(V,L^r\mathcal A^{p,q}_X(\log D))$, we have
\begin{equation*}
\phi(d_{X/S}<\omega>)(u)=\phi(<d\omega>)(u)=<\iota(\tilde u)d\omega>=<d\iota(\tilde u)\omega>=d_{X/S}<\iota(\tilde u)\omega>, 
\end{equation*}
since $L_{\tilde u}\omega\in\Gamma(V,L^r\mathcal A^{p,q}_X(\log D))$.

\end{rem}

The Leray filtration is compatible with proposition \ref{XD} :

\begin{prop}\label{XDL}
\begin{itemize}
\item [(i)] The wedge product induces an isomorphism of filtered complexes of sheaves on $X^{an}$
\begin{equation*}
w_X:(\Omega^p_X(\nul D),L)\otimes_{O_X}\mathcal A_X^{0,\bullet}
\xrightarrow{\sim}(\mathcal A^{p,\bullet}(\nul D),L)
\end{equation*}
\item [(ii)] The inclusion of bifiltered complexes of sheaves on $X^{an}$
\begin{equation*}
(\Omega^{\bullet}_X(\nul D),F_b,L)\hookrightarrow(\mathcal A_X^{\bullet}(\nul D),F,L),
\end{equation*} 
is a bifiltered quasi-isomorphism.
\end{itemize}
\end{prop}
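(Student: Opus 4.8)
The plan is to upgrade Proposition \ref{XD} by carrying the Leray filtration $L$ through its proof verbatim, using the interior-product identifications of Proposition \ref{IdInProp} to reduce everything to the relative situation $X/S$, where the argument of \ref{XD} applies unchanged.

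For (i) there is essentially nothing new to do. By Proposition \ref{XD}(i) the map $w_X\colon\Omega^p_X(\nul D)\otimes_{O_X}(\mathcal A^{0,\bullet}_X,\bar\partial)\to(\mathcal A^{p,\bullet}_X(\nul D),\bar\partial)$ is already an isomorphism of complexes, so I only need compatibility with $L$; but by the very definition of the Leray filtration one has $L^r\mathcal A^{p,q}_X(\nul D)=L^r\Omega^p_X(\nul D)\wedge\mathcal A^{0,q}_X$, i.e. $L^r$ of the target is exactly the image under $w_X$ of $L^r\Omega^p_X(\nul D)\otimes_{O_X}\mathcal A^{0,\bullet}_X$ (the Dolbeault factor carrying trivial $L$). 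Hence $w_X$ and $w_X^{-1}$ are both $L$-filtered, so $w_X$ is an isomorphism of filtered complexes. I would also note in passing that $\bar\partial$ preserves $L$, since $f^*$ of a holomorphic form on $S$ and sections of $\Omega^{p-r}_X(\nul D)$ are $\bar\partial$-closed.

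For (ii) I would first record that the inclusion is bifiltered: compatibility with $L$ is Proposition \ref{FLqsi1}, and with $F$ versus $F_b$ it is immediate (it is part of \ref{XD}(ii)). Since all three filtrations are bounded, the standard criterion reduces the claim to showing that the inclusion induces quasi-isomorphisms on the bigraded pieces $\Gr^p_{F_b}\Gr^r_L(-)\to\Gr^p_{F}\Gr^r_L(-)$. Now I would invoke Proposition \ref{IdInProp} (and Remark \ref{IdInRem}): the maps $\phi^{r,p}$ and $\phi^{r,p,q}$ identify $\Gr^r_L\Omega^\bullet_X(\nul D)$ with $\Omega^{\bullet-r}_{X/S}(\nul D)\otimes_{O_X}f^*\Omega^r_S$ (with its b\^ete filtration in the form degree) and $\Gr^r_L\mathcal A^\bullet_X(\nul D)$ with $\Tot(\mathcal A^{\bullet-r,\bullet}_{X/S}(\nul D))\otimes_{O_X}f^*\Omega^r_S$ (with its Fr\"olicher filtration), compatibly with the inclusion; so after also taking $\Gr^p_{F_b}$ resp.\ $\Gr^p_F$ one is reduced to checking, for every $p$ and $r$, that $\Omega^{p-r}_{X/S}(\nul D)\hookrightarrow(\mathcal A^{p-r,\bullet}_{X/S}(\nul D),\bar\partial)$ is a quasi-isomorphism after tensoring over $O_X$ with $f^*\Omega^r_S$.

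This last point is the relative analogue of Proposition \ref{XD}(i)--(ii), proved the same way: $w_X$ identifies $(\mathcal A^{q,\bullet}_{X/S}(\nul D),\bar\partial)$ with $\Omega^q_{X/S}(\nul D)\otimes_{O_X}(\mathcal A^{0,\bullet}_X,\bar\partial)$ (noting $\mathcal A^{0,\bullet}_{X/S}=\mathcal A^{0,\bullet}_X$, as $L^{\geq 1}$ is trivial in bidegree $(0,q)$); since $\mathcal A^{0,\bullet}_X$ resolves $O_X$ by the Dolbeault--Grothendieck lemma and $\Omega^q_{X/S}(\nul D)$ is locally free, this exhibits $(\mathcal A^{q,\bullet}_{X/S}(\nul D),\bar\partial)$ as a resolution of $\Omega^q_{X/S}(\nul D)$, and tensoring the resulting exact sequence with the locally free sheaf $f^*\Omega^r_S$ keeps it exact. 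I do not expect a genuine obstacle: the mathematical content sits entirely in Propositions \ref{XD} and \ref{IdInProp}, already established, and the only care needed is bookkeeping — keeping the degree shift by $r$ straight in the $\phi$-identifications, and correctly invoking the reduction of a bounded bifiltered quasi-isomorphism to its bigraded pieces (the usual filtered-complex lemma applied once for $L$ and once for $F$, $F_b$).
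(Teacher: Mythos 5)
Your proof is correct, and part (i) coincides with the paper's (the isomorphism of Proposition \ref{XD}(i) is $L$-filtered by the very definition of $L$ on $\mathcal A^{p,q}_X(\nul D)$). For part (ii), however, you take a somewhat different route than the paper. The paper does not pass to Leray-graded pieces at all: it simply observes that the Dolbeault resolution of Proposition \ref{XD}(ii) respects the Leray filtration, i.e.\ for every $r,p$ one has the resolution $0\to L^r\Omega^{p}_X(\nul D)\to L^r\Omega^p_X(\nul D)\otimes_{O_X}(\mathcal A_X^{0,\bullet},\bar\partial)\xrightarrow{\sim}(L^r\mathcal A^{p,\bullet}_X(\nul D),\bar\partial)$, which checks the quasi-isomorphism directly on the filtration steps $L^rF_b^p$ versus $L^rF^p$ and needs nothing beyond the local freeness of $L^r\Omega^p_X(\nul D)$. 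You instead reduce (by the standard bounded-filtration lemma) to the bigraded pieces $\Gr^p_{F_b}\Gr^r_L$ versus $\Gr^p_F\Gr^r_L$, identify these with relative objects over $S$ via the interior-product isomorphisms $\phi^{r,p}$, $\phi^{r,p,q}$ of Proposition \ref{IdInProp} (together with Remark \ref{IdInRem} for compatibility with $\bar\partial$), prove the relative Dolbeault resolution, and tensor with the locally free sheaf $f^*\Omega^r_S$. Both arguments are sound and rest on the same engine (Dolbeault resolution of a locally free sheaf tensored with $\mathcal A^{0,\bullet}_X$); the paper's version is shorter and avoids invoking \ref{IdInProp} here, while yours makes explicit the relative graded pieces $\Omega^{p-r}_{X/S}(\nul D)$, $\mathcal A^{p-r,\bullet}_{X/S}(\nul D)$, which is exactly the form in which the statement gets used later (e.g.\ in Corollary \ref{relvtR} and Definition \ref{Hodgesub}), at the small extra cost of needing local freeness of the relative sheaves rather than of $L^r\Omega^p_X(\nul D)$.
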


\begin{proof}
(i): By proposition \ref{XD}(i), it is a morphism of complex. It is an isomorphism by definition.

(ii): This comes from (i) : we have the Dolbeau resolutions
\begin{equation*}
0\to L^r\Omega^{p}_X(\nul D)\to L^r\Omega^p_X(\nul D)\otimes_{O_X}(\mathcal A_X^{0,\bullet},\bar\partial)
\xrightarrow{\sim}(L^r\mathcal A^{p,\bullet}(\nul D),\bar\partial).
\end{equation*}

\end{proof}

We now give the definition of the Leray filtration on complexes of currents :

\begin{defi}
The Leray filtration the logaritmic complex of sheaves of currents  
$(\mathcal D_X^{\bullet,\bullet}(\log D),\partial,\bar{\partial})$ on $X^{an}$ is :
\begin{equation*}
L^r\mathcal D^{p,q}_X(\log D):=\alpha^{p,q}(L^r\Omega_X^p(\log D)\otimes_{O_X}\mathcal D^{0,q}_X)\subset\mathcal D_X^{p,q}(\log D). 
\end{equation*}
\end{defi}

By definition, the wegde product $w_X$ (\ref{wedgeR}) is compatible with the Leray filtration on gives
the morphism of filtered complexes of presheaves on $X^{an}$ :
\begin{equation*}
w_X:(\mathcal D_X^{\bullet,\bullet}(\log D),L^r)\otimes_{O_X}(\mathcal A_X^{\bullet,\bullet}(\nul D),L^s)\to 
(\mathcal D_X^{\bullet,\bullet},L^{r+s})
\end{equation*}.
In particular it induces on the first graded piece the morphism of presheaves on $X^{an}$
\begin{equation}\label{wedgeRL}
<w_X>:\mathcal D_{X/S}^{\bullet,\bullet}(\log D))\otimes_{O_X}\mathcal A_{X/S}^{\bullet,\bullet}(\nul D)\to
\mathcal D_{X/S}^{\bullet,\bullet}.
\end{equation}

\begin{rem}
For $j_V:V\hookrightarrow X$ an open subset, the pairing 
\begin{equation*}
f_{V!}j_V^*w_{X}:(\Gamma(V,\mathcal D_X^{p,q}(\log D))/L^1)\otimes_{\mathbb C}\Gamma_c(V,\mathcal A_X^{d-p,d-q}(\nul D)/L_1)\to
\mathbb C, \; \; T\otimes\omega\mapsto f_{V!}(T\wedge\omega)
\end{equation*} 
shows that 
$\Gamma(V,\mathcal D_{X/S}^{p,q}(\log(D))=\Gamma_c(V,\mathcal A^{d-p,d-q}_{X/S}(\nul D))^\vee$.
That is,
$\mathcal D^{p,q}_{X/S}(\log D):=\mathcal D^{p,q}_X(\log D)/L_1$ is
the verdier dual $\mathcal A^{d-p,d-q}_{X/S}(\nul D):\mathcal A^{p,q}_X(\nul D)/L_1$. 
\end{rem}

\begin{prop}\label{LGrReso}
For all integer $0\leq r\leq d_X$, the Dolbeau resolutions of proposition \ref{GrReso} induces resolutions
\begin{itemize}
\item (i) $0\to L^r\Omega^p_X(\log D)\to L^r\mathcal D_X^{p,\bullet}(\log D)$
\item (ii) $0\to L^r\Omega^p_X(\nul D)\to L^r\mathcal A_X^{p,\bullet}(\nul D)$
\end{itemize}
\end{prop}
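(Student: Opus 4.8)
The plan is to derive both resolutions from the Dolbeault resolutions of Proposition \ref{GrReso} by tensoring them with the locally free $O_X$-modules $L^r\Omega^p_X(\log D)$, respectively $L^r\Omega^p_X(\nul D)$. First one records that these Leray-filtered pieces are indeed locally free of finite rank over $O_X$: by the isomorphisms $\phi^{s,p}$ of Proposition \ref{IdInProp} each graded piece $\Gr^s_L\Omega^p_X(\log D)\simeq\Omega^{p-s}_{X/S}(\log D)\otimes_{O_X}f^*\Omega^s_S$ (and similarly for $\nul D$) is a tensor product of locally free sheaves, hence locally free of finite rank; since $L^r\Omega^p_X(\log D)$ is a finite iterated extension of the $\Gr^s_L\Omega^p_X(\log D)$ for $s\geq r$, it too is locally free of finite rank, and equivalently one may exhibit an explicit $O_X$-basis in local coordinates adapted simultaneously to $D$ and to $f$ exactly as in the proof of Proposition \ref{ideal}(ii). (For $r>d_S$ every sheaf in sight vanishes, and for $r=0$ the statement is Proposition \ref{GrReso}.)

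For (ii): Proposition \ref{XDL}(i) furnishes an isomorphism of $L$-filtered complexes of sheaves on $X^{an}$, namely $w_X:(\Omega^p_X(\nul D),L)\otimes_{O_X}\mathcal A^{0,\bullet}_X\xrightarrow{\sim}(\mathcal A^{p,\bullet}_X(\nul D),L)$; since the factor $\mathcal A^{0,\bullet}_X$ carries no Leray filtration, passing to the $r$-th filtration step gives an isomorphism of complexes $L^r\Omega^p_X(\nul D)\otimes_{O_X}(\mathcal A^{0,\bullet}_X,\bar\partial)\xrightarrow{\sim}(L^r\mathcal A^{p,\bullet}_X(\nul D),\bar\partial)$. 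As $L^r\Omega^p_X(\nul D)$ is locally free, hence flat over $O_X$, tensoring the exact sequence $0\to O_X\to\mathcal A^{0,\bullet}_X$ with it preserves exactness, and this is precisely the asserted resolution $0\to L^r\Omega^p_X(\nul D)\to L^r\mathcal A^{p,\bullet}_X(\nul D)$.

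For (i): by Theorem \ref{module} the map $\alpha^{p,q}:\Omega^p_X(\log D)\otimes_{O_X}\mathcal D^{0,q}_X\xrightarrow{\sim}\mathcal D^{p,q}_X(\log D)$ is an $O_X$-linear isomorphism compatible with $\bar\partial$ (as already used in the proof of Proposition \ref{GrReso}, the first factor being $\bar\partial$-closed), so $\alpha^{p,\bullet}$ is an isomorphism of complexes $(\Omega^p_X(\log D)\otimes_{O_X}\mathcal D^{0,\bullet}_X,\bar\partial)\xrightarrow{\sim}(\mathcal D^{p,\bullet}_X(\log D),\bar\partial)$; by the very definition of the Leray filtration on $\mathcal D^{\bullet,\bullet}_X(\log D)$ it carries $L^r\Omega^p_X(\log D)\otimes_{O_X}\mathcal D^{0,\bullet}_X$ isomorphically onto $L^r\mathcal D^{p,\bullet}_X(\log D)$. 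Since $(\mathcal D^{0,\bullet}_X,\bar\partial)$ is a resolution of $O_X$ by the Dolbeault--Grothendieck lemma for currents and $L^r\Omega^p_X(\log D)$ is locally free, tensoring again preserves exactness and yields the resolution $0\to L^r\Omega^p_X(\log D)\to L^r\mathcal D^{p,\bullet}_X(\log D)$. The only delicate point is the local freeness of the filtered pieces settled in the first paragraph; the rest is just tensoring a flat $O_X$-module into the Dolbeault resolutions already in hand.
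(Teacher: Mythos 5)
Your proposal is correct and follows essentially the same route as the paper: part (ii) is obtained from the Leray-compatible wedge-product isomorphism (the Dolbeault resolutions appearing in Proposition \ref{XDL}(ii)), and part (i) from the isomorphism $\alpha^{p,q}:L^r\Omega_X^p(\log D)\otimes_{O_X}\mathcal D_X^{0,\bullet}\xrightarrow{\sim}L^r\mathcal D_X^{p,\bullet}(\log D)$ built into the definition of the Leray filtration on log currents. Your extra paragraph verifying local freeness (hence flatness) of the filtered pieces only makes explicit what the paper leaves implicit.
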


\begin{proof}
The second resolution is given by proposition \ref{XDL}(ii).
The first one follows from the isomorphisms
$\alpha^{p,q}:L^r\Omega_X^p(\log D)\otimes_{O_X}\mathcal D_X^{0,\bullet}\xrightarrow{\sim} L^r\mathcal D_X^{p,\bullet}(\log D)$.
\end{proof}

\begin{prop}\label{FLqsi}
The following embeddings of bifiltered complexes of sheaves on $X^{an}$  : 
\begin{equation*}
\xymatrix{
\mathcal (\Omega_X^{\bullet}(\log D), F^{\bullet},L^{\bullet})\ar@{^{(}->}[r] &
(\mathcal A_X^{\bullet}(\log D), F^{\bullet},L^{\bullet})\ar@{^{(}->}[r]^{\iota} &
(\mathcal D_X^{\bullet}(\log D), F^{\bullet},L^{\bullet})},
\end{equation*}
is a bifiltred quasi isomorphism of complexes of sheaves.
In particular,
\begin{equation*}
\mathcal (\Omega_{X/S}^{\bullet}(\log D), F_b^{\bullet})\to
(\mathcal A_{X/S}^{\bullet}(\log D), F^{\bullet})\xrightarrow{<\iota>}
(\mathcal D_{X/S}^{\bullet}(\log D), F^{\bullet}),
\end{equation*}
where $<\iota>$ is the morphism induced by $\iota$ on $\Gr_L^0$, are filtered quasi-isomorphism.
\end{prop}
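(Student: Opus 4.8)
The plan is to deduce the bifiltered statement from the (relative) Dolbeault resolutions already established — Proposition~\ref{GrReso}, Proposition~\ref{LGrReso} and their evident $\mathcal A_X^\bullet(\log D)$-analogue — by a two-step d\'evissage: first along the Leray filtration $L$, then along the Fr\"olicher/b\^ete filtration $F$. The key point to exploit is that on each of the three complexes $\Omega_X^\bullet(\log D)$, $\mathcal A_X^\bullet(\log D)$, $\mathcal D_X^\bullet(\log D)$ the filtrations $F$ (resp.\ $F_b$) and $L$ are transverse, i.e.\ $\Gr_F^p$ and $\Gr_L^r$ commute in every (bi)degree, both being cut out compatibly with the decomposition by holomorphic degree. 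Hence, by definition of a bifiltered quasi-isomorphism, a morphism between two of these complexes is an $(F,L)$-bifiltered quasi-isomorphism precisely when, for each $r$ with $0\le r\le d_S$, the induced map on $\Gr_L^r$ is an $F$-filtered quasi-isomorphism. So the whole problem reduces to understanding the maps $\Gr_L^r(-)$.

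Next I would identify those $L$-graded pieces. Using $\phi^{r,p,q}$ from Proposition~\ref{IdInProp} together with Remark~\ref{IdInRem} for the $\log D$ bicomplex, and — for the currents — combining $\phi^{r,p}$ with the isomorphism $\alpha^{p,q}$ of Theorem~\ref{module} (noting that $L^1\Omega_X^0(\log D)=0$, so that $\mathcal D_X^{0,q}$ passes unchanged to $\Gr_L^0$), the interior-product maps should yield isomorphisms of complexes of sheaves on $X^{an}$, natural across the three complexes and compatible with both inclusions,
\begin{align*}
\Gr_L^r(\Omega_X^\bullet(\log D),F_b) &\simeq (\Omega_{X/S}^{\bullet-r}(\log D),F_b)\otimes_{O_X}f^*\Omega_S^r, \\
\Gr_L^r(\mathcal A_X^\bullet(\log D),F) &\simeq (\mathcal A_{X/S}^{\bullet-r}(\log D),F)\otimes_{O_X}f^*\Omega_S^r, \\
\Gr_L^r(\mathcal D_X^\bullet(\log D),F) &\simeq (\mathcal D_{X/S}^{\bullet-r}(\log D),F)\otimes_{O_X}f^*\Omega_S^r,
\end{align*}
with induced differential $d_{X/S}\otimes\mathrm{id}$ and induced $F$-filtration the one on the right shifted by $r$ (a shift which is irrelevant for the notion of filtered quasi-isomorphism). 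Since $S$ is smooth, $f^*\Omega_S^r$ is a locally free $O_X$-module and $-\otimes_{O_X}f^*\Omega_S^r$ is exact; therefore $\Gr_L^r$ of the two maps is an $F$-filtered quasi-isomorphism for every $r$ as soon as it is so for $r=0$. In other words, the bifiltered assertion reduces to the "in particular" assertion.

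Finally I would handle the case $r=0$ by passing to $\Gr_F$: it suffices to check that, for every $p$, the maps $\Omega_{X/S}^p(\log D)\to(\mathcal A_{X/S}^{p,\bullet}(\log D),\bar\partial)\xrightarrow{<\iota>}(\mathcal D_{X/S}^{p,\bullet}(\log D),\bar\partial)$ are quasi-isomorphisms. Here $\Omega_{X/S}^p(\log D)$ is a locally free $O_X$-module (local description of $D$ as a relative normal crossing divisor, cf.\ the proof of Proposition~\ref{IdInProp}), and by construction $\mathcal A_{X/S}^{p,q}(\log D)=\Omega_{X/S}^p(\log D)\otimes_{O_X}\mathcal A_X^{0,q}$ and $\mathcal D_{X/S}^{p,q}(\log D)=\Omega_{X/S}^p(\log D)\otimes_{O_X}\mathcal D_X^{0,q}$; so tensoring the Dolbeault resolutions $O_X\to\mathcal A_X^{0,\bullet}$ and $O_X\to\mathcal D_X^{0,\bullet}$ with the locally free sheaf $\Omega_{X/S}^p(\log D)$ — equivalently, taking $\Gr_L^0$ in Proposition~\ref{GrReso}, in Proposition~\ref{LGrReso}, and in the corresponding resolution for $\mathcal A_X^\bullet(\log D)$ — gives resolutions $0\to\Omega_{X/S}^p(\log D)\to\mathcal A_{X/S}^{p,\bullet}(\log D)$ and $0\to\Omega_{X/S}^p(\log D)\to\mathcal D_{X/S}^{p,\bullet}(\log D)$. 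Both compositions with $\Omega_{X/S}^p(\log D)\hookrightarrow\mathcal A_{X/S}^{p,\bullet}(\log D)$ being quasi-isomorphisms, two-out-of-three shows $<\iota>$ is a quasi-isomorphism on each $\Gr_F^p$, which is the claim.

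The one non-formal ingredient — and the step I expect to cause the most trouble — is the identification of the $L$-graded pieces with the relative complexes together with their differentials and $F$-filtrations: this rests on the interior-product computation of Proposition~\ref{IdInProp} (in particular that contraction with a lift of a base vector field carries the $\log D$, resp.\ $\nul D$, vanishing conditions to their relative analogues, the genuinely delicate local verification made there) and on the bigraded $\mathcal A_X^\bullet(\log D)$-module structure of the log currents from Theorem~\ref{module}, which controls the Leray filtration on $\mathcal D_X^{\bullet,\bullet}(\log D)$ via $\alpha^{p,q}$. Everything after that — the reduction to $\Gr_L^r$, then to $r=0$, then to $\Gr_F^p$, and the exactness statements — is routine homological bookkeeping.
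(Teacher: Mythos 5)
Your argument is correct, but it is organized differently from the paper's. The paper disposes of Proposition \ref{FLqsi} in one line: it invokes Proposition \ref{LGrReso}(i), i.e.\ the Leray-filtered Dolbeault resolutions $0\to L^r\Omega_X^p(\log D)\to L^r\mathcal D_X^{p,\bullet}(\log D)$ of the \emph{absolute} log sheaves (together with the analogous, essentially definitional, resolutions $0\to L^r\Omega_X^p(\log D)\to L^r\mathcal A_X^{p,\bullet}(\log D)$ coming from $L^r\mathcal A_X^{p,q}(\log D)=L^r\Omega_X^p(\log D)\wedge\mathcal A_X^{0,q}$), so that the maps are quasi-isomorphisms on each $L^r\Gr_F^p$ column directly, and the ``in particular'' statement then follows by passing to $\Gr_L^0$. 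You instead d\'evisse in the opposite order: you first pass to $\Gr_L^r$, identify the graded pieces with the relative complexes tensored by the locally free sheaf $f^*\Omega_S^r$ (via $\phi^{r,p,q}$ from Proposition \ref{IdInProp} for forms and via $\alpha^{p,q}$ of Theorem \ref{module} for currents), reduce to $r=0$, and finish with the relative Dolbeault resolutions of $\Omega_{X/S}^p(\log D)$ obtained by tensoring $O_X\to\mathcal A_X^{0,\bullet}$ and $O_X\to\mathcal D_X^{0,\bullet}$ with that locally free sheaf. Both proofs rest on the same underlying input (Dolbeault resolutions compatible with $L$, controlled for currents by the $\alpha^{p,q}$ module structure), but your route needs the extra identification of the $L$-graded pieces with the relative complexes --- an ingredient the paper's proof of this particular proposition does not use, though it is established and exploited elsewhere (for the Gauss--Manin connection) --- and in exchange it yields the ``in particular'' relative statement directly and makes the shift of $F$ by $r$ on $\Gr_L^r$ explicit; your reductions (bifiltered quasi-isomorphism checked on $\Gr_L^r$ with the induced $F$, exactness of tensoring with $f^*\Omega_S^r$, exactness of $\Gr_L^0$ of the resolutions via the long exact sequence) are all legitimate for these finite filtrations, so there is no gap.
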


\begin{proof}
This comes from proposition \ref{LGrReso}(i).
\end{proof}

%In particular, the wegde product $w_X$ (\ref{wedgeR}) is compatible with the Leray filtration on gives
%the morphism of filtred complexes of presheaves on $X^{an}$ :
%\begin{equation}
%w_X:(\mathcal D_X^{\bullet,\bullet}(\log D),L^r)\otimes_{O_X}(\mathcal A_X^{\bullet,\bullet}(\nul D),L^s)\to 
%(\mathcal D_X^{\bullet,\bullet},L^{r+s})
%\end{equation}
%In particular it induces on the first graded piece the morphism of presheaves on $X^{an}$
%\begin{equation}\label{wedgeRL}
%[w_X]:\mathcal D_{X/S}^{\bullet,\bullet}(\log D))\otimes_{O_X}\mathcal A_{X/S}^{\bullet,\bullet}(\nul D)\to
%\mathcal D_{X/S}^{\bullet,\bullet}
%\end{equation}

\begin{defi}\label{defAXDR}
Leray filtration on the complex of sheaves on $X^{an}$ $\mathcal A^{\bullet}_{X,D}$ is given by
$L^r\mathcal A^{\bullet}_{X,D}:=\Cone(i_D^*:L^r\mathcal A^{\bullet}_X\to L^r\mathcal A^{\bullet}_{D_{\bullet}})[-1]\subset\mathcal A^{\bullet}_{X,D}$, 
that is for $V\subset X$ an open subset
\begin{equation*}
\Gamma(V,L^r\mathcal A^{\bullet}_{X,D})=\Cone(i_{D_{\bullet}}^*:\Gamma(V,L^r\mathcal A^{\bullet}_X))
\to\Gamma((V\cap D)_{\bullet},L^r\mathcal A^{\bullet}_{D_{\bullet}})\subset\Gamma(V,\mathcal A^{\bullet}_{X,D})
\end{equation*}
is the subcomplex whose terms are
$\Gamma(V,L^r\mathcal A^k_{X,D})=\Gamma(V,L^r\mathcal A^k_X)\oplus(\oplus_J\Gamma(V\cap D_J,L^r\mathcal A_{D_J}^{k-\card J}))
\subset\Gamma(V,\mathcal A^k_{X,D})$.
We will consider the complex of sheaf on $X^{an}$
$\mathcal A^{\bullet}_{X,D/S}:=\Gr^0_L\mathcal A^{\bullet}_{X,D} $
Since the morphisms $f:X\to S$ and $f_D:D\to S$ are smooth projective, the spectral sequence associated to the
Fr\"olicher filtration $(f_*\mathcal A^{\bullet}_{(X,D)/S},F)$ on this complex is $E_1$ degenerate.
of sheaves on $S$ is $E_1$ degenerate.
\end{defi}

\begin{prop}\label{AXDL}
\begin{itemize}
\item [(i)] The wedge product induces an isomorphism of filtered complexes of sheaves on $X^{an}$
\begin{equation*}
w_X:(\Omega^p_{X,D},L)\otimes_{O_X}\mathcal A_X^{0,\bullet}
\xrightarrow{\sim}(\mathcal A^{p,\bullet}_{X,D},L)
\end{equation*}
\item [(ii)] The inclusion of bifiltered complexes of sheaves on $X^{an}$
\begin{equation*}
(\Omega^{\bullet}_{X,D},F_b,L)\hookrightarrow(\mathcal A_{X,D}^{\bullet},F,L),
\end{equation*} 
is a bifiltered quasi-isomorphism.
\end{itemize}
\end{prop}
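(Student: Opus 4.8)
The plan is to follow the proof of Proposition \ref{AXD}, carrying the Leray filtration $L$ along at every step, exactly as Proposition \ref{XDL} upgrades Proposition \ref{XD}. For (i): by Proposition \ref{AXD}(i) the wedge product $w_X$ is already an isomorphism of complexes of sheaves, so only its compatibility with $L$ has to be checked. The Leray filtration on $\Omega^p_{X,D}$ and on $\mathcal A^{p,q}_{X,D}$ is, by construction, defined componentwise from the Leray filtrations on $\Omega^p_X,\Omega^p_{D_J}$ and on $\mathcal A^{p,q}_X,\mathcal A^{p,q}_{D_J}$ (compare Definition \ref{defAXDR}); since the antiholomorphic factor $(\mathcal A^{0,\bullet}_X,\bar\partial)$ sits in Leray degree $0$, and since $w_X$ on $X$ and on each stratum $D_J$ sends $L^r\Omega^p\otimes_{O}\mathcal A^{0,q}$ onto $L^r\mathcal A^{p,q}$ (this is Proposition \ref{XDL}(i) applied with empty divisor to $X$, resp. to $D_J$), $w_X$ restricts for every $r$ to an isomorphism $L^r\Omega^p_{X,D}\otimes_{O_X}\mathcal A^{0,\bullet}_X\xrightarrow{\sim}L^r\mathcal A^{p,\bullet}_{X,D}$. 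Hence $w_X$ is an isomorphism of $L$-filtered complexes, which is (i).

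For (ii): first note that the inclusion is well defined as a map of bifiltered complexes — a holomorphic form of degree $\geq p$ is of type $(\geq p,0)$, hence lies in $F^p$, and Leray degrees are preserved by construction, so $F_b$ lands in $F$ and $L$ in $L$. Then I would use the commutative square of bifiltered complexes of sheaves on $X^{an}$
\begin{equation*}
\xymatrix{
(\Omega^{\bullet}_X,F_b,L)\ar[r]^{i_{D_{\bullet}}^*}\ar@{^{(}->}[d] & (a_{D_{\bullet}*}\Omega^{\bullet}_{D_{\bullet}},F_b,L)\ar@{^{(}->}[d] \\
(\mathcal A^{\bullet}_X,F,L)\ar[r]^{i_{D_{\bullet}}^*} & (a_{D_{\bullet}*}\mathcal A^{\bullet}_{D_{\bullet}},F,L)}
\end{equation*}
which already appears in the proof of Proposition \ref{AXD}(ii), now regarded as a square of bifiltered complexes. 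Its two vertical maps are bifiltered quasi-isomorphisms: on $X$ this is Proposition \ref{XDL}(ii) with empty divisor (equivalently, the Dolbeault resolution $\Omega^p_X\to(\mathcal A^{p,\bullet}_X,\bar\partial)$ is strictly compatible with $L$ and its $\Gr_L^r$ is again such a resolution, via the interior-product identifications of Proposition \ref{IdInProp}), and termwise over each stratum $D_J$ the same applies. Since $\Omega^\bullet_{X,D}$, resp. $\mathcal A^\bullet_{X,D}$, is by definition the shifted cone of the top, resp. bottom, row, and the vertical maps induce on cones precisely the inclusion in (ii), the statement follows from the stability of bifiltered quasi-isomorphisms under $\Cone$ — indeed $\Gr_F^p\Gr_L^r$ commutes with $\Cone$ and $\Cone$ preserves quasi-isomorphisms of complexes of sheaves. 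Alternatively, (ii) can be deduced from (i) just as Proposition \ref{XDL}(ii) is deduced from Proposition \ref{XDL}(i): tensoring (i) with the Dolbeault complex and passing to $\Gr_F^p\Gr_L^r$ identifies, for every $p$ and $r$, the map $\Gr_{F_b}^p\Gr_L^r\Omega^\bullet_{X,D}\to\Gr_F^p\Gr_L^r\mathcal A^\bullet_{X,D}$ with the Dolbeault resolution of the locally free $O_X$-module $\Gr_{F_b}^p\Gr_L^r\Omega^\bullet_{X,D}$, hence a quasi-isomorphism.

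The step I expect to require the most care is not conceptual but bookkeeping: checking that $F_b$ on $\Omega^\bullet_{X,D}$ really maps into $F$ on $\mathcal A^\bullet_{X,D}$, and that after passing to $\Gr_L$ the complexes involved are still Dolbeault resolutions of locally free sheaves. The latter rests on $D$ being transversal to the fibers of $f$ — so that each $D_J\to S$ is smooth and $\Gr_L^r\Omega^p_{D_J}\cong\Omega^{p-r}_{D_J/S}\otimes_{O_X} f^*\Omega^r_S$ is locally free — which is exactly the standing hypothesis that $D_s\subset X_s$ is a normal crossing divisor for every $s\in S$. Granting these compatibilities, the closure of (bifiltered) quasi-isomorphisms under $\Cone$ concludes the argument.
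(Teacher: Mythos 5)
Your argument is correct and follows essentially the same route as the paper: (i) rests on Proposition \ref{AXD}(i) together with the componentwise definition of $L$ on the relative complexes, and (ii) is obtained either from (i) via Dolbeault resolutions or from the same commutative square $(\Omega^{\bullet}_X,F_b,L)\to(a_{D_{\bullet}*}\Omega^{\bullet}_{D_{\bullet}},F_b,L)$ over $(\mathcal A^{\bullet}_X,F,L)\to(a_{D_{\bullet}*}\mathcal A^{\bullet}_{D_{\bullet}},F,L)$ with bifiltered quasi-isomorphic columns and passage to cones, which is exactly the paper's proof. Your added bookkeeping (well-definedness of the bifiltered inclusion, smoothness of each $D_J\to S$ ensuring the $\Gr_L$ pieces are still Dolbeault resolutions of locally free sheaves) only makes explicit what the paper leaves implicit.
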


\begin{proof}

(i): This is a morphism of complexes by proposition \ref{AXD}(i).

(ii): It follow from (i).We can also see (ii) directly : we have the commutative diagram
\begin{equation}
\xymatrix{
(\Omega^{\bullet}_X,F_b,L)\ar[r]^{i_{D_{\bullet}}^*}\ar@{^{(}->}[d] & (a_{D_{\bullet}}\Omega^{\bullet}_{D_{\bullet}},F_b,L)\ar@{^{(}->}[d] \\
(\mathcal A^{\bullet}_X,F,L)\ar[r]^{i_{D_{\bullet}}^*} & (a_{D_{\bullet}}\mathcal A^{\bullet}_{D_{\bullet}},F,L)
}
\end{equation}
where the columns are bifiltered quasi-isomorphisms.
\end{proof}

%===========================================================================
\subsection{$E_1$ degeneresence and duality in the relative case}
%============================================================================

The inclusion (\ref{tau}) of filtered complexes of sheaves on $X^{an}$ 
$\tau:(\mathcal{A}^{\bullet}_X(\nul D),F)\hookrightarrow(\mathcal{A}^{\bullet}_{X,D},F)$
is by definition compatible with the Leray filtration. Hence $\tau$ is an inclusion of bi filtered complexes of sheaves on $X^{an}$ 
\begin{equation*}
\tau:(\mathcal A^{\bullet}_X(\nul D),F,L)\hookrightarrow(\mathcal{A}^{\bullet}_{X,D},F,L).
\end{equation*}
Denote by $<\tau>:(\mathcal A^{\bullet}_{X/S}(\nul D),F)\hookrightarrow(\mathcal{A}^{\bullet}_{(X,D)/S},F)$
the map induced on $\Gr_L^0$.
Similary 
$t_U^c:(j_!\mathcal A^{\bullet}_U,L)\hookrightarrow(\mathcal A^{\bullet}_X(\nul D),L)$ (c.f proposition \ref{IDAXD}) and  
$l:(\Omega^{\bullet}_X(\log D),L)\hookrightarrow(j_*\mathcal A^{\bullet}_U,L)$ (c.f corollary \ref{ltcutau})  
are inclusions of filtred complexes of sheaves on $X^{an}$.
Then,

\begin{prop}\label{IDAXDR}

(i) The restriction $\tau:(\Omega^{\bullet}_X(\nul D),L)\hookrightarrow(\Omega_{X,D},L)$ of $\tau$
is a filtered quasi-isomorphism of sheaves.

(ii): Consider embeddings of bifiltered complex of sheaves on $X^{an}$ given by \ref{IDAXDiii} :
\begin{equation*}
\xymatrix{
(j_{!}\mathcal A^{\bullet}_U,F,L)\ar@{^{(}->}[r]^{t^c_U} & (\mathcal A^{\bullet}_X(\nul D),F)\ar@{^{(}->}[r]^{\tau} & (\mathcal A^{\bullet}_{X,D},F,L)}
\end{equation*}
Then $\tau$ is a bifiltered quasi-isomorphism of sheaves. 
It induces the maps of filtered complex of sheaves on $X^{an}$ :
\begin{equation*}
(j_{!}\mathcal A^{\bullet}_{U/S},F)\xrightarrow{<t^c_U>}(\mathcal A^{\bullet}_{X/S}(\nul D),F)\xrightarrow{<\tau>}(\mathcal A^{\bullet}_{(X,D)/S},F)
\end{equation*}
where $<t^c_U>$ are the morphism induced by $t^c_U$ on $\Gr_L^0$ 
and $\mathcal A^{\bullet}_{U/S}=j^*\mathcal A^{\bullet}_{X/S}=\Gr^0_L\mathcal A^{\bullet}_U$.
In particular, $<\tau>$ is a filtered quasi-isomorphism. 
The inclusion $<t^c_U>$ is quasi-isomorphism but NOT a filtered quasi-isomorphism.

(iii) The inclusion map $\tau:(\mathcal{A}^{\bullet}_X(\nul D),F,L)\hookrightarrow(\mathcal{A}^{\bullet}_{X,D},F,L)$, 
 is a bi-filtered quasi-isomorphism of complexes of presheaves, that is for all open subset $V\subset X$,
and for all integers $p,r$ the restriction
\begin{equation*}
\tau:\Gamma(V,L^rF^p\mathcal{A}_X^{\bullet}(\nul D))\hookrightarrow\Gamma(V,L^rF^p\mathcal{A}_{X,D}^{\bullet}) 
\end{equation*}
of $\tau$ are quasi-isomorphisms. 

\end{prop}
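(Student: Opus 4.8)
The plan is to run the proof of Proposition~\ref{IDAXD} again, this time carrying the Leray filtration $L$ along at every step; the only genuinely new point is that each of the comparison maps involved is \emph{strictly} compatible with $L$, and this is exactly the place where the hypothesis that $D$ is transversal to the fibres of $f$ (i.e.\ $D_s\subset X_s$ a normal crossing divisor for every $s$) is used.

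For (i), I would start from the exact Koszul-type resolution of $\Omega_X^p(\nul D)$ by $\Omega_X^p$ and the $i_{D_J*}\Omega_{D_J}^p$ (for $J\subset\{1,\ldots,s\}$) used in the proof of Proposition~\ref{IDAXD}(i), and check that it is strictly compatible with the Leray filtrations $L^r\Omega_X^p=f^*\Omega_S^r\wedge\Omega_X^{p-r}$, $L^r\Omega_X^p(\nul D)=f^*\Omega_S^r\wedge\Omega_X^{p-r}(\nul D)$ and $L^ri_{D_J*}\Omega_{D_J}^p=i_{D_J*}(f_{D_J}^*\Omega_S^r\wedge\Omega_{D_J}^{p-r})$. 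Applying $\Gr_L^r$ and invoking the identifications $\phi^{r,p}$ of Proposition~\ref{IdInProp} turns this into the analogous relative resolution of $\Omega_{X/S}^{p-r}(\nul D)$ by $\Omega_{X/S}^{p-r}$ and the $i_{D_J*}\Omega_{D_J/S}^{p-r}$, tensored over $O_X$ with the locally free sheaf $f^*\Omega_S^r$, which is exact by the same local computation as in the absolute case, $D$ being transversal to the fibres of $f$. Strictness then forces $L^r$ of the resolution to stay exact for every $r$; comparing with the $\Gr_{F_b}^p$ of $(\Omega_{X,D}^\bullet,L)$, which is the $L^r$-version of the same Koszul complex shifted by $[-p]$, shows that $\tau$ is a quasi-isomorphism on every $\Gr_{F_b}^p\Gr_L^r$, hence a bifiltered $(F_b,L)$ quasi-isomorphism; in particular this gives (i).

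For (ii), I would feed (i) together with Proposition~\ref{IDAXD}(i) into the commutative square obtained from the bifiltered quasi-isomorphisms $(\Omega_X^\bullet(\nul D),F_b,L)\hookrightarrow(\mathcal A_X^\bullet(\nul D),F,L)$ of Proposition~\ref{XDL}(ii) and $(\Omega_{X,D}^\bullet,F_b,L)\hookrightarrow(\mathcal A_{X,D}^\bullet,F,L)$ of Proposition~\ref{AXDL}(ii); this yields that $\tau:(\mathcal A_X^\bullet(\nul D),F,L)\hookrightarrow(\mathcal A_{X,D}^\bullet,F,L)$ is a bifiltered quasi-isomorphism, and taking $\Gr_L^0$ makes $<\tau>:(\mathcal A_{X/S}^\bullet(\nul D),F)\to(\mathcal A_{(X,D)/S}^\bullet,F)$ a filtered quasi-isomorphism. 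It remains to see that $<t^c_U>:j_!\mathcal A_{U/S}^\bullet\to\mathcal A_{X/S}^\bullet(\nul D)$ is a quasi-isomorphism: over $U$ it is the identity, and at $x\in D$ the stalk of the source is $0$, so one must show $(\mathcal A_{X/S}^\bullet(\nul D))_x$ is acyclic. I would obtain this by regarding $\mathcal A_{X/S}^\bullet(\nul D)$ locally over $S$ as the simple complex of a double complex whose two differentials are the fibrewise differential of $\mathcal A_{X_s}^\bullet(\nul D_s)$ and the Dolbeault differential in the base antiholomorphic directions; running the spectral sequence that takes the base $\bar\partial$-Poincar\'e lemma first reduces stalk acyclicity at $x\in D$ to that of $\mathcal A_{X_{f(x)}}^\bullet(\nul D_{f(x)})$ at $x$, which holds by Proposition~\ref{IDAXD}(ii) since $x\in D_{f(x)}$. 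That $<t^c_U>$ is not an $F$-filtered quasi-isomorphism is seen, as in Proposition~\ref{IDAXD}(ii), on $\Gr_F^p$, where it induces $j_!\Omega_{U/S}^p\to\Omega_{X/S}^p(\nul D)$, whose source and target have different stalks along $D$.

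Finally, (iii) follows from (ii) exactly as Proposition~\ref{IDAXD}(iii) follows from Proposition~\ref{IDAXD}(ii): by (ii) the map $\tau:L^rF^p\mathcal A_X^\bullet(\nul D)\to L^rF^p\mathcal A_{X,D}^\bullet$ is a quasi-isomorphism of complexes of sheaves for all $p,r$, hence so is its restriction to any open $V\subset X$; the sheaves $L^rF^p\mathcal A_X^k(\nul D)$ and $L^rF^p\mathcal A_{X,D}^k$ are modules over the sheaf of $C^\infty$-functions, hence $c$-soft and therefore $\Gamma(V,-)$-acyclic on the paracompact space $V$, so $H^k\Gamma(V,L^rF^p\mathcal A_X^\bullet(\nul D))=\mathbb H^k(V,L^rF^p\mathcal A_X^\bullet(\nul D))$ and likewise for $\mathcal A_{X,D}^\bullet$, and the quasi-isomorphism on global sections follows. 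The main obstacle I anticipate is the graded analysis in (ii) — proving that $<t^c_U>$ is a quasi-isomorphism, and more generally that $L$ stays strict through all the comparison maps — which is precisely where the transversality of $D$ to the fibres of $f$ is indispensable.
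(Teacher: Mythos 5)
Your proposal is correct and follows essentially the same route as the paper: part (i) rests on the exactness of the $L^r$-filtered Koszul-type sequence (which the paper asserts directly and which you recover from its $\Gr_L$-graded pieces via the identifications of Proposition \ref{IdInProp}), part (ii) combines this with the bifiltered quasi-isomorphisms of Propositions \ref{XDL}(ii) and \ref{AXDL}(ii) exactly as in the paper, and part (iii) is the same c-softness/hypercohomology argument. The only real difference is that you also supply a stalkwise argument for the side claim that $<t^c_U>$ is a quasi-isomorphism but not a filtered one, which the paper states without proof (deferring, as in Proposition \ref{IDAXD}, to the comparison with $j_!$); note only that your spectral-sequence reduction produces a parametrized, holomorphic-in-the-base version of the local acyclicity along $D$ rather than literally the fibre statement of Proposition \ref{IDAXD}(ii), so the absolute local computation has to be rerun with parameters.
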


\begin{proof}

(i): The sequence of complexes of sheaves on $X^{an}$
\begin{equation}\label{ARL}
0\to L^r\Omega_X^p(\nul D)\xrightarrow{t_{X,D}} L^r\Omega_X^p\xrightarrow{D_1}\bigoplus_{j=1}^s i_{D_j*}L^r\Omega_{D_j}^p
\xrightarrow{D_2}\cdots\xrightarrow{D_s}i_{D_{1,\ldots s}*}L^r\Omega_{D_{1\ldots s}}^p\to 0,
\end{equation}
is exact. This prove (i). 

(ii): By (i),
\begin{equation*}
\tau:(\Omega_X^{\bullet}(\nul D),F_b,L)\hookrightarrow(\Omega^{\bullet}_{X,D},F_b,L)
\end{equation*}
of is a bifiltered quasi-isomorphism of complexes of sheaves. 
On the other side,
\begin{itemize}
\item the inclusion $(\Omega_X^{\bullet}(\nul D),F_b,L)\hookrightarrow(\mathcal A_X^{\bullet}(\nul D),F,L)$
is a bifiltrered quasi-isomorphism of complexes of sheaves by proposition \ref{XDL} (ii)
\item the inclusion $(\Omega_{X,D}^{\bullet},F_b,L)\hookrightarrow(\mathcal A_{X,D}^{\bullet},F,L)$
is a bifiltrered quasi-isomorphism of complexes of sheaves by proposition \ref{AXDL} (ii).
\end{itemize}
Hence,
\begin{equation*}
\tau:(\mathcal A_X^{\bullet}(\nul D),F,L)\hookrightarrow(\mathcal A_{X,D}^{\bullet},F,L)
\end{equation*}
is a bifiltered quasi-isomorphism of complexes of sheaves.
This prove (ii).

(ii): By (ii), the inclusion maps of complexes of sheaves on $X^{an}$ 
\begin{equation*}
\tau:L^rF^p\mathcal A_X^{\bullet}(\nul D)\hookrightarrow L^rF^p\mathcal A_{X,D}^{\bullet} 
\end{equation*}
are quasi-isomorphism of complexes of sheaves. Thus, for all every open subset $j_V:V\hookrightarrow X$,
$j_V^*\tau:j_V^*L^rF^p\mathcal A_X^{\bullet}(\nul D)\to j_V^*L^rF^p\mathcal A_{X,D}^{\bullet}$ are quasi-isomorphism of complexes of sheaves.
Hence, for every open subset $V\subset X$, the maps 
\begin{equation*}
\tau:\mathbb H^{\bullet}(V,L^rF^p\mathcal A_X^{\bullet}(\nul D))\hookrightarrow\mathbb H^{\bullet}(V,L^rF^p\mathcal A_{X,D}^{\bullet})
\end{equation*}
are quasi-isomorphism of complexes of $\mathbb C$-vector spaces.
The sheaves $L^rF^p\mathcal A^k_X(\nul D)$, $L^rF^p\mathcal A^k_X$ and $i_{D_J*}L^rF^p\mathcal A^k_{D_J}$ are sheaves of $O^{\infty}_X$ modules on $X^{an}$, 
so are c-soft (because the existence of partition of unity) 
and thus acyclic for the global section functor on each open subset $V\subset X$ ($X^{an}$ is a denombrable union of compact subsets).
Hence, for every open subset $V\subset X$, 
\begin{equation*}
H^k\Gamma(V,L^rF^p\mathcal A_X^{\bullet}(\nul D))=\mathbb H^k(V,L^rF^p\mathcal A_X^{\bullet}(\nul D)) \; \; \mbox{and} \: \; 
H^k\Gamma(V,L^rF^p\mathcal A^{\bullet}_{X,D})=\mathbb H^k(V,L^rF^p\mathcal A^{\bullet}_{X,D}).
\end{equation*} 
This proves (iii).

\end{proof}

\begin{cor}\label{Rcohm}
The following maps of complexes of sheaves on $X^{an}$  : 
\begin{itemize}
\item $j_*f_U^*O_S\to\Omega_{X/S}^{\bullet}(\log D)\xrightarrow{<l>}j_*\mathcal A^{\bullet}_{U/S}$, and
\item $j_!f_U^*O_S\to j_!\mathcal A^{\bullet}_{U/S}\xrightarrow{<t^c_{U}>}\mathcal A^{\bullet}_{X/S}(\nul D)
\xrightarrow{<\tau>}\mathcal A^{\bullet}_{(X,D)/S}$
\end{itemize}
are quasi-isomorphisms.
\end{cor}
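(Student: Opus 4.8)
The plan is to read corollary \ref{Rcohm} as the ``relative over $S$'' form of corollary \ref{ltcutau}: one replaces the absolute complexes of forms and currents on $X^{an}$ by their relative ($\Gr^0_L$) counterparts and $\mathbb C_U$ by $f_U^*O_S$, and assembles the statement from the relative quasi-isomorphisms already available together with the relative de Rham comparison. First note that every arrow occurring between relative complexes of differential forms or of currents has in fact already been treated: $\Omega_{X/S}^{\bullet}(\log D)\hookrightarrow\mathcal A_{X/S}^{\bullet}(\log D)$ is a filtered quasi-isomorphism by proposition \ref{FLqsi}; the maps $<t^c_U>:j_!\mathcal A_{U/S}^{\bullet}\to\mathcal A_{X/S}^{\bullet}(\nul D)$ and $<\tau>:\mathcal A_{X/S}^{\bullet}(\nul D)\to\mathcal A_{(X,D)/S}^{\bullet}$ are quasi-isomorphisms by proposition \ref{IDAXDR}(ii) (the first not filtered, which does not matter here); and $\Omega_{X/S}^{\bullet}(\nul D)\hookrightarrow\mathcal A_{X/S}^{\bullet}(\nul D)$ is a filtered quasi-isomorphism by taking $\Gr^0_L$ in proposition \ref{XDL}(ii). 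So the corollary reduces to two points: (a) that $<l>:\Omega_{X/S}^{\bullet}(\log D)\to j_*\mathcal A_{U/S}^{\bullet}$ is a quasi-isomorphism, and (b) the comparison of $\mathcal A_{U/S}^{\bullet}$, resp.\ $\Omega_{X/S}^{\bullet}(\log D)$, with $f_U^*O_S$ (after $j_!$, resp.\ $Rj_*$).

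For (b) I would first check that the complex of sheaves $\mathcal A_{U/S}^{\bullet}$ on $U$ resolves $f_U^*O_S$. As $f_U$ is smooth, $U$ is locally over $S$ a product of polydiscs; there $\mathcal A_{U/S}^{\bullet}$ is the total complex of the relative Dolbeault bicomplex, taking $\Gr^0_L$ of the Dolbeault resolutions of propositions \ref{GrReso} and \ref{XDL} shows that $(\mathcal A_{U/S}^{p,\bullet},\bar\partial)$ resolves $\Omega_{U/S}^p$, and the relative holomorphic Poincar\'e lemma for the submersion $f_U$ shows that $\Omega_{U/S}^{\bullet}$ resolves $f_U^*O_S$; putting these together through the Fr\"olicher filtration gives $\mathcal A_{U/S}^{\bullet}\simeq f_U^*O_S$. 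Applying the exact functor $j_!$ gives the quasi-isomorphism $j_!f_U^*O_S\to j_!\mathcal A_{U/S}^{\bullet}$, which together with $<t^c_U>$ and $<\tau>$ settles the second chain. Applying $Rj_*$ and using that each $\mathcal A_{U/S}^q$ is a fine, hence $j_*$-acyclic, sheaf, so that $Rj_*\mathcal A_{U/S}^{\bullet}=j_*\mathcal A_{U/S}^{\bullet}$, gives $Rj_*f_U^*O_S\xrightarrow{\sim}j_*\mathcal A_{U/S}^{\bullet}$; once (a) is known, the factorisation $Rj_*f_U^*O_S\to\Omega_{X/S}^{\bullet}(\log D)\xrightarrow{<l>}j_*\mathcal A_{U/S}^{\bullet}$ and two-out-of-three give that $Rj_*f_U^*O_S\to\Omega_{X/S}^{\bullet}(\log D)$ is a quasi-isomorphism, i.e.\ the first arrow of the first chain (the symbol $j_*$ there understood in the derived sense).

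It remains to prove (a), the one step that is not formal. Since $<l>$ factors through the filtered quasi-isomorphism $\Omega_{X/S}^{\bullet}(\log D)\hookrightarrow\mathcal A_{X/S}^{\bullet}(\log D)$ of proposition \ref{FLqsi}, it is enough to show that the inclusion $\mathcal A_{X/S}^{\bullet}(\log D)\hookrightarrow j_*\mathcal A_{U/S}^{\bullet}$ is a quasi-isomorphism. This is the relative analogue of proposition \ref{Jansen} (equivalently of the absolute corollary \ref{ltcutau}): away from $D$ the two complexes coincide, and at a point of $D$ one repeats, now with the base parameters present, the local computation of King and Deligne. The hypothesis that $D_s\subset X_s$ is a normal crossing divisor for every $s$ provides, locally on $X$, relative coordinates $w_1,\dots,w_d$ over $S$ with $D=\{w_1\cdots w_r=0\}$, so that $\mathcal A_{X/S}^{\bullet}(\log D)$ is, in holomorphic degree $p$, $\Omega_{X/S}^p(\log D)\otimes_{O_X}\mathcal A_X^{0,\bullet}$ with $\Omega_{X/S}^{\bullet}(\log D)$ locally spanned by the $\tfrac{dw_i}{w_i}$ ($i\le r$) and the $dw_j$ ($j>r$), and the logarithmic de Rham comparison is the classical one with parameters.

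The main obstacle is precisely this last point: it cannot be obtained by simply applying $\Gr^0_L$ to the absolute corollary \ref{ltcutau}. Indeed, by proposition \ref{IdInProp} each graded piece $\Gr^r_L$ of the absolute complexes $\mathcal A_X^{\bullet}(\log D)$ and $j_*\mathcal A_U^{\bullet}$ is a copy of the corresponding relative complex twisted by $f^*\Omega_S^r$ and shifted by $r$, so the assertion ``$\mathcal A_X^{\bullet}(\log D)\hookrightarrow j_*\mathcal A_U^{\bullet}$ is an $L$-filtered quasi-isomorphism'' is \emph{equivalent} to the relative one rather than a consequence of it, and the relative logarithmic de Rham comparison must be established directly. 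Granting it, the rest of the corollary is a routine reassembly of the quasi-isomorphisms listed above.
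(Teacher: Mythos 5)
Your argument follows the paper's own route: the second chain is handled exactly as in the paper (the resolution $0\to f_U^*O_S\to\mathcal A^{\bullet}_{U/S}$, exactness of $j_!$, and proposition \ref{IDAXDR}(ii)), and the first chain likewise rests on that resolution together with proposition \ref{FLqsi}. The only difference is one of completeness: the paper's two-line proof of the first chain stops at citing the resolution, whereas you correctly isolate and sketch the relative logarithmic comparison $\Omega^{\bullet}_{X/S}(\log D)\to j_*\mathcal A^{\bullet}_{U/S}$ (the analogue of proposition \ref{Jansen} with parameters), which the paper leaves implicit and which, as you rightly note, is equivalent to, rather than a formal consequence of, taking $\Gr_L^0$ in corollary \ref{ltcutau}.
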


\begin{proof}
The fact that the maps of the first sequence are quasi-isomorphism comes from the resolution
$0\to f_U^*O_S\to\mathcal{A}^{\bullet}_{U/S}$.
The fact that the maps of the second sequence are quasi-isomorphism is given by proposition \ref{IDAXDR}(ii)
\end{proof}

\begin{cor}\label{relvtR}

(i) The spectral sequence associated to the filtred complex of sheaves on $S^{an}$
$(f_*\mathcal A_{X/S}^{\bullet}(\nul D)),F)$ by Fr\"olicher filtration $F$ is $E^1$ degenerate.

(ii)For all integer $k,p$, the map induced on relative hypercohomology of the quotient map 
$F^p\mathcal A _{X/S}^{\bullet}(\nul D)\to\Gr_F^p\mathcal A _{X/S}^{\bullet}(\nul D)$
\begin{equation*}
\mathcal H^kf_*F^p\mathcal A _{X/S}^{\bullet}(\nul D)\to 
\mathcal H^kf_*\Gr_F^p\mathcal A^{\bullet}_{X/S}(\nul D))=\mathcal H^{k-p}f_*\mathcal A_{X/S}^{p,\bullet}(\nul D))
= R^{k-p}f_*\Omega_{X/S}^p(\nul D)),
\end{equation*} 
given by for $W\subset S$ an open subset and $\omega\in\Gamma(X_W,F^p\mathcal{A}_{X/S}^k(\nul D))^{d_{X/S}=0}$,
\begin{equation*}
[\omega]\in\Gamma(W,\mathcal H^kF^p\mathcal A _{X/S}^{\bullet}(\nul D)) \, \mapsto \, 
[\omega^{p,k-p}]\in\Gamma(W,\mathcal H^{k-p}\mathcal A_{X/S}^{p,\bullet}(\nul D)) 
\end{equation*}
is surjective.

\end{cor}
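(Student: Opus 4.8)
The plan is to mirror the proof of Corollary \ref{relvt} in the relative setting: transport the $E_1$-degeneracy from the relative complex of the pair $(X,D)$ — which is granted in Definition \ref{defAXDR} because $f$ and $f_D$ are smooth projective — to $(f_*\mathcal A^{\bullet}_{X/S}(\nul D),F)$ along the filtered quasi-isomorphism $<\tau>$, the only new ingredient being that the global section functor used in the absolute case must be replaced by the pushforward $f_*$, which is still exact on the sheaves that occur.

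For (i): by Proposition \ref{IDAXDR}(ii), $<\tau>:(\mathcal A^{\bullet}_{X/S}(\nul D),F)\hookrightarrow(\mathcal A^{\bullet}_{(X,D)/S},F)$ is a filtered quasi-isomorphism of complexes of sheaves on $X^{an}$; in particular each $<\tau>:F^p\mathcal A^{\bullet}_{X/S}(\nul D)\to F^p\mathcal A^{\bullet}_{(X,D)/S}$ is a quasi-isomorphism. The sheaves $F^p\mathcal A^k_{X/S}(\nul D)$, as well as those appearing as direct summands in $F^p\mathcal A^{\bullet}_{(X,D)/S}$, are sheaves of $O^{\infty}_X$-modules, hence $f$-soft; since $f$ is proper, $R^qf_*$ vanishes for $q>0$ on each of them, so $f_*$ applied to either filtered complex computes $Rf_*$ (this is the relative analogue of the acyclicity remark in the proof of Corollary \ref{relvt}, and Proposition \ref{IDAXDR}(iii) gives the corresponding statement over $f$-preimages of open subsets of $S$ directly). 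Therefore $f_*<\tau>:(f_*\mathcal A^{\bullet}_{X/S}(\nul D),F)\to(f_*\mathcal A^{\bullet}_{(X,D)/S},F)$ is a filtered quasi-isomorphism of complexes of sheaves on $S^{an}$, hence induces an isomorphism of the associated spectral sequences of the Fr\"olicher filtration from the $E_1$-page onward. Since the spectral sequence of $(f_*\mathcal A^{\bullet}_{(X,D)/S},F)$ is $E_1$-degenerate by Definition \ref{defAXDR}, so is that of $(f_*\mathcal A^{\bullet}_{X/S}(\nul D),F)$.

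For (ii): by the classical dictionary for filtered complexes (cf. \cite{PS}, exactly as in Corollary \ref{relvt}(ii)), $E_1$-degeneracy of $(f_*\mathcal A^{\bullet}_{X/S}(\nul D),F)$ is equivalent to $F$-strictness, i.e. to the surjectivity of $\mathcal H^kf_*F^p\mathcal A^{\bullet}_{X/S}(\nul D)\to\mathcal H^kf_*\Gr_F^p\mathcal A^{\bullet}_{X/S}(\nul D)$ for all $k,p$. Finally, by the relative Dolbeault resolution of Proposition \ref{XDL}(ii) (equivalently Proposition \ref{LGrReso}(ii)), $\Gr_F^p\mathcal A^{\bullet}_{X/S}(\nul D)\simeq(\mathcal A^{p,\bullet}_{X/S}(\nul D),\bar\partial)[-p]$ resolves $\Omega^p_{X/S}(\nul D)[-p]$, so $\mathcal H^kf_*\Gr_F^p\mathcal A^{\bullet}_{X/S}(\nul D)=\mathcal H^{k-p}f_*\mathcal A^{p,\bullet}_{X/S}(\nul D)=R^{k-p}f_*\Omega^p_{X/S}(\nul D)$, and the explicit formula $[\omega]\mapsto[\omega^{p,k-p}]$ for the map is immediate from this identification.

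I expect the only genuine subtlety to be the first step, namely commuting $f_*$ past the passage to $F^p$ and $\Gr_F^p$ up to quasi-isomorphism, which rests entirely on the $f$-softness of the fine $O^{\infty}_X$-module sheaves involved; everything else is formal transport along $<\tau>$ together with the standard spectral-sequence formalism, the deep Hodge-theoretic input — relative $E_1$-degeneracy for the pair $(X,D)$ over $S$ — having already been absorbed into Definition \ref{defAXDR}.
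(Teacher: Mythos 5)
Your proposal is correct and follows essentially the same route as the paper: transport the $E_1$-degeneracy of $(f_*\mathcal A^{\bullet}_{(X,D)/S},F)$ (granted in definition \ref{defAXDR}) along the filtered quasi-isomorphism $f_*<\tau>$, and deduce (ii) from (i) by the standard strictness criterion together with the relative Dolbeault identification of $\Gr_F^p$. The only cosmetic difference is that the paper invokes proposition \ref{IDAXDR}(iii) directly, i.e. the presheaf-level (sections over $f^{-1}(W)$) quasi-isomorphism, whereas you rederive that step from \ref{IDAXDR}(ii) via fineness/softness and $f_*$-acyclicity of the $O^{\infty}_X$-module sheaves — which is exactly the argument already embedded in the paper's proof of \ref{IDAXDR}(iii).
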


\begin{proof}

(i) By proposition \ref{IDAXDR} (iii), the map of complexes of sheaves on $X^{an}$
$<\tau>:(\mathcal{A}_{X/S}^{\bullet}(\nul D),F)\to(\mathcal{A}_{(X,D)/S}^{\bullet},F)$ 
is a filtered quasi-isomorphism of complexes of presheaves. 
Hence, the map of complexes of sheaves on $S^{an}$
$f_*<\tau>:(f_*\mathcal{A}_{X/S}^{\bullet}(\nul D),F)\to(f_*\mathcal{A}_{(X,D)/S}^{\bullet},F)$ 
is a filtered quasi-isomorphism of complexes presheaves, hence a filtered quasi-isomorphism of complexes of sheaves.  
On the other hand the spectral sequence associated to the complex of sheaves $(f_*\mathcal{A}^{\bullet}_{(X,D)/S},F)$
is $E_1$ degenerate (see definition \ref{defAXDR}).
 Thus the spectral sequence associated to $(f_*\mathcal{A}_X^{\bullet}(\nul D),F)$ is $E_1$ degenerate.

(ii) This is a classical fact on spectral sequence that (ii) is equivalent to (i) see for example \cite{PS}.

\end{proof}

Denote by 
$H^k_{\mathbb Z}(f_U):=R^kf_{U*}\mathbb Z_U$, $H^k_{\mathbb C}(f_U):=R^kf_{U*}\mathbb C_U$, and by 
$H^k_{\mathbb Z}(f_{X,D}):=R^kf_{U!}\mathbb Z_U$, $H^k_{\mathbb C}(f_{X,D}):=R^kf_{U!}\mathbb C_U$. 
For $s\in S$, since the fiber $U_s\subset U$ is closed in $U^{an}$ and $U^{an}$ is paracompact, 
we have $(R^kf_{U*}\mathbb C)_s\xrightarrow{\sim} H^k(U_s,\mathbb C)$.  
We have the canonical quasi isomorphism
$Rf_{X,D*}\mathbb C=Rf_{U!}\mathbb C\to\Cone(Rf_*\mathbb C\to Rf_{D*}\mathbb C)[-1]$. 
On the other hand, $(R^kf_{X*}\mathbb C)_s\xrightarrow{\sim} H^k(X_s,\mathbb C)$ and $(R^kf_{D*}\mathbb C)_s\xrightarrow{\sim} H^k(D_s,\mathbb C)$
since the fibers $X_s\subset X$ and $D_s\subset D$ are closed in $X^{an}$ and $D^{an}$ respectively and $X^{an}$ and $D^{an}$ are compact (hence paracompact).  
Hence, for $s\in S$, $(R^kf_{X,D*}\mathbb C)_s\xrightarrow{\sim} H^k(X_s,D_s,\mathbb C)$.

In our situation, the $H^k_{\mathbb Z}(f_U)$ and the $H^k_{\mathbb Z}(f_{X,D})$ are local systems on $S^{an}$ 
because the maps $f:X\to S$ and $f_{D_J}:D_J\to S$ are smooth projective.
For $0\leq k\leq 2d$ (otherwise the sheaves are zero), the sheaves of $O_S$ modules 
$\mathcal H^k_S(f_U):=H^k_{\mathbb C}(f_U)\otimes_{\mathbb C_S}O_S=R^kf_{U*}\mathbb C_U\otimes_{\mathbb C_S}O_S$
are locally free and we will denote again $\mathcal H^k_S(f_U)$ the corresponding holomorphic vector bundles on $S$
For $0\leq k\leq 2d$ (otherwise the sheaves are zero), the sheaves of $O_S$ modules 
$\mathcal H^k_S(f_{X,D}):=H^k_{\mathbb C}(f_{X,D})\otimes_{\mathbb C_S}O_S=R^kf_{U!}\mathbb C_U\otimes_{\mathbb C_S}O_S$
are locally free and we will denote again $\mathcal H^k_{S}(f_{X,D})$ the corresponding holomorphic vector bundles on $S$.

\begin{prop}
We have the following isomorphisms of sheaves on $S^{an}$ :
\begin{itemize}
\item $\mathcal H^k_S(f_U)\xrightarrow{\sim}R^kf_{U*}(f^*O_S)=\mathcal H^kf_{U*}\mathcal A^{\bullet}_{U/S}
=\mathcal H^kf_*\mathcal A^{\bullet}_{X/S}(\log D)=\mathcal H^kf_*\mathcal D^{\bullet}_{X/S}(\log D)$
\item $R^kf_{U!}(f^*O_S)=\mathcal H^kf_{U!}\mathcal A^{\bullet}_{U/S}=\mathcal H^kf_*\mathcal A^{\bullet}_{X/S}(\nul D)
\xrightarrow{\sim}\mathcal H^k_{S}(f_{X,D})$.
\end{itemize}
\end{prop}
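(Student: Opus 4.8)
The plan is to obtain both chains of isomorphisms by applying $Rf_*$ to the resolutions already established, the only substantive input being the projection formula, which identifies a relative de Rham cohomology with the corresponding local system tensored with $O_S$.

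First I would record the relevant softness facts. On $U^{an}$, the relative de Rham complex $(\mathcal A^{\bullet}_{U/S},d_{U/S})$, with $d_{U/S}=\partial_{U/S}+\bar\partial$, is a resolution of $f_U^*O_S$: a $d_{U/S}$-closed function is holomorphic (since $\bar\partial f=0$) and constant along the fibres of $f_U$ (since $\partial_{U/S}f=0$), and exactness in positive degrees is the relative holomorphic Poincar\'e lemma, checked on the double complex with $\bar\partial$ in the base directions and the full fibrewise de Rham differential in the fibre directions. The sheaves $\mathcal A^{k}_{U/S}$ are $O^{\infty}_U$-modules, hence soft; as $U^{an}$ and $X^{an}$ are paracompact, a soft sheaf is $\Gamma$-acyclic on every open, hence the $\mathcal A^{k}_{U/S}$ are acyclic for $f_{U*}$ and $f_{U!}$, and, since $Rj_*$ and $Rj_!$ act trivially on them, the sheaves $j_*\mathcal A^{k}_{U/S}$ and $j_!\mathcal A^{k}_{U/S}$ on $X^{an}$ are again $f_*$-acyclic. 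Likewise $\mathcal A^{\bullet}_{X/S}(\nul D)$, $\mathcal A^{\bullet}_{X/S}(\log D)$ and $\mathcal D^{\bullet}_{X/S}(\log D)$ consist of $O^{\infty}_X$-modules (the last because the sheaf of currents is fine), hence are $c$-soft; and as $f:X^{an}\to S^{an}$ is proper, $Rf_*=Rf_!$ and all these sheaves are $f_*$-acyclic. In particular $Rf_*$ of any of the bounded-below complexes below is computed by the naive $f_*$.

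Next I would chain the quasi-isomorphisms and push forward. From the soft resolution of $f_U^*O_S$ on $U^{an}$ together with the identities $f_{U*}=f_*\circ j_*$ and $f_{U!}=f_!\circ j_!=f_*\circ j_!$ one gets
\[
R^kf_{U*}(f_U^*O_S)=\mathcal H^kf_{U*}\mathcal A^{\bullet}_{U/S}=\mathcal H^kf_*(j_*\mathcal A^{\bullet}_{U/S}),\qquad R^kf_{U!}(f_U^*O_S)=\mathcal H^kf_{U!}\mathcal A^{\bullet}_{U/S}=\mathcal H^kf_*(j_!\mathcal A^{\bullet}_{U/S}).
\]
Corollary \ref{Rcohm} supplies the quasi-isomorphisms $\Omega^{\bullet}_{X/S}(\log D)\xrightarrow{<l>}j_*\mathcal A^{\bullet}_{U/S}$ and $j_!\mathcal A^{\bullet}_{U/S}\xrightarrow{<t^c_U>}\mathcal A^{\bullet}_{X/S}(\nul D)$ on $X^{an}$, and Proposition \ref{FLqsi} supplies $\Omega^{\bullet}_{X/S}(\log D)\hookrightarrow\mathcal A^{\bullet}_{X/S}(\log D)\xrightarrow{<\iota>}\mathcal D^{\bullet}_{X/S}(\log D)$. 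Applying $Rf_*$ to these (and computing $Rf_*$ by $f_*$ on the $f_*$-acyclic complexes $j_*\mathcal A^{\bullet}_{U/S}$, $j_!\mathcal A^{\bullet}_{U/S}$, $\mathcal A^{\bullet}_{X/S}(\nul D)$, $\mathcal A^{\bullet}_{X/S}(\log D)$, $\mathcal D^{\bullet}_{X/S}(\log D)$) yields
\[
R^kf_{U*}(f_U^*O_S)=\mathcal H^kf_*\mathcal A^{\bullet}_{X/S}(\log D)=\mathcal H^kf_*\mathcal D^{\bullet}_{X/S}(\log D),\qquad R^kf_{U!}(f_U^*O_S)=\mathcal H^kf_*\mathcal A^{\bullet}_{X/S}(\nul D).
\]

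Finally I would identify the left-hand sides with $\mathcal H^k_S(f_U)$ and $\mathcal H^k_S(f_{X,D})$. Since $f$ and the $f_{D_J}$ are smooth projective, $R^{\bullet}f_*\mathbb C$ and $R^{\bullet}f_{D*}\mathbb C$ are local systems, hence so are $R^kf_{U*}\mathbb C_U$ and $R^kf_{U!}\mathbb C_U$ (the latter via $Rf_{U!}\mathbb C\simeq\Cone(Rf_*\mathbb C\to Rf_{D*}\mathbb C)[-1]$), and they have finite-dimensional stalks. As $O_S$ is flat over $\mathbb C_S$, the projection formula gives $R^kf_{U*}(f_U^*O_S)\cong R^kf_{U*}\mathbb C_U\otimes_{\mathbb C_S}O_S=\mathcal H^k_S(f_U)$ and $R^kf_{U!}(f_U^*O_S)\cong R^kf_{U!}\mathbb C_U\otimes_{\mathbb C_S}O_S=\mathcal H^k_S(f_{X,D})$, which closes both chains. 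I do not expect a serious obstacle: the essential content is already in Corollary \ref{Rcohm} and Proposition \ref{FLqsi}, and the only points requiring care are the relative Poincar\'e lemma for $(\mathcal A^{\bullet}_{U/S},d_{U/S})$ and the softness bookkeeping ensuring that $j_*$, $j_!$ and $f_*$ all preserve the relevant acyclicity --- in particular that a soft sheaf on $U^{an}$, although not soft on $X^{an}$, stays $f_*$-acyclic after being pushed forward by $j_*$.
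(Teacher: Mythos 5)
Your proposal is correct and follows essentially the same route as the paper: the paper's (very terse) proof likewise obtains the two outer isomorphisms from the projection formula and the intermediate equalities from Corollary \ref{Rcohm}. Your extra bookkeeping (the relative Poincar\'e lemma for $\mathcal A^{\bullet}_{U/S}$, softness of $j_*\mathcal A^{\bullet}_{U/S}$ and $j_!\mathcal A^{\bullet}_{U/S}$ as $O^{\infty}_X$-modules, and the local-system/local-triviality input behind the projection formula for the non-proper map $f_U$) simply makes explicit what the paper leaves implicit.
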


\begin{proof}
These two isomorphism are given by the two projection formula. The equalities comes from corollary \ref{Rcohm}.
\end{proof}

\begin{rem}
In our situation, since $H^k(f_U)$ and $H^k(f_{X,D})$ are local systems, these isomorphisms can be explicited in common local trivialisations
of the differentially locally trivial maps $f:X^{an}\to S^{an}$, $f_{D_J}:D^{an}_J\to S^{an}$.
\end{rem}

\begin{defi}\label{Hodgesub}
The Hodge filtrations on the vector bundles $\mathcal H^k_S(f_U)$ and $\mathcal H^k_S(f_{X,D})$
is the one given by the Fr\"olicher filtration $F$ on the complexes of sheaves on $S^{an}$ 
$f_*\mathcal A^{\bullet}_{X/S}(\log D)$ and $f_*\mathcal A^{\bullet}_{X/S}(\nul D)$ respectively. 
By the $E_1$ degenerescence of the spectral sequences associated to $(f_*\mathcal A^{\bullet}_{X/S}(\log D),F)$ and
$(f_*\mathcal A^{\bullet}_{X/S}(\nul D),F)$(corollary \ref{relvtR}(i)),
the following canonical surjective maps of sheaves on $S^{an}$ are isomorphisms
\begin{itemize}
\item $\mathcal H^kf_*F^p\mathcal A^{\bullet}_{X/S}(\log D)=\mathcal H^kf_*F^p\mathcal D^{\bullet}_{X/S}(\log D)
\xrightarrow{\sim}F^p\mathcal H^k_S(f_U)$
\item $\mathcal H^kf_*F^p\mathcal A^{\bullet}_{X/S}(\nul D)\xrightarrow{\sim} F^p\mathcal H^k_{S}(f_{X,D})$
\end{itemize}
and their graded pieces are
\begin{itemize}
\item $\mathcal H^{p,k-p}_S(f_U):=F^p\mathcal H^k_S(f_U)/F^{p+1}\mathcal H^k_S(f_U)
%\xrightarrow{\sim}\mathcal H^kf_*F^p\mathcal A^{\bullet}_{X/S}(\log D)/\mathcal H^kf_*F^{p+1}\mathcal A^{\bullet}_{X/S}(\log D)
\xrightarrow{\sim}\mathcal H^{p-k}f_*\mathcal A^{p,\bullet}_{X/S}(\log D)
=\mathcal H^{p-k}f_*\mathcal D^{p,\bullet}_{X/S}(\log D)=R^{k-p}f_*\Omega^p_{X/S}(\log D)$
\item $\mathcal H^{p,k-p}_S(f_{X,D}):=F^p\mathcal H^k_S(f_{X,D})/F^{p+1}\mathcal H^k_S(f_{X,D})
%\xrightarrow{\sim}\mathcal H^kf_*F^p\mathcal A^{\bullet}_{X/S}(\nul D)/\mathcal H^kf_*F^{p+1}\mathcal A^{\bullet}_{X/S}(\nul D)
\xrightarrow{\sim}\mathcal H^{p-k}f_*\mathcal A^{p,\bullet}_{X/S}(\nul D)=R^{k-p}f_*\Omega^p_{X/S}(\nul D)$
(see also corollary \ref{relvtR}(ii)).

\end{itemize}

\end{defi}

The wedge product (\ref{wedge}) is a bifiltered morphism of complexes of presheaves on $X^{an}$ :
\begin{equation*}
w_X:(\mathcal D_X^{\bullet}(\log D),F,L)\otimes_{O_X}(\mathcal A_X^{\bullet}(\nul D),F,L)\to\mathcal D_X^{\bullet}
\end{equation*}
and induces the  pairings of filtered complexes of presheaves on $S^{an}$:
\begin{itemize}
\item 
$ev_f=f_*<w_X>=<\cdot,\cdot>_{ev_f}:(f_*\mathcal D_{X/S}^{\bullet}(\log D),F)\otimes_{O_S}(f_*\mathcal A_{X/S}^{2d-\bullet}(\nul D),F)
\to (f_*\mathcal D_{X/S}^{2d},F)$,
given by, for $W\subset S$, 
$T\otimes\omega\in\Gamma(X_W,\mathcal D_{X/S}^{\bullet}(\log D))\otimes_{\mathbb C}\Gamma(X_W,\mathcal A_{X/S}^{2d-\bullet}(\nul D))
\mapsto <T,\omega>_{ev_f}=f_{X_W*}(T\wedge\omega)$
\item 
$f_*ev_X=f_*w_X=<\cdot,\cdot>_{f_*w_X}:(f_*\mathcal D_{X}^{\bullet}(\log D)/L^2,F)\otimes_{O_S}(f_*L^{d_S-1}\mathcal A_{X}^{2d_X-\bullet}(\nul D),F)
\to (f_*\mathcal D_X^{2d_X},F)$,  
given by, for $W\subset S$, 
$T\otimes\omega\in\Gamma(X_W,\mathcal D_{X}^{\bullet}(\log D))\otimes_{\mathbb C}\Gamma(X_W,\mathcal A_{X}^{2d_X-\bullet}(\nul D))
\mapsto <T,\omega>_{f_*ev_X}=f_{X_W*}(T\wedge\omega)$.
\end{itemize}

\begin{prop}\label{dualR}
\begin{itemize}
\item [(i)] The pairing of filtered complexes of presheaves on $S^{an}$:

$ev_f=<\cdot,\cdot>_{ev_f}:(f_*\mathcal D_{X/S}^{\bullet}(\log D),F)\otimes_{O_S}(f_*\mathcal A_{X/S}^{2d-\bullet}(\nul D),F)
\to (f_*\mathcal D^{2d}_{X/S},F)$,

induces on cohomology isomorphisms of sheaves on $S^{an}$ (see definition \ref{Hodgesub}):
\begin{itemize}
\item $ev_f:\mathcal H^{k}_S(f_U)/F^p\mathcal H^k_S(f_U)\xrightarrow{\sim}D^{\vee}_{O_S}(\mathcal H^{k}_S(f_{X,D}))$ and  
\item $ev_f:\mathcal H^{p,k-p}_S(f_U)\xrightarrow{\sim}D^{\vee}_{O_S}(\mathcal H^{d-p,d-k}_S(f_{X,D}))$. 
\end{itemize}
\item [(ii)] The pairing of filtered complexes of presheaves on $S^{an}$:

$f_*ev_X=<\cdot,\cdot>_{f_*ev_X}:(f_*(\mathcal D_X^{\bullet}(\log D)/L^2),F)\otimes_{O_S}(f_*L^{d_S-1}\mathcal A_X^{2d_X-\bullet}(\nul D),F)
\to (f_*\mathcal D_X^{2d_X},F)$

induces on cohomology isomorphisms of sheaves on $S^{an}$:
\begin{equation*}
f_*ev_X:R^qf_*(\Omega_X^p(\log D)/L^2)\xrightarrow{\sim}R^{d-q}f_*(L^{d_S-1}\Omega_X^{d_X-p}(\nul D)).
\end{equation*}
\end{itemize}
\end{prop}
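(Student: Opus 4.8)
The plan is to deduce both statements from the corresponding fiberwise dualities --- Poincar\'e--Lefschetz duality for the smooth projective pairs $(X_s,D_s)$ for (i) and Serre duality on the projective fibers $X_s$ for (ii) --- combined with the $E_1$ degenerescence of Corollary \ref{relvtR}, which together with properness of $f$ and of the $f_{D_J}$ (hence base change and local freeness) will promote pointwise perfectness to perfectness over $S$.

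\emph{For (i).} Recall from the remark following \eqref{wedgeRL} that $\mathcal D^{\bullet,\bullet}_{X/S}(\log D)$ is the Verdier dual of $\mathcal A^{\bullet,\bullet}_{X/S}(\nul D)$ and that $ev_f=f_*\langle w_X\rangle$ realizes this pairing. First I would check that, as a pairing of $F$-filtered complexes of presheaves on $S^{an}$, it satisfies $F^p\otimes F^{d-p+1}\to F^{d+1}(f_*\mathcal D^{2d}_{X/S})=0$, since $\mathcal D^{2d}_{X/S}=\mathcal D^{d,d}_{X/S}$ carries no $F^{\geq d+1}$; hence it descends to $\mathcal H^k_S(f_U)/F^p\otimes F^{d-p+1}\mathcal H^{2d-k}_S(f_{X,D})\to O_S$ and, on $F$-graded pieces, to $\mathcal H^{p,q}_S(f_U)\otimes\mathcal H^{d-p,d-q}_S(f_{X,D})\to O_S$ (the identifications of the cohomology sheaves being those of Definition \ref{Hodgesub} and Proposition \ref{FLqsi}). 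All these modules are locally free over $O_S$ --- the $\mathcal H^k_S$ are $\mathbb C$-local systems tensored with $O_S$, and the $\Gr_F$ are locally free thanks to the $E_1$ degenerescence --- so perfectness can be tested fiberwise. At $s\in S$, via the base change isomorphisms recalled before Definition \ref{Hodgesub}, the first pairing becomes the Poincar\'e--Lefschetz pairing $H^k(U_s,\mathbb C)/F^p\otimes F^{d-p+1}H^{2d-k}(X_s,D_s,\mathbb C)\to\mathbb C$ of Proposition \ref{dual}(i) for the smooth projective pair $(X_s,D_s)$, which is perfect; on $\Gr_F$ it becomes the Serre duality pairing $H^q(X_s,\Omega^p_{X_s}(\log D_s))\otimes H^{d-q}(X_s,\Omega^{d-p}_{X_s}(\nul D_s))\to\mathbb C$ of Proposition \ref{dual}(ii), perfect by Proposition \ref{ideal}(ii) (which identifies $\Omega^{d-p}_{X_s}(\nul D_s)\cong D^{\vee}_{O_{X_s}}(\Omega^p_{X_s}(\log D_s))\otimes K_{X_s}$) together with ordinary Serre duality on $X_s$. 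A pairing of locally free $O_S$-modules perfect at every point is perfect, giving (i).

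\emph{For (ii).} The argument is parallel, with the Leray filtration $L$ now in the role of $F$. The wedge product $w_X$ is multiplicative for $L$ (immediate from the construction, cf.\ \eqref{wedgeR} and \eqref{wedgeRL}), so the pairing $f_*ev_X=f_*w_X\colon (f_*(\mathcal D_X^{\bullet}(\log D)/L^2),F)\otimes(f_*L^{d_S-1}\mathcal A_X^{2d_X-\bullet}(\nul D),F)\to(f_*\mathcal D_X^{2d_X},F)$ is compatible with the $L^2$-quotient of the first factor and the $L^{d_S-1}$-subcomplex of the second (note $2+(d_S-1)=d_S+1$ and $L^{d_S+1}$ of the $f^*\Omega^{\bullet}_S$-twisted sheaves vanishes). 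Taking $F$-graded pieces and using the Dolbeault resolutions of Propositions \ref{GrReso} and \ref{LGrReso} to identify $\mathcal H^{q+p}f_*\Gr_F^p(\mathcal D_X^{\bullet}(\log D)/L^2)=R^qf_*(\Omega_X^p(\log D)/L^2)$, and likewise for the other factor, then taking $L$-graded pieces and using Proposition \ref{IdInProp} to identify $\Gr_L^r$ of $\Omega_X^{\bullet}(\log D)$ (resp.\ of $\Omega_X^{\bullet}(\nul D)$) with the relative complexes tensored by $f^*\Omega^r_S$, the resulting pairing on the associated graded becomes, fiberwise, the twisted Serre duality pairing between $\Omega^p_{X_s}(\log D_s)$ and $\Omega^{d-p}_{X_s}(\nul D_s)$ (from $\Gr^0_L\otimes\Gr^{d_S}_L$) and between $\Omega^{p-1}_{X_s}(\log D_s)$ and $\Omega^{d-p+1}_{X_s}(\nul D_s)$ (from $\Gr^1_L\otimes\Gr^{d_S-1}_L$). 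By the $E_1$ degenerescence of Corollary \ref{relvtR}(i) the relevant higher direct images are locally free and commute with base change, so the induced map $R^qf_*(\Omega_X^p(\log D)/L^2)\to R^{d-q}f_*(L^{d_S-1}\Omega_X^{d_X-p}(\nul D))$ is a morphism of locally free $O_S$-modules, and it is an isomorphism at each $s\in S$ by fiberwise Serre duality on $X_s$ (relative duality absorbing the $f^*K_S=f^*\Omega^{d_S}_S$ twist carried by $\Gr^{d_S}_L$), hence an isomorphism.

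The main obstacle is not conceptual but organisational: one has to (a) make sure the pairings really are compatible with the $F$-- and, in (ii), the $L$--filtrations so that they descend to the stated sub/quotient modules and to the graded pieces --- this uses only the multiplicativity of $w_X$ and the vanishings $F^{d+1}\mathcal D^{2d}_{X/S}=0$ and $L^{d_S+1}\Omega_S=0$ --- and (b) match the $f^*K_S=f^*\Omega^{d_S}_S$ twist appearing in $\Gr^{d_S}_L$ (and the wedge $f^*\Omega^1_S\otimes f^*\Omega^{d_S-1}_S\to f^*K_S$) with the dualizing sheaf of the fibers, so that the fiberwise pairing is exactly the one of Proposition \ref{dual}. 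Granting this, the $E_1$ degenerescence and properness reduce everything to the classical fiberwise dualities.
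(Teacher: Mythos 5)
Your proposal is correct and follows essentially the same route as the paper: reduce to a fibrewise statement using local freeness of the relevant $O_S$-modules (justified via the $E_1$ degenerescence), then invoke Poincar\'e duality for the pair $(X_s,D_s)$ (Proposition \ref{dual}) for (i) and Serre duality on $X_s$ through the identification $\Omega_X^{d_X-p}(\nul D)_{|X_s}\simeq D^{\vee}_{O_{X_s}}(\Omega^p_X(\log D)_{|X_s})\otimes K_{X_s}$ of Proposition \ref{ideal}(ii) for (ii). The extra bookkeeping you carry out on $F$- and $L$-compatibility and on base change is exactly what the paper leaves implicit, so there is no substantive difference.
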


\begin{proof}
(i):As these sheaves on $S^{an}$ are locally free sheaves of $O_S$ modules, it suffices to show that the evaluation of the induced maps
at every point $s\in S$ are isomorphisms.
But this is Poincare duality for the pair $(X_s,D_s)$ (c.f proposition \ref{dual}). 

(ii): As in (i), since these sheaves on $S^{an}$ are locally free sheaves of $O_S$ modules, 
it suffices to show that the evaluation of the induced maps at every point $s\in S$ are isomorphisms.
But this is Serre duality for $X_s$ since 
\begin{equation}
\Omega^p_X(\log D)_{|X_s}=D_{O_{X_s}}^{\vee}(\Omega_X^{d_X-p}(\nul D)_{|X_s})\otimes K_{X|X_s}
=D_{O_{X_s}}^{\vee}(\Omega_X^{d_X-p}(\nul D)_{|X_s})\otimes K_{X_s}
\end{equation}
by proposition \ref{ideal}(ii) for $X$, and the fact that $K_{X|X_s}\simeq K_{X_s}$. 

\end{proof}

\begin{prop}\label{RestCur} 
For $s\in S$ and
\begin{itemize}
\item 
$T\in\Gamma(W(s),f_*\mathcal D^{\bullet}_{X/S}(\log D))^{d_{X/S}=0}$ and 
$T'\in\Gamma(W(s),f_*\mathcal D^{\bullet,\bullet}_{X/S}(\log D))^{\bar\partial_{X/S}=0}$, 
whose restriction to the fibers of $f$ is proper 
(c.f.\cite{King} for the definition of the pullback or Gynsin map for currents)
\item $\eta\in\Gamma(W(s),f_*\mathcal A^{2d-\bullet}_{X/S}(\nul D))^{d_{X/S}=0}$ and
$\eta'\in\Gamma(W(s),f_*\mathcal A^{d-\bullet,d-\bullet}_{X/S}(\nul D))^{\bar\partial_{X/S}=0}$,
\end{itemize}
where $W(s)\subset S$ is a neighborhood of $s$ in $S$, we have
\begin{equation*}
<T,\eta>_{ev_f}(s)=X_s\cdot T(\eta)=<T_{|X_s},\eta_{|X_s}>_{ev_{X_s}} \; \; \mbox{and} \; \; 
<T',\eta'>(s)=X_s\cdot T'(\eta')=<T'_{|X_s},\eta'_{|X_s}>_{ev_{X_s}}. 
\end{equation*}
This gives on cohomology
\begin{equation*}
<[T],[\eta]>_{ev_f}(s)=<[T](s),[\eta](s)>_{ev_{X_s}} \; \; \mbox{and} \; \; 
<[T'],[\eta']>(s)=<[T'](s),[\eta'](s)>_{ev_{X_s}}. 
\end{equation*}
In particular, if $\omega\in\Gamma(W(s),f_*\mathcal A^{\bullet}_{X/S}(\log D))^{d_{X/S}=0}$
and $\omega'\in\Gamma(W(s),f_*\mathcal A^{\bullet,\bullet}_{X/S}(\log D))^{\bar\partial_{X/S}=0}$  
are log forms, then $<[\omega],[\eta]>(s)=\int_{X_s}\omega\wedge\eta$ 
and $<[\omega'],[\eta']>(s)=\int_{X_s}\omega'\wedge\eta')$. 
\end{prop}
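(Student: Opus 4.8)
The plan is to unwind the definition $ev_f=f_*<w_X>$, reduce to a fibre computation over a neighbourhood of $s$, and identify the relative pushforward of a top relative current with fibrewise integration; the cohomological statement and the ``in particular'' clause will then follow formally. First I would recall that the wedge product $w_X$ of (\ref{wedge}) is strictly compatible with the Leray filtrations, so it induces on $\Gr_L^0$ the relative wedge product $<w_X>$ of (\ref{wedgeRL}), and that by construction $<\cdot,\cdot>_{ev_f}=f_*<w_X>$, where the relative pushforward $f_*\colon f_*\mathcal D^{2d}_{X/S}\to O_S$ on top relative currents is, pointwise over $s$, ``pair against the constant function $1$'', i.e. integration over $X_s$. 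Thus over a neighbourhood $W(s)$ the section $<T,\eta>_{ev_f}$ is $f_{X_{W(s)}*}(T\wedge\eta)$, and what remains is to evaluate it at a point $t\in W(s)$.

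Next I would shrink $W(s)$ so that $f\colon X_{W(s)}\to W(s)$ and all the $f_{D_J}\colon D_{J,W(s)}\to W(s)$ admit a common differentiable trivialization, i.e. a diffeomorphism $X_{W(s)}\simeq X_s\times W(s)$ carrying each $D_{J,W(s)}$ to $D_{J,s}\times W(s)$. Under this, a section over $W(s)$ of $f_*\mathcal D^{\bullet}_{X/S}(\log D)$ (resp. $f_*\mathcal A^{\bullet}_{X/S}(\nul D)$) becomes a $W(s)$-family of $\log D_s$ currents (resp. $\nul D_s$ forms) on $X_s$, the operators $d_{X/S}$ and $\bar\partial_{X/S}$ become the fibrewise ones, the relative wedge product becomes $t\mapsto T_{|X_t}\wedge\eta_{|X_t}$, and $f_{X_{W(s)}*}$ on top relative currents becomes $t\mapsto a_{X_t*}(\cdot)=(\cdot)(1)$. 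The hypothesis that $T$ (resp. $T'$) restricts properly to the fibres guarantees, via the Gysin pullback for currents of \cite{King}, that $T_{|X_t}$ (resp. $T'_{|X_t}$) is a well-defined current on $X_t$ agreeing with the evaluation of the family at $t$. Combining these identifications gives $<T,\eta>_{ev_f}(t)=a_{X_t*}(T_{|X_t}\wedge\eta_{|X_t})=T_{|X_t}(\eta_{|X_t})=<T_{|X_t},\eta_{|X_t}>_{ev_{X_t}}$, by the definition $ev_{X_t}=a_{X_t*}w_{X_t}$ of proposition \ref{dual}; the same computation with $\bar\partial_{X/S}$ in place of $d_{X/S}$ (the wedge now landing in $\mathcal D^{d,d}_{X/S}$ and computing the Serre pairing) yields the identity for $T'$ and $\eta'$.

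For the cohomological statement I would invoke that the canonical isomorphisms $(\mathcal H^kf_*\mathcal D^{\bullet}_{X/S}(\log D))_s\xrightarrow{\sim}H^k(U_s,\mathbb C)$ and $(\mathcal H^{2d-k}f_*\mathcal A^{\bullet}_{X/S}(\nul D))_s\xrightarrow{\sim}H^{2d-k}(X_s,D_s,\mathbb C)$ of definition \ref{Hodgesub} are given by restricting a representing current, resp. form, to $X_s$; since restriction commutes with $d_{X/S}$ (resp. $\bar\partial_{X/S}$), closed restricts to closed and exact to exact, so the pointwise identity descends to $<[T],[\eta]>_{ev_f}(s)=<[T](s),[\eta](s)>_{ev_{X_s}}$ and likewise for $[T']$, $[\eta']$. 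Finally, a $\log D$ form $\omega$, viewed as a current through the embedding $\iota$ of theorem \ref{module}, restricts properly to every fibre with $\omega_{|X_t}$ the ordinary restriction, and $\iota(\omega)_{|X_t}(\eta_{|X_t})=\int_{X_t}\omega_{|X_t}\wedge\eta_{|X_t}$ by the very definition of $\iota$ (and similarly in the Dolbeault case), giving $<[\omega],[\eta]>(s)=\int_{X_s}\omega\wedge\eta$ and $<[\omega'],[\eta']>(s)=\int_{X_s}\omega'\wedge\eta'$.

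The step I expect to be the main obstacle is exactly the identification of the current-theoretic restriction to a fibre (the Gysin pullback of \cite{King}) with the fibrewise evaluation of a relative current, and its compatibility with the canonical isomorphism $(\mathcal H^kf_*\mathcal D^{\bullet}_{X/S}(\log D))_s\simeq H^k(U_s,\mathbb C)$: this is the only non-formal point, and it is precisely where the properness hypothesis on $T$ and $T'$ is used.
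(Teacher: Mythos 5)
Your proposal is correct and follows essentially the same route as the paper: the paper's entire proof is the citation ``See \cite[proposition 3.2.2]{King}'', i.e.\ it delegates exactly the point you isolate as the only non-formal step (that the Gysin/slice restriction $X_s\cdot T$ of a current with proper fibrewise restriction computes the pointwise value of $f_*(T\wedge\eta)$, compatibly with the stalk identifications) to King's slicing result. Your surrounding reductions — unwinding $ev_f=f_*<w_X>$, the local differentiable trivialization of $(X,D)$ over $W(s)$, descent to cohomology since restriction commutes with $d_{X/S}$ and $\bar\partial_{X/S}$, and the treatment of log forms via $\iota$ — are the formal scaffolding the paper leaves implicit.
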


\begin{proof}
See \cite[proposition 3.2.2]{King}.
\end{proof}

%============================================================
\subsection{The Gauss-Manin connexion}
%=============================================================

We have the the commutative diagram of filtered complexes of sheaves on $X^{an}$, $F$ being the Fr\"olicher filtration on $F_b$ the filtration bête,
\begin{equation}\label{GM}
\xymatrix{
0\ar[r] & (\Gr_L^1\Omega^{\bullet}_X(\log D),F_b)\ar[r]^{r^{\vee}}\ar[d] & (\Omega^{\bullet}_X(\log D)/L^2,F_b)\ar[r]^{q}\ar[d] 
& (\Omega^{\bullet}_{X/S}(\log D),F_b)\ar[r]\ar[d] & 0 \\
0\ar[r] & (\Gr_ L^1\mathcal{A}^{\bullet}_X(\log D),F)\ar[r]^{r^{\vee}}\ar[d] & (\mathcal{A}^{\bullet}_{X}(\log D)/L^2,F)\ar[r]^{q}\ar[d] 
& (\mathcal{A}^{\bullet}_{X/S}(\log D),F)\ar[r]\ar[d] & 0 \\
0\ar[r] & (\Gr_ L^1\mathcal{D}^{\bullet}_X(\log D),F)\ar[r]^{r^{\vee}} & (\mathcal{D}^{\bullet}_{X}(\log D)/L^2,F)\ar[r]^{q} 
& (\mathcal{D}^{\bullet}_{X/S}(\log D),F)\ar[r] & 0}
\end{equation}
where the row are by definition exact sequences of filtered complexes 
(the embedding $r^{\vee}=\Gr_L^1\hookrightarrow L^0/L^2$ is the quotient of the inclusion $L^1\subset L^0$ by $L^2$ 
and $q:L^0/L^2\to\Gr_L^0$ is the projection )
and the column are filtered quasi-isomorphisms by proposition \ref{FLqsi}.

Consider also the commutative diagram of filtered complexes of sheaves on $X^{an}$ whose rows are exact :
\begin{equation}\label{GMc}
\xymatrix{
0\ar[r] & (\Gr_L^1\mathcal A^{\bullet}_X(\nul D),F)\ar[r]^{r^{\vee}}\ar[d] & (\mathcal A^{\bullet}_X(\nul D)/L^2,F)\ar[r]^{q}\ar[d] 
& (\mathcal A^{\bullet}_{X/S}(\nul D),F)\ar[r]\ar[d] & 0 \\
0\ar[r] & (\Gr_ L^1\mathcal{A}^{\bullet}_X,F)\ar[r]^{r^{\vee}}\ar[d] & (\mathcal{A}^{\bullet}_{X}/L^2,F)\ar[r]^{q}\ar[d] 
& (\mathcal{A}^{\bullet}_{X/S},F)\ar[r]\ar[d] & 0 \\
0\ar[r] & (\Gr_ L^1\mathcal{A}^{\bullet}_X(\log D),F)\ar[r]^{r^{\vee}} & (\mathcal{A}^{\bullet}_{X}(\log D)/L^2,F)\ar[r]^{q} 
& (\mathcal{A}^{\bullet}_{X/S}(\log D),F)\ar[r] & 0} 
\end{equation}

\begin{defi}\label{GMdefi}
The Gauss Manin connexions of the bundles $\mathcal H^k_S(f_U)$, $\mathcal H^k_S(f_{X,D})$ respectively, 
are induced by the connecting morphism associated 
to the long cohomological exact sequence of last, respectively first, row of the diagram (\ref{GMc})
\begin{itemize}
\item $\nabla : F^p\mathcal H_S^k(f_U)\to\mathcal H^{k+1}f_*\Gr_L^1F^p\mathcal A^{\bullet}_X(\log D) 
=F^{p-1}\mathcal H^{k}_S(f_U)\otimes_{O_S}\Omega_S$,
\item $\nabla : F^p\mathcal H_S^k(f_{X,D})\to\mathcal H^{k+1}f_*\Gr_L^1F^p\mathcal A^{\bullet}_X(\nul D) 
=F^{p-1}\mathcal H^{k}_S(f_{X,D})\otimes_{O_S}\Omega_S$,
\end{itemize}
where the above equalities are given by the identifications (\ref{IdIn}) of propsition \ref{IdInProp} (see also remark \ref{IdInRem})
and by the projection formula ($X^{an}$ being paracompact the canonical map of sheaves on $S^{an}$
$f_*F\otimes_{C_X}G\to f_*(F\otimes f^*G)$ is an isomorphism).

Hence, for $W\subset S$ an open subset,
$\omega\in\Gamma(W,f_*F^p\mathcal A^{\bullet}_{X/S}(\log D))^{d_{X/S}=0}
=\Gamma(X_W,F^p\mathcal A^{\bullet}_{X/S}(\log D))^{d_{X/S}=0}$, 
$\eta\in\Gamma(W,f_*F^p\mathcal A^{\bullet}_{X/S}(\nul D))^{d_{X/S}=0}
=\Gamma(X_W,F^p\mathcal A^{\bullet}_{X/S}(\nul D))^{d_{X/S}=0}$ 
and $u\in\Gamma(W,T_S)$,
\begin{equation*}
\nabla_u([\omega])=\phi^{1,\bullet,\bullet}([d\omega])=[<\iota(\tilde{u})d\omega>] \; \; \mbox{and} \; \; 
\nabla_u([\eta])=\phi^{1,\bullet,\bullet}([d\eta])=[<\iota(\tilde{u})d\eta>],
\end{equation*}
where $\tilde u\in\Gamma(X_W,T_X)$ is a relevement of $u$ (i.e. $df(\tilde u)=u$). 
\end{defi}

\begin{rem}
The diagram (\ref{GMc}) and the identifications (\ref{IdIn}) of proposition \ref{IdInProp} induces the commutative diagrams
\begin{equation}
\xymatrix{
 \, & \nabla : F^p\mathcal H_S^k(f_{X,D})\ar[ld]\ar[dd]\ar[r] & F^{p-1}\mathcal H^k_S(f_{X,D})\otimes_{O_S}\Omega_S\ar[ld]\ar[dd] \\
\nabla : F^p\mathcal H_S^k(f)\ar[r]\ar[rd] & F^{p-1}\mathcal H^k_S(f)\otimes_{O_S}\Omega_S\ar[rd] & \, \\
\, & \nabla : F^p\mathcal H_S^k(f_{U})\ar[r] & F^{p-1}\mathcal H^k_S(f_{U})\otimes_{O_S}\Omega_S}
\end{equation}
\end{rem}

\begin{defiprop}
Let $\bar\nabla$  the morphism induced by $\nabla$ on graded pieces.
\begin{itemize}
\item $\bar\nabla :\mathcal H_S^{p,k-p}(f_U)\to\mathcal H_S^{p-1,k-p+1}(f_U)\otimes_{O_S}\Omega_S$ 
\item $\bar\nabla :\mathcal H_S^{p,k-p}(f_{X,D})\to\mathcal H_S^{p-1,k-p+1}(f_{X,D})\otimes_{O_S}\Omega_S$,
\end{itemize}
Then, for $W\subset S$ an open subset,
$\omega'\in\Gamma(W,f_*\mathcal A^{p,k-p}_{X/S}(\log D))^{\bar\partial_{X/S}=0}=
\Gamma(X_W,\mathcal A^{p,\bullet}_{X/S}(\log D))^{\bar\partial_{X/S}=0}$,
$\eta'\in\Gamma(W,f_*\mathcal A^{p,k-p}_{X/S}(\nul D))^{\bar\partial_{X/S}=0}=
\Gamma(X_W,\mathcal A^{p,\bullet}_{X/S}(\nul D))^{\bar\partial_{X/S}=0}$ and
$u\in\Gamma(W,T_S)$,
\begin{equation*}
\bar\nabla([\omega'])=\phi^{1,\bullet,\bullet}(\bar\partial\omega')=[<\iota(\tilde{u})\bar\partial\omega'>] \; \; \mbox{and} \; \;
\bar\nabla([\eta'])=\phi^{1,\bullet,\bullet}(\bar\partial\eta')=[<\iota(\tilde{u})\bar\partial\eta'>]
\end{equation*}
\end{defiprop}

\begin{proof}
This follows from corollary \ref{relvtR}(ii) and the description of the morphism $\nabla$.
\end{proof}

\begin{prop}\label{manin}
For simplicity of notation denote by $<\cdot,\cdot>=<\cdot,\cdot>_{ev_f}$.
\begin{itemize}
\item[(i)] We have, for $s\in S$, $u\in T_{S,s}$,
$\lambda\in\Gamma(W(s),\mathcal H^k_S(f_U))$ and $\mu\in\Gamma(W(s),\mathcal H^{2d-k}_S(f_{X,D}))$,
where $W(s)\subset S$ is an open neighborhood of $s$ in $S$ :
\begin{equation*}
<\nabla_u\lambda,\mu>(s)=d_u<\lambda,\mu>(s)-<\lambda,\nabla_u\mu>(s)
\end{equation*}
\item[(ii)] The pairing $ev_f=<\cdot,\cdot>$ induces isomorphisms
\begin{equation*}
ev_f:\mathcal H_S^{p,q}(f_U)\otimes\Omega_S/\Im(\bar\nabla)
\xrightarrow{\sim}(\mathcal H^{d-p,d-q}_S(f_{X,D})\otimes T_S)^{\,^t\bar\nabla=0}
\end{equation*}
where $^t\bar\nabla(\mu\otimes u)=\bar\nabla_u\mu$.
\end{itemize}
\end{prop}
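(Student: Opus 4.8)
For (i), the plan is to compute $d_u<\lambda,\mu>$ directly at the level of currents. Fix $s\in S$ and a neighbourhood $W=W(s)$, choose a $d_{X/S}$-closed representative $T\in\Gamma(X_W,\mathcal D_{X/S}^{\bullet}(\log D))$ of $\lambda$ whose restriction to the fibres of $f$ is proper (so that Proposition \ref{RestCur} applies), and a $d_{X/S}$-closed representative $\eta\in\Gamma(X_W,\mathcal A_{X/S}^{\bullet}(\nul D))$ of $\mu$. Lift $T$ and $\eta$ to sections $\hat T$ of $\mathcal D_X^{\bullet}(\log D)/L^2$ and $\hat\eta$ of $\mathcal A_X^{\bullet}(\nul D)/L^2$; since $T$ and $\eta$ are relatively closed, $d\hat T$ and $d\hat\eta$ lie in $\Gr_L^1$, and by Definition \ref{GMdefi} (using the identifications (\ref{IdIn}) of Proposition \ref{IdInProp}) their classes represent $\nabla\lambda$ and $\nabla\mu$. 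The current $\hat T\wedge\hat\eta$, formed via the module structure of Theorem \ref{module} and the wedge (\ref{wedge}), lifts $T\wedge\eta$ modulo $L^1$, whose fibrewise integral is $<\lambda,\mu>_{ev_f}\in O_S$. Applying $f_*$ to the identity $d(\hat T\wedge\hat\eta)=d\hat T\wedge\hat\eta\pm\hat T\wedge d\hat\eta$, which lies in $\Gr_L^1\mathcal D_X^{\bullet}$, and using that $f_*$ commutes with $d$ (Stokes for proper pushforward of currents) together with the projection formula, the left side becomes $d_S<\lambda,\mu>_{ev_f}$ while the two terms on the right become $<\nabla\lambda,\mu>_{ev_f}$ and $\pm<\lambda,\nabla\mu>_{ev_f}$; contracting with $u\in T_{S,s}$ yields the stated formula, the signs being fixed by those in $d=\partial+\bar\partial$, in $\iota(\tilde u)$ and in the grading.

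For (ii), I would deduce the statement from (i) and the relative duality of Proposition \ref{dualR}(i). Pass to $F$-graded pieces (recall $\bar\nabla$ is $O_S$-linear), and apply the formula of (i) to representatives of $\alpha\in\mathcal H_S^{p+1,q-1}(f_U)$ and $\mu\in\mathcal H_S^{d-p,d-q}(f_{X,D})$ lifted to $F^{p+1}$ and $F^{d-p}$ respectively: these filtration levels annihilate each other under $ev_f$ (strictness with respect to $F$, which rests on the $E_1$-degenerescence of Corollary \ref{relvtR}), so the term $d_u<\alpha,\mu>$ vanishes and one obtains
\begin{equation*}
<\bar\nabla_u\alpha,\mu>_{ev_f}+<\alpha,\bar\nabla_u\mu>_{ev_f}=0
\end{equation*}
for all $u\in T_S$, both terms being well-defined graded pairings between matching bidegrees. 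Hence $\bar\nabla$ for $f_U$ and $\bar\nabla$ for $f_{X,D}$ are transpose to one another up to sign with respect to the perfect $O_S$-bilinear pairing $ev_f:\mathcal H_S^{p,q}(f_U)\otimes_{O_S}\mathcal H_S^{d-p,d-q}(f_{X,D})\to O_S$ furnished by Proposition \ref{dualR}(i). Tensoring this pairing with $\Omega_S$ identifies $\mathcal H_S^{p,q}(f_U)\otimes\Omega_S$ with $D_{O_S}^{\vee}(\mathcal H_S^{d-p,d-q}(f_{X,D})\otimes T_S)$, and under this identification the displayed relation, combined with the perfectness of $ev_f$ in the shifted bidegrees $(p+1,q-1)$ and $(d-p-1,d-q+1)$, shows that $\Im(\bar\nabla)$ inside $\mathcal H_S^{p,q}(f_U)\otimes\Omega_S$ is exactly the annihilator of $\ker({}^t\bar\nabla)$ inside $\mathcal H_S^{d-p,d-q}(f_{X,D})\otimes T_S$. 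Since all sheaves in sight are locally free $O_S$-modules of finite rank, dualising back gives the isomorphism $\mathcal H_S^{p,q}(f_U)\otimes\Omega_S/\Im(\bar\nabla)\xrightarrow{\sim}(\mathcal H_S^{d-p,d-q}(f_{X,D})\otimes T_S)^{{}^t\bar\nabla=0}$.

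The main obstacle is the current-level computation in (i): one has to justify lifting a relatively closed, a priori singular, current to $\mathcal D_X^{\bullet}(\log D)/L^2$, form and differentiate its wedge with a relative $\nul D$ form inside $\Gr_L^1$, and push the result forward correctly, all while tracking the signs coming from $d=\partial+\bar\partial$, from the interior products $\iota(\tilde u)$ and from the grading; the properness of the restriction to the fibres (Proposition \ref{RestCur}) and the $\mathcal A_X(\log D)$-module structure of Theorem \ref{module} are exactly what make these operations legitimate. Granting (i), part (ii) is essentially linear algebra over $O_S$; the only point that still uses the geometry is that $\Im(\bar\nabla)$ and $\ker({}^t\bar\nabla)$ are mutual annihilators, which is the transpose relation together with the non-degeneracy of $ev_f$ on each Hodge graded piece.
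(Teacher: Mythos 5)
Your overall strategy coincides with the paper's: (i) is a Leibniz/Stokes identity for the pairing $ev_f$, and (ii) follows by applying (i) to sections in complementary Hodge filtration levels, whose pairing vanishes because Poincar\'e duality for $(X_s,D_s)$ is a morphism of mixed Hodge structures, and then invoking the perfect graded pairing of Proposition \ref{dualR}; your closing linear-algebra step (that $\Im(\bar\nabla)$ and $\ker({}^t\bar\nabla)$ are mutual annihilators, so the quotient by $\Im(\bar\nabla)$ is dual to $\ker({}^t\bar\nabla)$) is exactly what the paper leaves implicit after the transpose relation, and your sign $<\bar\nabla_u\alpha,\mu>+<\alpha,\bar\nabla_u\mu>=0$ is the one consistent with (i) as stated. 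The one genuine divergence is in (i): you represent $\lambda$ by a relative log \emph{current} lifted to $\mathcal D^{\bullet}_X(\log D)/L^2$ and must then differentiate the fibrewise pairing of a singular current against a form --- precisely the step you flag as the main obstacle --- whereas the paper represents $\lambda$, after shrinking $W(s)$, by a $d_{X/S}$-closed smooth log form (available since $\mathcal A^{\bullet}_{X/S}(\log D)\hookrightarrow\mathcal D^{\bullet}_{X/S}(\log D)$ is a filtered quasi-isomorphism, Proposition \ref{FLqsi}), so the computation reduces to the classical identity $d_u(s'\mapsto\int_{X_{s'}}\omega\wedge\eta)(s)=\int_{X_s}\iota(\tilde u)\,d(\omega\wedge\eta)$ followed by the Leibniz rule and the interior-product description of $\nabla$ from Definition \ref{GMdefi}, the integrals being convergent because a log form is paired against a $\nul D$ form. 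Your current-level version can be made to work (the connecting-morphism description of $\nabla$, $f_*d=df_*$ for proper pushforward, and Proposition \ref{RestCur} are the right tools), but it buys nothing here and imports regularity issues that the paper simply never meets; choosing smooth log representatives removes your stated obstacle and turns your sketch into the paper's proof.
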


\begin{proof}
(i): Shrinking $W(s)$ if necessary, there exist closed forms
 $\omega\in\Gamma(W(s),f_*\mathcal A^k_{X/S}(\log D))^{d_{X/S}=0}$
and $\eta\in\Gamma(W(s),f_*\mathcal A^{2d-k}_{X/S}(\nul D))^{d_{X/S}=0}$
such that $[\omega]=\lambda$ and $[\eta]=\mu$.
Then, 
\begin{eqnarray*}
d_u<\lambda,\mu>(s)&=&d_u<[\omega],[\eta]>(s) \\
&=&d_u(s'\mapsto\int_{X_{s'}}\omega\wedge\eta)(s)  
=\int_{X_s}\iota(\tilde{u})d(\omega\wedge\eta) \\ 
&=&\int_{X_s}(\iota(\tilde{u})d\omega)\wedge\eta+\int_{X_s}\omega\wedge(\iota(\tilde{u})d\eta)  
=<\nabla_u[\omega],[\eta]>(s)+<[\omega],\nabla_u[\eta]>(s)
\end{eqnarray*}

(ii): If $\lambda'\in\Gamma(W(s),F^p\mathcal H_S^k(f_U))$ and $\mu'\in\Gamma(W(s),F^{k-p+1}\mathcal H_S^k(f_U))$
we have $<\lambda',\mu'>=0$ as Poincare duality for the pair $(X_s,D_s)$ is a morphism of mixed hodge structures.
Hence by (i), $<\nabla_u\lambda',\mu'>=<\lambda',\nabla_u\mu'>$. 
Thus, $<\bar\nabla_u\bar\lambda',\bar\mu'>=<\bar\lambda',\bar\nabla_u\bar\mu'>$.
Point (ii) follows from this equality.

\end{proof}

The F graded piece of first and last rows of the diagram (\ref{GM}) is the commutative diagram 
\begin{equation}\label{GMgr}
\xymatrix{
0\ar[r] & \Omega^{p-1}_{X/S}(\log D)\otimes f^*\Omega_S\ar[r]^{r^{\vee}}\ar[d] & \Omega^p_X(\log D)/L^2\ar[r]^{q}\ar[d] 
& \Omega^p_{X/S}(\log D)\ar[r]\ar[d] & 0 \\
0\ar[r] & \mathcal{D}^{p-1,\bullet}_{X/S}(\log D)\otimes f^*\Omega_S\ar[r]^{r^{\vee}} & \mathcal{D}^{p,\bullet}_{X}/L^2\ar[r]^{q} 
& \mathcal{D}^{p,\bullet}_{X/S}(\log D)\ar[r] & 0,}
\end{equation}
Dually to this diagram we have the following commutative diagram :
\begin{equation}\label{DGMgr}
\xymatrix{
0\ar[r] & \Omega^{d-p}_{X/S}(\nul D)\ar[r]^{q^{\vee}}\ar[d] & L^{d_S-1}\Omega^{d_X-p}_X(\nul D)\ar[r]^{r}\ar[d]
 & \Omega^{d-p+1}_{X/S}(\nul D)\otimes_{O_X} f^*T_S\ar[r]\ar[d] & 0 \\
0\ar[r] & \mathcal{A}^{d-p,\bullet}_{X/S}(\nul D)\ar[r]^{q^{\vee}} & L^{d_S-1}\mathcal A^{d_X-p,\bullet}_X(\nul D)\ar[r]^{r} 
& \mathcal{A}^{d-p+1,\bullet}_{X/S}(\nul D)\otimes_{O_X} f^*T_S\ar[r] & 0,}
\end{equation}
whose rows are by definition exact sequence of complexes of sheaves, 
where 
\begin{itemize}
\item $q^{\vee}:\omega\mapsto\omega\wedge f^*\kappa$ is the dual of $q$,
\item  $r:L^{d_S-1}\Omega^{d_X-p}_X(\nul D)\to\Gr_L^{d_S-1}\Omega^{d_X-p}_X(\nul D)
\xrightarrow{\sim}\Omega^{d-p+1}_{X/S}(\nul D)\otimes_{O_X} f^*\Omega^{d_S-1}_S$ 
is the dual of $r^{\vee}$, 
\end{itemize}
and whose columns are quasi-isomorphism by proposition \ref{LGrReso}(the Dolbeau resolutions).

The maps $ev_f=f_*<w_X>$ and $f_*w_X$ induces a pairing between the images by $f_*$ 
of the second rows of the diagramms (\ref{GMgr}) and (\ref{DGMgr}) of sheaves on $X^{an}$. 
Thus, by proposition \ref{dualR},
it induces an isomorphism between the two long cohomological exact sequences of these two exact sequences :
\begin{equation}\label{GMgrDGMgr}
\xymatrix{
\ar[r]^{\bar\nabla}
&\mathcal H^{p-1,q}_S(f_U)\otimes f^*\Omega_S\ar[r]^{r^\vee}\ar[d]^{ev_f}_{\sim} 
& R^qf_*(\Omega_X^p(\log D)/L^2)\ar[r]^{q^{\vee}}\ar[d]^{f_*w_X}_{\sim} & \ar[d]^{ev_f}_{\sim}\mathcal H^{p-1,q}(f_U)  \\
\ar[r]^{(\,^t\bar\nabla)^{\vee}}
&D_{O_S}^{\vee}(\mathcal H^{d-p,d-q}_S(f_{X,D})\otimes T_S)\ar[r]^{r^{\vee}} 
& D_{O_S}^{\vee}(R^{d-q}f_*L^{d_S-1}\Omega_X^{d_X-p}(\nul D))\ar[r]^{q^{\vee}} & D^{\vee}_{O_S}(\mathcal H^{d-p+1,d-q}_S(f_{X,D}))}
\end{equation}

\subsection{Normal functions and infinitesimal invariants}

\begin{defi}
The relative intermediate jacobian of $f$ is
the of the fibration of complex analytic varieties
\begin{equation*}
J^{p,k}(f_U)=\mathcal H^{k}_S(f_U)/(F^p\mathcal H^{k}_S(f_U)\oplus H^{k}_{\mathbb Z}(f_U))\to S.
\end{equation*}
By proposition \ref{dualR}, the map $ev_f$ induces an isomorphism over $S$
\begin{equation*}
ev_f:J^{p,k}(f_U)\xrightarrow{\sim} D^{\vee}_{O_S}(F^{d-p+1}\mathcal H^{2d-k}_S(f_{X,D}))/H_{2d-k,\mathbb Z}(f_{X,D})
\end{equation*}

A normal function is a holomorphic section
$\nu\in\Gamma(S,J^{p,k}(f_{U}))$
of the fibration $\mathcal J^{p,k}(f_U)\to S$, such that every local relevement 
$\tilde\nu_W\in\Gamma(W,\mathcal H^{k}_S(f_U))$ of $\nu$ over an open subset $W\subset S$
is horizontal, i.e. $\tilde\nu_W$ is holomorphic and satisfy
$\nabla\tilde\nu_W\in\Gamma(W,F^{p-1}\mathcal H^{k}_S(f_U)\otimes_{O_S}\Omega_S)$.
Denote by $NF(f_U)(S)\subset\Gamma(S,J^{p,k}(f_{U}))$ the subspace of normal functions.
\end{defi}

\begin{defiprop}
\begin{itemize}
\item Let $\nu\in NF(f_U)(S)$. Then for $W\subset S$ and $\tilde\nu_W\in\Gamma(W,\mathcal H^{k}_S(f_U))$,
the class 
$$[\overline{\nabla\tilde\nu_W}]\in\Gamma(W,\mathcal H_S^{p-1,k-p+1}(f_U)\otimes_{O_S}\Omega_S/\Im\bar\nabla)$$
of the projection $\overline{\nabla\tilde\nu_W}\in\Gamma(W,\mathcal H_S^{p-1,k-p+1}(f_U)\otimes_{O_S}\Omega_S)$
of $\nabla\tilde\nu_W$ modulo the image of $\bar\nabla$ does not depends on the choice of a relevement.
Thus the local sections $[\overline{\nabla\tilde\nu_W}]$ patches together to get the infinitesimal invariant of $\mu$ 
\begin{equation*}
\delta\nu\in\Gamma(S,(\mathcal H^{p-1,k-p+1}_S(f_U)\otimes_{O_S}\Omega_S)/\Im\bar\nabla), \;
\delta\nu_{|W}=[\overline{\nabla\tilde\nu_W}] \; \mbox{for all local relevements}.
\end{equation*}  
\item Let $\nu\in\Gamma(S,J^{p,k}(f_{U}))$, then using the exact sequence of sheaves on $S^{an}$
$0\to H_{\mathbb Z}^{k}(f_U)\to\mathcal H^k_S(f_U)/F^p\mathcal H^k_S(f_U)\to J^{p,k}(f_{U})\to 0$ by definition
of $J^{p,k}(f_{U})$, $\nu$ has a cohomology class $[\nu]\in H^1(S, H_{\mathbb Z}^k(f_U))$.
\end{itemize}
\end{defiprop}

\begin{proof}
Standard.
\end{proof}

%================================================================================
\subsection{Relative Abel jacobi map and infinitesimal invariants}
%=================================================================================

Denote by
\begin{itemize}
\item $\mathcal{Z}^p(U,\bullet)^{pr/X/S}\subset\mathcal{Z}^p(U,\bullet)^{pr/X}$ be the subcomplex consisting of $Z=\sum_in_iZ_i\in\mathcal Z^p(U,n)$
such that their closures $\bar Z=\sum_in_i\bar Z_i\in\mathcal Z^p(X,n)$ intersect all the fibers $X_s\hookrightarrow X$ of $f:X\to S$ properly.
By Bloch this inclusion of complexes of abelian group is a quasi-isomorphism : 
consider the generic fiber of $f$ and go on by a decreasing induction on the dimension of subvarieties of $S$.
\item $\mathcal{Z}^p(U,n)_{\partial=0}^{pr/X\hom/S}\subset\mathcal{Z}^p(U,n)^{pr/X/S}$ the subspace such that $\partial Z=0$ 
and $[\Omega_{Z_s}]=0\in H^{2p-n}(U_s,\mathbb C)\xrightarrow{\sim}H^{2d-2p+n}(X_s,D_s,\mathbb C)^{\vee}$ for all $s\in S$. 
\end{itemize}

Let $Z\in\mathcal{Z}^p(U,n)_{\partial=0}^{pr/X\hom/S}$. 
Recall that its closure $\bar{Z}\in\mathcal Z^p(X,n)$ satisfy $\partial\bar{Z}\in i_{D*}\mathcal Z^{p-1}(D,n)$.  
Let 
\begin{equation}
R_{Z/S}=q(R_Z)\in\Gamma(X,\mathcal D_{X/S}^{2p-n-1}))=\Gamma(S,f_*\mathcal D^{2p-n-1}_{X/S}) 
\end{equation}
where $q:\mathcal D_X(\log D)\to\mathcal D_{X/S}(\log D)$ is the quotient map of sheaves on $X^{an}$.
By hypothesis, for all $s\in S$, 
\begin{equation}
[\Omega_Z]_{|X_s}=[\Omega_{Z|X_s}]=[\Omega_{Z_s}]=0\in H^{2p-n}(U_s,\mathbb C). 
\end{equation}
Hence, by proposition \ref{R'}, for all $s\in S$, $R_{Z_s}=R_{Z|X_s}$ 
restrict to a closed current on $F^{d-p+1}\mathcal A_{X_s}^{2d-2p+n+1}(\nul D)$.
That is, $R_{Z/S}$ restrict to a closed current on $F^{d-p+1}\mathcal A_{X/S}^{2d-2p+n+1}(\nul D)$,
and choice of $\Gamma_{\bar Z_s}\in C^{\diff}_{2d-2p+n+1}(X_s,D_s,\mathbb Z)$ 
such that 
\begin{equation}
\partial\Gamma^{\epsilon}_{\bar Z_s}=\bar Z_{s\epsilon}\in C^{\diff}_{2d-2p+n}(X_s,D_s,\mathbb Z), 
\end{equation}
for each $s\in S$ gives the following section 
$\tilde\nu_Z\in\Gamma(S,D_{O_S}^{\vee}(F^{d-p+1}\mathcal H^{2d-2p+n+1}_S(f_{X,D})))$ 
of the dual vector bundle of the mixed hodge subbundle :
\begin{eqnarray*}
\tilde\nu_Z(s):=ev_f(R_{Z/S})(s):=(\eta\in\Gamma(X_{W(s)},F^{d-p+1}\mathcal A_{X/S}^{2d-2p+n+1}(\nul D))^{d_{X/S}=0} \\
\mapsto <[R_{Z/S}],[\eta]>_{ev_f}(s)=R_{Z_s}(\eta_{|X_s})=\int_{\Gamma_{\bar{Z}_s}}\pi_X^*\eta\wedge\pi_{(\mathbb P^1)^n}\Omega_{\square^n})
\end{eqnarray*}
where $W(s)\subset S$ is an open neighborhood of $s$ in $S$, the first equality follows from proposition \ref{RestCur},
and the last equality from proposition \ref{R'}.

\begin{thm}\label{normalfunction}
Let $Z=\sum_i n_i Z_i\in\mathcal Z^p(U,n)_{\partial=0}^{pr/X\hom/S}$. Then,
\begin{equation*}
\nu_Z\in\Gamma(S,D_{O_S}^{\vee}(F^{d-p+1}\mathcal H_S^{2d-2p+n+1}(f_{X,D}))/H_{\mathbb Z,2d-2p+n+1}(f_{X,D})), \; \;
\nu_Z(s)=[\tilde\nu_Z(s)] 
\end{equation*}
is a normal function (i.e. holomorphic and horizontal), the higher normal function associated to $Z$. 
\end{thm}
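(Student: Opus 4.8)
The plan is to reduce the statement, locally on $S$, to a computation with fibrewise period integrals. First I would note that $\tilde\nu_Z(s)=ev_f(R_{Z/S})(s)$ is already canonical: by propositions \ref{RestCur} and \ref{R'}, for a relatively closed form $\eta\in\Gamma(X_W,F^{d-p+1}\mathcal A^{2d-2p+n+1}_{X/S}(\nul D))^{d_{X/S}=0}$ one has $\tilde\nu_Z(\eta)(s)=R_{Z_s}(\eta_{|X_s})=\int_{\Gamma_{\bar Z,s}}\pi_X^*\eta\wedge\pi^*_{(\mathbb P^1)^n}\Omega_{\square^n}$, the chains $\Gamma_{\bar Z,s}$ entering only through this formula (two such differing by a relative cycle, on which a $\nul D$ form wedged with $\Omega_{\square^n}$ integrates to the period of an integral class which is killed in cohomology, while $R_{Z/S}=q(R_Z)$ is itself canonical). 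Hence $\nu_Z=[\tilde\nu_Z]$ is automatically a well defined section of $J^{p,2p-n-1}(f_U)$ via $ev_f$, and what remains is to prove that it is holomorphic and horizontal.

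For holomorphy I would work over a contractible $W\subset S$ on which $f:X_W\to W$ and the $f_{D_J}:D_{J,W}\to W$ are differentiably trivial, spread the bounding chains of proposition \ref{R'} to a differentiable chain $\Gamma_{\bar Z}\subset X_W\times(\mathbb P^1)^n$ horizontal over $W$ with $\partial\Gamma^\epsilon_{\bar Z,s}=\bar Z_{s,\epsilon}$ in $C^{\diff}_{\bullet}(X_s,D_s,\mathbb Z)$ (possible after shrinking $W$, since the monodromy of the relevant local systems is then trivial and $[\bar Z_{s,\epsilon}]=0$ in $H_{2d-2p+n}(X_s,D_s,\mathbb Z)$ by proposition \ref{R'}), and then differentiate $s'\mapsto\int_{\Gamma_{\bar Z,s'}}\pi_X^*\eta\wedge\pi^*\Omega_{\square^n}$ along a $(0,1)$-direction $u$ at $s$ using the fibrewise Stokes formula. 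Three kinds of terms appear: a term with $d\eta$, which lies in $L^1$ because $d_{X/S}\eta=0$ and hence contributes a relatively exact form killed by $\tilde\nu_Z$; the face contributions of $d\Omega_{\square^n}$, which by $\partial\bar Z\in i_{D*}\mathcal Z^{p-1}(D,n)$ are integrals over chains in $D_s\times(\mathbb P^1)^n$ and are annihilated by the $\nul D$ form $\eta$ (choosing, along $D$, the relevement $\tilde u$ tangent to $D$, which is possible since $f_D$ is smooth); and the boundary term $\pm\int_{\bar Z_{s,\epsilon}}\iota(\tilde u)(\pi_X^*\eta\wedge\pi^*\Omega_{\square^n})$. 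This last integrand has holomorphic degree at least $(d-p+1)+n$, since $\eta\in F^{d-p+1}$, $\Omega_{\square^n}$ has type $(n,0)$, and $\iota(\tilde u)$ with $\tilde u$ of type $(0,1)$ does not lower holomorphic degree; as $\dim_{\mathbb C}\bar Z_{s,\epsilon}=d+n-p$ is strictly less than $(d-p+1)+n$, its restriction to $\bar Z_{s,\epsilon}$ vanishes. Thus $\tilde\nu_Z(\eta)$ is holomorphic on $W$, and $\nu_Z$ is a holomorphic section.

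For horizontality I would combine the duality $ev_f$ of proposition \ref{dualR}, the orthogonality of the Hodge filtrations under Poincaré duality, and the Leibniz rule of proposition \ref{manin}(i); these reduce the assertion $\nabla\tilde\nu_Z\in F^{p-1}\mathcal H^{2p-n-1}_S(f_U)\otimes_{O_S}\Omega_S$ to the identity $d_u\langle\tilde\nu_Z,\mu\rangle(s)=\langle\tilde\nu_Z,\nabla_u\mu\rangle(s)$ for every $\mu\in\Gamma(W,F^{d-p+2}\mathcal H^{2d-2p+n+1}_S(f_{X,D}))$ and $u\in T_{S,s}$ — the index $d-p+2$ being exactly what makes both sides depend only on $\tilde\nu_Z$ as a functional on $F^{d-p+1}$. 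Representing $\mu$ by $\eta\in\Gamma(X_W,F^{d-p+2}\mathcal A^{2d-2p+n+1}_{X/S}(\nul D))^{d_{X/S}=0}$ with relevements $\tilde\eta,\tilde u$, so that $\nabla_u\mu=[\langle\iota(\tilde u)d\tilde\eta\rangle]$ by definition \ref{GMdefi}, the same Stokes computation identifies $d_u\langle\tilde\nu_Z,\mu\rangle(s)$ with $\int_{\Gamma_{\bar Z,s}}\pi_X^*(\iota(\tilde u)d\tilde\eta)\wedge\pi^*\Omega_{\square^n}=\langle\tilde\nu_Z,\nabla_u\mu\rangle(s)$ up to the boundary term $\pm\int_{\bar Z_{s,\epsilon}}\iota(\tilde u)(\pi_X^*\eta\wedge\pi^*\Omega_{\square^n})$; now $\eta\in F^{d-p+2}$, so for $u$ of type $(1,0)$ the integrand has holomorphic degree at least $(d-p+2)+n-1=(d-p+1)+n>\dim_{\mathbb C}\bar Z_{s,\epsilon}$ and vanishes (the type $(0,1)$ part of $u$ being handled as in the holomorphy step). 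This yields the required identity, hence $\nu_Z$ is horizontal; being also holomorphic, it is a normal function.

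The step I expect to be the main obstacle is making the two fibrewise Stokes computations rigorous: differentiating a fibre period integral of a parameter-dependent form against the moving chain $\Gamma_{\bar Z,s}$, controlling the $\epsilon\to0$ limits for the non-compactly supported currents $\Omega_{\square^n}$ and $R_{\square^n}$ and their faces, and verifying that the horizontal family $\Gamma_{\bar Z}$ with the prescribed fibrewise boundary can indeed be chosen over a small $W$. Once this bookkeeping is in place, the single Hodge-type inequality $(d-p+1)+n>\dim_{\mathbb C}\bar Z_{s,\epsilon}$ does the essential work in both the holomorphy and the horizontality steps.
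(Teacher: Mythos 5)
Your core computations follow the paper's own route: the reduction of horizontality through the duality $ev_f$ of proposition \ref{dualR} and the Leibniz rule of proposition \ref{manin}(i), the use of $\nabla_u[\omega]=[<\iota(\tilde u)d\omega>]$, and the fibrewise differentiation of the period integral $s'\mapsto\int_{\Gamma_{\bar Z_{s'}}}\pi_X^*\eta\wedge\pi_{(\mathbb P^1)^n}^*\Omega_{\square^n}$ are exactly what the paper does; your Hodge-type count killing the boundary terms $\int_{\bar Z_{s,\epsilon}}\iota(\tilde u)(\pi_X^*\eta\wedge\pi_{(\mathbb P^1)^n}^*\Omega_{\square^n})$ (holomorphic degree at least $d-p+1+n>\dim_{\mathbb C}\bar Z_{s}=d+n-p$) is a correct supplement to a point the paper leaves implicit. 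The genuine gap is the one you yourself flag and do not resolve: the existence, over an arbitrary small $W\subset S$, of a differentiable \emph{horizontal} family of chains $\Gamma_{\bar Z_{s'}}$ with $\partial\Gamma^{\epsilon}_{\bar Z_{s'}}=\bar Z_{s'\epsilon}$ in $C^{\diff}_{\bullet}(X_{s'},D_{s'},\mathbb Z)$. Triviality of the monodromy and $[\bar Z_{s\epsilon}]=0$ give, for each fixed $s'$, \emph{some} bounding chain, but not a family which is simultaneously horizontal and has the prescribed fibrewise boundaries: a horizontal family has horizontal fibrewise boundaries, so your requirement forces the family of cycles $\bar Z_{s'}$ itself to be differentiably trivial over $W$, which fails at parameters where the fibres of $f_{|\bar Z_i}:\bar Z_i\to S$ degenerate. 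Without such a family, the differentiation-under-the-integral formula, hence both your holomorphy and horizontality computations, is not justified at those parameters.

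The paper closes exactly this gap by a two-step reduction absent from your plan: it first proves that $\nu_Z$ is $C^{\infty}$ on all of $S^{an}$, using a relative Borel--Moore chain $\Gamma_{\bar Z_{W(s)}}$ over the whole neighbourhood, bounding $\bar Z_{\epsilon|W(s)}$ and meeting the fibres properly, which exists because $[\Omega_{Z|W(s)}]=0\in H^{BM}_{2d-2p+n}(X_{W(s)},D_{W(s)},\mathbb C)$; then, holomorphy and horizontality being closed conditions on a $C^{\infty}$ section, it verifies them only over the dense Zariski open subset $S^o\subset S$ over which the families $f_{|\bar Z_i}$ are isosingular, where a differentiable trivialisation of $(X_{W(s)},D_{W(s)})$ compatible with the $\bar Z_i$ exists and one may take $\Gamma_{\bar Z_{s'}}=T^{-1}(\Gamma_{\bar Z_s}\times s')$, and concludes on $S$ by continuity. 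You need this reduction (or a substitute for it). A second, smaller, point: in your holomorphy step the justification that the $\iota(\tilde u)d\eta$-term is harmless (``lies in $L^1$ \ldots hence contributes a relatively exact form'') is not right as stated, since membership of $d\eta$ in $L^1$ does not make $(\iota(\tilde u)d\eta)_{|X_s}$ exact. The correct reason is that $[\eta]$ is a section of the $O_S$-module $F^{d-p+1}\mathcal H^{2d-2p+n+1}_S(f_{X,D})$, i.e.\ a holomorphic section of the flat bundle, so its antiholomorphic Gauss--Manin derivative $[(\iota(\tilde u)d\eta)_{|X_s}]$ vanishes; this fibrewise exactness in $F^{d-p+1}$, combined with proposition \ref{R'} (closedness of $R_{Z_s}$ on $F^{d-p+1}$), kills the term, which is how the paper argues.
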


\begin{proof}
For simplicity of the notation denote $<\cdot,\cdot>=<\cdot,\cdot>_{ev_f}$. 

Let $s\in S$. There exists a diffeomorphism 
\begin{equation*}
T:(X_{W(s)},D_{W(s)})\xrightarrow{\sim}(X_s,D_s)\times W(s),\; \;  T(x)=(T_s,f), 
\end{equation*}
over a sufficialy small open neigborhood $W(s)\subset S$ of $s$ in $S$.
As $[\Omega_{Z|W(s)}]=0\in H^{BM}_{2d-2p+n}(X_{W(s)},D_{W(s)},\mathbb C)$, 
there exist $\Gamma_{\bar Z_{W(s)}}\in C^{\diff,BM}(X_{W(s)},D_{W(s)},\mathbb Z)$,
intersecting properly the fibers of $f:X\to S$, such that $\partial\Gamma^{\epsilon}_{\bar Z_{W(s)}}=Z_{\epsilon|W(s)}$.
Then  we can choose $\Gamma_{\bar Z_{s'}}=\Gamma_{\bar Z_{W(s)}|X_{s'}}$ for $s'\in W(s)$ and we see that $\nu_{Z|W(s)}=\tilde\nu_{ZW(s)}$ is $C^{\infty}$
This shows that $\nu_Z$ is $C^{\infty}$ and in particular continous on $S^{an}$.
Hence, to prove the holomorphicity and the horizontality of $\nu_Z$, it is enough by continuity of $\nu_Z$ on $S^{an}$, to prove
it on a Zariski analytic open subset of $S$ since it is dense in $S^{an}$.
Thus, we can restrict to the  Zariski open subset of $S^o\subset S$ over which the families $f_{|\bar Z_i}:\bar Z_i\to S$ are isosingular.

Let $s\in S^o$. There exists a diffeomorphism 
\begin{equation*}
T:(X_{W(s)},D_{W(s)})\xrightarrow{\sim}(X_s,D_s)\times W(s), \; \;  T(x)=(T_s,f),
\end{equation*}
over a sufficialy small open neigborhood $W(s)\subset S$ of $s$ in $S^o$ such that $T$ induces on $Z_i$ trivialisations :
\begin{equation*}
T=T_{|\bar Z_i}:(\bar Z_i,D\cap \bar Z_i)\xrightarrow{\sim}(\bar Z_{is},D\cap\bar Z_i)\times S^o
\end{equation*}
and such that $T^{-1}(x\times S^o)$ are complex subvarieties of $X$. 
We can choose $\Gamma_{\bar Z_{s'}}=T^{-1}(\Gamma_{\bar Z_s}\times s')$ for $s'\in W(s)$
Then, for $u\in\Gamma(W(s),T^{0,1}_{S})$, $\tilde u\in\Gamma(X_{W(s)},T^{0,1}_X)$ a relevement of $u$ of type $(0,1)$
i.e. $d_f(\tilde u)=u$, and $\eta\in\Gamma(X_{W(s)},F^{d-p+1}\mathcal A_{X/S}^{2d-2p+n+1}(\nul D))^{d_{X/S}=0}$,
$(\iota(\tilde u)d\eta)\in\Gamma(X_{W(s)},F^{d-p+1}\mathcal A_X(\nul D))$ is $d_{X/S}$ exact. Hence,
\begin{equation*}
d_u<\nu_Z,[\eta]>(s)=d_u(s'\mapsto\int_{\Gamma_{\bar Z_{s'}}}\pi_X^*\eta\wedge\pi_{(\mathbb P^1)^n}\Omega_{\square^n})(s)
=\int_{\Gamma_{\bar Z_{s}}}\pi_X^*\iota(\tilde u)d\eta\wedge\pi_{(\mathbb P^1)^n}\Omega_{\square^n}=0
\end{equation*}
since the form $(\iota(\tilde u)d\eta)_{|X_s}\in\Gamma(X_s,F^{d-p+1}\mathcal A_{X_s}(\nul D))$ is exact and in $F^{d-p+1}$.
This proves that $\nu_Z$ is holomorphic.
Now let $\omega\in\Gamma(X_{W(s)},F^{d-p+2}\mathcal A^{2d-2p+n+1}_X(\nul D))^{d_{X/S}=0}$ and $u\in\Gamma(W(s),T_S)$.
Then $\iota(\tilde u)d\omega\in\Gamma(X_{W(s)},F^{d-p+1}\mathcal A^{2d-2p+n+1}_X(\nul D))^{d_{X/S}=0}$
and $\nabla_u[\omega]=[\iota(\tilde(u))d\omega]\in\Gamma(W(s),F^{d-p+1}\mathcal H^{2d-2p+n+1}_S(f_{X,D}))$.
Hence, by proposition \ref{manin} (i),
\begin{eqnarray*}
<\nabla_u\tilde\nu_Z,[\omega]>(s)&=&<\nu_Z,\nabla_u[\omega]>(s)-d_u<\nu_Z,\omega>(s) \\
&=&d_u(s'\mapsto\int_{\bar Z_{s'}}\pi_X^*\omega\wedge\pi_{(\mathbb P^1)^n}\Omega_{\square^n})(s)-
\int_{\bar Z_{s}}\pi_X^*\iota(\tilde u)d\omega\wedge\pi_{(\mathbb P^1)^n}\Omega_{\square^n} \\
&=&\int_{\bar Z_{s}}\pi_X^*\iota(\tilde u)d\omega\wedge\pi_{(\mathbb P^1)^n}\Omega_{\square^n}-
\int_{\bar Z_{s}}\pi_X^*\iota(\tilde u)d\omega\wedge\pi_{(\mathbb P^1)^n}\Omega_{\square^n}=0.
\end{eqnarray*}
This proves that $\nu_Z$ is horizontal.
\end{proof}

\begin{defi}
The map  
\begin{equation*}
AJ_{f_U}:\mathcal Z^p(U,n)_{\partial=0}^{pr/X\hom/S}\to\CH^p(U,n)^{\hom/S}\to NF_S(f_U)\subset\Gamma(S,J^{p,2p+n-1}(f_U)), \; \; 
Z\mapsto \nu_Z=[\tilde\nu_Z]
\end{equation*} 
is the relative higher Abel Jacobi map.
The image of $AJ(f_U)$ lies in the subspace $NF_S(f_U)\subset\Gamma(S,J^{p,2p+n-1}(f_U))$ by theorem \ref{normalfunction}
\end{defi}

\begin{prop}
\begin{itemize}
\item[(i)] We have decompositions 
\begin{itemize}
\item $m_{Uk}=(m^0_{Uk},\cdots,m^k_{Uk}):
H^k(U,\mathbb Z)\xrightarrow{\sim}\oplus_{r=0}^k H^{k-r}(S,H^{r}_{\mathbb Z}(f_U))$ 
\item $m_{(X,D)k}=(m^0_{(X,D)k},\cdots,m^k_{(X,D)k}):
H^k(X,D,\mathbb Z)\xrightarrow{\sim}\oplus_{r=0}^k H^{k-r}(S,H^{r}_{\mathbb Z}(f_{X,D}))$
\end{itemize}
\item[(ii)] For $Z\in\mathcal Z^p(U,n)_{\partial=0}^{pr/X\hom/S}$, 
we have $[\nu_Z]=m^1_{U(2p-n)}([T_Z])\in H^1(S,H_{\mathbb Z}^{2p-n-1}(f_U))$.
\end{itemize}
\end{prop}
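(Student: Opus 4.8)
The plan is to treat (i) as a form of the degeneration-and-splitting of the Leray spectral sequences of $f_U$ and $f_{X,D}$, and (ii) as the classical identity ``the cohomology class of the normal function of a family of cycles equals the first Leray component of the cycle class'', which I would prove by exhibiting a single \v{C}ech $1$-cocycle representing both sides.

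For (i) I would establish decompositions $Rf_{U*}\mathbb Q\simeq\bigoplus_qR^qf_{U*}\mathbb Q[-q]$ and $Rf_{X,D*}\mathbb Q=Rf_{U!}\mathbb Q\simeq\bigoplus_qR^qf_{U!}\mathbb Q[-q]$ in $D^b(S^{an})$, which give the displayed isomorphisms after applying $H^k$; since the cohomology sheaves $H^r_{\mathbb Z}(f_U)$ and $H^r_{\mathbb Z}(f_{X,D})$ are local systems, the Leray filtrations are defined over $\mathbb Z$ and the splitting holds over $\mathbb Z$ modulo torsion. For the smooth projective morphisms $f$ and the $f_{D_J}$ this decomposition is Deligne's theorem via hard Lefschetz (cf.\ \cite{Voisin},\cite{PS}); for $f_{X,D}$ I would use $Rf_{X,D*}\mathbb Q\simeq\Cone(Rf_*\mathbb Q\to Rf_{D_\bullet*}\mathbb Q)[-1]$ together with the fact that the Deligne decompositions of the two terms can be chosen compatibly with $i_{D_\bullet}^*$ (a morphism of direct sums of polarizable variations of Hodge structure), so that the mapping cone decomposes as well; and for $f_U$ I would transport the decomposition through the Poincar\'e--Lefschetz duality isomorphism $ev_f$ of proposition \ref{dualR}, which over $S$ identifies $H^k_{\mathbb Q}(f_U)$ with $D_{O_S}^{\vee}(H^{2d-k}_{\mathbb Q}(f_{X,D}))$.

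For (ii) I would first note that both sides lie in $H^1(S,H^{2p-n-1}_{\mathbb Z}(f_U))$: the left one is the image of the normal function $\nu_Z$ of theorem \ref{normalfunction} under the connecting map of $0\to H^{2p-n-1}_{\mathbb Z}(f_U)\to\mathcal H^{2p-n-1}_S(f_U)/F^p\to J^{p,2p-n-1}(f_U)\to 0$; the right one is defined because $[T_Z]\in H^{2p-n}(U,\mathbb Z)$ lies in $L^1$, being fiberwise cohomologically trivial ($(2\pi i)^n[T_{Z_s}]=[\Omega_{Z_s}]=0$ by hypothesis, so $m^0_{U(2p-n)}([T_Z])=0$), whence $m^1_{U(2p-n)}([T_Z])\in\Gr^1_LH^{2p-n}(U,\mathbb Q)\cong H^1(S,H^{2p-n-1}_{\mathbb Q}(f_U))$. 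Next I would choose a cover $\{W_i\}$ of $S$ by contractible Stein sets over which $f$ and all the $\bar Z_\ell$ trivialize; over each $W_i$ pick (as in the proof of theorem \ref{normalfunction} and proposition \ref{R'}) a family of Borel--Moore chains $\Gamma_{\bar Z,i}$ on $(X_{W_i},D_{W_i})$ meeting the fibers properly with $\partial\Gamma_{\bar Z,i}^\epsilon=\bar Z_{\epsilon|W_i}$, and a relative primitive $d_{X/S}^{-1}\Omega_{Z/S}\in\Gamma(X_{W_i},F^p\mathcal A^{2p-n-1}_{X/S}(\log D))$ (possible by the strictness coming from the $E_1$-degeneration of $(f_*\mathcal A^\bullet_{X/S}(\log D),F)$, the $\log D$ analogue of corollary \ref{relvtR}). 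Then $R'_{Z,i}=R_{Z/S}-d_{X/S}^{-1}\Omega_{Z/S}+(2\pi i)^nq(\Gamma_{\bar Z,i})$ is $d_{X/S}$-closed, its class $[R'_{Z,i}]\in\Gamma(W_i,\mathcal H^{2p-n-1}_S(f_U))$ reduces mod $F^p$ to a holomorphic lift $\sigma_i$ of $\nu_{Z|W_i}$, and since the $d_{X/S}^{-1}\Omega_{Z/S}$ are in $F^p$, the \v{C}ech cocycle representing $[\nu_Z]$ is $(\sigma_i-\sigma_j)_{ij}=(2\pi i)^n\big([\Gamma_{\bar Z_s,i}-\Gamma_{\bar Z_s,j}]\big)_{ij}$, with values in $H^{2p-n-1}_{\mathbb Z}(f_U)$ (difference of bounding chains on the fibers).

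It remains --- and this is the step I expect to demand the most care --- to see that the same cocycle represents $m^1_{U(2p-n)}([T_Z])$. I would lift to the total space: over $X_{W_i}$ the closed current $\Omega_Z$ has $[\Omega_Z]_{|X_{W_i}}=0$, and from $dR_Z=\Omega_Z-(2\pi i)^nT_Z$ one may take $\tau_i=R_Z+(2\pi i)^n\Gamma_{\bar Z,i}$ as a primitive, $d\tau_i=\Omega_Z$; then $(\tau_i-\tau_j)_{ij}=(2\pi i)^n(\Gamma_{\bar Z,i}-\Gamma_{\bar Z,j})_{ij}$ is a \v{C}ech cochain of closed integral currents on the $X_{W_{ij}}$ bounding $(2\pi i)^n[T_Z]$, and by the description of the Leray filtration on $\mathcal A^\bullet_X(\log D)$ and of the edge map $L^1H^{2p-n}(U)\to\Gr^1_LH^{2p-n}(U)=H^1(S,H^{2p-n-1}_{\mathbb Z}(f_U))$ through the identifications $\phi^{1,\bullet}$ of proposition \ref{IdInProp} and corollary \ref{relvtR}, its induced $\Gr^1_L$-cocycle is precisely $\big([(\tau_i-\tau_j)_{|X_s}]\big)_{ij}=(2\pi i)^n\big([\Gamma_{\bar Z_s,i}-\Gamma_{\bar Z_s,j}]\big)_{ij}$, representing $(2\pi i)^nm^1_{U(2p-n)}([T_Z])$. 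Comparing with the previous paragraph yields $[\nu_Z]=m^1_{U(2p-n)}([T_Z])$. The genuinely delicate points are the relative strictness used to place $d_{X/S}^{-1}\Omega_{Z/S}$ in $F^p$, the precise \v{C}ech--currents computation of the Leray edge map and the identification of its output with the difference of bounding chains on fibers, and the bookkeeping of orientation signs and the $(2\pi i)$-powers (Tate twists) carried by $R_{\square^n},\Omega_{\square^n},T_{\square^n}$; part (i) enters only to make the projection $m^1_{U(2p-n)}$ well defined.
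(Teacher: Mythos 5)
Your proposal is correct and follows essentially the same route as the paper: for (i) the paper likewise invokes the Deligne decomposition for the smooth projective morphisms $f$ and $f_{D_J}$ and induces the splittings of $Rf_{U*}\mathbb Z$ and $Rf_{X,D*}\mathbb Z$ from the defining triangles with $Rf_*\mathbb Z$ and $Rf_{D*}\mathbb Z$ (your only deviations are transporting the $f_U$ case through the duality $ev_f$ instead of the triangle, and the prudent ``over $\mathbb Q$ / modulo torsion'' caveat where the paper asserts the splitting over $\mathbb Z$). For (ii) the paper offers no argument beyond ``Standard'', and the \v{C}ech comparison of bounding chains you spell out is precisely that standard argument, so there is nothing to reconcile.
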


\begin{proof}
(i): This follows from the fact that the morphisms $f:X\to S$ and $f_{D_J}\to S$ are smooth projectif.
Indeed, assume for simplicity that $D=D_1\subset X$ is smooth. Then we have decompositions
\begin{equation}
\xymatrix{
Rf_*\mathbb Z\ar[r]\ar[d]^{m_X} 
& Rf_{U*}\mathbb Z\ar[r]\ar[d]^{m_U} & Rf_{D*}\mathbb Z[-1]\ar[r]\ar[d]^{m_D} &  Rf_*\mathbb Z[1]\ar[d]^{m_X[1]}\\
\oplus_{r=0}^{2d}H^{r}_{\mathbb Z}(f)[-r]\ar[r] & \oplus_{r=0}^{2d}H^{r}_{\mathbb Z}(f_U)[-r]\ar[r] &
\oplus_{r=0}^{2d-2}H^{r}_{\mathbb Z}(f_D)[-r-1]\ar[r] & \oplus_{r=0}^{2d}H^{r}_{\mathbb Z}(f)[1-r]}
\end{equation}
and the map $m_U$ is the one induced by $m_X$ and $m_D$.
Taking hypercohomology gives the decompositions featuring in the commutative diagram 
whose rows are long exact sequence:
\begin{equation}
\xymatrix{
\cdots\ar[r] & H^k(X,\mathbb Z)\ar[r]\ar[d]^{m_{Xk}} 
& H^k(U,\mathbb Z)\ar[r]\ar[d]^{m_{Uk}} & H^{k-1}(D,\mathbb Z)\ar[r]\ar[d]^{m_{Dk}} & \cdots \\
\cdots\ar[r] & \oplus_{r=0}^kH^{k-r}(S,H^{r}_{\mathbb Z}(f))\ar[r] & \oplus_{r=0}^{k-r}H^r(S,H^{r}_{\mathbb Z}(f_U))\ar[r] &
\oplus_{r=0}^{k-1}H^{k-r-1}(S,H^{r}_{\mathbb Z}(f_D))\ar[r] & \cdots}
\end{equation}

The map $m_{(X,D)}$ is defined similarly : we have decompositions
\begin{equation}
\xymatrix{
Rf_*\mathbb Z\ar[r]\ar[d]^{m_X} 
& Rf_{D*}\mathbb Z\ar[r]\ar[d]^{m_D} & Rf_{X,D*}\mathbb Z[-1]\ar[r]\ar[d]^{m_{(X,D)}} &  Rf_*\mathbb Z[1]\ar[d]^{m_X[1]}\\
\oplus_{r=0}^{2d}H^{r}_{\mathbb Z}(f)[-r]\ar[r] & \oplus_{r=0}^{2d-2}H^{r}_{\mathbb Z}(f_D)[-r]\ar[r] &
\oplus_{r=0}^{2d}H^{r}_{\mathbb Z}(f_{X,D})[-r-1]\ar[r] & \oplus_{r=0}^{2d}H^{r}_{\mathbb Z}(f)[-r+1]}
\end{equation}
and the map $m_{(X,D)}$ is the one induced by $m_X$ and $m_D$.

(ii): Standard.
\end{proof}

Let $\mathcal Z^p(U,n)_{\partial=0}^{pr/X\hom/S}$. 
Denote again by $[\Omega_Z]=[\Omega_Z^{p,p-n}]\in H^{p-n}(X,\Omega_X^p(\log D))$ its class, 
recall that $\Omega_Z=\Omega_Z^{p,p-n}$ is of type $(p,p-n)$.
Denote again by $[\Omega_Z]\in\Gamma(S,R^{p-n}f_*(\Omega_X^p(\log D)))$ its image by
the canonical map $H^{p-n}(X,\Omega_X^p(\log D))\to\Gamma(S,R^{p-n}f_*\Omega_X^p(\log D))$.
Since $[q(\Omega_Z)]=0$, 
\begin{equation*}
[\Omega_Z]\in\ker(\Gamma(S,R^{p-n}f_*\Omega_X^p(\log D))\to\Gamma(S,R^{p-n}f_*\Omega_{X/S}^p(\log D)))
=\Gamma(S,L^1R^{p-n}f_*\Omega_X^p(\log D)).
\end{equation*}
Denote by 
\begin{itemize}
\item $[\Omega_Z/L^2]\ker(\Gamma(S,R^{p-n}f_*(\Omega_X^p(\log D)/L^2))\to\Gamma(S,R^{p-n}f_*\Omega_{X/S}^p(\log D)))$,
the image of $[\Omega_Z]$ by the projection $\Gamma(S,R^{p-n}f_*\Omega_X^p(\log D))\to\Gamma(S,R^{p-n}f_*(\Omega_X^p(\log D)/L^2))$.
\item  $[\Omega_Z]/L^2\in\Gamma(S,\Gr_L^1R^{p-n}f_*\Omega_X^p(\log D))$,
the image of $[\Omega_Z]$ by the projection $\Gamma(S,R^{p-n}f_*\Omega_X^p(\log D))\to\Gamma(S,(R^{p-n}f_*\Omega_X^p(\log D))/L^2)$.
\end{itemize}
We have the following commutative diagram of sheaves on $S^{an}$ :
\begin{equation}\label{phi12r}
\xymatrix{ \, &\Gr_L^1R^{p-n}f_*\Omega_X^p(\log D)\ar@{^{(}->}[ld]^{\psi^L_1}\ar@{^{(}->}[rd]^{\psi^L_2} & \, \\
\Omega_S\otimes\mathcal H_S^{p-1,p-n}(f_U)/\Im(\bar\nabla)\ar^{\bar{r^{\vee}}\sim}[rr] & \, & 
\ker(R^{p-n}f_*(\Omega_X^p(\log D)/L^2)\to R^{p-n}f_*\Omega_{X/S}^p(\log D))}
\end{equation}
where
\begin{itemize}
\item $\bar{r^{\vee}}:\Omega_S\otimes\mathcal H_S^{p-1,p-n}(f_U)/\Im(\bar\nabla)\to R^{p-n}f_*(\Gr_L^1\Omega^p_X(\log D))$ 
is the isomorphism induced by
\begin{itemize}
\item the morphism of sheaves on $S^{an}$
$r^{\vee}:R^{p-n}f_*\Gr_L^1\Omega^p_X(\log D)\to R^{p-n}f_*(\Omega^p_X(\log D)/L^2)$  
(induced in relative cohomoloy by the morphism of sheaves on $X^{an}$ $r^{\vee}:\Gr_L^1\Omega^p(\log D)\to\Omega^p_X(\log D)/L^2$),
\item  the isomorphism of sheaves on $S^{an}$ 
$\phi^{1,p}:R^{p-n}f_*\Gr_L^1\Omega^p_X(\log D)\xrightarrow{\sim}\Omega_S\otimes\mathcal H_S^{p-1,p-n}(f_U)$
(induced in $f$ direct image cohomology by the isomorphism of complexes of sheaves on $X^{an}$
$\phi^{1,p,\bullet}:\Gr^1_L\mathcal A^{p,\bullet}_X(\log D)\xrightarrow{\sim}\mathcal A^{p-1,\bullet}_{X/S}(\log D)\otimes\Omega_S$,
c.f. proposition \ref{IdInProp} and remark \ref{IdInRem})
\end{itemize} 
\item $\psi^2_L:\Gr_L^1R^{p-n}f_*\Omega^p_X(\log D)=E_{\infty}^{1,p-n}\hookrightarrow R^{p-n}f_*(\Gr_L^1\Omega^p_X(\log D))=E_1^{1,p-n}$
is the inclusion of sheaves on $S^{an}$ induced by the spectral sequence associated to the complex $(\Omega^p_X(\log D),L)$ :
for degree  reason no arrow $d_r$, $r\geq 2$ can lead to $E_r^{1,p-n}$. 
We have $\psi^L_2([\Omega_Z/L^2])=[\Omega_Z]/L^2$.
\item  $\psi^L_1:=\bar{r^{\vee}}^{-1}\circ\psi^L_2$ is the inclusion given by composition.
\end{itemize} 
 
The infinitesimal invariant associated to the class $[\Omega_Z]\in H^{p,p-n}(U,\mathbb C)$ is 
\begin{equation*}
\delta[\Omega_Z]:=\psi^L_1([\Omega_Z]/L^2)=\bar{r^{\vee}}^{-1}([\Omega_Z/L^2])
\in\Gamma(S,\Omega_S\otimes\mathcal H_S^{p-1,p-n}(f_U)/\Im(\bar\nabla))
\end{equation*}

\begin{lem}\label{diemterm}
Let $Z=\sum_in_iZ_i\in\mathcal Z^p(U,n)_{\partial=0}^{pr/X\hom/S}$ 
such that $\pi_X(\bar Z_i)\subset X$ is a local complete intersection for all $i$. 
Then, for $s\in S$ and $\gamma\in\Gamma(W(s),R^{d-p+n}f_*L^{d_S-1}\Omega^{d_X-p}_X(\nul D))$,
\begin{equation*}
<[\Omega_Z/L^2],\gamma>_{f_*ev_X}(s)=<[\Omega_Z/L^2](s),\gamma(s)>_{f_*ev_X(s)}
=\sum_i\int_{\bar Z_{is}}\pi_X^*\tilde{\gamma}(s)^{N_i}\wedge\pi_{(\mathbb P^1)^n}\Omega_{\square^n},
\end{equation*}
where, 
\begin{itemize}
\item $[\Omega_Z/L^2](s)\in O_s\otimes_{O_{S,s}}(R^{p-n}f_*(\Omega^{p}_X(\log D)/L^2))_s= H^{p-n}(X_s,(\Omega^{p}_X(\log D)/L^2)_{|X_s})$
\item $\gamma(s)\in O_s\otimes_{O_{S,s}}(R^{d-p+n}f_*L^{d_S-1}\Omega^{d_X-p}_X(\nul D))_s= H^{d-p+n}(X_s,(L^{d_S-1}\Omega^{d_X-p}_X(\nul D))_{X_s})$
\item $\tilde{\gamma}(s)^{N_i}=i_{\pi_X(\bar Z_i)}^*\tilde{\gamma}(s)\in H^{d-p+n}(\pi_X(\bar Z_i),L^{d_S-1}\Omega^{d_X-p}_{Z_i}(\nul D))$
\end{itemize}
are the evaluation in $s$ of the respective sheaves on $S^{an}$ of $O_S$ module,
and $\tilde{\gamma}(s)\in\Gamma(X_s,\Omega_X^{d_X-p}(\nul D)_{|X_s}\otimes_{O_{X_s}}\mathcal A^{0,d-p+n}_{X_s})^{\bar\partial=0}$
 is a closed form such that $[\tilde{\gamma}(s)]=\gamma(s)$.
\end{lem}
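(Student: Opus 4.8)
The plan is to establish the two equalities of the lemma separately, and the first of them is formal. The pairing $f_*ev_X$ of Proposition \ref{dualR}(ii) is $O_S$-bilinear and, for the smooth projective morphism $f$, compatible with base change $-\otimes_{O_S}\mathbb C(s)$; the induced pairing on the fibre at $s$ is precisely the Serre-duality pairing $f_*ev_X(s)$ on $X_s$ between $H^{p-n}(X_s,(\Omega_X^p(\log D)/L^2)_{|X_s})$ and $H^{d-p+n}(X_s,(L^{d_S-1}\Omega_X^{d_X-p}(\nul D))_{|X_s})$ --- this is exactly the statement verified in the proof of Proposition \ref{dualR}(ii). Hence evaluating the section $<[\Omega_Z/L^2],\gamma>_{f_*ev_X}$ of $O_S$ at $s$ amounts to pairing the fibre evaluations $[\Omega_Z/L^2](s)$ and $\gamma(s)$ via $f_*ev_X(s)$, which is the first equality.

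The second equality requires an explicit Dolbeault representative of $[\Omega_Z/L^2](s)$ on $X_s$. The current $\Omega_Z$ is $d$-closed (since $\partial Z=0$), hence $\bar\partial$-closed of type $(p,p-n)$, and represents the class $[\Omega_Z]\in H^{p-n}(X,\Omega_X^p(\log D))$, whence, after the canonical map to $\Gamma(S,R^{p-n}f_*\Omega_X^p(\log D))$ and reduction modulo $L^2$, the class $[\Omega_Z/L^2]$. Because $Z\in\mathcal Z^p(U,n)^{pr/X/S}$, the cycle $\bar Z$ meets every fibre $X_s\times(\mathbb P^1)^n$ properly, so by King's formalism (Proposition \ref{RestCur}) $\Omega_Z$ restricts to $X_s$, and its restriction modulo $L^2$ is the geometric current $\Omega_{Z_s}:\omega\mapsto\sum_i n_i\int_{\bar Z_{is}}\pi_X^*\omega\wedge\pi_{(\mathbb P^1)^n}^*\Omega_{\square^n}$, which therefore represents $[\Omega_Z/L^2](s)$. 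Representing $\gamma(s)$ by the $\bar\partial$-closed form $\tilde\gamma(s)$, the fibre pairing $f_*ev_X(s)$ --- induced on $X_s$ by $T\otimes\omega\mapsto a_{X_s*}(T\wedge\omega)$, the fibre of $T\otimes\omega\mapsto f_*(T\wedge\omega)$ --- sends this pair of representatives to the value $\Omega_{Z_s}(\tilde\gamma(s))=\sum_i n_i\int_{\bar Z_{is}}\pi_X^*\tilde\gamma(s)\wedge\pi_{(\mathbb P^1)^n}^*\Omega_{\square^n}$ of the current on the form. Finally, $\Omega_{Z_s}$ is supported on $\bigcup_i\pi_{X_s}(\bar Z_{is})$, so in each summand $\pi_X^*\tilde\gamma(s)$ may be replaced on $\bar Z_{is}$ by $\pi_X^*(i^*_{\pi_X(\bar Z_i)}\tilde\gamma(s))=\pi_X^*\tilde\gamma(s)^{N_i}$, giving the asserted right-hand side.

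The hypothesis that each $\pi_X(\bar Z_i)$ be a local complete intersection enters exactly in this last step: it guarantees, by the analogue for lci subvarieties of Propositions \ref{ideal}(ii) and \ref{XD}, that $L^{d_S-1}\Omega^{d_X-p}_{Z_i}(\nul D)$ is a sheaf admitting a Dolbeault-type resolution, so that $i^*_{\pi_X(\bar Z_i)}\tilde\gamma(s)$ defines a class $\tilde\gamma(s)^{N_i}\in H^{d-p+n}(\pi_X(\bar Z_i),L^{d_S-1}\Omega^{d_X-p}_{Z_i}(\nul D))$ independent of the chosen representative $\tilde\gamma(s)$ of $\gamma(s)$; without such a condition the operation ``restrict to the support of $\Omega_{Z_s}$'' only makes sense for currents, not for cohomology classes. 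I expect the main obstacle to be the bookkeeping underlying the second paragraph: verifying that the geometric current $\Omega_{Z_s}$, taken modulo $L^2$, genuinely represents $[\Omega_Z/L^2](s)$ in the correct $\bar\partial$-cohomology group --- which means following the holomorphic Leray filtration and the Hodge bidegrees through King's restriction-of-currents construction --- and, correspondingly, that the $f_*$ integration along the fibres built into $f_*ev_X$ degenerates at $s$ to the fibrewise Serre pairing on $X_s$.
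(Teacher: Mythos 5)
Your proposal is correct and follows essentially the same route as the paper: the paper's proof is just a compressed citation of Voisin's description (section 19.2.2), extended to the lci case, of $[\Omega_{Z_i}/L^2](s)$ as the Serre-dual functional ``restrict $\tilde\gamma(s)$ to $\pi_X(\bar Z_{is})$ and evaluate the current $\Omega_{Z_{is}}$'', which is exactly the current-evaluation computation you carry out directly, with Proposition \ref{RestCur} and the fibrewise degeneration of $f_*ev_X$ supplying the first equality. The only cosmetic differences are that the paper phrases the key step as a composite of restriction maps in coherent cohomology rather than as your ``support'' argument (the lci hypothesis serving in both cases to make the cohomological reformulation on $\pi_X(\bar Z_i)$ legitimate), and that you correctly keep the multiplicities $n_i$ which the paper's displayed formula drops.
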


\begin{proof}
We can assume that $\pi_{X|Z_i}:Z_i\to\pi_X(Z_i)$ is generically finite, otherwise $\Omega_Z=0$.
It is then a staightforward generalization of the description given in \cite{Voisin} section 19.2.2. and the remark that the description is still correct
in the case the $\pi_X(Z_i)$ are not smooth but only local complete intersection in $X$ :
The class $[\Omega_{Z_i}/L^2](s)\in H^{d-p+n}(X_s,L^{d_S-1}\Omega^{d_X-p}_X(\nul D)_{|X_s})^{\vee}$ is given by the composite
\begin{eqnarray*}
H^{d-p+n}(X_s,L^{d_S-1}\Omega^{d_X-p}_X(\nul D)_{|X_s})\to H^{d-p+n}(X_s,\Omega^{d_X-p}_X(\nul D)_{|X_s}) \\
\to H^{d-p+n}(\pi_X(Z_{is}),\Omega^{d_X-p}_X(\nul D)_{|\pi_X(Z_{is})}) 
\to H^{d-p+n}(\pi_X(Z_{is}),\Omega^{d_X-p}_{\pi_X(Z_{is})}(\nul(D\cap\pi_X(Z_{is})))) \\
\to H^{d-p+n}(\pi_X(Z_{is}),\Omega^{d-p}_{\pi_X(Z_{is})}(\nul(D\cap\pi_X(Z_{is}))))\xrightarrow{ev_{Z_{is}}(\Omega_{Z_{is}})}\mathbb C.
\end{eqnarray*}
Note that $\dim\pi_X(Z)=\dim Z=d_X-p+n$ and $\dim\pi_X(Z_s)=\dim Z_s=d-p+n$.
\end{proof}

We have then one of the main result of this paper :

\begin{thm}\label{main}
Let $Z=\sum_i n_iZ_i\in\mathcal Z^p(U,n)_{\partial=0}^{pr/X\hom/S}$ such that $\pi_X(\bar Z_i)\subset X$ 
is a local complete intersection for all $i$.
Then $\delta\nu_Z=\delta[\Omega_Z]\in\Gamma(S,\Omega_S\otimes\mathcal H_S^{p-1,p-n}(f_U)/\Im(\bar\nabla))$.
\end{thm}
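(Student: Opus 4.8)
The plan is to compare $\delta\nu_Z$ and $\delta[\Omega_Z]$ after applying the duality isomorphism $ev_f$ of Proposition \ref{manin}(ii), which identifies $\Gamma(S,\Omega_S\otimes\mathcal H^{p-1,p-n}_S(f_U)/\Im(\bar\nabla))$ with the ${}^t\bar\nabla$-closed part of $\Gamma(S,\mathcal H^{d-p+1,d-p+n}_S(f_{X,D})\otimes T_S)$. Since both invariants are built out of the pairing $\langle\cdot,\cdot\rangle_{ev_f}$, it suffices to check that for a point $s\in S$, a local vector field $u$ near $s$ and a $\bar\nabla$-closed local section $\mu$ of $\mathcal H^{d-p+1,d-p+n}_S(f_{X,D})$ (such sections annihilate $\Im(\bar\nabla)$ by Proposition \ref{manin}(i)) the two numbers $\langle\delta\nu_Z,\mu\otimes u\rangle(s)$ and $\langle\delta[\Omega_Z],\mu\otimes u\rangle(s)$ agree. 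As in the proof of Theorem \ref{normalfunction}, both are $C^\infty$ on $S^{an}$, so it is enough to argue over the Zariski-dense open $S^o\subset S$ on which the families $f_{|\bar Z_i}\colon\bar Z_i\to S$ are iso-singular, and over a neighbourhood $W(s)$ of a point $s\in S^o$ carrying a $C^\infty$ trivialisation $T\colon(X_{W(s)},D_{W(s)})\xrightarrow{\sim}(X_s,D_s)\times W(s)$ which also trivialises each $\bar Z_i$ and sends the slices $T^{-1}(x\times W(s))$ to complex subvarieties, exactly as in loc. cit.

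\smallskip

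\emph{The Hodge-theoretic side.} Here $\delta[\Omega_Z]=\bar{r^\vee}{}^{-1}([\Omega_Z]/L^2)$ with $[\Omega_Z]/L^2\in\Gamma(S,\Gr^1_L R^{p-n}f_*\Omega^p_X(\log D))$, as in diagram \ref{phi12r}. By the commutative diagram \ref{GMgrDGMgr} and Proposition \ref{dualR}(ii) the isomorphism $\bar{r^\vee}$ is intertwined by $ev_f$ and $f_*ev_X$ with the maps in the bottom row of \ref{GMgrDGMgr}; consequently $\langle\delta[\Omega_Z],\mu\otimes u\rangle(s)$ equals $\langle[\Omega_Z/L^2],\gamma\rangle_{f_*ev_X}(s)$ for a lift $\gamma\in\Gamma(W(s),R^{d-p+n}f_*L^{d_S-1}\Omega^{d_X-p}_X(\nul D))$ of the image of $\mu\otimes u$ under the map $r^\vee$ of that bottom row. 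This is precisely the quantity computed by Lemma \ref{diemterm} — and it is here that the hypothesis that the $\pi_X(\bar Z_i)$ are local complete intersections is used — so $\langle\delta[\Omega_Z],\mu\otimes u\rangle(s)=\sum_i n_i\int_{\bar Z_{is}}\pi_X^*\tilde\gamma(s)^{N_i}\wedge\pi_{(\mathbb P^1)^n}^*\Omega_{\square^n}$.

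\smallskip

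\emph{The cycle side.} By definition $\delta\nu_Z|_{W(s)}=[\overline{\nabla\tilde\nu_Z}]$ with $\tilde\nu_Z=ev_f(R_{Z/S})$ and $R_{Z/S}=q(R_Z)$. Computing $\nabla[R_{Z/S}]$ through the connecting morphism of the first row of \ref{GMc} means lifting $R_{Z/S}$ to $R_Z\in\mathcal D_X(\log D)/L^2$ and taking the class in $\Gr^1_L$ of $dR_Z$. On $\nul D$-forms one has $dR_Z=\Omega_Z-(2i\pi)^nT_Z$: indeed $dR_Z=\Omega_Z-(2i\pi)R_{\partial Z}-(2i\pi)^nT_Z$ and $R_{\partial Z}$ annihilates $\Gamma(X,\mathcal A_X(\nul D))$ because $\partial\bar Z\in i_{D*}\mathcal Z^{p-1}(D,n)$ is supported on $D$ while $\nul D$-forms vanish on $D$. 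Over $W(s)$ the trivialisation $T$ exhibits $T_Z$ as the locally constant family $T_{Z_s}\times W(s)$, so its Gauss--Manin derivative vanishes, and therefore, modulo $\Im(\bar\nabla)$, $\nabla[R_{Z/S}]$ is represented by the $\Gr^1_L$-class of $\Omega_Z$, i.e. by $[\Omega_Z]/L^2$ pushed into $\mathcal H^{p-1,p-n}_S(f_U)\otimes\Omega_S$ via $\bar{r^\vee}$. Dually, unwinding the pairing with $\mu\otimes u$ using the explicit formula $\tilde\nu_Z(s')(\eta)=\int_{\Gamma_{\bar Z_{s'}}}\pi_X^*\eta\wedge\pi_{(\mathbb P^1)^n}^*\Omega_{\square^n}$ of Proposition \ref{R'} with $\Gamma_{\bar Z_{s'}}=T^{-1}(\Gamma_{\bar Z_s}\times s')$, then differentiating along $u$, invoking $\langle\nabla_u\tilde\nu_Z,[\omega]\rangle=d_u\langle\tilde\nu_Z,[\omega]\rangle-\langle\tilde\nu_Z,\nabla_u[\omega]\rangle$ (Proposition \ref{manin}(i)) and applying Stokes exactly as in Theorem \ref{normalfunction} converts $\langle\delta\nu_Z,\mu\otimes u\rangle(s)$ into $\sum_i n_i\int_{\bar Z_{is}}\pi_X^*\tilde\gamma(s)^{N_i}\wedge\pi_{(\mathbb P^1)^n}^*\Omega_{\square^n}$, once $\gamma$ and $\mu\otimes u$ are matched through diagram \ref{GMgrDGMgr}.

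\smallskip

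The two computations agree, so $ev_f(\delta\nu_Z)=ev_f(\delta[\Omega_Z])$ and hence $\delta\nu_Z=\delta[\Omega_Z]$. \textbf{The main difficulty} lies in the bookkeeping around the topological current $T_Z$: one has to check both that its Gauss--Manin derivative vanishes over $S^o$ (flatness of the iso-singular family, which is exactly why one restricts to $S^o$ and uses the trivialisation $T$) and that the relative remainder $q(\Omega_Z)$, which is only cohomologically trivial on each fibre rather than identically zero, does not obstruct the identification of $\nabla[R_{Z/S}]$ with $[\Omega_Z]/L^2$ modulo $\Im(\bar\nabla)$ — together with verifying the precise compatibility of the two duality pairings $ev_f$ and $f_*ev_X$ encoded in diagram \ref{GMgrDGMgr}. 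Everything else is the Stokes--period computation already carried out in Theorem \ref{normalfunction} combined with Lemma \ref{diemterm}.
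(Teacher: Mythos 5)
Your skeleton coincides with the paper's: reduce via the duality of proposition \ref{manin}(ii) to pairing against ${}^t\bar\nabla$-flat sections $\mu\otimes u$, compute the $\delta[\Omega_Z]$ side through diagram \ref{GMgrDGMgr} and lemma \ref{diemterm}, and compute the $\delta\nu_Z$ side through proposition \ref{manin}(i), proposition \ref{R'} and Stokes. But there is a genuine gap at the decisive step, which you flag as ``the main difficulty'' and then leave unresolved. On the cycle side the computation does not stop at an abstract ``period of $\Omega_Z$'': one must first lift $\mu$, via the $E_1$ degenerescence of $(f_*\mathcal A^{\bullet}_{X/S}(\nul D),F)$ (corollary \ref{relvtR}, never invoked in your write-up), to a $d_{X/S}$-closed $\hat\eta\in F^{d-p+1}$, and then use ${}^t\bar\nabla(\mu\otimes u)=0$ together with the same $E_1$ degenerescence to write $\iota(\tilde u)d\hat\eta=\beta+d\alpha$ with $\beta\in F^{d-p+1}$ closed and $\alpha\in F^{d-p}$; this is forced because $\tilde\nu_Z$ only pairs with closed forms in $F^{d-p+1}$, so the term $<\tilde\nu_Z,\nabla_u[\hat\eta]>$ must be evaluated on $\beta$, and the Stokes argument then yields $<\delta\nu_Z,\mu\otimes u>(s)=\sum_in_i\int_{\bar Z_{is}}\pi_X^*\alpha^{d-p,d-p+n}\wedge\pi_{(\mathbb P^1)^n}^*\Omega_{\square^n}$. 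On the Hodge side, lemma \ref{diemterm} gives $\sum_in_i\int_{\bar Z_{is}}\pi_X^*\tilde\gamma(s)^{N_i}\wedge\pi_{(\mathbb P^1)^n}^*\Omega_{\square^n}$ for a $\bar\partial$-closed representative $\tilde\gamma(s)$ of an arbitrary lift $\gamma$ with $r(\gamma)=\mu\otimes u$. Asserting that the two numbers agree ``once $\gamma$ and $\mu\otimes u$ are matched through diagram \ref{GMgrDGMgr}'' is exactly what has to be proved, and it does not follow formally: $\alpha^{d-p,d-p+n}$ restricted to a fibre is not $\bar\partial$-closed (the paper's remark after the proof insists on this), so it cannot itself serve as $\tilde\gamma(s)$. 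The paper closes this gap by the explicit construction $\chi=\hat\eta^{d-p+1,d-p+n}\wedge f^*\iota(u)\kappa+\alpha^{d-p,d-p+n}\wedge f^*\kappa$, whose evaluation $\xi=\chi_{|X_s}$ is closed precisely because of the $(d-p,d-p+n)$-component of $\iota(\tilde u)d\hat\eta=\beta+d\alpha$ restricted to $X_s$, satisfies $r(s)[\xi]=\mu(s)\otimes u(s)$, and, since $\Omega_{Z_s}$ has pure type $(p,p-n)$, contributes to the $\bar Z_{is}$-integral only through its $\alpha^{d-p,d-p+n}$ part, giving the same number as the cycle side. Without this (or an equivalent) construction the two computations produce integrals of a priori unrelated forms, and the proof is incomplete.

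Two secondary points. First, your heuristic that ``modulo $\Im(\bar\nabla)$, $\nabla[R_{Z/S}]$ is represented by the $\Gr_L^1$-class of $\Omega_Z$'' begs the question: that identification is the content of the theorem, and $R_{Z/S}$ is not a closed relative current (it is only closed against $F^{d-p+1}$ $\nul D$-forms), so the Gauss--Manin connecting morphism cannot be applied to it directly; the argument has to pass through the dual pairing as above. Second, the reduction to $S^o$ and the trivialisation $T$ belong to the proof of theorem \ref{normalfunction} (to establish that $\nu_Z$ is a normal function); once that is granted, the proof of the present statement works at an arbitrary $s\in S$ and does not need the iso-singularity of the families $f_{|\bar Z_i}$.
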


\begin{proof}
For simplicity of notation, denote by $<\cdot,\cdot>=<\cdot,\cdot>_{ev_f}$.
By proposition \ref{manin} (ii), we have to prove that
for all $s\in S$, and all $\mu\otimes u\in\Gamma(W(s),\mathcal H_S^{d-p+1,d-p+n}(f_{X,D}))^{\,^t\nabla=0}$,
where $W(s)\subset S$ is an open neighborhood of $s$ in $S$,
\begin{equation*}
<(\delta\nu_Z)_{|W(s)},\mu\otimes u>(s)=<(\delta[\Omega_Z])_{|W(s)},\mu\otimes u>(s)
\end{equation*}

So, let $s\in S$, and $\mu\otimes u\in\Gamma(W(s),\mathcal H_S^{d-p+1,d-p+n}(f_{X,D}))^{\,^t\bar\nabla=0}$. 
Shrinking $W(s)$ if necessary, 
there exist $\eta\in\Gamma(W(s),f_*\mathcal A^{d-p+1,d-p+n}_{X/S}(\nul D))^{\bar\partial_{X/S}=0}$
such that
\begin{equation*}
[\eta]=\mu\in\Gamma(W(s),\mathcal H_S^{d-p+1,d-p+n}(f_{X,D}))
\end{equation*}
(see definition \ref{Hodgesub}). By corollary \ref{relvtR}(ii),there exist 
$\hat{\eta}\in\Gamma(W(s),f_*F^{d-p+1}\mathcal A^{2d-2p+n+1}_{X/S}(\nul D))^{d_{X/S}=0}$
such that
\begin{equation*}
\overline{[\hat\eta]}=[\hat\eta^{d-p+1,d-p+n}]=[\eta]\in\Gamma(W(s),\mathcal H_S^{d-p+1,d-p+n}(f_{X,D})).
\end{equation*}
By definition, 
\begin{equation*}
\nabla_u[\hat\eta]=[\iota(\tilde u)d\hat\eta]\in\Gamma(W(s),F^{d-p}\mathcal H_S^{2d-2p+n+1}(f_{X,D})), 
\end{equation*}
with $\iota(\tilde u)d\hat\eta\in\Gamma(W(s),f_*F^{d-p}\mathcal A^{2d-2p+n+1}_{X/S}(\nul D))^{d_{X/S}=0}$.
By hypothesis, 
\begin{equation*}
\bar\nabla_u(\mu)=\overline{\nabla_u([\hat\eta])}=\overline{[\iota(\tilde u)d\hat\eta]}=\,^t\bar\nabla(\mu\otimes u)=0
\in\Gamma(W(s),\mathcal H_S^{d-p,d-p+n+1}(f_{X,D})),
\end{equation*}
that is 
\begin{equation*}
\nabla_u[\hat\eta]=[\iota(\tilde u)d\hat\eta]\in\Gamma(W(s),F^{d-p+1}\mathcal H_S^{2d-2p+n+1}(f_{X,D})).
\end{equation*}
Thus, using again the $E_1$ degenerescence of $(f_*\mathcal A_{X/S}(\nul D),F)$ (corollary \ref{relvtR} (i)),
there exist 
\begin{itemize}
\item $\alpha\in\Gamma(W(s),f_*F^{d-p}\mathcal A^{2d-2p+n+1}_{X/S}(\nul D))^{d_{X/S}=0}$,
\item $\beta\in\Gamma(W(s),f_*F^{d-p+1}\mathcal A^{2d-2p+n+1}_{X/S}(\nul D))^{d_{X/S}=0}$
\end{itemize}
such that
\begin{equation}\label{alphabeta}
\iota(\tilde u)d\hat\eta=\beta+d\alpha\in\Gamma(W(s),f_*F^{d-p}\mathcal A^{2d-2p+n+1}_{X/S}(\nul D))^{d_{X/S}=0}.
\end{equation} 

Let us now compute the first term 
$<(\delta\nu_Z)_{|W(s)},\mu\otimes u>(s)=<(\delta\nu_Z)(s),\mu(s)\otimes u(s)>_{ev_{X_s}}$.
We have
\begin{eqnarray*} 
<(\delta\nu_Z)_{|W(s)},\mu\otimes u>(s)&=&<\nabla_u[\tilde\nu_{Z,W}],[\hat\eta]>(s) \\
&=&d_u<[\tilde\nu_{Z,W}],[\hat\eta]>(s)-<[\tilde\nu_{Z,W}],\nabla_u[\hat\eta]>(s) \; \; \mbox{by proposition} \, \ref{manin} \, (i) \\
:&=&  d_u(s'\mapsto\int_{\Gamma_{\bar Z_{s'}}}\pi_X^*\hat\eta\wedge\pi_{(\mathbb P^1)^n}^*\Omega^n_{\square})-
\int_{\Gamma_{\bar Z_{s}}}\pi_X^*\beta\wedge\pi_{(\mathbb P^1)^n}^*\Omega^n_{\square} \\
\end{eqnarray*}
We have 
$d_u(s'\mapsto\int_{\Gamma_{\bar Z_{s'}}}\pi_X^*\hat\eta\wedge\pi_{(\mathbb P^1)^n}^*\Omega^n_{\square})
=\int_{\Gamma_{\bar Z_{s}}}\iota(\tilde u(\square^n))d(\pi_X^*\hat\eta\wedge\pi_{(\mathbb P^1)^n}^*\Omega^n_{\square})
=\int_{\Gamma_{\bar Z_{s}}}\pi_X^*\iota(\tilde u)d\hat\eta\wedge\pi_{(\mathbb P^1)^n}^*\Omega^n_{\square}$
where $\tilde u(\square^n)\in\Gamma(X_{W(s)}\times\square^n,T_{X\times\square^n})$ is a relevement of $\tilde{u}$
hence a relevement of $u$ for $f(\square^n):X\times\square^n\to S$, $f(\square^n)(x,t)=f(x)$, 
since $\iota(\tilde u(\square^n))\pi_{\mathbb P n}^*\ell=0$ for all differential form $\ell\in\Gamma((\mathbb C^*)^n,\mathcal A_{(\mathbb P^1)^n})$ 
(hence in particular $\iota(\tilde u(\square^ n))\pi_{\mathbb P n}^*d\Omega^n_{\square}=0$).
Hence,
\begin{eqnarray*} 
<(\delta\nu_Z)_{|W(s)},\mu\otimes u>(s)
&=&\int_{\Gamma_{\bar Z_{s}}}\pi_X^*(\iota(\tilde u)d\hat\eta-\beta)\wedge\pi_{(\mathbb P^1)^n}^*\Omega^n_{\square} 
=\int_{\Gamma_{\bar Z_{s}}}\pi_X^*d\alpha\wedge\pi_{(\mathbb P^1)^n}^*\Omega^n_{\square} \\
&=&\sum_i n_i\int_{\bar Z_{is}}\pi_X^*\alpha\wedge\pi_{(\mathbb P^1)^n}^*\Omega^n_{\square} 
=\sum_i n_i\int_{\bar Z_{is}}\pi_X^*\alpha^{d-p,d-p+n}\wedge\pi_{(\mathbb P^1)^n}^*\Omega^n_{\square} 
\end{eqnarray*}
where the third equality follows by Stoke formula and the last equality for type reason ($\Omega_{Z_s}$ is of type $(p,p-n)$). 

Let us compute the second term.
Shrinking $W(s)\subset S$ if necessary, there exist, by the exactness of the first row
of the diagramm of sheaves on $S^{an}$ (\ref{GMgrDGMgr}), 
\begin{equation*}
\gamma\in\Gamma(W(s),R^{d-p+n}f_*L^{d_S-1}\Omega^{d_X-p}_X(\nul D)) 
\end{equation*}
such that  $r(\gamma)=\mu\otimes u$.
By commutativity of this diagram (\ref{GMgrDGMgr}), 
\begin{equation*}
<\delta[\Omega_Z]_{|W(s)},\mu\otimes u>=<\delta[\Omega_Z]_{|W(s)},r(\gamma)>=<[\Omega_Z/L^2]_{|W(s)},\gamma>_{f_*ev_X}
\end{equation*}
Hence, by lemma \ref{diemterm},
\begin{eqnarray}\label{inf}
<\delta[\Omega_Z]_{|W(s)},\mu\otimes u>(s)=<\delta[\Omega_Z](s),\mu(s)\otimes u(s)>_{ev_{X_s}}&=&<\delta[\Omega_Z](s),r(s)(\gamma(s))>_{ev_{X_s}} \\
&=&<[\Omega_Z/L^2](s),\gamma(s)>_{f_*ev_X(s)} \\
&=&\sum_in_i\int_{\bar Z_{is}}\pi_X^*\tilde{\gamma}(s)^{N_i}\wedge\pi_{(\mathbb P^1)^n}\Omega_{\square^n}
\end{eqnarray}
Hence, we have to find a form $\xi\in\Gamma(X_s,L^{d_S-1}\mathcal A^{d_X-p,d-p+n}_X(\nul D)_{|X_s})^{\partial_{X_s}=0}$
such that
\begin{equation*}
r(s)[\xi]=\mu(s)\otimes u(s)\in H^{d-p+n}(X_s,\Omega^{d-p}_{X_s}(\nul D)). 
\end{equation*}
Consider the form
\begin{equation*}
\chi:=\hat\eta^{d-p+1,d-p+n}\wedge f^*\iota(u)\kappa+\alpha^{d-p,d-p+n}\wedge f^*\kappa
\in\Gamma(W(s),L^{d_S+1}\mathcal A^{d_X-p,d-p+n}_X(\nul D)). 
\end{equation*}
We have $r(\chi)=\hat\eta^{d-p+1,d-p+n}\otimes u$.
Taking the component of type $(d-p,d-p+n)$ in the relation
\begin{equation*}
(\iota(\tilde u)d\hat\eta)_{|X_s}=\beta_{X_s}+d\alpha_{|X_s}
\end{equation*}
which is the restriction of (\ref{alphabeta}) to $X_s$, we find
that the form
\begin{equation*}
\xi:=\chi_{|X_s}\in\Gamma(X_s,L^{d_S-1}\mathcal A_X^{d_X-p,d-p+n}(\nul D)_{|X_s})^{\bar\partial_{X_s}=0}
\end{equation*}
is closed. 
Moreover, since $r(\chi)=\hat\eta^{d-p+1,d-p+n}\otimes u$, we have
\begin{equation*}
r(s)(\xi)=r(s)(\chi_{|X_s})=\hat\eta^{d-p+1,d-p+n}_{|X_s}\otimes u. 
\end{equation*}
Hence, on cohomology $r(s)([\xi])=[\eta_{|X_s}]\otimes u(s)=\mu(s)\otimes u(s)$. We have the desired form.
Then (\ref{inf}) gives,
\begin{eqnarray*}
<\delta[\Omega_Z]_{|W(s)},\mu\otimes u>(s)=<\delta[\Omega_Z](s),\mu(s)\otimes u(s)>_{ev_{X_s}}&=&<(\delta[\Omega_Z](s),r(s)[\xi]>_{ev_{X_s}} \\
&=&<[\Omega_Z/L^2](s),[\xi]>_{f_*ev_X(s)} \\
&=&\sum_in_i\int_{\bar Z_{is}}\pi_X^*\alpha^{d-p,d-p+n}\wedge\pi_{(\mathbb P^1)^n}\Omega_{\square^n},
\end{eqnarray*}
where the last equality follows again from the fact that 
$\Omega_{Z_{is}}\in\Gamma(X_s,\mathcal D_X^{d-p,d-p+n}(\log D))$ is of type $(d-p,d-p+n)$.

\end{proof}

\begin{rem}
Note that the form $\alpha^{d-p,d-p+n}\in\Gamma(X_{W_s},\mathcal A^{d-p,d-p+n}_{X}(\nul D))$ is not $\bar\partial_{X/S}$
closed, hence not $d_{X/S}$ closed since it is of single type $(d-p,d-p+n)$,
that is $\alpha^{d-p,d-p+n}_{|X_s}$ is not $\bar\partial_{X_s}$ and not $d_{X_s}$ closed. 
But $\pi_X^*\alpha^{d-p,d-p+n}_{|Z^{\reg}_i}$ is $\bar\partial_{Z^{\reg}_i/S}$ closed,
that is $\pi_X^*\alpha^{d-p,d-p+n}_{|Z^{\reg}_{is}}$ is $\bar\partial_{Z^{\reg}_{is}}$ closed.
where, $Z^{\reg}_i\subset Z_i$ the smooth locus of $Z_i$.
Denote by $i_{\bar Z_i}:\bar Z_i\hookrightarrow X\times(\mathbb P^1)^n$ the closed embedding. 
On the other side, the current 
$\Omega_Z=\sum_i n_i \pi_{X*}i_{\bar Z_i*}\Omega_{Z_i}^{onZ^{\reg}_i}\in\Gamma(X,\mathcal D^{p,p-n}_X(\log D))$ 
is $d_{X/S}$ closed, hence  $\bar\partial_{X/S}$ closed since it is of single type $(p,p-n)$, 
that is $\Omega_{Z_s}$ is $d_{X_s}$ closed, since $\partial Z=0$.
But the currents $\Omega_{Z_i}^{onZ^{\reg}_i}\in\Gamma(\bar Z_i,\mathcal D_{Z^{\reg}_i}^{p,p-n}(\log (D\cap Z_i))$ 
are not $\bar\partial_{Z^{\reg}_i/S}$ closed.

\end{rem}

%%%%%%%%%%%%%%%%%%%%%%%%%%%%%%%%%%%%%%%%%%%%%%%%%%%%%%%%%%%%%%%%%%%%%%%%%%%%%%%%%%%%%%%%%%%
\section{Higher Abel Jacobi map for open complete intersection} 
%%%%%%%%%%%%%%%%%%%%%%%%%%%%%%%%%%%%%%%%%%%%%%%%%%%%%%%%%%%%%%%%%%%%%%%%%%%%%%%%%%%%%%%%%%%

Let $Y\in\PSmVar(\mathbb C)$ together with an embedding $Y\in\mathbb P^N$. For $d,e>>0$, the morphisms of $\mathbb C$ vector spaces
\begin{itemize}
\item $\Gamma(\mathbb P^N,O(d))\to\Gamma(Y,O_Y(d))=S_d$,
\item $\Gamma(\mathbb P^N,O(e))\to\Gamma(Y,O_Y(e))=S_e$, and  
\item $\Gamma(\mathbb P^N,O(d))\to\Gamma(Z,O_Z(d))=S_d$ for $Z\subset Y$ such that $Z\in\Gamma(Y,O_Y(e))$,
\end{itemize}
are surjective.
Denote by $p_{d,e}:Y\times S_d\times S_e\to S_d\times S_e$ and  $p_Y:Y\times S_d\times S_e\to Y$
the projections. 
Consider the commutative diagram of families of hypersurface sections of degre $d$ and $e$, whose squares are cartesians : 
\begin{equation}\label{HF}
\xymatrix{
f_D:\mathcal D=\mathcal X\cap\mathcal Z\ar@{^{(}->}[r]^{k_{\mathcal D}}\ar@{^{(}->}[d]^{i_{\mathcal D}} & \mathcal Z\ar@{^{(}->}[d]\ar[rd] \\
f:\mathcal X\ar@{^{(}->}[r]^{i_{\mathcal X}} &  Y\times S_d\times S_e\ar[r]^{p_{d,e}} & S_d\times S_e \\
f_U:\mathcal U=\mathcal X\backslash\mathcal D\ar@{^{(}->}[r]^{i_{\mathcal U}}\ar@{^{(}->}[u]^{j_{\mathcal U}} &  
(Y\times S_d\times S_e)\backslash\mathcal Z\ar@{^{(}->}[u]\ar[ru]}
\end{equation}
Note that $\mathcal X,\mathcal Z,\mathcal D\in\PSmVar(\mathbb C)$, since 
$p_{Y|\mathcal X}:\mathcal X\to Y$, $p_{Y|\mathcal Z}:\mathcal Z\to Y$, $p_{Y|\mathcal D}:\mathcal D\to Y$
are projective bundles and $Y$ is smooth.

For $0\in S_e$, denote by 
$p^0_Y=p_{Y|Y\times S_d\times 0}:Y\times S_d\times 0\to Y$ and 
$p^0_{Y\backslash Z_0}=p_{Y|(Y\backslash Z_0)\times S_d\times 0}:(Y\backslash Z_0)\times S_d\times 0\to Y\backslash Z_0$,
$p^0_d=p_{d,e|Y\times S_d\times 0}:Y\times S_d\times 0\to S_d$,
the projections,
and consider the pullback of the diagram (\ref{HF}) :
\begin{equation}\label{HF0}
\xymatrix{
f^o_D:D=X\cap (Z_0\times S_d)\ar@{^{(}->}[r]^{k_D}\ar@{^{(}->}[d]^{i_D} & Z_0\times S_d\ar@{^{(}->}[d]\ar[rd] \\
f^o:X=\mathcal X_{S_d\times 0}\ar@{^{(}->}[r]^{i_X} &  Y\times S_d\times 0\ar[r]^{p^0_d} & S_d \\
f^o_U:U=X\backslash D\ar@{^{(}->}[r]^{i_U}\ar@{^{(}->}[u]^{j} &  
(Y\backslash Z_0)\times S_d\ar@{^{(}->}[u]\ar[ru]}
\end{equation}
where $Z_0=p^0_Y(\mathcal Z_{S_d\times 0})\subset Y$ so that we have $\mathcal Z_{S_d\times 0}=Z_0\times S_d$.
Then $Y\backslash Z_0$ is an affine variety. For $s\in S_d$, 
consider the correspondence $\Delta(U_s)\subset(Y\backslash Z_0)\times(Y\backslash Z_0)$
which is the diagonal of $U_s$. Since the projection $\Delta(U_s)\to(Y\backslash Z_0)$ is proper
there is a well defined action of this correspondence on cohomology.
We denote by  
\begin{equation}
H^{k}(Y\backslash Z_0,\mathbb C)^0:=\ker(\Delta(U_s)_*)\subset H^{k}(Y\backslash Z_0,\mathbb C)
\end{equation}
the primitive cohomology of $Y\backslash Z_0$, that is kernel of this action.
For $s\in S_d$ such that $U_s\subset Y\backslash Z_0$ is smooth,
we have the equality (by Poincare duality for $U_s$) 
\begin{equation}
H^{k}(Y\backslash Z_0,\mathbb C)^0:=\ker(\Delta(U_s)_*)=\ker(i_{U_s}^*)\subset H^{k}(Y\backslash Z_0,\mathbb C),
\end{equation}
that is the primitive cohomology coincide with the kernel of pullback by the inclusion of an ample smooth hypersurface section.
Since $Y\backslash Z_0$ is affine, $H^{k}(Y\backslash Z_0,\mathbb C)=0$ for $k\geq d_Y+1$ and
$H^{d_Y}(Y\backslash Z_0,\mathbb C)^0=H^{d_Y}(Y\backslash Z_0,\mathbb C)$.

For a morphism $T\to S_d$, we consider the pullback of the diagram (\ref{HF0}) :
\begin{equation}\label{HF0T}
\xymatrix{
f^{T}_D:D_T=X_T\cap (Z_0\times T)\ar@{^{(}->}[r]^{k_{D_T}}\ar@{^{(}->}[d]^{i_{D_T}} & Z_0\times T\ar@{^{(}->}[d]\ar[rd] \\
f^{T}:X_T\ar@{^{(}->}[r]^{i_{X_T}} &  Y\times T\times 0\ar[r]^{p^T} & T \\
f^{T}_U:U_T=X_T\backslash D_T\ar@{^{(}->}[r]^{i_{U_T}}\ar@{^{(}->}[u]^{j_{U_T}} &  
(Y\backslash Z_0)\times T\ar@{^{(}->}[u]\ar[ru],}
\end{equation}
where $X_T=X\times_{S_d} T$, $U_T=U\times_{S_d} T$, $D_T=D\times_{S_d} T$. 

We now give a version of Nori connectness theorem for families of ample open hypersurfaces of $Y\in\PSmVar(\mathbb C)$.

\begin{thm}\label{nori}
Assume $d_Y\geq 4$
Let $0\in S_e$ sufficiently general and $S\subset S_d$ the open subset over which
such that the morphisms $f^0:X\to S_d$ and $f^0_D:D\to S_d$ are smooth projective.
Then, if $d,e>>0$, for all smooth morphism $T\to S_d$ and all $0\leq k\leq d_Y$, 
\begin{itemize}
\item[(i)] $i_{X_T}^*:H^{k-p}(Y\times T,\Omega^p_{Y\times T}(\log(Z_0\times T)))\xrightarrow{\sim} H^{k-p}(X_T,\Omega_{X_T}^p(\log D_T))$
is an isomorphism,
\item[(ii)] $i_{U_T}^*:H^{k}((Y\backslash Z_0)\times T,\mathbb C)\xrightarrow{\sim} H^{k}(U_T,\mathbb C)$
is an isomorphism of mixed hodge structure.
\end{itemize}
\end{thm}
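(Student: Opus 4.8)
The plan is to deduce (ii) from (i) by Hodge theory, and to prove (i) as a logarithmic incarnation of Nori's connectedness theorem, the heart of the argument being a twisted vanishing statement on the fixed pair $(Y,Z_0)$ which is uniform in the test scheme $T$.

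First I would reduce (ii) to (i). Choosing a smooth projective compactification of $T$ with normal crossing boundary, the pairs $(Y\times T,Z_0\times T)$ and $(X_T,D_T)$ give rise to logarithmic de Rham complexes computing $H^{k}((Y\backslash Z_0)\times T,\mathbb C)$ and $H^{k}(U_T,\mathbb C)$ together with their Hodge filtrations, and by Deligne the associated Fr\"olicher (Hodge--de Rham) spectral sequences degenerate at $E_1$; hence $H^{k-p}(Y\times T,\Omega^p_{Y\times T}(\log(Z_0\times T)))=\Gr_F^pH^{k}((Y\backslash Z_0)\times T,\mathbb C)$ and $H^{k-p}(X_T,\Omega^p_{X_T}(\log D_T))=\Gr_F^pH^{k}(U_T,\mathbb C)$, the map $i_{U_T}^*$ being compatible with all of this. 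Thus (i) for all $p$ with $p+(k-p)=k\le d_Y$ says exactly that $i_{U_T}^*$ is an isomorphism on $\Gr_F$, hence an isomorphism in degrees $\le d_Y$ by exhaustiveness and separatedness of $F$; strictness for morphisms of mixed Hodge structures, together with the fact that the weight filtration is produced functorially from the residue sequences along $Z_0\times T$ and $D_T$, then promotes this to an isomorphism of mixed Hodge structures, which is (ii).

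For (i): the inclusion $i_{X_T}:X_T\hookrightarrow Y\times T$ is a relative Cartier divisor over $T$ in the class $p_Y^*O_Y(d)$, and $Z_0\times T$ lies in the class $p_Y^*O_Y(e)$; for $0\in S_e$ general and $T\to S$, the divisors $X_T$ and $Z_0\times T$ meet transversally, so $X_T+(Z_0\times T)$ is a normal crossing divisor in $Y\times T$ and $D_T=X_T\cap(Z_0\times T)$ is smooth over $T$. I would then exploit the two short exact sequences of sheaves
\begin{equation*}
0\to\Omega^p_{Y\times T}(\log(Z_0\times T))\otimes O_{Y\times T}(-X_T)\to\Omega^p_{Y\times T}(\log(Z_0\times T))\to\Omega^p_{Y\times T}(\log(Z_0\times T))|_{X_T}\to 0,
\end{equation*}
\begin{equation*}
0\to\Omega^{p-1}_{X_T}(\log D_T)\otimes N^\vee_{X_T/Y\times T}\to\Omega^p_{Y\times T}(\log(Z_0\times T))|_{X_T}\to\Omega^p_{X_T}(\log D_T)\to 0,
\end{equation*}
with $N^\vee_{X_T/Y\times T}=O_{X_T}(-X_T)=p_Y^*O_Y(-d)|_{X_T}$, so that $i_{X_T}^*$ is an isomorphism in degree $k-p$ once $H^{k-p}$ and $H^{k-p+1}$ of the two negatively twisted log sheaves vanish. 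Iterating the second sequence (induction on $p$ via the Koszul complex $0\to p_Y^*O_Y(-d)\to O_{Y\times T}\to O_{X_T}\to 0$ of $X_T$) reduces everything to the vanishing of $H^q(Y\times T,\Omega^a_{Y\times T}(\log(Z_0\times T))\otimes p_Y^*O_Y(-md))$ for $m\ge 1$, $a\le d_Y$; and the K\"unneth decomposition $\Omega^a_{Y\times T}(\log(Z_0\times T))=\bigoplus_{a'+a''=a}p_Y^*\Omega^{a'}_Y(\log Z_0)\otimes p_T^*\Omega^{a''}_T$ factors out the $H^\bullet(T,\Omega^{a''}_T)$ and reduces it further to the vanishing of $H^{q'}(Y,\Omega^{a'}_Y(\log Z_0)\otimes O_Y(-md))$. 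By Serre duality these are $H^{d_Y-q'}(Y,(\Omega^{a'}_Y(\log Z_0))^\vee\otimes K_Y(md))^\vee$, which vanish for $q'\le d_Y-1$ as soon as $d\gg 0$ --- a condition depending only on $Y$, $Z_0$ and the finitely many values $a'\le d_Y$, hence \emph{independent of $T$}. Feeding this back through the exact sequences gives (i) in the stated range, the top degree $k=d_Y$ being accommodated by the primitivity built into $H^{d_Y}(Y\backslash Z_0,\mathbb C)^0=H^{d_Y}(Y\backslash Z_0,\mathbb C)$.

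\textbf{Main obstacle.} The delicate points are: (a) reaching the full range $k\le d_Y$ --- the naive twisted vanishing above is comfortable strictly below $d_Y$, and the extreme degree genuinely uses the sharper form of Nori's argument, i.e.\ the vanishing of the relevant Koszul cohomology of the logarithmic Jacobian rings of the hypersurfaces $X_s$ (the ``symmetrizer lemma''), rather than crude Serre vanishing; (b) keeping all estimates \emph{uniform in the smooth test scheme $T$}, which is precisely why the K\"unneth splitting-off of the $Y$-factor, together with Serre duality on $Y$ alone, is indispensable; and (c) verifying that the logarithmic/residue sequences along $Z_0\times T$ and $D_T$ remain compatible with restriction to $X_T$, i.e.\ that the transversality of $X_T$ with $Z_0\times T$ guaranteed over $S$ makes $X_T+(Z_0\times T)$ a normal crossing divisor relatively over $T$, so that both displayed exact sequences and the $E_1$-degeneration used to pass from (i) to (ii) are legitimate.
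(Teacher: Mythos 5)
Your reduction of (ii) to (i) via $E_1$-degeneration and strictness of morphisms of mixed Hodge structures is fine (the paper in fact gets (ii) the same way, or alternatively by running the identical five-lemma argument on the Gysin sequences in singular cohomology). The problem is your proof of (i) itself. The chain (twist by $O_{Y\times T}(-X_T)$, log-conormal sequence, Koszul induction, K\"unneth, Serre duality on $Y$, Serre vanishing for $d\gg 0$) only kills the twisted cohomology $H^{q'}(Y,\Omega^{a'}_Y(\log Z_0)\otimes O_Y(-md))$ for $q'\leq d_Y-1$, so after K\"unneth it cannot reach the degrees $k=d_Y-1,\,d_Y$ of the statement, which lie one and two steps beyond the Lefschetz range for the fibers $X_s$ (of dimension $d_Y-1$). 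This is not a technicality: as written, your argument never uses the smoothness of $T\to S_d$ at all, so if it closed it would prove the statement for $T$ a point, which is false in top degree (for $T=\{s\}$ one has $H^{d_Y}(U_s,\mathbb C)=0$ since $U_s$ is affine of dimension $d_Y-1$, while $H^{d_Y}(Y\backslash Z_0,\mathbb C)\neq 0$ in general). The entire content of the theorem is exactly this Nori-type gain of two degrees coming from the family over a smooth base; your ``Main obstacle'' paragraph correctly locates it, but deferring to ``the symmetrizer lemma / Koszul cohomology of logarithmic Jacobian rings'' without carrying it out leaves the essential step unproved (that route exists --- it is essentially Asakura--Saito's theory of Jacobi rings for open complete intersections, cited as \cite{AS} in the paper --- but it is a substantial piece of work, not a remark). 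The closing appeal to $H^{d_Y}(Y\backslash Z_0,\mathbb C)^0=H^{d_Y}(Y\backslash Z_0,\mathbb C)$ does not repair the top degree: primitivity of the source says nothing about surjectivity or injectivity of $i_{U_T}^*$ there.

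For comparison, the paper avoids any new Koszul-theoretic input: it writes the residue exact sequences $0\to\Omega^p_{Y\times T}\to\Omega^p_{Y\times T}(\log(Z_0\times T))\to i_{(Z_0\times T)*}\Omega^{p-1}_{Z_0\times T}\to 0$ and its analogue on $(X_T,D_T)$, and compares the two long exact sequences by $i_{X_T}^*$ and $k_{D_T}^*$. Both outer maps are isomorphisms in the range $k\leq d_Y$ by the \emph{classical} Nori connectedness theorem applied to the two constant-coefficient universal families $X\subset Y\times S_d$ and $D\subset Z_0\times S_d$ over $S_d$ (here $d_Y\geq 4$ guarantees $d_Y<2d_Y-2$ and $d_Y-1<2d_Y-4$, so the required degrees fall inside Nori's range), and the five lemma gives (i); the same diagram chase on the topological Gysin sequences gives (ii). If you want to salvage your approach, you must either actually prove the logarithmic symmetrizer/Jacobian-ring vanishing uniformly in $T$, or, more economically, follow this reduction to the already-known Nori theorem.
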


\begin{proof}
(i): Consider the commutative diagram of sheaves on $Y\times T$ :
\begin{equation}
\xymatrix{
0\ar[r] & \Omega^p_{Y\times T}\ar[r]\ar[d]^{i_{X_T}^*} & \Omega^p_{Y\times T}(\log(Z_0\times T))\ar^{Res}[r]\ar[d]^{i_{X_T}^*} &
i_{(Z_0\times T)*}\Omega^{p-1}_{Z_0\times T}\ar[r]\ar[d]^{k_{D_T}^*} & 0 \\
0\ar[r] & i_{X_T*}\Omega^p_{X_T}\ar[r] & i_{X_T*}\Omega^p_{X_T}(\log D_T)\ar^{Res}[r] &
i_{X_T*}i_{D_T*}\Omega^{p-1}_{D_T}\ar[r] & 0}
\end{equation}
whose rows are exact sequences.

It induces in cohomology
\begin{equation}\label{Nori1}
\xymatrix{
\cdots\ar[r]  & H^{k-p}(Y\times T,\Omega^p_{Y\times T})\ar[r]\ar[d]^{i_{X_T}^*}
 & H^{k-p}(Y\times T, \Omega^p_{Y\times T}(\log(Z_0\times T)))\ar^{Res}[r]\ar[d]^{i_{X_T}^*} &
H^{k-p}(Z_0\times T,\Omega^{p-1}_{Z_0\times T})\ar[r]\ar[d]^{k_{D_T}^*} & \cdots \\
\cdots\ar[r]  & H^{k-p}(X_T,\Omega^p_{X_T})\ar[r] & H^{k-p}(X_T,\Omega^p_{X_T}(\log D_T)))\ar^{Res}[r] &
H^{k-p}(D_T,\Omega^{p-1}_{D_T})\ar[r] & \cdots}
\end{equation}
Now,
\begin{itemize}
\item by Nori connectness theorem for the pair $(Y\times S_d,X)$,
since $d_Y\geq 4$ (hence $d_Y< 2d_Y-2$), $d>>0$ and $T\to S_d$ is smooth, 
the map $i_{X_T}^*:H^{k-p}(Y\times T,\Omega^p_{Y\times T})\xrightarrow{\sim} H^{k-p}(X_T,\Omega^p_{X_T})$
is an isomorphism for all $0\leq k\leq d_Y$, 
\item by Nori connectness theorem for the pair $(Z_0\times S_d,D)$, 
since $d_Y\geq 4$ (hence $d_Y-1< 2d_Y-4$), $e>>0$, and $T\to S_d$ is smooth,
the map $k_{D_T}^*:H^{k-p}(Z_0\times T,\Omega^{p-1}_{Z_0\times T})\xrightarrow{\sim} H^{k-p}(D_T,\Omega^{p-1}_{D_T})$
is an isomorphism for all $0\leq k\leq d_Y$. 
\end{itemize}
Hence, by the diagramm (\ref{Nori1})
$i_{X_T}^*:H^{k-p}(Y\times T,\Omega^p_{Y\times T}(\log (Z_0\times T)))\xrightarrow{\sim} H^{k-p}(X_T,\Omega^p_{X_T}(\log D_T))$ 
is an isomorphism for all $0\leq k\leq d_Y$.

(ii): It follows from (i). We can also prove (ii) directly.
Indeed, we have the commutative diagram whose rows are long exact sequences :
\begin{equation}\label{Nori2}
\xymatrix{
\cdots\ar[r]  & H^{k}(Y\times T,\mathbb C)\ar[r]^{j_{(Y\backslash Z_0)\times T}^*}\ar[d]^{i_{X_T}^*}
 & H^{k}((Y\backslash Z_0)\times T,\mathbb C)\ar[r]^{Res}\ar[d]^{i_{U_T}^*} &
H^{k-1}(Z_0\times T,\mathbb C)\ar[r]^{i_{(Z_0\times T)*}}\ar[d]^{k_{D_T}^*} & \cdots \\
\cdots\ar[r]  & H^{d_Y}(X_T,\mathbb C)\ar[r]^{j_{U_T}^*} & H^{k}(U_T,\mathbb C)\ar[r]^{Res} &
H^{k-1}(D_T,\mathbb C)\ar[r]^{i_{D_T*}} & \cdots}
\end{equation}
Now,
\begin{itemize}
\item by Nori connectness theorem for the pair $(Y\times S_d,X)$,
since $d_Y\geq 4$ (hence $d_Y<2d_Y-2$), $d>>0$ and $T\to S_d$ is smooth, 
the map $i_{X_T}^*:H^{k}(Y\times T,\mathbb C)\xrightarrow{\sim} H^{k}(X_T,\mathbb C)$
is an isomorphism of mixed hodge structures for all $0\leq k\leq d_Y$, 
\item by Nori connectness theorem for the pair $(Z_0\times S_d,D)$, 
since $d_Y\geq 4$ (hence $d_Y-1<2d_Y-4$), $e>>0$, and $T\to S_d$ is smooth,
the map $k_{D_T}^*:H^{k-1}(Z_0\times T,\mathbb C)\xrightarrow{\sim} H^{k-1}(D_T,\mathbb C)$
is an isomorphism of mixed hodge structures for all $0\leq k\leq d_Y$. 
\end{itemize}
Hence, by the diagramm (\ref{Nori2})
$i_{U_T}^*:H^{k}((Y\backslash Z_0)\times T,\mathbb C)\xrightarrow{\sim} H^{k}(U_T,\mathbb C)$ an isomorphism
of mixed hodge structures for all $0\leq k\leq d_Y$.
\end{proof}

\textbf{A non vanishing criterion for an ample hypersurface of $Y\backslash Z_0$}

We will prove theorem \ref{cormain}. We begin by a lemma :

\begin{lem}\label{leraysurj}
Let $0\in S_e$ sufficiently general and $S\subset S_d$ the open subset over which
such that the morphisms $f^0:X\to S_d$ and $f^0_D:D\to S_d$ are smooth projective.
The map of filtered complexes of sheaves on $(Y\times S_d)^{an}$ 
\begin{equation*}
i_X^*:(\Omega^{\bullet}_{Y\times S_d}(\log(Z_0\times S_d)),L)\to(i_{X*}\Omega_X^{\bullet}(\log D),L)
\end{equation*}
induces a surjection of sheaves on $S^{an}$ 
\begin{equation*}
i_X^*:L^2R^{d_Y-p}p^0_{d*}\Omega^p_{Y\times S}(\log(Z_0\times S))\to L^2R^{d_Y-p}f^0_*\Omega^d_{X_S}(\log D).
\end{equation*}
\end{lem}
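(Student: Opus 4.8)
The plan is to reduce the assertion to a statement on the Leray graded pieces of $\Omega^p(\log D)$ by a filtered d\'evissage, and then to identify these graded pieces with relative logarithmic Hodge bundles on which the map $i_X^*$ becomes the relative form of Theorem \ref{nori}.

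First note that $i_X^*:(\Omega^\bullet_{Y\times S_d}(\log(Z_0\times S_d)),L)\to(i_{X*}\Omega^\bullet_X(\log D),L)$ is a morphism of filtered complexes, so in degree $p$ it carries $R^qp^0_{d*}(L^r\Omega^p_{Y\times S}(\log(Z_0\times S)))$ to $R^qf^0_*(L^r\Omega^p_{X_S}(\log D))$ compatibly with the inclusions $L^{r}\hookrightarrow L^{r-1}$. Since by definition $L^2R^{d_Y-p}f^0_*\Omega^p_{X_S}(\log D)$ is the image of $R^{d_Y-p}f^0_*(L^2\Omega^p_{X_S}(\log D))$ in $R^{d_Y-p}f^0_*\Omega^p_{X_S}(\log D)$ (and likewise over $Y\times S$), naturality of $i_X^*$ with respect to $L^2\hookrightarrow L^0$ shows that it suffices to prove that
\[
i_X^*:R^{d_Y-p}p^0_{d*}(L^2\Omega^p_{Y\times S}(\log(Z_0\times S)))\longrightarrow R^{d_Y-p}f^0_*(L^2\Omega^p_{X_S}(\log D))
\]
is surjective; indeed a preimage of any lift to $R^{d_Y-p}f^0_*(L^2\Omega^p_{X_S}(\log D))$ of a class in $L^2R^{d_Y-p}f^0_*\Omega^p_{X_S}(\log D)$ then maps to the required preimage.

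Using the exact sequences $0\to L^{r+1}\Omega^p\to L^r\Omega^p\to\Gr_L^r\Omega^p\to 0$, their long exact sequences under $Rp^0_{d*}$ and $Rf^0_*$, the five lemma, and descending induction on $r$ (the filtration is finite, $L^r\Omega^p=0$ for $r>p$; we may assume $p\geq 2$, otherwise $L^2\Omega^p=0$), it is enough to prove that for every $q$ and every $r\geq 2$ the map $i_X^*:R^qp^0_{d*}\Gr_L^r\Omega^p_{Y\times S}(\log(Z_0\times S))\to R^qf^0_*\Gr_L^r\Omega^p_{X_S}(\log D)$ is an isomorphism. By Proposition \ref{IdInProp} (the identification $\phi^{r,p}$) and the projection formula, $R^qf^0_*\Gr_L^r\Omega^p_{X_S}(\log D)\cong R^qf^0_*\Omega^{p-r}_{X_S/S}(\log D)\otimes_{O_S}\Omega^r_S$, and, since $(Y\times S)/S$ is a trivial family, $R^qp^0_{d*}\Gr_L^r\Omega^p_{Y\times S}(\log(Z_0\times S))\cong(H^q(Y,\Omega^{p-r}_Y(\log Z_0))\otimes_{\mathbb C}O_S)\otimes_{O_S}\Omega^r_S$; under these identifications $i_X^*$ becomes the restriction map on relative log forms tensored with $\mathrm{id}_{\Omega^r_S}$. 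Thus the claim reduces to showing that
\[
i_X^*:H^q(Y,\Omega^{p'}_Y(\log Z_0))\otimes_{\mathbb C}O_S\longrightarrow R^qf^0_*\Omega^{p'}_{X_S/S}(\log D)
\]
is an isomorphism for all $q$ and all $0\leq p'\leq p$.

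This relative form of Nori's isomorphism is the heart of the matter and the step I expect to be the main obstacle. Here the plan is to use that the relative logarithmic Hodge--de Rham spectral sequence of the smooth projective pair $(X_S,D_S)\to S$ degenerates at $E_1$, so that $R^qf^0_*\Omega^{p'}_{X_S/S}(\log D)$ is locally free with fibre $H^q(X_s,\Omega^{p'}_{X_s}(\log D_s))$ by cohomology and base change, while the source is the trivial bundle with fibre $H^q(Y,\Omega^{p'}_Y(\log Z_0))$. One then invokes Theorem \ref{nori}(i) with $T=S$ together with this $E_1$-degeneration: the degeneration makes the resulting comparison in hypercohomology over $Y\times S$ and over $X_S$ strictly compatible with the Leray and Hodge filtrations on both sides, so that an isomorphism in total hypercohomology forces an isomorphism on each graded piece, and hence $i_X^*$ is an isomorphism on the fibre at every $s\in S$, and therefore an isomorphism of $O_S$-modules. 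The delicate point is precisely this passage from the absolute statement of Theorem \ref{nori} over $Y\times S$ and $X_S$ to the fibrewise (relative) isomorphism of the Hodge bundles $R^\bullet f^0_*\Omega^\bullet_{X_S/S}(\log D)$.
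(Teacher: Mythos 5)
Your reduction to the $L$-graded pieces and the identification of those pieces via $\phi^{r,p}$ and the projection formula are fine, and they agree with what the paper leaves implicit. The gap is in the key step: the claim that $i_X^*:H^q(Y,\Omega^{p'}_Y(\log Z_0))\otimes_{\mathbb C}O_S\to R^qf^0_*\Omega^{p'}_{X_S/S}(\log D)$ is an isomorphism \emph{for all} $q$, deduced from Theorem \ref{nori}(i). This is false as stated, and it cannot be obtained from Theorem \ref{nori}. The fibre of the target at $s$ is $H^q(X_s,\Omega^{p'}_{X_s}(\log D_s))=\Gr_F^{p'}H^{p'+q}(U_s,\mathbb C)$; for $p'+q=d_Y-1$ this contains the vanishing cohomology of the open hypersurface section and is in general strictly larger than $\Gr_F^{p'}H^{p'+q}(Y\backslash Z_0,\mathbb C)$, and for $p'+q=d_Y$ the target vanishes ($U_s$ is affine of dimension $d_Y-1$) while the source need not; so the map of locally free sheaves is not an isomorphism in those degrees. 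Moreover Theorem \ref{nori} is a statement about the cohomology of the total spaces $Y\times T$ and $X_T$ for a smooth $T\to S_d$: the whole point of Nori connectivity is that it holds in a range of degrees where the fibrewise statement fails, so the passage you yourself flag as delicate, from the absolute isomorphism to an isomorphism of the relative Hodge bundles, is not merely delicate but impossible in general in the degrees where you invoke it.

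What the argument actually needs, and what the paper uses, is weaker and purely fibrewise: the affine weak Lefschetz theorem. Since $Y\backslash(Z_0\cup X_s)$ is smooth affine, $i_{U_s}^*:H^k(Y\backslash Z_0,\mathbb C)\to H^k(U_s,\mathbb C)$ is an isomorphism for $k<d_Y-1$ and injective for $k=d_Y-1$; as this is a morphism of mixed Hodge structures and the Fr\"olicher filtration is $E_1$ degenerate, the map of Hodge bundles $H^m(Y,\Omega^l_Y(\log Z_0))\otimes_{\mathbb C}O_S\to R^mf^0_*\Omega^l_{X_S/S}(\log D)$ is an isomorphism for $l+m<d_Y-1$ and injective for $l+m=d_Y-1$. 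In your d\'evissage the graded pieces that occur are $\Gr_L^r$ with $r\geq 2$ in the cohomological degrees $q=d_Y-p$ (where surjectivity is needed) and $q=d_Y-p+1$ (where only injectivity is needed), i.e. Hodge bundles of total degree $d_Y-r\leq d_Y-2$, respectively $d_Y-r+1\leq d_Y-1$, so this weak Lefschetz input is exactly sufficient. Replacing the appeal to Theorem \ref{nori} by this fibrewise Lefschetz statement, and restricting your graded claims to the degrees actually used, repairs the proof and recovers the paper's argument; Nori's theorem is only needed later, in Theorem \ref{cormain}, for the absolute groups over $Y\times S$ and $X_S$.
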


\begin{proof}
By Lefschetz theorem, the restriction morphism
$i_U^*:H^k(Y\backslash Z_0,\mathbb C)\to H^k(U_s,\mathbb C)$
is an isomorphism for $0\leq k<d_Y-1$ and is injective for $k=d_Y-1$ ($Y\backslash (Z_0\cup U_s)$ is a smooth affine variety).
Moreover it is a morphism of mixed hodge structures.
Hence, since the Fr\"olicher filtration is $E_1$ degenerate,
\begin{equation*}
i_X^*:\mathcal H^{l,m}_S(p_{Y\backslash Z_0})=R^mp^0_{d*}\Omega^l_{Y\times S/S}(\log(Z_0\times S))=E^{1,m}_1
\to \mathcal H^{l,m}_S(p_{Y\backslash Z_0})=R^mf^0_*\Omega^l_{X_S/S}(\log D)=E^{1,m}_1
\end{equation*}
is an isomorphism for $0\leq l+m<d_Y-1$ and is injective for $l+m=d_Y-1$.
\end{proof}

\begin{thm}\label{cormain}

Assume $d_Y\geq 4$
Let $0\in S_e$ sufficiently general and $S\subset S_d$ the open subset over which
such that the morphisms $f^0:X\to S_d$ and $f^0_D:D\to S_d$ are smooth projective.
Let $Z\in\mathcal Z^{p}(Y\backslash Z_0,2p-d_Y)^{pr/Y}_{\partial=0}$ such that 
$[\Omega_Z]\neq 0\in H^{d_Y}(Y\backslash Z_0,\mathbb C)$. Then for $s\in S$ general,
$AJ_{U_s}(Z_s):=[R'_{Z_s}]\neq 0\in J^{p,d_Y-1}(U_s)$.

\end{thm}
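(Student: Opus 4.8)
The plan is to combine Theorem \ref{main} (the Voisin-type statement on infinitesimal invariants) with Theorem \ref{nori} (Nori connectedness for families of open ample hypersurfaces) and the surjectivity of the Leray restriction in Lemma \ref{leraysurj}. The strategy is by contradiction at the level of the normal function $\nu_Z$: if $AJ_{U_s}(Z_s) = [R'_{Z_s}] = 0 \in J^{p,d_Y-1}(U_s)$ for $s$ in a set which is not contained in a proper analytic subset of $S$, then by continuity the normal function $\nu_Z \in NF_S(f^0_U)$ associated to $Z$ (viewed over the base $S$, after restricting the cycle $Z$ on $Y\backslash Z_0$ to the family $U_s$ — note $Z$ is a \emph{fixed} cycle on the affine variety $Y\backslash Z_0$, so its restriction to $U_s$ gives a relative cycle in $\mathcal Z^p(U,2p-d_Y)_{\partial=0}^{pr/X\hom/S}$) vanishes identically on a Zariski-dense open subset, hence vanishes as a section of $J^{p,d_Y-1}(f^0_U)$. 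But then its infinitesimal invariant $\delta\nu_Z = 0$.

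The key point is that $\pi_X(\bar Z_i) \subset X$ is automatically a local complete intersection here: since $Z$ lives on $Y\backslash Z_0$ and $X_s \subset Y\backslash Z_0$ is a hyperplane-type section, $\pi_X(\bar Z_i)$ is just $\bar Z_i$ transported along the (locally trivial) projective-bundle structure, so the hypothesis of Theorem \ref{main} is satisfied. Applying Theorem \ref{main} gives $\delta[\Omega_Z] = \delta\nu_Z = 0 \in \Gamma(S,\Omega_S\otimes\mathcal H_S^{p-1,p-n}(f^0_U)/\Im(\bar\nabla))$ with $n = 2p-d_Y$, i.e. $p-n = d_Y-p$. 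Unwinding the definition of $\delta[\Omega_Z]$ via the diagram (\ref{phi12r}), this says that the image $[\Omega_Z]/L^2 \in \Gamma(S,\Gr_L^1 R^{d_Y-p}f^0_*\Omega^p_X(\log D))$ vanishes; equivalently $[\Omega_Z] \in \Gamma(S, L^2 R^{d_Y-p}f^0_*\Omega^p_X(\log D))$.

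Now I would use Lemma \ref{leraysurj}: the Leray-filtered restriction $i_X^*$ is surjective on $L^2 R^{d_Y-p}f^0_*\Omega^p_X(\log D)$ from $L^2 R^{d_Y-p}p^0_{d*}\Omega^p_{Y\times S}(\log(Z_0\times S))$. But $Y\backslash Z_0$ is affine of dimension $d_Y$, so over the point the fiber is $H^{d_Y-p}(Y\backslash Z_0, \Omega^p_{Y\backslash Z_0})$ and the relevant graded piece $\mathcal H^{p-1,d_Y-p}_S$ of the \emph{base} $Y\times S \to S$ sees the cohomology of $Y\backslash Z_0$, which by Lefschetz/affineness reasoning (as in the proof of Lemma \ref{leraysurj}) forces $L^2$ to behave like the cohomology of the base factor. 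Concretely, $[\Omega_Z]$ on $Y\backslash Z_0$ (where there is no base, $S_d$ being absent — or rather, since $Z$ is pulled back from $Y\backslash Z_0$, its class $[\Omega_Z]_{|U_s}$ is the restriction of the fixed class $[\Omega_Z] \in H^{d_Y}(Y\backslash Z_0,\mathbb C)$) cannot lie in $L^2$ of anything nontrivial unless it already vanishes, because on the affine base side there is no room: the Leray filtration on $H^{d_Y}(Y\backslash Z_0)$ — which is all primitive — combined with Theorem \ref{nori}(ii) giving $i_{U_s}^*: H^{d_Y}(Y\backslash Z_0,\mathbb C) \xrightarrow{\sim} H^{d_Y}(U_s,\mathbb C)$ an isomorphism of mixed Hodge structures, shows that $[\Omega_Z]_{|U_s} = 0 \in H^{d_Y}(U_s,\mathbb C)$ would imply $[\Omega_Z] = 0 \in H^{d_Y}(Y\backslash Z_0,\mathbb C)$, contradicting the hypothesis. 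Tracing this through: vanishing of $\delta\nu_Z$ plus the injectivity statements of Lemma \ref{leraysurj} and Theorem \ref{nori}(i)--(ii) force $[\Omega_Z] = 0$, a contradiction.

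The main obstacle I expect is the bookkeeping in the third step: making precise how $\delta[\Omega_Z] = 0$ — a statement about a class in $L^2$ of the \emph{relative} log-de Rham cohomology of $f^0: X\to S$ — translates into a statement about the \emph{fixed} class $[\Omega_Z] \in H^{d_Y}(Y\backslash Z_0,\mathbb C)$, using that $Z$ and hence $[\Omega_Z]$ is constant along $S$ (it is pulled back along the affine family $(Y\backslash Z_0)\times S \to Y\backslash Z_0$). One must check that the identification $\phi^{1,p}$ and the isomorphisms of Theorem \ref{nori}(i) intertwine the base $Y\times S$ picture with the total-space $X$ picture compatibly with Leray filtrations, so that $\delta\nu_Z = \delta[\Omega_Z] = 0$ on the $X$-side pulls back to $\delta[\Omega_Z] = 0$ on the $Y\times S$-side, where Lefschetz makes $L^2$ collapse onto $H^{d_Y}(Y\backslash Z_0)$ and forces $[\Omega_Z] = 0$. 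Once this compatibility is in place, the contradiction is immediate and the genericity of $s \in S$ is exactly the complement of the analytic locus where $\nu_Z$ could fail to control $\delta\nu_Z$.
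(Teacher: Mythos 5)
Your skeleton is the same as the paper's: pull $Z$ back to the relative cycle $\tilde Z=i_U^*p^{0*}_{Y\backslash Z_0}Z$ over $S$, note that the nonvanishing of the normal function $\nu_{\tilde Z}$ on a proper-closed-complement suffices, use Theorem \ref{main} to replace $\delta\nu_{\tilde Z}$ by $\delta[\Omega_{\tilde Z}]$, and observe (via the injectivity of $\psi^2_L$ in diagram (\ref{phi12r})) that $\delta[\Omega_{\tilde Z}]=0$ is equivalent to $[\Omega_{\tilde Z}]\in\Gamma(S,L^2R^{d_Y-p}f^0_*\Omega^p_X(\log D))$. The gap is in the step where the hypothesis $[\Omega_Z]\neq 0\in H^{d_Y}(Y\backslash Z_0,\mathbb C)$ is actually used. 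You propose to conclude by applying Theorem \ref{nori}(ii) ``at $s$'', claiming $i_{U_s}^*:H^{d_Y}(Y\backslash Z_0,\mathbb C)\to H^{d_Y}(U_s,\mathbb C)$ is an isomorphism, so that $[\Omega_Z]_{|U_s}=0$ would force $[\Omega_Z]=0$. This cannot work: Theorem \ref{nori} is stated only for \emph{smooth} morphisms $T\to S_d$, and the inclusion of a point is not one; moreover $U_s$ is affine of dimension $d_Y-1$, so $H^{d_Y}(U_s,\mathbb C)=0$ and $[\Omega_Z]_{|U_s}=0$ holds automatically --- this vanishing is exactly what makes $AJ_{U_s}(Z_s)$ well defined in the first place, so no contradiction can be extracted fiberwise from it.

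The missing mechanism (which is precisely the ``bookkeeping'' you flag as the main obstacle) is the comparison over the whole family $T=S$, not over a fiber. The paper argues: if $[\Omega_{\tilde Z}]\in\Gamma(S,L^2R^{d_Y-p}f^0_*\Omega^p_X(\log D))$, then Lemma \ref{leraysurj}, together with the affineness of $S$ (identifying global sections of the direct images with $L^2H^{d_Y-p}$ of the total spaces), produces $\alpha\in L^2H^{d_Y-p}(Y\times S,\Omega^p_{Y\times S}(\log(Z_0\times S)))$ with $i_X^*\alpha=[\Omega_{\tilde Z}]$. Since also $[\Omega_{\tilde Z}]=i_X^*p^{0*}_Y[\Omega_Z]$ and $p^{0*}_Y[\Omega_Z]\notin L^2$, the class $\alpha-p^{0*}_Y[\Omega_Z]$ is nonzero and lies in the kernel of $i_X^*$, contradicting Theorem \ref{nori}(i) applied to the smooth morphism $S\hookrightarrow S_d$, which says $i_X^*:H^{d_Y-p}(Y\times S,\Omega^p_{Y\times S}(\log(Z_0\times S)))\to H^{d_Y-p}(X_S,\Omega^p_{X_S}(\log D))$ is an isomorphism. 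Your proposal never produces such a nonzero class killed by $i_X^*$ over the family, so the contradiction is not reached. As a secondary remark, your claim that the hypothesis of Theorem \ref{main} is ``automatic'' because $\pi_X(\bar{\tilde Z}_i)$ is ``$\bar Z_i$ transported along the projective-bundle structure'' is not accurate as stated (it is rather $(\pi_Y(\bar Z_i)\times S_d)\cap X$); the paper itself applies Theorem \ref{main} here without comment, so this is a minor point compared with the gap above.
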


\begin{proof}
Consider the cycle $\tilde Z=i_U^*p_{Y\backslash Z_0}^{0*}Z\in\mathcal Z^{p}(U_S,2p-d_Y)^{pr/X,\hom/S}_{\partial=0}$.
We want to show that 
\begin{equation*}
\nu_{\tilde Z}\neq 0\in\Gamma(S,J^{p,d_Y-1}(f^0_U))=\Gamma(S,D_{O_S}^{\vee}(F^{d_Y-p}\mathcal H_S^{d_Y-1}(f^0_{X,D}))/H_{d_Y-1,\mathbb Z}(f^0_{X,D}). 
\end{equation*}
Since for all $s\in S$, 
\begin{equation*}
\nu_{\tilde Z}(s)=[R_{Z_s}]=ev_{X_s}(AJ_{U_s}(Z_s))\in F^{d_Y-p}H^{d_Y-1}(X_s,D_s,\mathbb C)^{\vee}/H_{d_Y-1}(X_s,D_s,\mathbb Z),
\end{equation*}
this will give the result because then $V(\nu_{\tilde Z})\subset S$ will be a proper analytic subset, 
even a proper algebraic subset by a result of Brossman, Pearlstein and Schnell.
By theorem \ref{main},
\begin{equation*}
\delta\mu_{\tilde Z}=\delta[\Omega_{\tilde Z}]\in\Gamma(S,\mathcal H^{p-1,d_Y-p}_S(f^0_U)\otimes_{O_S}\Omega_S/\Im\bar\nabla). 
\end{equation*}
Hence, it suffice to show that $\delta[\Omega_{\tilde Z}]\neq 0$.
Since, by the commutativity of (\ref{GMgrDGMgr}), we have, for all $s\in S$, $W(s)\subset S$ an open neighborhood of $s$ in $S$ and 
$\mu\in\Gamma(W(s),\mathcal H^{d_Y-p,p-1}_S(f^0_{X,D})\otimes_{O_S}T_S)^{\bar\nabla=0}$,
\begin{equation*}
<\delta[\Omega_{\tilde Z}]_{|W(s)},\mu>_{ev_f}=<[\Omega_{\tilde Z}/L^2]_{|W(s)},\gamma>_{f_*ev_X} 
\end{equation*}
where $\gamma\in\Gamma(W(s),R^{p-1}f^0_*L^{d_S-1}\Omega^{d_Y-p+d_S}_X(\nul D))$ is such that $r(\gamma)=\mu$,
it suffice to show that $[\Omega_{\tilde Z}/L^2]\neq 0\in\Gamma(S,R^{d_Y-p}f^0_*(\Omega^p_X(\log D)/L^2))$.
Since the map from the Leray spectral sequence of associated to the filtered complex 
$(\Omega^{\bullet}_X(\log D),L)$ (c.f.diagramm (\ref{phi12r}))
\begin{equation*}
\psi^2_L:\Gamma(S,(L^1R^{d_Y-p}f^0_*\Omega^p_X(\log D))/L^2)=E_{\infty}^{1,d_Y-p}
\hookrightarrow\Gamma(S,R^{d_Y-p}f^0_*(\Omega^p_X(\log D)/L^2))^{q=0}=E_1^{1,d_Y-p}
\end{equation*}
is injective, it suffice to show that $[\Omega_{\tilde Z}]/L^2\neq 0\in\Gamma(S,(L^1R^{d_Y-p}f^0_*\Omega^p_X(\log D))/L^2)$.

So, suppose that $[\Omega_{\tilde Z}]/L^2=0$, that is $[\Omega_{\tilde Z}]\in\Gamma(S,L^2R^{d_Y-p}f^0_*\Omega^p_X(\log D))$.
By the lemma \ref{leraysurj}, since $S\subset S_d$ is affine, 
there exist 
\begin{equation*}
\alpha\in\Gamma(S,L^2R^{d_Y-p}p^0_{d*}\Omega^p_{Y\times S}(\log(Z_0\times S)))
\end{equation*}
such that $i_X^*\alpha=[\Omega_{\tilde Z}]$.
Since $S$ is affine, the canonical maps
\begin{itemize}
\item $L^2H^{d_Y-p}(Y\times S,\Omega^p_{Y\times S}(\log(Z_0\times S)))\xrightarrow{\sim}
\Gamma(S,L^2R^{d_Y-p}p^0_{d*}\Omega^p_{Y\times S}(\log(Z_0\times S)))$ and
\item $L^2H^{d_Y-p}(X_S,\Omega^p_X(\log D))\xrightarrow{\sim}\Gamma(S,L^2R^{d_Y-p}f^0_*\Omega^p_X(\log D))$
\end{itemize}
are isomorphisms.
Hence, seeing $\alpha\in L^2H^{d_Y-p}(Y\times S,\Omega^p_{Y\times S}(\log(Z_0\times S)))$,
\begin{equation*}
i_X^*\alpha=[\Omega_{\tilde Z}]=i_X^*p^{0*}_Y[\Omega_Z]\in H^{d_Y-p}(X_S,\Omega_{X_S}^p(\log D)),
\end{equation*}
that is $i_X^*(\alpha-p^{0*}_Y[\Omega_Z])=0\in H^{d_Y-p}(X_S,\Omega_{X_S}^p(\log D)$.
But since $p^{0*}_Y[\Omega_Z]\notin L^2H^{d_Y-p}(Y\times S,\Omega^p_{Y\times S}(\log(Z_0\times S)))$,
$\alpha-p^{0*}_Y[\Omega_Z]\neq 0\in H^{d_Y-p}(Y\times S,\Omega^p_{Y\times S}(\log(Z_0\times S)))$.
But by the theorem \ref{nori} (i), since $S\hookrightarrow S_d$ is smooth,
\begin{equation*}
i_X^*:H^{d_Y-p}(Y\times S,\Omega^p_{Y\times S}(\log(Z_0\times S)))\xrightarrow{\sim} H^{d_Y-p}(X_S,\Omega_{X_S}^p(\log D)),
\end{equation*}
is an isomorphism. We get a contradiction.
\end{proof}

\textbf{The image of the Abel Jacobi map of an ample hypersurface of $Y\backslash Z_0$}

\begin{thm}\label{ImAJUs}
Assume $d_Y\geq 4$.
Let $0\in S_e$ sufficiently general and $S\subset S_d$ the open subset over which
such that the morphisms $f^0:X\to S_d$ and $f^0_D:D\to S_d$ are smooth projective.
Consider the commutative diagram
\begin{equation}\label{ImAJUsdiag}
\xymatrix{
\CH^p(Y\backslash Z_0,2p-d_Y,\mathbb Q)\ar^{i_{U_s}^*}[r]\ar^{\mathcal R(Y,Z_0)}[d] &
\CH^p(U_s,2p-d_Y,\mathbb Q)\ar^{\overline{\mathcal R(X_s,D_s)}}[d] \\
H^{d_Y}_{\mathcal D}(Y,Z_0,\mathbb Q)\ar[r] & H^{d_Y}_{\mathcal D}(X_s,D_s,\mathbb Q)/J^{p,d_Y}(Y\backslash Z_0)_{\mathbb Q}}
\end{equation}
Then for a general point $s\in S$, $\Im(\overline{\mathcal R^p(X_s,D_s)})=\Im(\overline{\mathcal R^p(X_s,D_s)}\circ i_{U_s}^*)$. 
\end{thm}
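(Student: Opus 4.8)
The plan is to deduce the statement from the version of Nori connectedness proved in Theorem \ref{nori}, applied with $T=S$ (which is smooth over $S_d$), combined with the Deligne-homology incarnation of the higher Abel-Jacobi map and a standard spreading-out argument. The inclusion $\Im(\overline{\mathcal R^p(X_s,D_s)}\circ i_{U_s}^*)\subset\Im(\overline{\mathcal R^p(X_s,D_s)})$ is trivial, so the content is the reverse inclusion: every Deligne class coming from a Bloch cycle on $U_s$ is, modulo $J^{p,d_Y}(Y\backslash Z_0)_{\mathbb Q}$, already the restriction of a Deligne class on $Y\backslash Z_0$. First I would recall that by definition of the relative Deligne complex and of $\mathcal R^p$ (subsection 2.3), $\overline{\mathcal R^p(X_s,D_s)}(\zeta)$ for $\zeta\in\CH^p(U_s,2p-d_Y,\mathbb Q)$ is built out of the currents $(T_\zeta,\Omega_\zeta,R_\zeta)$, whose cohomological shadow is controlled by the class $[\Omega_\zeta]\in H^{d_Y-p}(X_s,\Omega^p_{X_s}(\log D_s))$ together with the Hodge-filtered part of $H^{d_Y}(U_s,\mathbb C)$. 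By Theorem \ref{nori}(i) with $T=S$, specialized at a general $s\in S$ (so that also $i_{X_s}^*$ is an isomorphism on the relevant $H^{k-p}(\cdot,\Omega^p(\log\cdot))$ — this holds on a Zariski-dense open of $S$ by generic flatness and upper-semicontinuity), the map $i_{X_s}^*:H^{d_Y-p}(Y\times s,\Omega^p_{Y}(\log Z_0))\xrightarrow{\sim}H^{d_Y-p}(X_s,\Omega^p_{X_s}(\log D_s))$ is an isomorphism, and by Theorem \ref{nori}(ii) the same holds for $i_{U_s}^*$ on $H^{d_Y}(\cdot,\mathbb C)$ as a morphism of mixed Hodge structures.

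The key step is then a diagram chase in the long exact sequence relating the Deligne homology groups $H^{d_Y}_{\mathcal D}(Y,Z_0,\mathbb Q)$ and $H^{d_Y}_{\mathcal D}(X_s,D_s,\mathbb Q)$, namely the exact sequence
\begin{equation*}
0\to J^{p,d_Y-1}(V)_{\mathbb Q}\to H^{d_Y}_{\mathcal D}(V,\mathbb Q)\to \Hom_{\MHS}(\mathbb Q(0),H^{d_Y}(V,\mathbb Q)(p))\to 0
\end{equation*}
applied to $V=Y\backslash Z_0$ and to $V=U_s$, fitted into a commutative ladder under the restriction maps induced by $i_{U_s}$. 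On the right-hand terms (the Hodge-theoretic "class map" parts), Theorem \ref{nori}(ii) gives that $i_{U_s}^*$ is an isomorphism; in particular every class in the target $\Hom_{\MHS}(\mathbb Q(0),H^{d_Y}(U_s)(p))$ of $\overline{\mathcal R^p(X_s,D_s)}(\zeta)$ is the restriction of a class on $Y\backslash Z_0$. Choosing a Bloch cycle $W\in\mathcal Z^p(Y\backslash Z_0,2p-d_Y,\mathbb Q)_{\partial=0}$ realizing that class (such a $W$ exists because the class map from higher Chow groups of the affine variety $Y\backslash Z_0$ to the Hodge-theoretic part is surjective — this is the higher analogue of surjectivity of the cycle class map onto $\Hom_{\MHS}$, which for the relevant affine $Y\backslash Z_0$ one gets from the corresponding statement for $Y$ and the localization sequence), the difference $\overline{\mathcal R^p(X_s,D_s)}(\zeta-i_{U_s}^*W)$ lands in the subgroup $J^{p,d_Y-1}(U_s)_{\mathbb Q}$. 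It then remains to kill this Jacobian part modulo $J^{p,d_Y}(Y\backslash Z_0)_{\mathbb Q}$; here one uses Theorem \ref{cormain} together with Theorem \ref{main}: for $s\in S$ general the relative normal function $\nu_{\tilde Z}$ of any cycle whose primitive cohomology class vanishes has $\delta\nu_{\tilde Z}=\delta[\Omega_{\tilde Z}]=0$, so by the theorem of Brosnan--Pearlstein--Schnell the normal function is locally constant (torsion) on a dense open, hence $\nu_{\tilde Z}(s)$ lies in $i_{U_s}^*J^{p,d_Y-1}(Y\backslash Z_0)_{\mathbb Q}$ modulo torsion for general $s$; combined with $i_{U_s}^*J^{p,d_Y-1}(Y\backslash Z_0)_{\mathbb Q}$ being the image of the subgroup we are allowed to quotient by, this finishes the reverse inclusion.

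Concretely I would organize the steps as: (1) reduce to showing, for general $s\in S$ and any $\zeta\in\CH^p(U_s,2p-d_Y,\mathbb Q)$, the existence of $W\in\CH^p(Y\backslash Z_0,2p-d_Y,\mathbb Q)$ with $\overline{\mathcal R^p(X_s,D_s)}(\zeta-i_{U_s}^*W)=0$ in $H^{d_Y}_{\mathcal D}(X_s,D_s,\mathbb Q)/J^{p,d_Y}(Y\backslash Z_0)_{\mathbb Q}$; (2) use Theorem \ref{nori}(ii) and surjectivity of the higher cycle class map of $Y\backslash Z_0$ onto the Hodge part to match the $\Hom_{\MHS}$-components, reducing to the case where $\overline{\mathcal R^p(X_s,D_s)}(\zeta-i_{U_s}^*W)\in J^{p,d_Y-1}(U_s)_{\mathbb Q}$; (3) spread $\zeta-i_{U_s}^*W$ out to a relative cycle over an open $S'\subset S$, note its fiberwise cohomology class is primitive and vanishes, apply Theorems \ref{main} and \ref{cormain} to conclude $\delta\nu=0$, hence by Brosnan--Pearlstein--Schnell that $\nu$ is torsion on a dense open, i.e. $\overline{\mathcal R^p(X_s,D_s)}(\zeta-i_{U_s}^*W)\in i_{U_s}^*J^{p,d_Y-1}(Y\backslash Z_0)_{\mathbb Q}$ for general $s$; (4) observe $i_{U_s}^*J^{p,d_Y-1}(Y\backslash Z_0)_{\mathbb Q}$ is visibly in the image of $i_{U_s}^*\circ\mathcal R^p(Y,Z_0)$ and is quotiented out, giving the claim. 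The main obstacle I anticipate is step (2)--(3): making precise and rigorous the surjectivity of the higher cycle class map of the affine variety $Y\backslash Z_0$ onto the Hodge-theoretic part of relative Deligne cohomology (one needs the Bloch--Beilinson--type statement that $\CH^p(Y\backslash Z_0,2p-d_Y)$ surjects onto $\Hom_{\MHS}(\mathbb Q(0),H^{d_Y}(Y\backslash Z_0,\mathbb Q)(p))$, which follows from the localization sequence for $(Y,Z_0)$ and the corresponding surjectivity for the smooth projective $Y$), and in verifying that the spreading-out and the appeal to Brosnan--Pearlstein--Schnell genuinely produce a \emph{single} general $s$ working for the given $\zeta$ — this is handled by noting that the locus where $\nu$ fails to be torsion is a proper algebraic subset, and $\CH^p(U_s,\cdot)$ is a countable union, so a very general, hence (by the algebraicity of the exceptional loci) general, $s$ suffices.
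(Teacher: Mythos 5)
Your route is genuinely different from the paper's, and it contains two serious gaps. First, your step (2) requires the higher cycle class map $\CH^p(Y\backslash Z_0,2p-d_Y,\mathbb Q)\to\Hom_{\MHS}(\mathbb Q(0),H^{d_Y}(Y\backslash Z_0,\mathbb Q)(p))$ to be surjective. This is a Beilinson--Hodge type statement: it does not follow from the localization sequence together with the corresponding statement for the smooth projective $Y$ (which is itself a Hodge-conjecture-type assertion, equally unknown), and nothing in the paper supplies it. Second, in step (3) you assert that once the fiberwise cohomology classes of the spread-out cycle vanish one gets $\delta\nu=\delta[\Omega]=0$. Fiberwise vanishing only places the total-space class $[\Omega_Z]$ in $L^1$; the infinitesimal invariant $\delta[\Omega_Z]$ is precisely the image of $[\Omega_Z]$ in the first Leray graded piece (modulo $\Im\bar\nabla$) and has no reason to vanish --- indeed Theorem \ref{cormain} is exactly about showing it is \emph{nonzero} for such cycles. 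Moreover, even granting $\delta\nu=0$, the Brosnan--Pearlstein--Schnell theorem concerns algebraicity of zero loci of admissible normal functions; it does not say that a normal function with vanishing infinitesimal invariant is torsion, nor that its values lie in $i_{U_s}^*J^{p,d_Y-1}(Y\backslash Z_0)_{\mathbb Q}$, which is what your step (4) needs.

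The paper's proof avoids both issues by a Green--M\"uller-Stach style geometric argument that never matches Hodge classes by cycles and never uses Theorems \ref{main} or \ref{cormain}: given $Z_s$ on $U_s$, it is spread out over a branched cover $h:T\to S_d$ using relative Hilbert schemes; Theorem \ref{nori}(ii), applied to $U_T\subset(Y\backslash Z_0)\times T$ at the level of Deligne cohomology, shows that the Deligne classes of the fibers $Z_{t_i}$, $t_i\in h^{-1}(s)$, all agree modulo $i_{U_s}^*J^{p,d_Y-1}(Y\backslash Z_0)$, whence $\overline{\mathcal R(X_s,D_s)}(\sum_{t_i}Z_{t_i})=r'\,\overline{\mathcal R(X_s,D_s)}(\bar Z_s)$; then one restricts to a pencil $\Lambda_d\ni s$, pushes the family cycle forward to $Y\backslash Z_0$ by the proper projection from $(Y\backslash Z_0)\times\hat T$, and the projection formula identifies $\sum_{t_i}j_{U_s}^*Z_{t_i}$, up to a correction term coming from the base locus of the pencil, with the restriction to $U_s$ of an explicit cycle on $Y\backslash Z_0$. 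Dividing by $r'$ (rational coefficients) gives the reverse inclusion directly. If you want to salvage your outline you would have to replace step (2) by such an explicit geometric construction, since the surjectivity you invoke is conjectural.
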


\begin{proof}
We follow \cite{GrMS}.
Let $s\in S$ a general point and $Z_s=\sum_{i=1}^k n_i Z_{is}\in\mathcal Z^p(U_s,n,\mathbb Q)^{pr/X_s}_{\partial=0}$. 
Then, there exists a branched covering $h:T\to S_d$, $t\in h^{-1}(s)$, and $Z\in\mathcal Z^p(X_T,n,\mathbb Q)$
such that 
\begin{itemize}
\item $h^{-1}(s)=\left\{t,t_1,\cdots,t_r\right\}\subset T_0$, where $T_0\subset T$ is the Zariski open subset such that $h:T_0\to S_d$ is smooth,
\item $\partial Z\in i_{D_T*}\mathcal Z^{p-1}(D_T,n-1,\mathbb Q)$, 
\item $Z\cdot(X_t\times\left\{t\right\})=\bar Z_s$,
\end{itemize}
with $X_T=X\times_{S_d} T$ and $D_T=D\times_{S_d} T$.
For this, consider, for each $1\leq i\leq k$, 
the relative Hilbert scheme $h_i:H_i\to S_d$ of $f(\square^n):\mathcal X\times\square^n\to S_d$,
such that $\bar Z_{is}$ belongs to and $h:H\hookrightarrow H_1\times_{S_d}\cdots\times_{S_d}H_k\to S_d$
defining the condition $\partial G_{is}\in i_{D*}\mathcal Z^{p-1}(D,n)$. 
Note that $H\to S_d$ is surjective since there always exist such a cycle in a fiber and $s\in S_d$ is general.
Take a multisection $T\hookrightarrow H\to S_d$ of $h$ such that
$h^{-1}(s)\cap T\cap\sing(h)=\emptyset$, where $\sing(h)$ is the singular locus of $h$, 
and such that the intersection $h^{-1}(s)\cap T\subset H$ is transversal.  

Denote by $C=\left\{t,t_1,\cdots,t_{r'}\right\}\subset h^{-1}(s)$, with $1\leq r'\leq r$, the subset such that
$Z_{t_i}\subset X_s$ is not included in $D_s$.
By theorem \ref{nori}(ii),
\begin{equation*}
i_{U_T}^*:H_{\mathcal D}^{d_Y}((Y\backslash Z_0)\times T,\mathbb C)\xrightarrow{\sim} H_{\mathcal D}^{d_Y}(U_T,\mathbb C)
\end{equation*}
is an isomorphism and in particular surjectif. Hence, there exist  
$\gamma\in H_{\mathcal D}^{d_Y}((Y\backslash Z_0)\times T,\mathbb C)$ such that
$\mathcal R((Y\backslash Z_0)\times T)(Z_T)=i_{U}^*\gamma$.
Hence, for $t_i,t_j\in C$,
\begin{equation*}
\mathcal R(X_s,D_s)(Z_{t_i})-\mathcal R(X_s,D_s)(Z_{t_j})\in i_{U_s}^*J^{p,d_Y-1}(Y\backslash Z_0)
\end{equation*}
This gives the equality
\begin{equation}\label{Nsurj}
\overline{\mathcal R(X_s,D_s)}(\sum_{t_i\in C}Z_{t_i})=\overline{\mathcal R(X_s,D_s)}(\sum_{t_i\in h^{-1}(s)}Z_{t_i})
=r'\overline{\mathcal R(X_s,D_s)}(\bar Z_s)
\end{equation}
Consider now a pencil $\Lambda_d\subset S_d$ such that $s\in\Lambda_d$,
and $\hat T=h^{-1}(\Lambda_d)\subset T$.
\begin{itemize}
\item In $Y\times\hat T$ we have 
$(X_s\times\hat T).X_{\hat T}=\sum_{i=1}^rX_s\times\left\{t_i\right\}+B(\Lambda_d)\times\hat T$
\item In $(Y\backslash Z_0)\times\hat T$ we have 
$(U_s\times\hat T).U_{\hat T}=\sum_{i=1}^{r'}U_s\times\left\{t_i\right\}+(B(\Lambda_d)\cap(Y\backslash Z_0))\times\hat T$
\end{itemize}
where $X_{\hat T}=X\times_{S_d}{\hat T}$, $U_{\hat T}=U\times_{S_d}{\hat T}$ and 
$B(\Lambda_d)=X_s\cap X_{s'}\subset Y$, $s'\neq s\in S_d$ is the base locus of the pencil.
Consider
\begin{itemize}
\item $Z_{\hat T}=Z\cdot X_{\hat T}\in\mathcal Z^p(X_{\hat T},2p-d_Y,\mathbb Q)$ and 
\item ${Z_{\hat T}}_{|{U_{\hat T}}}:=j^*_{U_{\hat T}}Z_{\hat T}=(j_{U_T}^*Z).U_{\hat T}\in\mathcal Z^p(U_{\hat T},2p-d_Y,\mathbb Q)$
its restriction.
\end{itemize}
We may assume, adding a boundary if necessary, that 
\begin{equation*}
(j_{U_T}^*Z)\cap((B(\Lambda_d)\cap(Y\backslash Z_0))\times\hat T)
:=(j_{U_T}^*Z).(U_s\times\hat T).(U_{s'}\times\hat T)
\in\mathcal Z^{p+3}((Y\backslash Z_0)\times\hat T,2p-d_Y),
\end{equation*}
that is the intersection is a Bloch cycle of the appropriate codimension. 
By the projection formula, we have, denoting $p^{\hat T}_{Y\backslash Z_0}:(Y\backslash Z_0)\times\hat T\to Y\backslash Z_0$
the projection (which is proper since $\hat T$ is projective),
\begin{equation}\label{intUs}
(p^{\hat T}_{Y\backslash Z_0*}{Z_{\hat T}}_{|U_{\hat T}}).U_s=p^{\hat T}_{Y\backslash Z_0*}((j_{U_T}^*Z).U_{\hat T}.(U_s\times\hat T)) 
=\sum_{t_i\in C}j_{U_s}^*Z_{t_i}+(p^{\hat T}_{Y\backslash Z_0*}((j_{U_T}^*Z).(U_{s'}\times\hat T)))\cdot U_s
\end{equation}
Finaly, we obtain,
\begin{eqnarray*}
\overline{\mathcal R(X_s,D_s)}(Z_s)&=&\frac{1}{r'}\overline{\mathcal R(X_s,D_s)}(\sum_{t_i\in C}j_{U_s}^*Z_{t_i}) \mbox{\; by \; (\ref{Nsurj})} \\
&=&\frac{1}{r'}\overline{\mathcal R(X_s,D_s)}\circ i_{U_s}^*(p^{\hat T}_{Y\backslash Z_0*}{Z_{\hat T}}_{|U_{\hat T}}-
p^{\hat T}_{Y\backslash Z_0*}((j_{U_T}^*Z).(U_{s'}\times\hat T)) \mbox{\; by \; (\ref{intUs})}
\end{eqnarray*}
This gives the theorem.
\end{proof}

%%%%%%%%%%%%%%%%%%%%%%%%%%%%%%%%%%%%%%%%%%%%%%%%%%%%%%%%%%%%%%%%%%%%%%%%%%%%%%%%%%%%%%%%%
%%%%%%%%%%%%%%%%%%%%%%%%%%%%%%%%%%%%%%%%%%%%%%%%%%%%%%%%%%%%%%%%%%%%%%%%%%%%%%%%%%%%%%

\end{document}